\newtheorem{theorem}{Theorem}[section]
\newtheorem{statement}[theorem]{Statement}
\newtheorem{lemma}[theorem]{Lemma}
\newtheorem{proposition}[theorem]{Proposition}
\newtheorem{corollary}[theorem]{Corollary}
\newtheorem{assumption}[theorem]{Assumption}
\theoremstyle{definition}
\newtheorem{definition}[theorem]{Definition}
\theoremstyle{remark}
\newtheorem{remark}[theorem]{Remark}
\DeclarePairedDelimiter{\abs}{\lvert}{\rvert}
\DeclarePairedDelimiter{\bkt}{\lbrack}{\rbrack}
\DeclarePairedDelimiter{\pth}{(}{)}
\DeclarePairedDelimiter{\nor}{\lVert}{\rVert}
\DeclarePairedDelimiter{\set}{\lbrace}{\rbrace}
\let\oldpth\pth
\def\pth{\@ifstar{\oldpth}{\oldpth*}}
\let\oldbkt\bkt
\def\bkt{\@ifstar{\oldbkt}{\oldbkt*}}
\let\oldset\set
\def\set{\@ifstar{\oldset}{\oldset*}}
\let\oldabs\abs
\def\abs{\@ifstar{\oldabs}{\oldabs*}}
\let\oldnor\nor
\def\nor{\@ifstar{\oldnor}{\oldnor*}}
\newcommand{\absetnostar}[1]{\abs{\set{#1}}}
\newcommand{\absetstar}[1]{\abs*{\set*{#1}}}
\def\abset{\@ifstar{\absetstar}{\absetnostar}}
\newcommand{\ptsetnostar}[1]{\pth{\set{#1}}}
\newcommand{\ptsetstar}[1]{\pth*{\set*{#1}}}
\def\ptset{\@ifstar{\ptsetstar}{\ptsetnostar}}
\def\ind{\@ifstar{\indWithSet}{\indWithoutSet}}
\renewcommand{\div}{\operatorname{div}}
\newcommand{\curl}{\operatorname{curl}}
\newcommand{\grad}{\nabla}
\newcommand{\La}{\Delta}
\newcommand{\mm}{\mathcal{M}}
\newcommand{\inn}{\text{ in }}
\newcommand{\onn}{\text{ on }}
\renewcommand*{\d}{\mathop{\kern0pt\mathrm{d}}\!{}}
\newcommand{\dt}{\d t}
\newcommand{\dx}{\d x}
\newcommand{\dy}{\d y}
\newcommand{\pt}{\partial _t}
\newcommand{\e}{\varepsilon}
\newcommand{\loc}{\mathrm{loc}}
\newcommand{\hfsq}[1]{\frac{\abs{#1} ^2}{2}}
\newcommand{\mins}[2][]{\min _{#1} \set{#2}}
\newcommand{\maxs}[2][]{\max _{#1} \set{#2}}
\newcommand{\cci}{C _c ^\infty}
\newcommand{\ssubset}{\subset \subset}
\DeclareMathOperator{\meas}{meas}
\DeclareMathOperator{\Reg}{Reg}
\DeclareMathOperator{\Sing}{Sing}
\newcommand{\dist}{\operatorname{dist}}
\newcommand{\esssup}{\operatorname*{ess\,sup}}
\newcommand{\R}{\mathbb{R}}
\newcommand{\Rd}{\R ^D}
\newcommand{\RR}[1]{\R ^{#1}}
\newcommand{\domain}{\R ^{1 + D}}
\newcommand{\OmegaT}{(0, T) \times \Omega}
\renewcommand{\P}{\mathcal P}
\renewcommand{\OmegaT}{\Omega _T}
\newcommand{\radm}{r _{\mathrm{adm}}}
\newcommand{\rint}{r _{\mathrm{int}}}
\newcommand{\rbar}{\bar r}
\newcommand{\rstar}{r _*}
\renewcommand{\dim}{\mathrm{dim}}
\newcommand{\Sa}{\mathscr S _\alpha}
\newcommand{\Saw}{\mathscr S _\alpha ^\wedge}
\newcommand{\Aa}{\mathscr A _\alpha}
\newcommand{\indWithSet}[1]{\mathbf1_{\set{#1}}}
\newcommand{\indWithoutSet}[1]{\mathbf1_{#1}}
\newcommand{\dfr}[2]{\frac{\mathrm{d} #1}{\mathrm{d} #2}}
\newcommand{\pthf}[2]{\pth{\frac{#1}{#2}}}
\newcommand{\intRd}{\int _{\Rd}}
\newcommand{\Lip}{\operatorname{Lip}}
\DeclareMathOperator{\Id}{Id}
\newcommand{\A}{\mathscr A}
\newcommand{\mres}{\mathbin{\vrule height 1.6ex depth 0pt width 0.13ex\vrule height 0.13ex depth 0pt width 1.3ex}}
\newcommand{\half}{\frac12}
\begin{document}

\title[Trace estimates via blow-up]{Vorticity interior trace estimates and higher derivative estimates via blow-up method}

\author{Jincheng Yang}
\address{School of Mathematics, 
Institute for Advanced Study,
1 Einstein Dr,
Princeton, NJ 08540, USA}
\email{jcyang@ias.edu}

\date{\today}
\keywords{Navier-Stokes equation, trace estimates, higher derivatives, blow-up technique}
\subjclass[2020]{76D05, 35Q30}

\thanks{\textit{Acknowledgement}. The author would like to thank Alexis Vasseur for suggesting the problem. The author would also like to thank Luis Silvestre and Seyhun Ji for helpful discussions on the appendices.}

\begin{abstract}
    We derive several nonlinear a priori trace estimates for the 3D incompressible Navier--Stokes equation, which extend the current picture of higher derivative estimates in the mixed norm. The main ingredient is the blow-up method and a novel averaging operator, which could apply to PDEs with scaling invariance and $\varepsilon$-regularity, possibly with a drift.
\end{abstract}

\maketitle

\tableofcontents

\section{Introduction}
\label{sec:introduction}

This paper aims to provide a family of a priori trace estimates and higher regularity estimates for the vorticity of the three-dimensional incompressible Navier--Stokes equation in a general Lipschitz domain $\Omega$. For some $T \in (0, \infty]$, denote $$\OmegaT = (0, T) \times \Omega.$$ Let $u: \OmegaT \to \RR3$ and $P: \OmegaT \to \R$ be a classical solution to the Navier--Stokes equation with no-slip boundary condition:
\begin{align}
    \label{eqn:navier-stokes}
        &\pt u + u \cdot \grad u + \grad P = \nu \La u, \quad \div u = 0 \quad && \inn (0, T) \times \Omega, \\
        \label{eqn:no-slip}
        &u = 0 && \onn (0, T) \times \partial \Omega.
\end{align} 
\eqref{eqn:no-slip} is dropped in the absence of a boundary.
By rescaling, we renormalize the equation with a unit kinematic viscosity $\nu = 1$. We study a priori bounds on the norm of derivatives of vorticity $\omega = \curl u$ over a lower-dimensional set $\Gamma _t \subset \Omega$, which is allowed to change in time. 

This work is motivated by the study of vortex sheet, vortex filament/vortex ring, and point vortex type solutions to the Euler equation in dimensions 2 and 3. Note that the vorticity of solutions to \eqref{eqn:navier-stokes} satisfies
\begin{align*}
    \partial _t \omega + u \cdot \grad \omega - \omega \cdot \grad u = \nu \La \omega, \quad \div \omega = 0 && \inn (0, T) \times \Omega.
\end{align*}
For the Euler equation, due to the absence of viscosity $\nu = 0$, vorticity is not dissipated but only transported and stretched. More specifically, if at $t = 0$ vorticity is supported over some lower dimensional manifold $\Gamma _0$, then at time $t$, the vorticity should still be supported on some manifold $\Gamma _t$ that is transported and stretched from $\Gamma _0$ by the velocity field $u$, which in turn can be recovered from $\omega$ via Biot--Savart law. 

This type of solution is not possible for the Navier--Stokes equation due to the dissipation. Nevertheless, we want to understand, for a solution of finite energy, how much vorticity can concentrate on lower dimensional manifolds. 

\subsection{Main results}

Throughout the article, we assume $\partial \Omega$ is uniformly Lipschitz and $\Gamma _t$ is a Lipschitz graph for each $t \in (0, T)$. We define $\OmegaT = (0, T) \times \Omega$ and the space-time graph to be 
\begin{align*}
    \Gamma _T = \set{(t, x): t \in (0, T), x \in \Gamma _t}. 
\end{align*}
$\rstar: (0, T) \times \Omega \to \R$ is a function defined by \eqref{eqn:defn-rstar} which characterizes the parabolic distance to the parabolic boundary. The precise setups are given in Section \ref{sec:preliminary}. The main theorem is the following a priori estimate on the vorticity.

\begin{theorem}
    \label{thm:main}
    Let $T \in (0, \infty]$, $\Omega \subset \RR3$ satisfy Assumption \ref{ass:lipschitz}, and $\set{\Gamma _t} _{t \in (0, T)}$ be $d$-dimensional submanifolds satisfying Assupmtion \ref{ass:Gamma} with Lipschitz constant $L$. There exist universal constants $C _n > 0$ for each $n \ge 0$ and a constant $C _L > 0$ depending on $L$ such that the following is true. 
    Let $u$ be a classical solution to the incompressible Navier--Stokes equation \eqref{eqn:navier-stokes} with no-slip boundary condition \eqref{eqn:no-slip}. Denote $\omega = \curl u$. There exists a measurable function $s _1: (0, T) \times \Omega \to [0, \infty]$ with the following properties:
    \begin{align*}
            |\grad ^{n} \omega (t, x)| \le C _n s _1 (t, x) ^{-n - 2}, \qquad \forall (t, x) \in \Omega _T, n \ge 0.
    \end{align*}
    \begin{enumerate}[\upshape (a)]
        \item For any $0 \le d \le 3$, it holds that 
        \begin{align*}
            \nor{s _1 ^{-1} \ind*{s _1 < \rstar}} _{L ^{d + 1, \infty} (\Gamma _T)} ^{d + 1} \le C _L \nor{\grad u} _{L ^2 (\OmegaT)} ^2.
        \end{align*}
        \label{enu:main}

        \item 
        If $2 \le d \le 3$ then for every $t \in (0, T)$ it holds that  
        \begin{align*}
            \nor{s _1 ^{-1} (t) \ind*{s _1 < \rstar}} _{L ^{d - 1, \infty} (\Gamma _{t})} ^{d - 1} \le C _L \nor{\grad u} _{L ^2 (\OmegaT)} ^2.
        \end{align*}
        \label{enu:main-fixed-t}
    \end{enumerate}
\end{theorem}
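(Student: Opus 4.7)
The plan is to introduce a regularity scale $s _1 (t, x)$ characterising the largest parabolic cylinder around $(t, x)$ on which the scaled local energy of $u$ stays below a fixed $\e$-regularity threshold $\e _0$; a natural candidate is
\[
    s _1 (t, x) = \sup \set{r \in (0, \rstar (t, x)) : \tfrac{1}{r} \int _{Q _r (t, x)} \abs{\grad u} ^2 < \e _0},
\]
possibly smoothed via the novel averaging operator advertised in the abstract in order to secure measurability and the correct semicontinuity. Once $s _1$ is in hand, the pointwise bound $\abs{\grad ^n \omega (t, x)} \le C _n s _1 (t, x) ^{-n - 2}$ follows from a Caffarelli--Kohn--Nirenberg type $\e$-regularity theorem combined with parabolic bootstrap and Schauder estimates: smallness of the scaled local energy on a cylinder of radius $\sim s _1$ upgrades to pointwise control of derivatives of every order at the scale-invariant exponent.

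For the trace estimate \ref{enu:main} I would proceed by a parabolic packing argument. Fix $r > 0$ and consider $E _r := \set{(t, x) \in \OmegaT : s _1 (t, x) < r, \, s _1 (t, x) < \rstar (t, x)}$. To each $(t, x) \in E _r$ attach the radius $\rho (t, x) := \min \set{r, \rstar (t, x)} > s _1 (t, x)$, so that the parabolic cylinder $Q _{\rho (t, x)} (t, x) \subset \OmegaT$ exhibits an energy defect $\int _{Q _\rho} \abs{\grad u} ^2 \ge \e _0 \, \rho (t, x)$. A parabolic Vitali covering lemma with variable radii extracts a pairwise disjoint subfamily $\set{Q _{\rho _i} (t _i, x _i)}$ whose $5$-fold dilates cover $E _r$. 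Disjointness together with the global energy bound gives
\[
    \sum _i \rho _i \le \frac{1}{\e _0} \nor{\grad u} _{L ^2 (\OmegaT)} ^2,
\]
and each cylinder of radius $5 \rho _i$ meets the space-time graph $\Gamma _T$ (a $(d+1)$-dimensional Lipschitz-in-space submanifold) in Hausdorff measure at most $C _L \rho _i ^{d + 2}$. Using $\rho _i \le r$ we have $\rho _i ^{d + 2} \le r ^{d + 1} \rho _i$, and summing,
\[
    \abs{E _r \cap \Gamma _T} \le C _L \, r ^{d + 1} \, \nor{\grad u} _{L ^2 (\OmegaT)} ^2,
\]
which via $\lambda = 1 / r$ is exactly the claimed weak-$L ^{d + 1}$ bound.

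Part \ref{enu:main-fixed-t} is the same packing argument restricted to the slice $\set{t} \times \Omega$: cover $\set{x \in \Gamma _t : s _1 (t, x) < r, \, s _1 (t, x) < \rstar (t, x)}$ by spatial balls $B _{\rho (t, x)} (x)$ via a spatial Vitali lemma. The associated cylinders $Q _{\rho _i} (t, x _i) = (t - \rho _i ^2, t) \times B _{\rho _i} (x _i)$ remain pairwise disjoint in space-time as soon as the spatial balls are disjoint in $\Omega$, so the same energy count gives $\sum _i \rho _i \le \nor{\grad u} _{L ^2} ^2 / \e _0$. Each $B _{5 \rho _i}$ meets the Lipschitz graph $\Gamma _t$ in $d$-dimensional measure at most $C _L \rho _i ^d$, and $\rho _i ^d \le r ^{d - 1} \rho _i$ delivers the desired $(d - 1)$-dimensional weak-type bound, which is meaningful precisely when $d \ge 2$.

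The main obstacle I expect is constructing $s _1$ so that it simultaneously carries the pointwise derivative bounds, is measurable on $\OmegaT$, and satisfies the energy defect dichotomy on $\set{s _1 < \rstar}$ in a form usable for the Vitali argument. A bare supremum over admissible radii forces one to choose between open and closed sub-level sets and can lose the energy accounting near the parabolic boundary; this is presumably where the novel averaging operator of the abstract becomes essential and where the bulk of the technical care will go. The $\e$-regularity input near the Lipschitz boundary $\partial \Omega$ is also delicate, but should become clean once one works throughout with the interior parabolic distance $\rstar$, which is exactly the role of the cutoff $\ind*{s _1 < \rstar}$.
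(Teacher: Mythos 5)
The covering and packing arguments are essentially sound (and close to the paper's, modulo the choice of a spacetime Vitali covering versus the paper's spatial slice covering followed by integration in time), but there is a genuine gap at the very first step: there is no $\e$-regularity theorem of the form you invoke. Smallness of $\tfrac{1}{r}\int_{Q_r(t,x)}\abs{\grad u}^2$ on a \emph{standard} parabolic cylinder $Q_r(t,x)=(t-r^2,t]\times B_r(x)$ does not imply local boundedness of $u$ or of $\grad^n\omega$: the pressure is nonlocal, and neither $P$ nor the drift $u\cdot\grad u$ is controlled by $\grad u$ alone on a fixed cylinder. The Caffarelli--Kohn--Nirenberg/Lin criterion you allude to needs $\tfrac1{r^2}\int_{Q_r}(\abs{u}^3+\abs{P}^{3/2})$ small; if you were to use that pivot instead, the packing accounting in Step~(a) changes (the defect becomes $\int_{Q_\rho}(\abs u^3+\abs P^{3/2})\ge\e_0\rho^2$, leading to exponent $d$ rather than $d+1$, with $\nor{u}_{L^3}^3$ on the right rather than $\nor{\grad u}_{L^2}^2$), so the theorem as stated would not be recovered.

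What the paper actually does to close this gap is to work with the \emph{Lagrangian} (skewed) cylinders of Section~\ref{sec:blowup}, $Q_\rho(t,x)=\set{(s,X_\rho(s;t,x)+y):s\in(t-\rho^2,t],\ y\in B_\rho}$, following the mollified flow of $b=u$, and with the pivot $f_1=\mm(\grad u)^2/\eta$ rather than $\abs{\grad u}^2$. In those coordinates the drift disappears to leading order, and Lemma~\ref{lem:pointwise} (via the Vasseur--Yang $\e$-regularity, Corollary~\ref{cor:vorticity-local}) gives $|\grad^n\omega(t,x)|\le C_n\rho^{-n-2}$ from smallness of $\fint_{Q_\rho}\mm(\grad u)^2$ alone — this is the statement your proposal needs but does not have. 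The maximal function $\mm(\grad u)$ is not a cosmetic smoothing: via Lemma~\ref{lem:radmrbar} it guarantees the admissibility condition \eqref{eqn:admissible}, which in turn is what makes the Vitali-type covering lemma for skewed cylinders valid (standard Vitali fails here since the cylinders are no longer translates of a fixed shape). Thus the ``novel averaging operator'' is not there merely to regularize a supremum; the scale operator $\Saw$, the drift-adapted cylinders, the admissibility cutoff, and the $\mm(\grad u)^2$ pivot are a package, and replacing them with standard cylinders and $\abs{\grad u}^2$ removes the $\e$-regularity input the whole argument rests on.
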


The theorem shows that $\grad ^n \omega$ is locally controlled in the weak $L ^\frac{d + 1}{n + 2}$ space.
Since $d \le 3$, the Lebesgue index $\frac{d + 1}{n + 2}$ is too small to pass to the limit and construct weak solutions in most cases, with two notable exceptions. 
\begin{enumerate}
    \item 
    When $d = 3$ and $n = 1$, we get local $L ^{\frac43, \infty}$ estimate for the vorticity gradient $\grad \omega$, which has been proven in several works. Constantin \cite{constantin1990} constructed suitable weak solutions in $\Omega = \mathbb T ^3$ with $\grad ^2 u \in L ^\frac4{3 + \e}$ for any $\e > 0$, which was improved by Lions \cite{lions1996} to weak space $L ^{\frac43, \infty}$, and local in space for bounded domains. For the case $\Omega = \RR3$, Vasseur \cite{vasseur2010} obtained local integrability of $\grad ^2 u \in L ^\frac4{3 + \e} _{\loc}$ using the blow-up method, then Choi and Vasseur \cite{choi2014} improved it to iterative local weak norm $L ^{\frac43, \infty} (t _0, T; L ^{\frac43, \infty} _{\loc} (\RR3))$ for $t _0 > 0$. They also extended this to fractional higher derivatives $(-\La) ^\frac\alpha2 \grad ^n u \in L ^{p, \infty} (t _0, T; L ^{p, \infty} _\loc (\RR3))$ with $p = \frac4{n + \alpha + 1}$. Recently, Vasseur and Yang \cite{vasseur2021} improved the local integrability to spacetime Lorentz norm $\grad ^2 u \in L ^{\frac43, q} _\loc$ for any $q > \frac43$, and they also obtained the higher regularity for the vorticity $\grad ^n \omega \in L ^{\frac4{n + 2}, q} _\loc$ for any $q > \frac4{n + 2}$ and $n \ge 0$ for classical solutions.
    
    \item 
    When $d = 2$ and $n = 0$, we obtain local $L ^{\frac32, \infty}$ interior trace of the vorticity. A weaker version was obtained by Vasseur and Yang \cite{vasseur2023,vasseur2023b} on the boundary trace $\partial \Omega$ for an averaged vorticity $\tilde \omega$ instead of $\omega$ itself. They used the blow-up method on the boundary to obtain this estimate and studied the inviscid limit problem.
\end{enumerate}
These two estimates can thus be extended to suitable weak solutions. 
Recall that suitable weak solutions defined in \cite{caffarelli1982} refer to a divergence-free vector field $u \in C _{\mathrm w} (0, T; L ^2 (\Omega)) \cap L ^2 (0, T; \dot H ^1 _0 (\Omega))$ and a scalar field $P \in L ^\frac54 (\OmegaT)$, that satisfy \eqref{eqn:navier-stokes} and the following local energy inequalities in distributional sense:
\begin{align*}
    \partial _t \hfsq u + \div \pth{
        u \pth{
            \hfsq u + P
        }
    } + |\grad u| ^2 \le \La \hfsq u.
\end{align*}
For such suitable solutions, as well as more general Leray--Hopf weak solutions, the energy dissipation is controlled by the initial kinetic energy:
\begin{align*}
    \frac12 \nor{u (T)} _{L ^2 (\Omega)} ^2 + \nor{\grad u} _{L ^2 (\OmegaT)} ^2 \le \frac12 \nor{u (0)} _{L ^2 (\Omega)} ^2.
\end{align*}
Therefore, the above and following estimates are a priori in the sense that they only rely on the kinetic energy of the initial data $u (0)$.

\begin{corollary}
    \label{cor:main}
    Let $T > 0$, let $\Omega \subset \RR3$ be a bounded set satisfying Assumption \ref{ass:lipschitz}. For any suitable weak solution $u$ to \eqref{eqn:navier-stokes}-\eqref{eqn:no-slip}, it holds that 
    \begin{align*}
        \nor{\grad \omega \ind*{|\grad \omega| > C \rstar ^{-3}}} _{L ^{\frac43, \infty} (\OmegaT)} ^\frac43 \le C _L \nor{\grad u} _{L ^2 (\OmegaT)} ^2.
    \end{align*}
    Moreover, if $\set{\Gamma _t} _{t \in (0, T)}$ satisfy Assumption \ref{ass:Gamma} with $d = 2$, then  
    \begin{align*}
        \nor{\omega \ind*{|\omega| > C \rstar ^{-2}}} _{L ^{\frac{3}{2}, \infty} (\Gamma _T)} ^\frac{3}{2} \le C _L \nor{\grad u} _{L ^2 (\OmegaT)} ^2.
    \end{align*}
\end{corollary}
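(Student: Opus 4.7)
The plan is to deduce both inequalities of Corollary \ref{cor:main} from Theorem \ref{thm:main} by combining the pointwise bound $|\grad^n \omega| \le C _n s _1^{-n-2}$ with the weak-Lorentz estimate in part (\ref{enu:main}), and then to promote the estimate from classical to suitable weak solutions by an approximation argument.

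For the interior bound on $\grad \omega$ I would apply Theorem \ref{thm:main}(\ref{enu:main}) with $d = 3$ (so that $\Gamma _T = \OmegaT$) and $n = 1$. Fixing the truncation constant $C = C _1$ from the pointwise estimate, the hypothesis $|\grad \omega| > C \rstar^{-3}$ together with $|\grad \omega| \le C _1 s _1^{-3}$ forces $s _1 < \rstar$, so pointwise
\begin{align*}
    |\grad \omega| \ind*{|\grad \omega| > C \rstar^{-3}} \le C _1 \, s _1^{-3} \, \ind*{s _1 < \rstar}.
\end{align*}
The Lorentz homogeneity relation $\nor{f^\alpha} _{L ^{p/\alpha, \infty}} = \nor{f} _{L ^{p, \infty}} ^\alpha$, valid for $f \ge 0$ and $\alpha > 0$, then converts the resulting $L ^{4/3, \infty}(\OmegaT)$ quasi-norm, raised to the power $4/3$, into $\nor{s _1^{-1} \ind*{s _1 < \rstar}} _{L ^{4, \infty}(\OmegaT)} ^{4}$, which is exactly what part (\ref{enu:main}) of the main theorem controls by $C _L \nor{\grad u} _{L ^2(\OmegaT)} ^2$. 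The trace bound on $\omega$ is the analogous specialization with $d = 2$, $n = 0$, using the identity $\nor{s _1 ^{-2} \ind*{s _1 < \rstar}} _{L ^{3/2, \infty}(\Gamma _T)} ^{3/2} = \nor{s _1 ^{-1} \ind*{s _1 < \rstar}} _{L ^{3, \infty}(\Gamma _T)} ^{3}$.

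The remaining task is to move from classical solutions, for which Theorem \ref{thm:main} applies, to arbitrary suitable weak solutions. My plan is to approximate $u$ by a sequence of classical solutions $u ^\e$, for instance via a mollified nonlinearity $(\eta _\e \ast u) \cdot \grad u$ or a vanishing hyperviscous regularization, whose kinetic energies stay bounded by $\nor{u (0)} _{L ^2}$. Theorem \ref{thm:main} then furnishes $\e$-uniform estimates, and I would pass to the limit using the almost-everywhere lower semicontinuity of $L ^{p, \infty}$ quasi-norms via Fatou applied to the distribution function. I expect the main obstacle to be establishing the almost-everywhere convergence of $\grad \omega ^\e \to \grad \omega$ (respectively $\omega ^\e \to \omega$) on the truncated sets; this should follow from parabolic $\e$-regularity of Caffarelli--Kohn--Nirenberg type, since the truncation $\set{|\grad \omega| > C \rstar ^{-3}}$ excludes the singular set and the approximations enjoy uniform local smoothness in a neighborhood of every smooth point of the limit.
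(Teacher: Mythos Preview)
Your reduction from Theorem \ref{thm:main} in the classical case is exactly the paper's argument: with $d=3$, $n=1$ (resp.\ $d=2$, $n=0$) the pointwise bound $|\grad^n\omega|\le C_n s_1^{-n-2}$ and the Lorentz homogeneity give precisely the two displayed inequalities.

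Where you diverge from the paper is in the passage to suitable weak solutions. The paper does \emph{not} approximate by classical solutions. Instead it observes that the entire machinery behind Theorem \ref{thm:main}---in particular the $\e$-regularity statement of Corollary \ref{cor:vorticity-local} and the pointwise bounds of Lemma \ref{lem:pointwise}---is already formulated for suitable weak solutions at \emph{regular} points. Thus $|\grad^n\omega|\le C_n s_1^{-n-2}$ holds on $\Reg(u)$, and one only needs that $\Sing(u)$ is Lebesgue-null in $\OmegaT$ and $\mu_T$-null on $\Gamma_T$; the paper gets the latter from $\mathscr P^1(\Sing(u))=0$ (Theorem \ref{thm:ckn}) together with a short linear-Stokes argument showing $\grad\omega\in L^{5/4}_{\loc}$ so that the trace of $\omega$ on $\Gamma_t$ is well defined for a.e.\ $t$.

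Your approximation route has a genuine gap. Given an \emph{arbitrary} suitable weak solution $u$, it is not known that $u$ arises as the limit of any specific sequence of smooth (or regularized) solutions---weak--strong uniqueness fails in general, so the Leray-mollified or hyperviscous approximants you construct from $u(0)$ may converge to a \emph{different} suitable weak solution. Even setting this aside, those approximants solve modified equations, so Theorem \ref{thm:main} does not apply to them as stated; and the pointwise a.e.\ convergence $\grad\omega^\e\to\grad\omega$ you need is not a consequence of CKN $\e$-regularity for the limit alone. The paper's direct approach avoids all of this: since the underlying local lemmas already cover suitable weak solutions, no approximation is needed.
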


Although previous a priori estimates only apply to the vorticity, we can control the higher derivatives for the symmetric part of $\grad u$ as well using the Hessian of the pressure. 

\begin{theorem}
    \label{thm:main-with-pressure}
    Under the same setting as in Theorem \ref{thm:main}, there exists a measurable function $s _2: (0, T) \times \Omega \to [0, \infty]$ with the following properties:
    \begin{align*}
        |\grad ^{n} u (t, x)| \le C _n s _2 (t, x) ^{-n - 1}, \qquad \forall (t, x) \in \Omega _T, n \ge 1.
    \end{align*}
    \begin{enumerate}[\upshape (a)]
        \item For any $0 \le d \le 3$, it holds that 
        \begin{align*}
            \nor{s _2 ^{-1} \ind*{s _2 < \rstar}} _{L ^{d + 1, \infty} (\Gamma _T)} ^{d + 1} \le C _L \pth{
                \nor{\grad u} _{L ^2 (\OmegaT)} ^2 + \nor{\grad ^2 P} _{L ^1 (\OmegaT)}
            }.
        \end{align*}
        \label{enu:a-priori-u}

        \item 
        If $2 \le d \le 3$ then for every $t \in (0, T)$ it holds that  
        \begin{align*}
            \nor{s _2 ^{-1} (t) \ind*{s _2 < \rstar}} _{L ^{d - 1, \infty} (\Gamma _{t})} ^{d - 1} \le C _L \pth{
                \nor{\grad u} _{L ^2 (\OmegaT)} ^2 + \nor{\grad ^2 P} _{L ^1 (\OmegaT)}
            }.
        \end{align*}
    \end{enumerate}
\end{theorem}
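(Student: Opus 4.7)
The strategy parallels the proof of Theorem \ref{thm:main}, with a modified smoothness scale $s_2$ that absorbs the pressure contribution. The vorticity equation is pressure-free, which is why Theorem \ref{thm:main} depends only on $\nor{\grad u}_{L^2(\OmegaT)}^2$. For the full velocity gradient, differentiating the momentum equation gives
$$\pt (\grad u) + u \cdot \grad (\grad u) - \La (\grad u) = -(\grad u)(\grad u) - \grad ^2 P,$$
so $\grad^2 P$ enters as an inhomogeneity. Under the Navier--Stokes scaling $u_\lambda(t,x) = \lambda u(\lambda^2 t, \lambda x)$, both $\nor{\grad u}_{L^2(Q_r)}^2$ and $\nor{\grad^2 P}_{L^1(Q_r)}$ scale like $r$, so dividing by $r$ yields scale-invariant quantities that can be added together; this matches the right-hand side of the theorem.

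First, I would define $s_2(t,x)$ as the largest radius $r \le \rstar(t,x)$ at which the dimensionless local average
$$\frac{1}{r}\pth{\nor{\grad u}_{L^2(Q_r(t,x))}^2 + \nor{\grad^2 P}_{L^1(Q_r(t,x))}} < \e_0$$
for a universal threshold $\e_0$. By monotonicity in $r$, the resulting $s_2$ is lower semicontinuous, hence measurable.

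Next, I would establish the pointwise bound $\abs{\grad^n u(t,x)} \le C_n s_2(t,x)^{-n-1}$ for every $n \ge 1$ by the same blow-up and $\e$-regularity scheme that underlies Theorem \ref{thm:main}. Rescaling at scale $s_2(t,x)$ produces a Navier--Stokes solution on a unit parabolic cylinder whose intrinsic data $\nor{\grad u}_{L^2}^2 + \nor{\grad^2 P}_{L^1}$ are controlled by $\e_0$. A Caffarelli--Kohn--Nirenberg style $\e$-regularity statement, adapted so that its smallness hypothesis is stated via $\grad^2 P$ instead of $P$, then yields uniform smoothness of the rescaled solution at the origin and converts back to the pointwise bound on $\grad^n u$.

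Finally, I would feed $s_2$ into the Vitali--averaging machinery developed in the proof of Theorem \ref{thm:main}. That argument uses only additivity of the controlling quantity over disjoint cylinders, scale invariance of the local criterion, and the Lipschitz structure of $\Gamma_t$ — all of which persist here. The weak-$L^{d+1}$ bound over $\Gamma_T$ in \textup{(a)} and the fixed-time weak-$L^{d-1}$ bound in \textup{(b)} then follow with the same constant $C_L$, provided one replaces $\nor{\grad u}_{L^2(\OmegaT)}^2$ by the combined quantity $\nor{\grad u}_{L^2(\OmegaT)}^2 + \nor{\grad^2 P}_{L^1(\OmegaT)}$ throughout. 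The main obstacle is the $\e$-regularity step: since $\nor{\grad^2 P}_{L^1}$ controls $P$ only up to affine functions, and since $L^1$ sequences may concentrate under blow-up, care is needed to show that blow-up limits remain suitable weak solutions whose pressure component carries no ghost mass at the origin. I expect this to follow from a quantitative version of CKN where the smallness of $\nor{\grad u}_{L^2}^2$ already controls local $P$ via the pressure Poisson equation $-\La P = \partial_i \partial_j (u^i u^j)$, so that the $\grad^2 P$ term plays the auxiliary role of transferring smallness through the linearized equation for $\grad u$.
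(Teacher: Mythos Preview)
Your overall architecture is correct and matches the paper: define a scale function via a local smallness criterion, apply an $\e$-regularity theorem at that scale, then feed into the covering/averaging machinery. The paper's proof is indeed one line: replace $s_1$ by $s_2$ from Lemma~\ref{lem:pointwise} and repeat the argument for Theorem~\ref{thm:main}.

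However, your $\e$-regularity step has a genuine gap. Smallness of $\nor{\grad u}_{L^2(Q_r)}^2 + \nor{\grad^2 P}_{L^1(Q_r)}$ on a \emph{standard} parabolic cylinder does \emph{not} imply regularity of $\grad u$ at the center, because this quantity is invariant under Galilean boosts $u \mapsto u + c$: a large constant drift transports features across the cylinder without being detected by either $\grad u$ or $\grad^2 P$. The paper handles this by using \emph{skewed} cylinders that follow the drift $b = u$ (Section~\ref{sec:blowup}) together with the Choi--Vasseur $\e$-regularity lemma, whose zero-mean hypothesis $\int \varphi\, u = 0$ is enforced precisely by the Galilean shift built into the skewed cylinder. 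Your proposed ``CKN-style'' variant phrased in terms of $\grad^2 P$ on standard cylinders is not available without this correction. Two smaller points: the pivot quantity in the paper is $f_2 = \bar\eta^{-1}\bigl(\mm(\grad u)^2 + |\grad^2 P|\bigr)$ with the maximal function $\mm(\grad u)$ rather than $|\grad u|$, which is what makes the admissibility condition and hence the Vitali covering lemma for skewed cylinders work; and your worry about ``ghost mass'' under blow-up is misplaced, since the $\e$-regularity used is quantitative (De Giorgi iteration), not compactness-based.
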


When $\Omega = \RR3$ or $\mathbb T ^3$, the $L ^1$ norm of $\grad ^2 P$ is controlled by $L ^2$ norm of $\grad u$ due to compensated compactness (see Remark \ref{rmk:pressure}), so the right-hand side is bounded again by the initial kinetic energy. When $\Omega$ has a nontrivial $C ^2$ boundary, we can still control $\grad ^n u$ away from the parabolic boundary of $\Omega _T$ as in Proposition \ref{prop:gradu}.

The main idea of both theorems is inspired by the blow-up method developed by Vasseur in \cite{vasseur2010}. The key is to find the appropriate ``scale function'' $s$. At each $(t, x)$, we blow-up the equation to the scale $\rho = s (t, x)$, then use existing $\e$-regularity theorems for suitable weak solutions to obtain full regularity in the $\rho$-neighborhood. By scaling, any higher derivative is controlled by the appropriate negative power of the scale. In \cite{vasseur2010}, the scale is a constant, whereas in \cite{choi2014} the scale can be viewed as a function of time. Now by allowing scale to depend on both time and space, we can obtain finer estimates here and below, with considerably more efforts in quantifying the scale function. In fact, our results also apply to suitable weak solutions on the regular set, and the singular set coincides with $\set{s = 0}$.

\subsection{Anisotropic norm estimates and regularity criteria}

The trace estimates mentioned above can also provide mixed norms of derivatives of vorticity via interpolation. First, note that taking trace to $d = 0$ yields an $L ^\infty$ estimate in $x$:
$\grad ^n \omega$ is locally in $L ^{\frac1{n + 2}, \infty} _t L ^\infty _x$, but this is slightly weaker than the current knowledge of a priori estimate for higher derivatives. It was proven in \cite{foias1981, duff1990} that $\partial _t ^r \grad ^n u$ is $L ^\frac1{2r + n + 1} _t L ^\infty _x$ and $L ^\frac{2}{4 r + 2 n - 1} _t L ^2 _x$ in strong norms up to the parabolic boundary. However, their proofs are based on different strategies, and the dependence on the initial energy is less clear.

By interpolating the spatial trace $L ^{1, \infty} _t L ^\infty _x$, the isotropic norm $L ^{4, \infty} _{t, x}$, and the temporal trace $L ^\infty _t L ^{2, \infty} _x$, we get the following picture of the anisotropic integrability. 
We remark that even though the isotropic Lorentz norm $L ^{4, \infty} _{t, x}$ and the nested Lorentz norm $L ^{4, \infty} _t L ^{4, \infty} _x$ are not equivalent and not comparable, interpolation is still possible (see Appendix \ref{app:lorentz}). For simplicity, we restrict our attention to $\Omega = \mathbb T ^3$. The case of a bounded domain is studied in Proposition \ref{prop:gradu}.

\begin{corollary}
    \label{cor:anisotropic}
    When $\Omega = \mathbb T ^3$, we conclude the following a priori bounds: for $0 < p < q \le \infty$ with $\frac1p + \frac3q = n + 1$, $n \ge 1$, $0 < t < T$, it holds that 
    \begin{align*}
        \nor{\grad ^n u} _{L ^{p, \infty} (t _0, T; L ^{q, \infty} (\mathbb T ^3))} \le C \pth{
            \nor{\grad u} _{L ^2 ((0, T) \times \mathbb T ^3)} ^{\frac2p} + (T - t _0) ^\frac1p \max \{t _0^{-\frac{n + 1}2}, 1\}
        }.
    \end{align*}
    Here $C = C (L, p, q, n)$. 
    For $0 < q < p \le \infty$ with $\frac1p + \frac1q = \frac{n + 1}2$, $n \ge 1$, it holds that 
    \begin{align*}
        \nor{\grad ^n u} _{L ^{p, \infty} (t _0, T; L ^{q} (\mathbb T ^3))} \le C \pth{
            \nor{\grad u} _{L ^2 ((0, T) \times \mathbb T ^3)} ^{\frac2q} + (T - t _0) ^\frac1q \max \{t _0^{-\frac{n + 1}2}, 1\}
        }.
    \end{align*}
    In addition, for $p = q = \frac4{n + 1}$, 
    \begin{align*}
        \nor{\grad ^n u} _{L ^{\frac4{n + 1}, \infty} ((t _0, T) \times \mathbb T ^3)} \le C \pth{
            \nor{\grad u} _{L ^2 ((0, T) \times \mathbb T ^3)} ^{\frac{n + 1}2} + (T - t _0) ^\frac{n + 1}4 \max \{t _0^{-\frac{n + 1}2}, 1\}
        }.
    \end{align*}
\end{corollary}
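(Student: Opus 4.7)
\emph{Approach.} The plan is to apply Theorem \ref{thm:main-with-pressure} at three extreme choices of the trace parameter $d$ to obtain three endpoint inequalities corresponding to the vertices of the relevant $(1/p, 1/q)$ polygon, and then real-interpolate along the two scaling segments. On $\mathbb{T} ^3$ the pressure Hessian is controlled by the energy via compensated compactness (Remark \ref{rmk:pressure}), so the right-hand side of Theorem \ref{thm:main-with-pressure} collapses to a multiple of $\nor{\grad u} _{L ^2 (\OmegaT)} ^2$.

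\emph{Step 1 (endpoints).} Using the pointwise bound $|\grad ^n u (t, x)| \le C _n s _2 (t, x) ^{-(n + 1)}$, I extract three control points from Theorem \ref{thm:main-with-pressure}:
\begin{enumerate}
    \item Part (a) with $d = 0$ and $\Gamma _t = \set{x _0}$ a single point: the Lipschitz constant of a point is zero, so the estimate is uniform in $x _0 \in \mathbb{T} ^3$, giving the spatial endpoint $\grad ^n u \in L ^{\frac1{n + 1}, \infty} _t L ^\infty _x$ at $(1/p, 1/q) = (n + 1, 0)$.
    \item Part (a) with $d = 3$ and $\Gamma _t = \mathbb{T} ^3$: the isotropic endpoint $\grad ^n u \in L ^{\frac4{n + 1}, \infty} ((0, T) \times \mathbb{T} ^3)$ at $(1/p, 1/q) = \pth{\tfrac{n + 1}{4}, \tfrac{n + 1}{4}}$. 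This already proves the third inequality of the corollary.
    \item Part (b) with $d = 3$ and $\Gamma _t = \mathbb{T} ^3$: the temporal endpoint $\grad ^n u \in L ^\infty _t L ^{\frac2{n + 1}, \infty} _x$ at $(1/p, 1/q) = \pth{0, \tfrac{n + 1}{2}}$.
\end{enumerate}
Vertices (1) and (2) are the endpoints of the segment $\frac1p + \frac3q = n + 1$, while (2) and (3) are the endpoints of $\frac1p + \frac1q = \frac{n + 1}{2}$, so the scaling constraints in the corollary are matched automatically.

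\emph{Step 2 (exceptional set).} On $\set{s _2 \ge \rstar}$ the pointwise bound only gives $|\grad ^n u| \le C \rstar ^{-(n + 1)}$. Since $\mathbb{T} ^3$ has no spatial boundary, $\rstar (t, x) \sim \min (\sqrt t, 1)$, so on $(t _0, T) \times \mathbb{T} ^3$ this yields $\rstar ^{-(n + 1)} \le C \max \set{t _0 ^{-(n + 1)/2}, 1}$ uniformly, contributing the additive term $(T - t _0) ^{1/p} \max \set{t _0 ^{-(n + 1)/2}, 1}$ after integrating in the relevant time norm.

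\emph{Step 3 (interpolation).} Along $\frac1p + \frac3q = n + 1$ with $p < q$, real interpolation between endpoint (1) and endpoint (2) produces $L ^{p, \infty} _t L ^{q, \infty} _x$. Along $\frac1p + \frac1q = \frac{n + 1}{2}$ with $q < p$, interpolation between (2) and (3) yields $L ^{p, \infty} _t L ^q _x$; the upgrade from a weak to a strong spatial Lorentz norm at interior points reflects the Marcinkiewicz-type gain from interpolating through a weak-type endpoint.

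\emph{Main obstacle.} The technical crux is Step 3. The isotropic quasi-norm $L ^{\frac4{n + 1}, \infty} _{t, x}$ and the nested quasi-norm $L ^{\frac4{n + 1}, \infty} _t L ^{\frac4{n + 1}, \infty} _x$ are neither equivalent nor comparable, so real interpolation between an isotropic and a nested endpoint does not follow directly from classical Lions--Peetre theory; this is exactly the difficulty addressed by the distribution-function and layer-cake computations of Appendix \ref{app:lorentz}. A secondary subtlety is converting the pointwise-in-$x _0$ bound (1) to an essential-supremum statement in $x$, which uses the regularity of the scale function $s _2$ together with the uniformity of the constant in the base point.
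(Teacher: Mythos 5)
Your skeleton (three endpoints, then interpolate along the two scaling segments, isolate the $r_*$-exceptional set separately) is the same one that underlies the paper's machinery, but two of your steps have genuine gaps that the paper's proof circumvents in ways your outline does not capture.

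\emph{Gap 1: the $L^\infty_x$ endpoint.} Applying Theorem \ref{thm:main-with-pressure}(a) with $d=0$ to a \emph{fixed} singleton $\Gamma_t = \set{x_0}$ yields $\nor{s_2^{-1}(\cdot,x_0)\ind*{s_2<\rstar}}_{L^{1,\infty}_t}\le C$ uniformly in $x_0$, but this does not imply $\nor{\esssup_x s_2^{-1}(\cdot,x)\ind*{s_2<\rstar}}_{L^{1,\infty}_t}\le C$: the pointwise-in-$x_0$ bound allows the time interval where $s_2^{-1}(\cdot,x_0)$ is large to slide around with $x_0$, accumulating when you take the sup (cf.\ $f(t,x)=\e^{-1}\ind{[x,x+\e]}(t)$). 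Your closing remark that this is salvaged by ``regularity of the scale function $s_2$'' is not the right fix; the correct mechanism, and the one the paper uses (see the proof of Proposition \ref{prop:anisotropic-spacetime}, case $\beta=0$), is that Assumption \ref{ass:Gamma} permits $\Gamma_t$ to be a \emph{time-dependent} singleton $\set{x(t)}$, and one invokes a measurable selection theorem to choose $x(t)$ near-maximizing $s_2^{-1}(t,\cdot)$. Without this, endpoint (1) fails.

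\emph{Gap 2: strong spatial integrability on the second segment.} Your endpoint (3), from Theorem \ref{thm:main-with-pressure}(b), sits in $L^\infty_t L^{2/(n+1),\infty}_x$ (weak in space), and endpoint (2) is the isotropic weak space $L^{4/(n+1),\infty}_{t,x}$; real interpolation between two weak-$x$ endpoints produces only $L^{p,\infty}_t L^{q,\infty}_x$, not the strong $L^{p,\infty}_t L^q_x$ claimed in the second inequality. The upgrade is not a Marcinkiewicz gain of the kind you invoke (Marcinkiewicz strengthens the outer Lorentz index, not the inner one), and Lemma \ref{lem:lorentz-interpolation} in Appendix \ref{app:lorentz} only outputs weak-$x$ norms. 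The strong spatial norm comes from the separate direct argument of Proposition \ref{prop:anisotropic-2-spacetime} — a layer-cake decomposition of $\int_{\Gamma_t}(\Aa f)^q\,\d\mu_t$ followed by Young's convolution inequality against $t^{-1+\gamma}\in L^{1/(1-\gamma),\infty}$ — which the paper then packages into Theorem \ref{thm:avg} \eqref{enu:Ba}. In the paper's actual proof of this corollary, one never interpolates between endpoints of different $d$ at all: one fixes $\Gamma_t=\mathbb T^3$, $d=3$, $\alpha=4$, and reads off the anisotropic bounds from Theorem \ref{thm:avg} by choosing $(p_2,q_2)$ directly on the relevant scaling line. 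Your organization by varying $d$ is a legitimate alternative for the weak–weak segment (once Gap 1 is fixed), but it cannot produce the strong-in-$x$ conclusion without the non-interpolative ingredient.

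Your Step 2 (the $r_*$ tail) and the isotropic third inequality via endpoint (2) are correct as stated.
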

These results are plotted in Figure \ref{fig:integrability}.
Solid lines represent the strong norm $L ^p _t L ^q _x$ connecting $q = \infty$ and $q = 2$, which are due to \cite{foias1981} and \cite{duff1990}, whereas $\grad u \in L ^{\infty} _t L ^1 _x$ is due to \cite{constantin1990} and \cite{lions1996}. Dashed lines represent the weak norm $L ^{p, \infty} _{t, \mathrm{loc}} L ^{q, \infty} _x$ if $p < q$, and $L ^{p, \infty} _{t, \mathrm{loc}} L ^{q} _x$ if $p > q$. Note that it passes through $\grad ^2 u \in L ^{2, \infty} _{t, \loc} L ^1 _x$, which strengthens the result of Lions \cite{lions1996} which showed $L ^p _{t} L ^1 _x$ for $p < 2$. It also passes through $\grad ^2 u \in L ^{1, \infty} _{t, \loc} L ^{\frac32, \infty} _x$, which is weaker than the result of Lions \cite{lions1996} which showed $L ^{1, \infty} _t L ^{\frac32, 1} _x$. The diagonal dotted line $p = q$ represents the isotropic Lorentz space $L ^{p, \infty} _{t, x}$.

\begin{figure}
    \centering
    \begin{tikzpicture}
        \draw[dotted] (0, 0) -- (-5, -5);
    
        \draw[->, -latex] (-10.5, 0) -- (0.5, 0) node [anchor=west] {$-\frac1p$};
    
        \draw[->, -latex] (0, -5) -- (0, 0.5) node [anchor=south] {$-\frac1q$};
        \draw[] (-1.5, 0) -- (-1.5, 0.1) node [anchor=south] {$2$};
        \draw[] (-2.25, 0) -- (-2.25, 0.1) node [anchor=south] {$\frac43$};
        \draw[] (-3, 0) -- (-3, 0.1) node [anchor=south] {$1$};
        \draw[] (-4.5, 0) -- (-4.5, 0.1) node [anchor=south] {$\frac23$};
        \draw[] (-6, 0) -- (-6, 0.1) node [anchor=south] {$\frac12$};
        \draw[] (-7.5, 0) -- (-7.5, 0.1) node [anchor=south] {$\frac25$};
        \draw[] (-9, 0) -- (-9, 0.1) node [anchor=south] {$\frac13$};
        \draw[] (0, -0.5) -- (0.1, -0.5) node [anchor=west] {$6$};
        \draw[] (0, -1.5) -- (0.1, -1.5) node [anchor=west] {$2$};
        \draw[] (0, -2.25) -- (0.1, -2.25) node [anchor=west] {$\frac43$};
        \draw[] (0, -3) -- (0.1, -3) node [anchor=west] {$1$};
        \draw[] (0, -4.5) -- (0.1, -4.5) node [anchor=west] {$\frac23$};
        
        \draw (0, 0) node [anchor=south west] {$\infty$};
        \draw[] (-3, 0) -- (-1.5, -0.5) node[anchor=south] {$u$} -- (0, -1.5);
        \draw[] (-6, 0) -- (-1.5, -1.5) node[anchor=south] {$\grad u$} -- (0, -3);
        \draw[] (-9, 0) -- (-4.5, -1.5);
        \draw[dashed] (-4.5, -1.5) -- (-2.25, -2.25) node[anchor=south] {$\grad ^2 u$} -- (0, -4.5);
        \draw[] (-10.5, -0.5) -- (-7.5, -1.5);
        \draw[dashed] (-7.5, -1.5) -- (-3, -3) node[anchor=south] {$\grad ^3 u$} -- (-1, -5);
        \draw[dashed] (-10.5, -1.5) -- (-3.75, -3.75) node[anchor=south] {$\grad ^4 u$} -- (-2.5, -5);
        \draw[dashed] (-10.5, -2.5) -- (-4.5, -4.5) node[anchor=south] {$\grad ^5 u$} -- (-4, -5);
        \draw[dashed] (-10.5, -3.5) -- (-6, -5) node[anchor=south] {$\grad ^6 u$};
        \draw[dashed] (-10.5, -4.5) -- (-9, -5) node[anchor=south] {$\grad ^7 u$};
    \end{tikzpicture}
    \caption{Higher derivatives in mixed norm $L ^p _t L ^q _x$ or weak in $\Omega = \mathbb T ^3$}
    \label{fig:integrability}
\end{figure}
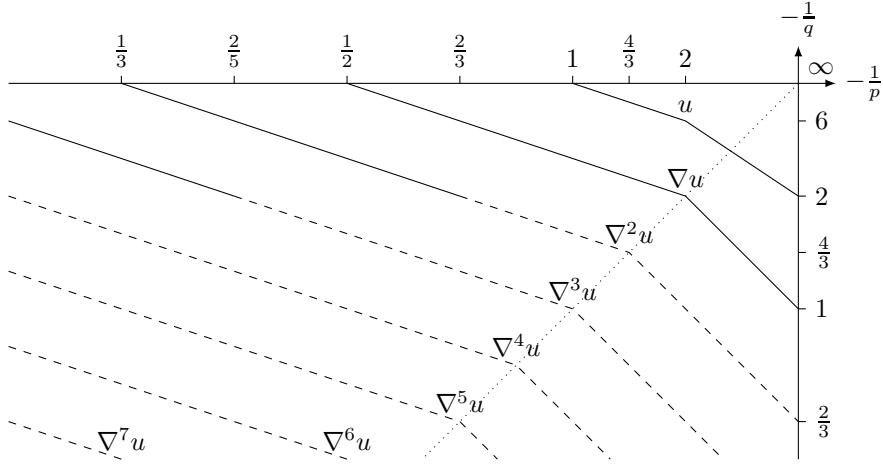

Unfortunately, none of the anisotropic a priori estimates could reach any regularity criteria due to the essence of the blow-up method. Indeed, since the energy norm is supercritical, all the a priori higher derivative estimates acquired via scaling are supercritical as well. However, all of the current energy criteria are either critical or subcritical.
Critical norms are invariant under the scaling of Navier--Stokes equation:
\begin{align*}
    u _\lambda (t, x) = \lambda u (\lambda ^2 t, \lambda x), \qquad P _\lambda (t, x) = \lambda ^2 P (\lambda ^2 t, \lambda x).
\end{align*}
Many of them also serve as regularity criteria: a weak solution is regular as long as one of these criteria norms is bounded. For instance, the Lady\v{z}enskaya--Prodi--Serrin criteria \cite{ladyzhenskaya1957,prodi1959,serrin1962,serrin1963,fabes1972} assert whenever $u$ is in $L ^p _t L ^q _x$ with $\frac2p + \frac3q = 1$, $2 \le p < \infty$, then $u$ is regular. The limit case $L ^\infty _t L ^3 _x$ was shown by Escauriaza--Ser\"{e}gin--\v{S}ver\'{a}k \cite{escauriaza2003}. There are some logarithmic improvements or in weak Lebesgue spaces \cite{chan2007,bjorland2011,bosia2014,montgomery2005,pineau2020}. The regularity criteria on velocity was extended to regularity criteria on velocity gradients $\grad u \in L ^p _t L ^q _x$ with $\frac2p + \frac3q = 2$ by Veiga \cite{veiga1995}. This also includes a special case, the Beal--Kato--Majda criterion \cite{beale1984}, i.e., $\nor{\omega} _{L ^1 _t L ^\infty _x}$, which was originally proposed as a criterion for the Euler equation. It is possible to require certain norms only on some components of the velocity or velocity gradient \cite{cao2008,cao2011,bae2016,chemin2017,chemin2017}, or in the direction of the velocity or vorticity \cite{vasseur2009,pineau2018,veiga2019,zhang2019,guo2022}. Below we give a comparison among some of these criteria in 3D periodic domain.

\begin{proposition}
    \label{prop:blow-up}
    Let $\Omega = \mathbb T ^3$, $T > 0$, and \renewcommand{\Omega}{\mathbb T ^3}
    let $u: [0, T) \times \Omega \to \RR3$ be a classical solution to \eqref{eqn:navier-stokes} with possible blowup at terminal time $T$. 
    Let $p, q, p', q'$ satisfy 
    \begin{align*}
        \frac2p + \frac3q = \frac2{p'} + \frac3{q'} &= 1, \qquad
        3 < q \le p < \infty, 2 \le p' \le \infty, 3 \le q' \le \infty.
    \end{align*}
    Then there exists $\gamma = \gamma (p, p') > 0$, such that for any $t \in (0, T)$ there exists $C = C (t, T, p, p')$ such that 
    \begin{align}
        \label{eqn:use-u}
        \nor{u} _{L ^{p'} (t, T; L ^{q'} (\Omega))} + \nor{\grad u} _{L ^{\frac{p'}2} (t, T; L ^{\frac{q'}2} (\Omega))} ^\frac12 &\le C \left(
            \nor{u} _{L ^p (0, T; L ^q (\Omega))} ^\gamma + 1
        \right). 
    \end{align}
    In addition, if $\int _{\Omega} u (0) \dx = 0$, and $3 < q \le \frac{15}4 < 10 \le p < \infty$ or $4 < q \le p < 8$, then
    \begin{align}
        \label{eqn:use-gradu}
        \nor{u} _{L ^{p'} (t, T; L ^{q'} (\Omega))} + \nor{\grad u} _{L ^{\frac{p'}2} (t, T; L ^{\frac{q'}2} (\Omega))} ^\frac12 &\le C \left(
            \nor{\grad u} _{L ^{\frac{p}2} (0, T; L ^{\frac{q}2} (\Omega))} ^\gamma + 1
        \right). 
    \end{align}
\end{proposition}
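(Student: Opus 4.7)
The plan is to adapt the blow-up and $\varepsilon$-regularity framework of Theorem \ref{thm:main} to the LPS critical scaling. First I would introduce an LPS scale function
\[
    r_{\mathrm{LPS}}(t, x) = \sup\set{\rho \le \rstar(t, x) : \nor{u}_{L^p(t - \rho^2, t; L^q(B_\rho(x)))} \le \varepsilon_0},
\]
where $\varepsilon_0 = \varepsilon_0(p, q)$ is the universal threshold in the local $\varepsilon$-regularity version of the LPS criterion. Local regularity then supplies the pointwise bounds $|u(t, x)| \le C/r_{\mathrm{LPS}}$ and $|\nabla u(t, x)| \le C/r_{\mathrm{LPS}}^2$ wherever $r_{\mathrm{LPS}} < \rstar$.

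Next I would extract weak-type measure estimates on the sublevel sets of $r_{\mathrm{LPS}}$. A Vitali covering in space-time at radius $R$ yields $|\set{r_{\mathrm{LPS}} < R}| \lesssim R^5 \varepsilon_0^{-p} \nor{u}_{L^p L^q}^p$, giving $r_{\mathrm{LPS}}^{-1} \in L^{5, \infty}_{t, x}$; a purely spatial Vitali covering at each fixed $t$ yields $|\set{x : r_{\mathrm{LPS}}(t, x) < R}| \lesssim R^3 \varepsilon_0^{-p} \nor{u}_{L^p(t - R^2, t; L^q)}^p$, hence $r_{\mathrm{LPS}}^{-1}(t, \cdot) \in L^{3, \infty}_x$ uniformly in $t$, each with norm polynomial in $\nor{u}_{L^p L^q}$. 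Since $|\nabla u| \lesssim r_{\mathrm{LPS}}^{-2}$ gives $\nor{\nabla u}_{L^{p'/2} L^{q'/2}}^{1/2} \le C \nor{r_{\mathrm{LPS}}^{-1}}_{L^{p'} L^{q'}}$, both terms on the left-hand side of the claim reduce to a single mixed Lorentz norm of $r_{\mathrm{LPS}}^{-1}$.

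I would then interpolate the two endpoint bounds along the critical line $\tfrac{2}{p'} + \tfrac{3}{q'} = 1$ using the mixed-Lorentz interpolation in Appendix \ref{app:lorentz}, obtaining $\gamma = \gamma(p, p')$ as a convex combination of the exponents $p/3$ and $p/5$ coming from the two endpoint bounds. For pairs $(p', q')$ outside this interpolation segment, I combine with the energy-based a priori estimates for $\nabla^n u$ from Corollary \ref{cor:anisotropic}, which also produces the ``$+1$'' baseline on the right-hand side.

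For the gradient-based estimate \eqref{eqn:use-gradu}, I would replace $\nor{u}_{L^p L^q}$ by $\nor{\nabla u}_{L^{p/2} L^{q/2}}$ in the scale definition and apply a Veiga-type local $\varepsilon$-regularity. Recovering the pointwise bound $|u| \lesssim r_{\mathrm{LPS}}^{-1}$ from $|\nabla u| \lesssim r_{\mathrm{LPS}}^{-2}$ then requires Sobolev embedding on cylinders together with the zero-mean hypothesis, and the restrictive ranges $3 < q \le \tfrac{15}{4}$ with $10 \le p$, or $4 < q \le p < 8$, are precisely those where the Sobolev step preserves the critical scaling; elsewhere the recovery would require extra structure. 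The main obstacle is the interpolation step itself: $L^{5, \infty}_{t, x}$ is an isotropic space-time Lorentz space, not a nested mixed Lorentz space, so the usual real interpolation does not apply verbatim — this is presumably the role of Appendix \ref{app:lorentz}, and getting the strong (rather than merely weak) Lorentz norms that appear on the left of the inequality is the finer quantitative point.
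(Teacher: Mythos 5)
Your high-level strategy—define a scale function by thresholding a locally scale-invariant quantity, use local $\e$-regularity to obtain the pointwise bounds $|u| \lesssim r^{-1}$ and $|\grad u| \lesssim r^{-2}$, derive sublevel-set estimates on $r$, and push these through to mixed-norm bounds—is indeed the mechanism the paper uses, and the $\e$-regularity you invoke is essentially what the paper obtains via CKN with pivot density $f_3 = (|u|^3 + |P|^{3/2})/\epsilon_0$ (pressure controlled by $u$ on $\mathbb T^3$). Your Vitali covering arguments for the two endpoint estimates $r^{-1}\in L^{5,\infty}_{t,x}$ and $r^{-1}(t)\in L^{3,\infty}_x$ are also sound, since disjoint cylinders give disjoint spatial balls at each common time slice and $q\le p$ allows the $\ell^q\hookrightarrow\ell^p$ step. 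Where you diverge is that the paper routes everything through the averaging operator $\Aa^<$ and Theorem \ref{thm:avg}, which carries the measure-theoretic bookkeeping in a uniform way and, in particular, already delivers \emph{strong} mixed Lebesgue norms when the pivot sits in $L^{p_1}_t L^{q_1}_x$ with $p_1 > 1$.

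The genuine gap in your plan is exactly the one you flag at the end, and it cannot be patched by Appendix \ref{app:lorentz} alone: interpolating the two \emph{weak}-type endpoint estimates via Lemma \ref{lem:lorentz-interpolation} only produces $L^{p',\infty}_t L^{q',\infty}_x$ bounds, whereas \eqref{eqn:use-u} asserts a strong $L^{p'}_t L^{q'}_x$ bound. To upgrade from weak to strong you need a Marcinkiewicz-type argument, which in turn requires two weak-type inputs at \emph{different integrability exponents of the pivot}; in your setup the LPS exponent $p$ is a single fixed number, so there is no second endpoint to interpolate against. The paper sidesteps this because $f_3 \in L^{p/3}_t L^{q/3}_x$ with $p/3>1$, so it can split $f_3$ at a level set (using the quasiconvexity Lemma \ref{lem:sublin}) and interpolate against $L^\infty$; this is precisely what Theorem \ref{thm:avg} \eqref{enu:p=q>1} and \eqref{enu:Bb} do, and is why the strong norms come out on the left. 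You could repair your argument by defining scale functions for a family of perturbed exponents and interpolating, but that is effectively re-deriving the machinery of Sections \ref{sec:spacetime}--\ref{sec:blowup}. A secondary divergence: for the gradient estimate \eqref{eqn:use-gradu}, the paper does not appeal to a Veiga-type $\e$-regularity; the Sobolev embedding in your sketch is used in the paper's Step 3 to \emph{reduce} $\nor{\grad u}_{L^{p/2}_t L^{q/2}_x}$ to a velocity LPS norm (which is why the mean-zero hypothesis and the ranges $3<q\le\frac{15}4$, $10\le p$ appear), while the remaining range $4<q\le p<8$ is handled separately via the Choi--Vasseur pivot $f_2 = (\mm(\grad u)^2 + |\grad^2 P|)/\bar\eta$ with $\alpha=4$ and the skewed (drift $b=u$) cylinders of Section \ref{sec:blowup}, rather than by inverting Sobolev at the pointwise level as you propose.
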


If any Lady\v zenskaya--Prodi--Serrin norm or Veiga norm blow up at $T ^*$, then $\nor{\grad u} _{L ^p _t L ^q _x}$ also blows up for $p, q$ in the range $2 \le q \le p \le 4$ and $\frac32 < q \le \frac{15}8 < 5 \le p < \infty$, and $\nor{u} _{L ^p _t L ^q _x}$ blows up for $p, q$ in the range $3 < q \le p < \infty$. Of course, all blow-up criteria should blow up at the same time. The purpose of this proposition is to provide a quantitative comparison of blow-up rates among different norms.

This paper is structured as the following. In Section \ref{sec:preliminary}, we introduce the notations and assumptions. We introduce the blow-up technique and the averaging operator in $\Rd$ in Section \ref{sec:space}, the time-dependent case in Section \ref{sec:spacetime}, and we include drift in Section \ref{sec:blowup}. We conclude in Section \ref{sec:proof} with the proofs of the main results. Some technical lemmas and auxiliary results are deferred to Appendix \ref{app:cantor} and \ref{app:lorentz}.

\section{Preliminary}
\label{sec:preliminary}

We will adopt the following assumptions on $\Omega$ and $\Gamma _t$.

\begin{assumption}
    \label{ass:lipschitz}
    Suppose $D \ge 0$, $\Omega \subset \Rd$ is open and nonempty, and $\partial\Omega$ is uniformly Lipschitz. That is, there exist $r _0 > 0, L > 0$ such that for every $y \in \partial \Omega$, $\partial \Omega \cap B _{r _0} (y)$ is an $L$-Lipschitz graph. Alternatively, we could also have the periodic cube $\Omega = \mathbb T ^D$; in this case, we denote $r _0 = 1$ to be the period of the set. 
\end{assumption}

\begin{assumption}
    \label{ass:Gamma}
    Let $\Gamma _t \subset \Omega$ be a time-dependent $L$-Lipschitz graph for every $t \in (0, T)$ with dimension $0 \le d \le D$. 
    More precisely, for every $t \in (0, T)$, up to choosing an orthonormal basis, there is a Lipschitz function $g _t: U _t \subset \R ^{d} \to \R ^{D - d}$ such that $\Gamma _t = \operatorname{Graph} (g _t)$. Note that $\Gamma _t \equiv \Omega$ is a $0$-Lipschitz graph with $d = D$.
\end{assumption}

We use $\meas (A)$ or $|A|$ to denote the Lebesgue measure of a set $A \subset \Rd$ or $\domain$. 
The symbol $\mathscr H ^{d}$ denotes the $d$-dimensional Hausdorff measure. 
Throughout the article, we equip $\Gamma _t$ with the Hausdorff measure $\mu _t = \mathscr H ^{d} \mres \Gamma _t$. 
For any $\mu _t$-measurable function $f$, $0 < q < \infty$, define 
\begin{align*}
    \nor{f} _{L ^q (\Gamma _t)} = \pth{
        \int _{\Gamma _t} |f| ^q \d \mu _t
    } ^\frac1q,
\end{align*}
and $\nor{f} _{L ^\infty (\Gamma _t)}$ denotes the $\mu _t$-$\esssup$ of $|f|$. For $0 < q _1, q _2 < \infty$, define the Lorentz norm 
\begin{align*}
    \nor{f} _{L ^{q _1, q _2} (\Gamma _t)} = \pth{
        q _1 \int _0 ^\infty \mu _t \pth{\set{|f| > \lambda}} ^{\frac{q _2}{q _1}} \lambda ^{q _2 - 1} \d \lambda
    } ^\frac1{q _2}.
\end{align*}
When $q _1 < q _2 = \infty$, define the weak norm 
\begin{align*}
    \nor{f} _{L ^{q _1, \infty} (\Gamma _t)} = \sup _{\lambda > 0} \lambda \mu _t \pth{\set{|f| > \lambda}} ^\frac1{q _1}.
\end{align*}
When $q _1 = \infty$, $L ^{\infty, q _2}$ coincides with $L ^\infty$ norm. When $q _1 = q _2$, $L ^{q _1, q _2}$ coincides with $L ^{q _1}$ norm.

We introduce the measure $\mu _T$ on $\Gamma _T$ by 
\begin{align*}
    \d \mu _T = \d \mu _t \d t.
\end{align*}
That is, for a set $A \subset \Gamma _T$, its $\mu _T$ measure is defined by 
\begin{align*}
    \mu _T (A) = \int _0 ^T \mu _t (A _t) \d t, \qquad A _t = \set{x \in \Gamma _t: (t, x) \in A}.
\end{align*}
For any $\mu _T$-measurable function $f$, $0 < p < \infty$, define 
\begin{align*}
    \nor{f} _{L ^p (\Gamma _T)} = \pth{
        \int _{\Gamma _T} |f| ^p \d \mu _T
    } ^\frac1q,
\end{align*}
and $\nor{f} _{L ^\infty (\Gamma _T)}$ denotes the $\mu _T$-ess sup of $|f|$. For $0 < p, q \le \infty$, define 
\begin{align*}
    \nor{f} _{L ^{p} _t L ^{q} _x (\Gamma _T)} = \begin{cases}
        \pth{
            \int _0 ^T \nor{f (t)} _{L ^q (\Gamma _t)} ^p \d t
        } ^\frac1p & p < \infty \\
        \esssup _{t \in (0, T)} \nor{f (t)} _{L ^q (\Gamma _t)} & p = \infty\;.
    \end{cases}
\end{align*}
The Lorentz norms $L ^{p _1, p _2} (\Gamma _T)$ and $L ^{p _1, p _2} _t L ^{q _1, q _2} _x (\Gamma _T)$ are defined similarly. In particular, the anisotropic weak norm is defined as the following. 

\begin{definition}
    Let $0 < p, q < \infty$. We define the anisotropic (nested) weak Lebesgue space $L ^{p, \infty} _t L ^{q, \infty} _x (\Gamma _T)$ by the space of all measurable functions $f: \Gamma _T \to \R$ such that 
    \begin{align*}
        \nor{f} _{L ^{p, \infty} _t L ^{q, \infty} _x (\Gamma _T)} = \sup _{\lambda > 0} \lambda \abs{\set{t \in (0, T): \nor{f (t)} _{L ^{q, \infty} (\Gamma _t)} > \lambda}} ^\frac1p < \infty. 
    \end{align*}
\end{definition}

In Appendix \ref{app:lorentz}, we see that the anisotropic weak Lebesgue space $L ^{p, \infty} _t L ^{p, \infty} _x (\Gamma _T)$ is not equivalent to the weak Lebesgue space $L ^{p, \infty} _{t, x} (\Gamma _T)$ even in the most simple setting, contrary to the strong Lebesgue spaces.

Similar to the strong norm, we have the following well-known interpolation and convolution theorems for the weak norm. Proofs can be found in \cite{grafakos2008}.

\begin{lemma}
    \label{lem:interpolation}
    Let $0 < p _0, p _1, q _0, q _1, r _0, r _1, s _0, s _1 \le \infty$, and $0 < \theta < 1$. Then we have 
    \begin{align*}
        \nor{f} _{L ^{p _\theta, r _\theta} _t L ^{q _\theta, s _\theta} _x (\Gamma _T)} \le \nor{f} _{L ^{p _0, r _0} _t L ^{q _0, s _0} _x (\Gamma _T)} ^{1 - \theta} \nor{f} _{L ^{p _1, r _1} _t L ^{q _1, s _1} _x (\Gamma _T)} ^{\theta}
    \end{align*}
    where
    \begin{align*}
        p _\theta &= (1 - \theta) p _0 + \theta p _1, &
        q _\theta &= (1 - \theta) q _0 + \theta q _1, 
        \\
        r _\theta &= (1 - \theta) r _0 + \theta r _1, &
        s _\theta &= (1 - \theta) s _0 + \theta s _1. 
    \end{align*}
\end{lemma}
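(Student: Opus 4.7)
The plan is to iterate a one-variable interpolation inequality, first on the spatial fibers $\Gamma_t$ and then in the time variable on $(0, T)$. Fix $t \in (0, T)$. The log-convexity of Lorentz quasinorms on the measure space $(\Gamma_t, \mu_t)$ yields, for $0 < \theta < 1$,
$$\|f(t, \cdot)\|_{L^{q_\theta, s_\theta}(\Gamma_t)} \le C \|f(t, \cdot)\|_{L^{q_0, s_0}(\Gamma_t)}^{1-\theta} \|f(t, \cdot)\|_{L^{q_1, s_1}(\Gamma_t)}^{\theta},$$
where the interpolated indices are determined by the standard reciprocal convex combinations $1/q_\theta = (1-\theta)/q_0 + \theta/q_1$ and $1/s_\theta = (1-\theta)/s_0 + \theta/s_1$ (the convention implicit in the statement). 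This reduces to Hölder's inequality for Lorentz spaces combined with the scaling identity $\|g^\alpha\|_{L^{p, q}} = \|g\|_{L^{\alpha p, \alpha q}}^\alpha$: writing $|f| = |f|^{1-\theta} \cdot |f|^{\theta}$ and applying Hölder with the conjugate Lorentz pairs dictated by the reciprocal formulas gives the claim.

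Setting $\phi_i(t) := \|f(t, \cdot)\|_{L^{q_i, s_i}(\Gamma_t)}$ for $i = 0, 1$, the preceding estimate reads $\|f(t, \cdot)\|_{L^{q_\theta, s_\theta}(\Gamma_t)} \le C\,\phi_0(t)^{1-\theta} \phi_1(t)^{\theta}$ pointwise in $t$. Taking the $L^{p_\theta, r_\theta}(0, T)$ quasinorm in $t$ and invoking monotonicity of Lorentz norms under pointwise domination, it suffices to bound $\|\phi_0^{1-\theta} \phi_1^\theta\|_{L^{p_\theta, r_\theta}(0, T)}$. Repeating exactly the same argument — Hölder for Lorentz spaces plus the power-scaling identity — on the measure space $((0, T), dt)$ produces
$$\|\phi_0^{1-\theta} \phi_1^\theta\|_{L^{p_\theta, r_\theta}(0, T)} \le C' \|\phi_0\|_{L^{p_0, r_0}(0, T)}^{1-\theta} \|\phi_1\|_{L^{p_1, r_1}(0, T)}^{\theta},$$
which is precisely the asserted inequality after recalling the definitions of $\phi_0, \phi_1$.

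The routine point I would verify is measurability of $t \mapsto \phi_i(t)$: pulling back to the parameter domain $U_t \subset \R^d$ through the Lipschitz chart $g_t$ granted by Assumption \ref{ass:Gamma} and using joint measurability of $(t, \lambda) \mapsto \mu_t(\{|f(t, \cdot)| > \lambda\})$ via Fubini, one sees that each $\phi_i$ is measurable in $t$. The only genuine obstacle is that, outside the normable range of Lorentz spaces (for instance when some $p_i$ or $q_i$ is $\le 1$), Hölder's inequality for Lorentz quasinorms carries a multiplicative constant depending only on the indices; this yields a harmless universal factor on the right-hand side that can be absorbed into the constants $C_L$ appearing in Theorems \ref{thm:main} and \ref{thm:main-with-pressure}. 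For the underlying scalar inequalities I would cite the standard treatment of Hölder's inequality for Lorentz spaces and the power-scaling identity in Grafakos.
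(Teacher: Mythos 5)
The paper supplies no proof of this lemma — it is stated with a bare citation to Grafakos — so there is nothing in-paper to compare against; your reconstruction is correct in substance and is the standard two-step argument (interpolate on the fibers, then in time). Two remarks are warranted. First, you are right to read the index formulas reciprocally. Taken literally, $p_\theta = (1-\theta)p_0 + \theta p_1$ makes the claim false: already in the plain $L^p$ case, testing on $f = \mathbf{1}_{[0,a]}$ with $0 < a < 1$ gives $a^{1/p_\theta}$ on the left and $a^{(1-\theta)/p_0 + \theta/p_1}$ on the right, and convexity of $x \mapsto 1/x$ makes the left strictly larger. The only convention under which the lemma holds is $1/p_\theta = (1-\theta)/p_0 + \theta/p_1$ (and likewise for $q, r, s$), which you use and which also matches the index arithmetic in Lemma \ref{lem:lorentz-interpolation} and the proof of Proposition \ref{prop:anisotropic-spacetime}. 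Second, your outer step — bounding the $L^{p_\theta, r_\theta}(0,T)$ quasinorm of the genuine two-function product $\phi_0^{1-\theta}\phi_1^{\theta}$ — does require O'Neil--H\"older for Lorentz quasinorms and hence carries a multiplicative constant; the lemma as printed has none, but as you observe this is immaterial downstream. (One can avoid any constant in the inner fiber step: the paper's quasinorm coincides with the decreasing-rearrangement quantity $\bigl(\int_0^\infty (t^{1/q}f^*(t))^s\,dt/t\bigr)^{1/s}$, and writing $(t^{1/q_\theta}f^*(t))^{s_\theta}$ as the product of the endpoint integrands raised to powers $(1-\theta)s_\theta/s_0$ and $\theta s_\theta/s_1$, then applying scalar H\"older on $(0,\infty)$ with conjugate exponents $s_0/((1-\theta)s_\theta)$ and $s_1/(\theta s_\theta)$, gives constant $1$ there, since only one function is involved.) So the constant you acknowledge is inherited solely from the time step and is harmless.
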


\begin{lemma}
    \label{lem:convolution}
    Let $1 < p, q, r < \infty$ satisfy $\frac1p + \frac1q = \frac1r + 1$, $f \in L ^p (\R)$ and $g \in L ^{q, \infty} (\R)$. Then 
    \begin{align*}
        \nor{f * g} _{L ^r} \le C _{p, q} \nor{f} _{L ^p} \nor{g} _{L ^{q, \infty}}. 
    \end{align*}
    Moreover, in case $f \in L ^1 (\R)$, we have 
    \begin{align*}
        \nor{f * g} _{L ^{q, \infty}} \le C _q \nor{f} _{L ^1} \nor{g} _{L ^{q, \infty}}.
    \end{align*}
\end{lemma}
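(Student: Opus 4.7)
The plan is to handle the two inequalities in sequence: first the weak-type bound (the second inequality), then deduce the strong $(p, r)$ bound (the first inequality) by real interpolation.

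For the weak-type bound, fix $f \in L^1(\R)$ and $\lambda > 0$, and split $g = g_M^\flat + g_M^\sharp$ at level $M := \lambda / (2 \nor{f}_{L^1})$, where $g_M^\flat := g \ind{|g| \le M}$. Since $\nor{g_M^\flat}_{L^\infty} \le M$, Young's inequality gives $\nor{f * g_M^\flat}_{L^\infty} \le \lambda/2$, so $\set{|f * g| > \lambda} \subseteq \set{|f * g_M^\sharp| > \lambda / 2}$. A layer-cake computation combined with the weak-$L^q$ hypothesis on $g$ (which uses $q > 1$ in a crucial way) yields $\nor{g_M^\sharp}_{L^1} \le \frac{q}{q-1} M^{1-q} \nor{g}_{L^{q,\infty}}^q$, after which Chebyshev and the classical $L^1$ Young estimate produce
\begin{align*}
    \abs{\set{|f * g| > \lambda}} \le \frac{2}{\lambda} \nor{f}_{L^1} \nor{g_M^\sharp}_{L^1} \lesssim \nor{f}_{L^1}^{q} \nor{g}_{L^{q,\infty}}^{q} \lambda^{-q},
\end{align*}
which is the desired weak-$L^q$ bound after taking the $\lambda$-supremum.

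For the strong $(p, r)$ bound, I would apply real interpolation between two endpoints of the convolution operator $T_g \colon f \mapsto f * g$. The endpoint $T_g \colon L^1 \to L^{q, \infty}$ has just been established. A second endpoint $T_g \colon L^{q', 1} \to L^\infty$, with the same operator norm $\nor{g}_{L^{q,\infty}}$, follows pointwise from H\"older's inequality in Lorentz spaces: $|(f * g)(x)| \le \nor{f}_{L^{q', 1}} \nor{g(x - \cdot)}_{L^{q, \infty}} = \nor{f}_{L^{q', 1}} \nor{g}_{L^{q, \infty}}$. The hypotheses $1 < p, r < \infty$ together with $\frac1p + \frac1q = \frac1r + 1$ force $1 < p < q'$, placing $p$ strictly between the two endpoint exponents. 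The Hunt--Calder\'on interpolation theorem then delivers $T_g \colon L^{p, p} \to L^{r, p}$, and since the same relation forces $p < r$, the nested-Lorentz embedding $L^{r, p} \hookrightarrow L^{r, r} = L^r$ closes the argument, with the exponent identity falling out of the interpolation formulas automatically.

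The main obstacle is the second endpoint and the careful handling of Lorentz-space interpolation constants: H\"older requires the stronger input norm $L^{q', 1}$ rather than $L^{q'}$, which is why one must interpolate and then embed, rather than apply H\"older directly at the target exponent. A more self-contained alternative avoiding interpolation is to iterate the truncation argument on the layer-cake decomposition $\nor{f * g}_{L^r}^r = r \int_0^\infty \lambda^{r-1} \abs{\set{|f * g| > \lambda}} \, d\lambda$, splitting $g$ at a level $M = c\lambda^{\alpha}$ optimized to balance two applications of classical Young with exponents $q_1 > q > q_2$; this avoids any duality or abstract machinery, but the bookkeeping of the optimization in $\alpha$ is noticeably more tedious.
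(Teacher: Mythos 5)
Your proof is correct. The paper does not actually prove this lemma---it simply cites \cite{grafakos2008}---so there is no in-paper argument to compare against, but your truncation-plus-interpolation route is the standard one (and essentially what Grafakos does): the level-set split of $g$ at $M = \lambda/(2\nor{f}_{L^1})$ combined with the layer-cake bound $\nor{g\ind{|g|>M}}_{L^1} \le \frac{q}{q-1}M^{1-q}\nor{g}_{L^{q,\infty}}^q$ correctly yields the weak-type $(1,q)$ estimate, and the Lorentz--H\"older endpoint $T_g\colon L^{q',1}\to L^\infty$ together with off-diagonal (Hunt--Calder\'on) interpolation and the embedding $L^{r,p}\hookrightarrow L^r$ (valid since the exponent identity forces $1<p<q'$ and $p<r$) closes the strong bound. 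One small remark: for the off-diagonal Marcinkiewicz/Hunt theorem one should check that $p_0\le q_0$ and $p_1\le q_1$ at both endpoints; here $1\le q$ and $q'\le\infty$ hold with room to spare, so the hypothesis is satisfied, but it is worth making explicit.
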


For $(t, x) \in \domain$, define the parabolic distance by
\begin{align*}
    \dist _\P ((t _1, x _1), (t _2, x _2)) = \pth{
        \abs{t _1 - t _2} + \abs{x _1 - x _2} ^2
    } ^\frac12, 
\end{align*}
and denote the parabolic boundary of $\OmegaT$ as 
\begin{align*}
    \partial _\P \OmegaT = \set{0} \times \Omega \cup [0, T) \times \partial \Omega.
\end{align*}
We say a function $f$ is essentially locally bounded at $(t, x)$ if there exist $\delta, M > 0$ such that $|f (s, y)| \le M$ for almost every $(s, y) \in \domain$ with $\dist _\P ((t, x), (s, y)) < \delta$.

Recall the following partial regularity result of Caffarelli, Kohn, and Nirenberg \cite{caffarelli1982}.

\begin{theorem}
    \label{thm:ckn}
    Let $u$ be a suitable weak solution to \eqref{eqn:navier-stokes} in $\OmegaT$. The singular set $\Sing (u)$ denotes the set where $u$ is not essentially locally bounded. Then the one-dimensional parabolic Hausdorff measure $\mathscr P ^1 (\Sing (u)) = 0$.
\end{theorem}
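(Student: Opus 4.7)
The plan is to follow the two-step strategy of Caffarelli--Kohn--Nirenberg, whose pillars are a local $\varepsilon$-regularity lemma and a Vitali-type covering argument based on the finite total energy dissipation.

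For the first pillar, I would aim to prove the following local statement: there exists a universal $\varepsilon_0 > 0$ such that if $(t_0, x_0) \in \OmegaT$ and the parabolic cylinder $Q_r(t_0, x_0) = (t_0 - r^2, t_0) \times B_r(x_0)$ lies inside $\OmegaT$ with
\begin{align*}
    \frac1r \int_{Q_r(t_0, x_0)} |\grad u|^2 \dx \dt \le \varepsilon_0,
\end{align*}
then $u$ is essentially bounded on $Q_{r/2}(t_0, x_0)$, so $(t_0, x_0) \notin \Sing(u)$. By scaling invariance I would reduce to $r = 1$. Testing the local energy inequality
\begin{align*}
    \pt \hfsq u + \div \pth*{u \pth*{\hfsq u + P}} + |\grad u|^2 \le \La \hfsq u
\end{align*}
against a space-time cut-off concentrated near $(t_0, x_0)$ yields a Caccioppoli-type control of $\sup_t \int \hfsq u \phi + \int |\grad u|^2 \phi$ by the excess plus a nonlocal pressure term. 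The pressure would be handled by decomposing $P = P_1 + P_2$ on a slightly larger ball, where $P_1$ is the Newtonian potential of $-\div\div(u \otimes u)$ (controlled by Calder\'on--Zygmund) and $P_2$ is harmonic (controlled by interior mean-value estimates from an outer cylinder). Iterating on a geometric sequence of parabolic cylinders produces decay of the scale-invariant excess $\frac1{\rho^2}\int_{Q_\rho}(|u|^3 + |P|^{3/2})$, and a final bootstrap, either De Giorgi--Moser or Serrin's criterion once $u$ enters a subcritical Lebesgue space, delivers the $L^\infty$ bound.

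For the second pillar, set
\begin{align*}
    E := \set*{(t, x) \in \OmegaT : \limsup_{r \to 0^+} \frac1r \int_{Q_r(t, x)} |\grad u|^2 \dx \dt > \varepsilon_0},
\end{align*}
so that $\Sing(u) \subset E$ by the $\varepsilon$-regularity lemma. Given $\delta > 0$, assign to each point of $E$ a parabolic cylinder $Q_{r_i}(t_i, x_i)$ with $r_i < \delta$ violating the smallness condition, then apply Vitali's covering lemma in the parabolic metric to obtain a disjoint subfamily whose fivefold dilations still cover $E$. This gives
\begin{align*}
    \sum_i r_i \le \frac1{\varepsilon_0} \sum_i \int_{Q_{r_i}(t_i, x_i)} |\grad u|^2 \dx \dt \le \frac1{\varepsilon_0} \int_{\bigcup_i Q_{r_i}} |\grad u|^2 \dx \dt.
\end{align*}
Since the cylinders are disjoint with parabolic diameters bounded by $\delta$, their total Lebesgue measure tends to $0$ as $\delta \to 0$, so absolute continuity of $\int |\grad u|^2$ forces $\sum_i r_i \to 0$, whence $\mathscr P^1(\Sing(u)) = 0$.

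The principal obstacle is the $\varepsilon$-regularity lemma itself, and within it the treatment of the nonlocal pressure: localizing $P$ correctly under only the natural energy bounds on $u$, and closing the iteration without losing scaling, are the delicate points. The covering argument is essentially mechanical once the local criterion is established.
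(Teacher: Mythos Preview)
The paper does not prove this theorem; it is quoted verbatim as the classical result of Caffarelli--Kohn--Nirenberg \cite{caffarelli1982} (see also Lin \cite{lin1998}), and is used only as background. Your outline is a faithful sketch of the original CKN strategy --- an $\varepsilon$-regularity criterion in terms of the scale-invariant dissipation $\frac{1}{r}\int_{Q_r}|\nabla u|^2$, followed by a Vitali covering argument exploiting $|\nabla u|^2 \in L^1$ --- so there is nothing in the paper to compare it against, and your plan is the standard one.
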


The estimates in this paper are over the complement of the singular set, i.e. the regular set $\Reg (u)$.

\vskip1em

We introduce a distance function $\rstar: \OmegaT \to \R$, which characterizes the parabolic distance to the parabolic boundary ($a \wedge b$ stands for $\mins{a,b}$):
\begin{align}
    \label{eqn:defn-rstar}
    \rstar (t, x) &:= \frac{\dist _\P ((t, x), \partial _\P \OmegaT) \wedge r _0}{L + 4} = \frac1{L + 4} \mins{
        \sqrt t, \dist(x, \partial \Omega), r _0
    }.
\end{align}

\section{Blow-up technique and trace estimate: space case}
\label{sec:space}

In the blow-up technique, we deal with an equation with scaling invariance, which contains the following situation. At the unit scale, two quantities $f$ and $g$ are related by the following one-scale, quantitative $\e$-regularity theorem: 

\begin{tcolorbox}
\begin{statement}[Template theorem]
    \label{thm:template}
    There exist two constants $\eta, C > 0$, such that 
    \begin{align*}
        \int _{B _1} f \dx \le \eta \qquad \text{ implies } \qquad \| g \| _{L ^\infty (B _\frac12)} \le C.
    \end{align*}
\end{statement}
\end{tcolorbox}

In the above statement, $f \ge 0$ is called a ``pivot quantity'', and $g$ is called the ``controlled quantity''. For instance, suppose $u: \Omega \to \R$ solves the elliptic equation 
\begin{align*}
    \div (A \grad u) = 0 \qquad \inn \Omega \subset \Rd    
\end{align*}
where $A: \Omega \to \mathcal M _{\text{sym}} ^{D \times D}(\R)$ is a uniformly elliptic matrix with $0 < \lambda \Id \le A \le \Lambda \Id$. Then the first De Giorgi lemma provides Statement \ref{thm:template} with $f = |u| ^2$ and $g = u$. There are also corresponding variants in the time-dependent setting, with ball $B _1$ replaced by cylinder $Q _1 = (-1, 0] \times B _1$. One example is the $\e$-regularity theorem for the Navier--Stokes in \cite{caffarelli1982} with $f = |u| ^3 + |p| ^\frac32$ and $g = u$.

Next, we rescale the above statement to $\e$-scale near $x \in \Omega$, using the scaling of the equation. Below $\alpha \ge 0$ and $\beta$ are two real numbers.

\begin{tcolorbox}
\begin{statement}[Rescaling of the template theorem]
    There exist two universal constants $\eta, C > 0$, such that for any $\e > 0$ sufficiently small, it holds that 
    \begin{align*}
        \fint _{B _\e (x)} f \dx \le \eta \e ^{-\alpha} 
        \qquad \text{ implies } \qquad 
        |g (x)| \le \| g \| _{L ^\infty (B _{\frac \e 2} (x))} \le C \e ^{-\beta}.
    \end{align*}
\end{statement}
\end{tcolorbox}

This can be used to prove the partial regularity of quantity $g$. Over the set at which $g$ is large, $\e$ is small, so we cover it by $5 B _{\e _i} (x _i)$ using a pairwise disjoint family of balls $B _{\e _i} (x _i)$. Since $\int _{B _{\e _i}} f \dx \sim \e _i ^{D - \alpha}$ is summable, the singular set $\{g = \infty\}$ has Hausdorff dimension at most $D - \alpha$. 

Since the singular set has codimension $\alpha$, intuitively it should be possible to take the trace of $g$ on a subset with codimension strictly less than $\alpha$. We will show that this is indeed the case.

\subsection{Scale operator and averaging operator}

In this subsection, we define the scale operator, which finds the threshold of scales below which the $\e$-regularity theorem can be applied. 

\begin{definition}
    \label{def:SASingReg}
    Let $f: \Rd \to [0, \infty]$ be a Lebesgue measurable function. Denote 
    \begin{align*}
        f _\rho (x) := \fint _{B _\rho (x)} f (y) \d y, \qquad 0 < \rho < \infty.
    \end{align*}
    For $\alpha > 0$, we define the \textbf{scale operator} $\Sa [f]: \Rd \to [0, \infty]$ by 
    \begin{align}
        \label{def:E-space}
        \Sa [f] (x) &:= \inf _{0 < \rho < \infty} \set{
            \rho: f _\rho (x) > \rho ^{-\alpha}
        }, \qquad x \in \Rd.
    \end{align}
    We define the \textbf{averaging operator} $\Aa [f]: \Rd \to [0, \infty]$ by 
    \begin{align}
        \label{def:A-space}
        \Aa [f] (x) &:= \Sa [f] (x) ^{-\alpha}, \qquad x \in \Rd.
    \end{align}
    We define the \textbf{singular set} and \textbf{regular set} as 
    \begin{align*}
        \Sing _\alpha (f) = \set{x \in \Rd: \Sa [f] (x) = 0}, \qquad \Reg _\alpha (f) = \set{x \in \Rd: \Sa [f] (x) > 0}.
    \end{align*}
\end{definition}

Note that $B _\rho (x)$ depends continuously on $\rho$ and $x$ in the following sense. We omit the proof since it is a standard exercise.

\begin{lemma}
    \label{lem:inner-outer-regularity}
    Fix $\rho > 0$ and $x \in \Rd$. 
    \begin{enumerate}
        \item (Inner regularity) For any compact subset $K \ssubset B _{\rho} (x)$, $B _\sigma (y)$ is also a superset of $K$ provided $(\sigma, y)$ is sufficiently close to $(\rho, x)$. 
        \item (Outer regularity) For any open superset $O \supset \supset \bar B _{\rho} (x)$, $B _\sigma (y)$ is also a subset of $O$ provided $(\sigma, y)$ is sufficiently close to $(\rho, x)$.
    \end{enumerate}    
\end{lemma}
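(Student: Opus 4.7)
The plan is to reduce both claims to elementary triangle-inequality estimates, with the quantitative margin coming from the positive distance between $K$ (resp.\ $\bar B_\rho(x)$) and the complement of the reference ball (resp.\ of $O$). Both parts are purely geometric, with no measure theory or topology beyond compactness of $K$ and $\bar B_\rho(x)$.

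For inner regularity, since $K$ is compact and contained in the open ball $B_\rho(x)$, the continuous function $z \mapsto |z - x|$ attains its maximum on $K$, so
\[
\delta := \rho - \max_{z \in K} |z - x| > 0.
\]
I would then show that whenever $|\sigma - \rho| < \delta/2$ and $|y - x| < \delta/2$, every $z \in K$ satisfies
\[
|z - y| \le |z - x| + |x - y| < (\rho - \delta) + \tfrac{\delta}{2} < \sigma,
\]
the last inequality using $\sigma > \rho - \delta/2$. This gives $K \subset B_\sigma(y)$.

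For outer regularity, the hypothesis $O \supset\supset \bar B_\rho(x)$ means $\bar B_\rho(x)$ is compactly contained in $O$, so
\[
\delta := \dist\bigl(\bar B_\rho(x), \R^D \setminus O\bigr) > 0
\]
(a positive number, since $\bar B_\rho(x)$ is compact and $\R^D \setminus O$ is closed and disjoint from it). Then for $|\sigma - \rho| < \delta/2$ and $|y - x| < \delta/2$, any $z \in B_\sigma(y)$ satisfies
\[
|z - x| \le |z - y| + |y - x| < \sigma + \tfrac{\delta}{2} < \rho + \delta,
\]
so $z$ lies in the open $\delta$-neighborhood of $\bar B_\rho(x)$, which is contained in $O$ by the definition of $\delta$. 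Hence $B_\sigma(y) \subset O$.

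There is essentially no obstacle here: the only subtlety is remembering that compactness of $K$ (resp.\ of $\bar B_\rho(x)$) is what guarantees the strict positivity of $\delta$, which in turn allows the triangle-inequality estimate to be made uniform in $(\sigma, y)$. In both parts, choosing the neighborhood of $(\rho, x)$ to have radius $\delta/2$ in each coordinate suffices, which justifies the author's remark that the proof is a standard exercise.
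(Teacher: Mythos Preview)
Your proof is correct; the paper itself omits the proof entirely, stating only that it is a standard exercise. Your triangle-inequality argument with the explicit $\delta/2$ neighborhood is exactly the kind of elementary verification the author had in mind.
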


Because $B _\sigma (y)$ depends on $(\sigma, y)$ continuously, we have the following (semi-) continuity on $f _\sigma (y)$.

\begin{lemma}
    \label{lem:continuity-space}
    The map $(\rho, x) \mapsto f _\rho (x)$ is lower semicontinuous in $\R _+ \times \Rd$. Moreover, if $f \in L ^1 _\loc (\domain)$, then this map is continuous in $\R _+ \times \Rd$.
\end{lemma}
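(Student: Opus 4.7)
The plan is to prove the two assertions as a routine application of Lemma \ref{lem:inner-outer-regularity} combined with the monotone and dominated convergence theorems. Fix $(\rho, x) \in \R_+ \times \Rd$ and consider a sequence $(\rho_n, x_n) \to (\rho, x)$. Throughout, I use that $|B_{\rho_n}(x_n)| \to |B_\rho(x)|$ by continuity of Lebesgue measure under translation and scaling.

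For the lower semicontinuity, I would exhaust $B_\rho(x)$ from the inside. Fix a compact set $K \ssubset B_\rho(x)$; by the inner regularity part of Lemma \ref{lem:inner-outer-regularity}, we have $K \subset B_{\rho_n}(x_n)$ for all sufficiently large $n$, so
\[
f_{\rho_n}(x_n) \;=\; \frac{1}{|B_{\rho_n}(x_n)|} \int_{B_{\rho_n}(x_n)} f \;\ge\; \frac{1}{|B_{\rho_n}(x_n)|} \int_K f.
\]
Letting $n \to \infty$ yields $\liminf_n f_{\rho_n}(x_n) \ge \frac{1}{|B_\rho(x)|}\int_K f$. Choosing an increasing exhaustion $K_j \uparrow B_\rho(x)$ (say $K_j = \overline{B_{\rho(1-1/j)}(x)}$), the monotone convergence theorem gives $\int_{K_j} f \uparrow \int_{B_\rho(x)} f$, so $\liminf_n f_{\rho_n}(x_n) \ge f_\rho(x)$. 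Note this argument is unaffected when $f_\rho(x) = \infty$, since then the right-hand side diverges as $j \to \infty$ and forces the liminf to be $\infty$ as well.

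For the continuity under $f \in L^1_\loc$, I would complement the above with upper semicontinuity via dominated convergence. Choose $R > \rho$ large enough that $B_{\rho_n}(x_n) \subset B_R(x)$ for all $n$ large; this is possible since $\rho_n \to \rho$ and $x_n \to x$, and it gives $f \mathbf 1_{B_R(x)} \in L^1(\Rd)$. The indicator functions $\mathbf 1_{B_{\rho_n}(x_n)}$ converge pointwise to $\mathbf 1_{B_\rho(x)}$ off the sphere $\partial B_\rho(x)$, which has Lebesgue measure zero, and they are uniformly bounded by $\mathbf 1_{B_R(x)}$. Dominated convergence then gives
\[
\int_{\Rd} f \, \mathbf 1_{B_{\rho_n}(x_n)} \;\longrightarrow\; \int_{\Rd} f \, \mathbf 1_{B_\rho(x)},
\]
and dividing by $|B_{\rho_n}(x_n)| \to |B_\rho(x)| > 0$ yields $f_{\rho_n}(x_n) \to f_\rho(x)$.

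The only subtle point, and the closest thing to an obstacle, is ensuring the a.e. pointwise convergence of the indicators, which rests on the fact that $\partial B_\rho(x)$ is Lebesgue null; everything else reduces to standard measure-theoretic bookkeeping and the geometric statement already encapsulated in Lemma \ref{lem:inner-outer-regularity}.
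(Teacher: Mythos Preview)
Your proof is correct and follows essentially the same route as the paper: lower semicontinuity via inner regularity (Lemma \ref{lem:inner-outer-regularity}) and compact exhaustion, then upper semicontinuity under local integrability. The only cosmetic difference is that for the second part the paper invokes the outer-regularity half of Lemma \ref{lem:inner-outer-regularity} with shrinking open supersets $O_n \downarrow \bar B_\rho(x)$, while you bypass that and apply dominated convergence to the indicators directly; these are equivalent one-line arguments.
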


\begin{proof}
    Note that 
    \begin{align*}
        f _\rho (x) = \frac1{|B _\rho (x)|} \int _{B _\rho (x)} f,
    \end{align*}
    where $|B _\rho (x)| = \rho ^D |B _1|$ is continuous in $\rho \in \R _+$. To show that the integral is lower semicontinuous, we fix $\rho > 0$ and $x \in \Rd$. For any compact subset $K \ssubset B _{\rho} (x)$,  Lemma \ref{lem:inner-outer-regularity} implies
    \begin{align*}
        \liminf _{(\sigma, y) \to (\rho, x)} f _\sigma (y) = \frac1{|B _{\rho}|} \liminf _{(\sigma, y) \to (\rho, x)} \int _{B _{\sigma} (y)} f \ge \frac1{|B _{\rho}|} \int _K f.
    \end{align*} 
    This is true for any compact subset $K$, so taking a sequence $K _n \uparrow B _{\rho} (x)$ yields $\int _{K _n} f \to \int _{B _\rho (x)} f = |B _\rho| f _\rho (x)$ and completes the proof of lower semicontinuity.

    To show continuity when $f$ is locally integrable, we may take an open superset $O$ of the closure of $B _{\rho} (x)$. Again by Lemma \ref{lem:inner-outer-regularity}, we obtain 
    \begin{align*}
        \limsup _{(\sigma, y) \to (\rho, x)} f _\sigma (y) = \frac1{|B _{\rho}|} \limsup _{(\sigma, y) \to (\rho, x)} \int _{B _{\sigma} (y)} f \le \frac1{|B _{\rho}|} \int _O f.
    \end{align*} 
    This is true for any superset $O$ of the closure of $B _{\rho} (x)$, so by taking a sequence of $O _n \downarrow B _{\rho} (x)$ completes the proof of upper semicontinuity, provided that $f$ is locally integrable.
\end{proof}

Moreover, we can also show the semicontinuity of $\Sa [f]$ and $\Aa [f]$.

\begin{lemma}
    $\Sa [f]$ is upper semicontinuous, and $\Aa [f]$ is lower semicontinuous.
\end{lemma}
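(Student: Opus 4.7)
My plan is to reduce both claims to the joint lower semicontinuity of $(\sigma, y) \mapsto f_\sigma(y)$ provided by Lemma \ref{lem:continuity-space}. Since $\sigma \mapsto -\sigma^{-\alpha}$ is continuous on $(0, \infty)$, the map
\begin{align*}
    \Phi(\sigma, y) := f_\sigma(y) - \sigma^{-\alpha}
\end{align*}
is lower semicontinuous on $(0, \infty) \times \Rd$, so the open sublevel structure of $\{\Phi > 0\}$ will drive the whole argument.

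To establish upper semicontinuity of $\Sa f$ at a point $x_0$, I would fix an arbitrary $\rho > \Sa f(x_0)$ and exhibit a neighborhood $U$ of $x_0$ on which $\Sa f(y) < \rho$. By the definition of infimum in \eqref{def:E-space}, there exists $\rho' \in [\Sa f(x_0), \rho)$ with $f_{\rho'}(x_0) > (\rho')^{-\alpha}$, i.e.\ $\Phi(\rho', x_0) > 0$. Lower semicontinuity of $\Phi$ at $(\rho', x_0)$ yields a neighborhood of $(\rho', x_0)$ on which $\Phi$ remains positive; restricting to the slice $\sigma = \rho'$ produces a neighborhood $U$ of $x_0$ with $f_{\rho'}(y) > (\rho')^{-\alpha}$ for all $y \in U$, and hence $\Sa f(y) \le \rho' < \rho$ on $U$. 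Letting $\rho \downarrow \Sa f(x_0)$ gives $\limsup_{y \to x_0} \Sa f(y) \le \Sa f(x_0)$. The case $\Sa f(x_0) = \infty$ is vacuous; the case $\Sa f(x_0) = 0$ is handled by sending $\rho \downarrow 0$, since the argument still produces $\Sa f(y) \le \rho$ in a neighborhood for each $\rho > 0$.

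For lower semicontinuity of $\Aa f = \Sa(f)^{-\alpha}$, I would simply observe that $t \mapsto t^{-\alpha}$ is a continuous strictly decreasing bijection of $[0, \infty]$ onto itself (with $0^{-\alpha} = \infty$, $\infty^{-\alpha} = 0$). Since $\Sa f$ is upper semicontinuous, the sublevel sets $\{\Sa f < a\}$ are open, so for any $c > 0$,
\begin{align*}
    \set{\Aa f > c} = \set{\Sa f < c^{-1/\alpha}}
\end{align*}
is open, and the cases $c \le 0$ are trivial. This gives the desired lower semicontinuity of $\Aa f$.

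I do not anticipate a serious obstacle: the only delicate point is that the infimum in the definition of $\Sa f(x_0)$ need not be attained, so one cannot test at $\rho = \Sa f(x_0)$ itself; choosing $\rho'$ strictly between $\Sa f(x_0)$ and $\rho$ is precisely what lets the strict inequality $\Phi(\rho', x_0) > 0$ propagate to nearby points via lower semicontinuity.
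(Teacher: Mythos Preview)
Your proposal is correct and follows essentially the same route as the paper's proof: both pick a scale $\rho'$ (the paper calls it $M$) slightly above $\Sa f(x_0)$ at which the strict inequality $f_{\rho'}(x_0) > (\rho')^{-\alpha}$ holds, then use the lower semicontinuity of $f_{\rho'}$ from Lemma~\ref{lem:continuity-space} to push this inequality to a neighborhood, forcing $\Sa f(y) \le \rho'$ there. Your packaging via the auxiliary function $\Phi$ and the level-set argument for $\Aa f$ is a bit more explicit than the paper's one-line deduction, but the substance is identical.
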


\begin{proof}
    Fix $x \in \Rd$, we want to show that 
    \begin{align*}
        \limsup _{y \to x} \Sa [f] (y) \le \Sa [f] (x).
    \end{align*}
    If $\Sa [f] (x) = +\infty$, then there is nothing to prove. If $\Sa [f] (x) < +\infty$, then according to \eqref{def:E-space}, for any $\e > 0$, there exists $M \in [\Sa [f] (x), \Sa [f] (x) + \e]$ such that 
    \begin{align*}
        f _M (x) > M ^{-\alpha}.
    \end{align*}
    By the lower semicontinuity of $f _M$, there exists $\delta > 0$ such that
    \begin{align*}
        f _M (y) > M ^{-\alpha}, \qquad \forall y \in B _\delta (x).
    \end{align*}
    By definition, $\Sa [f] (y) \le M \le \Sa [f] (x) + \e$. This shows 
    \begin{align*}
        \limsup _{y \to x} \Sa [f] (y) \le \Sa [f] (x) + \e    
    \end{align*}
    for any $\e > 0$, thus $\Sa [f]$ is upper semicontinuous, and correspondingly $\Aa [f] = (\Sa [f]) ^{-\alpha}$ is lower semicontinuous.
\end{proof}

Now we can justify the name ``averaging operator'': $\Aa [f]$ is indeed the average over a ball.

\begin{lemma}
    \label{lem:average-name}
    Suppose $f \in L ^1 _{\loc} (\Rd)$, $\alpha > 0$, $x \in \Rd$. Denote $s = \Sa [f] (x)$. If $0 < s < \infty$, then $\Aa [f] (x) = f _s (x) = \fint _{B _s (x)} f$.
\end{lemma}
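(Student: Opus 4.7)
The plan is to establish the two-sided inequality $f_s(x) \le s^{-\alpha} \le f_s(x)$ by analyzing the infimum defining $\Sa(f)(x)$ from both sides, exploiting the full continuity of $\rho \mapsto f_\rho(x)$ guaranteed by Lemma \ref{lem:continuity-space} under the hypothesis $f \in L^1_{\loc}(\Rd)$. Once both inequalities are in hand, the conclusion is immediate from $\Aa(f)(x) = s^{-\alpha}$.

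For the upper bound, I would read off from the definition \eqref{def:E-space} that if $0 < \rho < s$, then $\rho$ is not a member of the set $\set{\sigma > 0 : f_\sigma(x) > \sigma^{-\alpha}}$, since $s$ is its infimum. Hence $f_\rho(x) \le \rho^{-\alpha}$ for every such $\rho$. Letting $\rho \uparrow s$ and using continuity of both $\rho \mapsto f_\rho(x)$ (Lemma \ref{lem:continuity-space}, using $f \in L^1_{\loc}$) and $\rho \mapsto \rho^{-\alpha}$ at the positive, finite value $s$, I obtain $f_s(x) \le s^{-\alpha}$.

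For the matching lower bound, since $s < \infty$ the infimum is finite, so there exists a sequence $\rho_n \downarrow s$ with $\rho_n > s$ and $f_{\rho_n}(x) > \rho_n^{-\alpha}$. Passing to the limit $n \to \infty$ and invoking continuity again in both $\rho_n \to s$ and $\rho_n^{-\alpha} \to s^{-\alpha}$ yields $f_s(x) \ge s^{-\alpha}$. Combining the two bounds gives $f_s(x) = s^{-\alpha} = \Aa(f)(x)$.

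I do not anticipate any real obstacle here: the entire content of the lemma is that the strict inequality threshold in the infimum becomes an equality at the endpoint $s$, which is a routine consequence of continuity. The only subtle point is that lower semicontinuity of $\rho \mapsto f_\rho(x)$, which holds for arbitrary measurable $f \ge 0$, would only suffice for one of the two inequalities; the hypothesis $f \in L^1_{\loc}$ is precisely what upgrades this to full continuity and thereby secures both directions simultaneously.
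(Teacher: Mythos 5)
Your proof is correct and follows essentially the same route as the paper: both directions hinge on the (semi)continuity of $\rho\mapsto f_\rho(x)$ together with the definition of the infimum, yielding $f_s(x)=s^{-\alpha}$. The only cosmetic difference is that the paper deduces the upper bound from lower semicontinuity alone (which does not need $f\in L^1_{\loc}$) and phrases the lower bound as a contradiction argument rather than extracting an explicit sequence $\rho_n\downarrow s$ in the set $\{\rho:f_\rho(x)>\rho^{-\alpha}\}$; your sequence version implicitly uses that this set is open (hence excludes its infimum), which is exactly what the $L^1_{\loc}$-continuity supplies.
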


\begin{proof}
    On the one hand, from the definition \eqref{def:E-space}, we have $f _\rho (x) \le \rho ^{-\alpha}$ for any $\rho < s$. By the lower semicontinuity of $f _\rho (x)$ in $\rho$, we have $f _s (x) \le s ^{-\alpha} = \Aa [f] (x)$. On the other hand, if $f _s (x) < s ^{-\alpha}$ then the continuity of $f _\rho (x)$ and $\rho ^{-\alpha}$ in $\rho$ would imply that $f _\rho (x) < \rho ^{-\alpha}$ for $\rho$ sufficiently close to $s$, violating definition \eqref{def:E-space}. Therefore $f _s (x) = s ^{-\alpha} = \Aa [f] (x)$.
\end{proof}

Notice that $\Aa$ is a nonlinear operator, where $\alpha > 0$ is related to nonlinearity. The following property is a simple consequence of Jensen inequality.

\begin{lemma}
    \label{lem:nonlinear-scaling}
    Let $f: \Rd \to [0, \infty]$ be a Lebesgue measurable function. Let $p \in [1, \infty)$, then 
    \begin{align*}
        \Sa [f] \ge \mathscr S _{p\alpha} [f ^p], \qquad \Aa [f] \le \A _{p\alpha} [f ^p]
        ^\frac1p.
    \end{align*}
\end{lemma}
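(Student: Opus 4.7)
The plan is to reduce both inequalities to a single application of Jensen's inequality on the ball averages. The key observation is that for $p \ge 1$, the map $t \mapsto t^p$ is convex on $[0, \infty)$, so Jensen's inequality applied to the probability measure $\fint_{B_\rho(x)}$ gives
\begin{align*}
    f_\rho(x)^p = \pth{\fint_{B_\rho(x)} f \dy}^p \le \fint_{B_\rho(x)} f^p \dy = (f^p)_\rho(x).
\end{align*}

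With this in hand, I would prove the scale inequality $\Sa(f)(x) \ge \mathscr{S}_{p\alpha}(f^p)(x)$ via a set-inclusion argument. Fix $x \in \Rd$ and suppose $\rho > 0$ satisfies $f_\rho(x) > \rho^{-\alpha}$. Raising to the $p$-th power preserves the inequality, so $f_\rho(x)^p > \rho^{-p\alpha}$, and Jensen's inequality above yields $(f^p)_\rho(x) > \rho^{-p\alpha}$. Thus
\begin{align*}
    \set{\rho > 0 : f_\rho(x) > \rho^{-\alpha}} \subseteq \set{\rho > 0 : (f^p)_\rho(x) > \rho^{-p\alpha}},
\end{align*}
and taking the infimum of each side (which reverses the inequality, since the infimum of a subset is at least the infimum of the superset) gives $\Sa(f)(x) \ge \mathscr{S}_{p\alpha}(f^p)(x)$.

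For the averaging operator inequality, I just unpack the definitions. Using $\Aa(f) = \Sa(f)^{-\alpha}$ and $\A_{p\alpha}(f^p) = \mathscr{S}_{p\alpha}(f^p)^{-p\alpha}$, the scale inequality together with monotonicity of $s \mapsto s^{-\alpha}$ on $(0, \infty]$ yields
\begin{align*}
    \Aa(f)(x) = \Sa(f)(x)^{-\alpha} \le \mathscr{S}_{p\alpha}(f^p)(x)^{-\alpha} = \pth{\mathscr{S}_{p\alpha}(f^p)(x)^{-p\alpha}}^\frac1p = \A_{p\alpha}(f^p)(x)^\frac1p.
\end{align*}

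There is no real obstacle here; the only thing to be a bit careful about is the edge cases where $\Sa(f)(x) \in \set{0, \infty}$. If $\Sa(f)(x) = 0$, the set inclusion forces $\mathscr{S}_{p\alpha}(f^p)(x) = 0$ as well, and both averaging operators take the value $+\infty$ (interpreted via the standard conventions $0^{-\alpha} = \infty$ and $\infty^{1/p} = \infty$), so the inequalities are trivial. If $\Sa(f)(x) = \infty$, the scale inequality is trivially satisfied and $\Aa(f)(x) = 0$, again making the averaging inequality trivial. Hence the argument is short enough to present without separating cases in the main proof.
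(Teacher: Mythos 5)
Your proof is correct and follows the same argument as the paper: Jensen's inequality on ball averages gives the set inclusion between the two sets of admissible radii, and taking infima yields the scale-operator inequality, from which the averaging-operator inequality follows by monotonicity of $s \mapsto s^{-\alpha}$. Your added discussion of the edge cases $\Sa(f)(x) \in \{0, \infty\}$ is a harmless bonus the paper omits.
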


\begin{proof}
    Fix $x \in \Rd$, $\rho \in (0, \infty)$. If $f _\rho (x) > \rho ^{-\alpha}$ then 
    \begin{align*}
        \fint _{B _\rho (x)} f ^p \ge \pth{
            \fint _{B _\rho (x)} f
        } ^p > \rho ^{-p\alpha}.
    \end{align*}
    Hence 
    \begin{align*}
        \set{\rho \in (0, \infty): f _\rho (x) > \rho ^{-\alpha}} \subset \set{\rho \in (0, \infty): (f ^p) _\rho (x) > \rho ^{-p\alpha}}.
    \end{align*}
    Thus the infimum of the left is no less than the infimum of the right, which by definition \eqref{def:E-space} says $\Sa [f] (x) \ge \mathscr S _{p\alpha} [f ^p] (x)$. The second inequality is a direct consequence from the definition \eqref{def:A-space}.
\end{proof}

We conclude this subsection by showing $\Aa$ is quasiconvex. 

\begin{lemma}\label{lem:sublin-space}
    Fix $\alpha > 0$, for any $f, g \in  L ^1 _{\loc} (\Rd)$, $\lambda \in [0, 1]$, it holds that
    \begin{align*}
        \Aa \bkt{(1 - \lambda) f + \lambda g} (x) \le \maxs{
            \Aa [f] (x), \Aa (g) (x)
        }, \qquad \forall x \in \Rd.
    \end{align*}
\end{lemma}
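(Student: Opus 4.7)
The plan is to reduce the desired quasiconvexity of $\Aa$ to a quasiconcavity statement for $\Sa$. Since $\Aa (f)(x) = \Sa (f)(x) ^{-\alpha}$ and $r \mapsto r ^{-\alpha}$ is strictly decreasing on $(0, \infty]$ (with the conventions $0 ^{-\alpha} = +\infty$ and $\infty ^{-\alpha} = 0$), the inequality we want is equivalent to
\begin{align*}
    \Sa \pth{(1 - \lambda) f + \lambda g} (x) \ge \min \set{\Sa (f)(x), \Sa (g)(x)}.
\end{align*}
Set $s := \min \set{\Sa (f)(x), \Sa (g)(x)}$. The engine of the proof is that the ball-average operator $h \mapsto h _\rho (x)$ is linear in $h$, which is why the Jensen-style argument of Lemma \ref{lem:nonlinear-scaling} is replaced here by a much softer manipulation.

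Fix any $\rho < s$. Since $\rho < \Sa (f)(x)$ and the definition \eqref{def:E-space} expresses $\Sa (f)(x)$ as an infimum, $\rho$ cannot lie in the set $\set{\sigma : f _\sigma (x) > \sigma ^{-\alpha}}$, so $f _\rho (x) \le \rho ^{-\alpha}$. The same reasoning applied to $g$ yields $g _\rho (x) \le \rho ^{-\alpha}$. By linearity of the averaging and the fact that $(1 - \lambda) + \lambda = 1$,
\begin{align*}
    \pth{(1 - \lambda) f + \lambda g} _\rho (x) = (1 - \lambda) f _\rho (x) + \lambda g _\rho (x) \le \rho ^{-\alpha},
\end{align*}
so $\rho$ is not in the set defining $\Sa \pth{(1 - \lambda) f + \lambda g}(x)$ either. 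Since this holds for every $\rho < s$, taking the infimum over such $\rho$ gives $\Sa \pth{(1 - \lambda) f + \lambda g}(x) \ge s$, which is exactly the reduced inequality.

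It remains to dispose of the degenerate cases, which I would handle briefly at the start. If $s = 0$, then at least one of $\Aa (f)(x), \Aa (g)(x)$ equals $+\infty$, so the claim is trivial; if $s = +\infty$, then $f _\rho (x), g _\rho (x) \le \rho ^{-\alpha}$ for every $\rho > 0$, and the same linear combination shows $\Sa \pth{(1 - \lambda) f + \lambda g}(x) = +\infty$ as well, hence $\Aa \pth{(1 - \lambda) f + \lambda g}(x) = 0$. There is no genuine obstacle in this lemma: the content is purely the combination of linearity of ball averages with the monotone bijection between $\Sa$ and $\Aa$. The only point that warrants care is the passage from the strict inequality in $\set{\sigma : f _\sigma > \sigma ^{-\alpha}}$ to the non-strict inequality $f _\rho (x) \le \rho ^{-\alpha}$ for $\rho$ strictly below $\Sa (f)(x)$, which holds precisely because $\Sa (f)(x)$ is the infimum of this set.
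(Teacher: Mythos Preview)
Your proof is correct and uses essentially the same idea as the paper: both exploit the linearity of $h \mapsto h_\rho(x)$ to compare the convex combination with $\rho^{-\alpha}$. The paper argues via a sequence of ``bad'' scales $\rho_n$ for $h$ and shows one of $f,g$ must also be bad there, whereas you argue directly that every scale $\rho < s$ is ``good'' for $h$; this contrapositive presentation is slightly cleaner since it avoids the subsequence extraction, but the content is the same.
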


\begin{proof}
    Denote $h = (1 - \lambda) f + \lambda g$, and take $x \in \Rd$. If $\Sa [h] (x) = \infty$, then $\Aa [h] (x) = 0$ and there is nothing to prove. Otherwise, there exists a sequence of $\rho _n > \Sa [h] (x)$ and $\rho _n \to \Sa [h] (x)$ such that 
        \begin{align*}
            \rho _n ^{-\alpha} < h _{\rho _n} (x) = (1 - \lambda) f _{\rho _n} (x) + \lambda g _{\rho _n} (x).
        \end{align*}
        Therefore, at least one of $f _{\rho _n} (x)$ and $g _{\rho _n} (x)$ is greater than $\rho _n ^{-\alpha}$. Suppose, up to a subsequence, that $f _{\rho _n} (x) > \rho _n ^{-\alpha}$. Then $\Sa [f] (x) \le \rho _n$ for any $n$, and taking $n \to \infty$ we find $\Sa [f] (x) \le \Sa [h] (x)$. Consequently 
        \begin{align*}
            \Aa [h] (x) = \Sa [h] (x) ^{-\alpha} \le \Sa [f] (x) ^{-\alpha} = \Aa [f] (x).
        \end{align*}
        The case $g _{\rho _n} (x) > \rho _n ^{-\alpha}$ would result in $\Aa [h] (x) \le \Aa [g] (x)$, so we conclude $\Aa [h]$ is smaller than the max of $\Aa [f]$ and $\Aa [g]$.
\end{proof}

\subsection{Partial regularity and trace estimate}

Recall in Definition \ref{def:SASingReg}, we decomposed $\Rd = \Sing _\alpha (f) \cup \Reg _\alpha (f)$. First, we show that for a locally $L ^p$ function, the singular set has codimension at least $p \alpha$.

\begin{proposition}
    \label{prop:partial-regularity}
    Suppose $f \in L ^p _{\loc} (\Rd)$ is a nonnegative function with $p \in [1, \infty)$. If\; $0 < p \alpha < D$ then the $(D - p \alpha)$-dimensional Hausdorff measure of $\Sing _\alpha (f)$ is zero. If $p \alpha \ge D$ then $\Sing _\alpha (f)$ is empty.
\end{proposition}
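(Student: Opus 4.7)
The plan is to first reduce to the case $p = 1$, then run a Vitali covering argument on the singular set to bound its Hausdorff premeasure by an integral that can be made arbitrarily small using the absolute continuity of the Lebesgue integral.

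\textbf{Reduction to $p=1$.} By Lemma \ref{lem:nonlinear-scaling}, $\Sa(f) \ge \mathscr S_{p\alpha}(f^p)$, so $\Sing_\alpha(f) \subset \Sing_{p\alpha}(f^p)$. Since $f^p \in L^1_{\loc}(\Rd)$, it suffices to prove the proposition when $p = 1$ (the threshold $p\alpha$ vs.\ $D$ is unchanged). From here on assume $f \in L^1_{\loc}(\Rd)$.

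\textbf{Lebesgue measure of the singular set.} The Lebesgue differentiation theorem gives $f_\rho(x) \to f(x) < \infty$ for a.e.\ $x \in \Rd$. At any such Lebesgue point the condition $f_\rho(x) > \rho^{-\alpha}$ fails for all sufficiently small $\rho$, so $x \in \Reg_\alpha(f)$. Hence $|\Sing_\alpha(f)| = 0$.

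\textbf{Covering argument for $0 < \alpha < D$.} Fix $R > 0$ and $\delta \in (0, 1)$. For each $x \in \Sing_\alpha(f) \cap B_R$ there is $\rho_x \in (0, \delta)$ with $f_{\rho_x}(x) > \rho_x^{-\alpha}$, i.e.,
\[
\int_{B_{\rho_x}(x)} f \,\dy \;>\; |B_1|\, \rho_x^{D - \alpha}.
\]
By Vitali's covering lemma extract a countable disjoint subfamily $\{B_{\rho_i}(x_i)\}$ whose $5$-fold enlargements cover $\Sing_\alpha(f) \cap B_R$. Writing $U_\delta = \bigcup_i B_{\rho_i}(x_i)$, the disjointness yields
\[
\sum_i (10 \rho_i)^{D - \alpha} \;\le\; \frac{10^{D-\alpha}}{|B_1|} \sum_i \int_{B_{\rho_i}(x_i)} f \,\dy
\;=\; \frac{10^{D-\alpha}}{|B_1|} \int_{U_\delta} f \,\dy.
\]
This bounds the Hausdorff $(D-\alpha)$-premeasure at scale $10\delta$ of $\Sing_\alpha(f) \cap B_R$ up to a dimensional constant. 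Because $U_\delta$ is contained in the $\delta$-neighborhood of $\Sing_\alpha(f) \cap B_R$, and $|\Sing_\alpha(f) \cap B_R| = 0$, we have $|U_\delta| \to 0$ as $\delta \to 0$. Absolute continuity of the integral of $f \in L^1(B_{R+1})$ then gives $\int_{U_\delta} f \to 0$. Letting $\delta \to 0$ we conclude $\mathscr H^{D-\alpha}(\Sing_\alpha(f) \cap B_R) = 0$, and taking $R \to \infty$ finishes this case.

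\textbf{The case $\alpha \ge D$.} Here if $x \in \Sing_\alpha(f)$ we could choose $\rho_x \in (0, 1)$ arbitrarily small with $\int_{B_{\rho_x}(x)} f > |B_1| \rho_x^{D-\alpha} \ge |B_1|$, contradicting $\int_{B_\rho(x)} f \to 0$ as $\rho \to 0$ (again absolute continuity of the integral, valid since $f$ is locally $L^1$). Hence $\Sing_\alpha(f) = \emptyset$.

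The only mildly subtle point is the last limit $\int_{U_\delta} f \to 0$: it relies on the geometric fact that $U_\delta$ sits in a shrinking neighborhood of a Lebesgue-null set, combined with absolute continuity of the integral. Everything else is routine Vitali covering plus the scaling built into Definition \ref{def:SASingReg}.
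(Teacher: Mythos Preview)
Your overall strategy matches the paper's: reduce to $p=1$ via Lemma~\ref{lem:nonlinear-scaling}, then run a Vitali covering and use absolute continuity of the integral. The case $\alpha \ge D$ is handled correctly. However, there is a genuine gap in your justification that $|U_\delta| \to 0$ in the case $0<\alpha<D$.

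You argue that $U_\delta$ lies in the $\delta$-neighborhood of the null set $\Sing_\alpha(f)\cap B_R$ and conclude $|U_\delta|\to 0$. But the $\delta$-neighborhood of a Lebesgue-null set need not have measure tending to zero: take the rationals in $[0,1]$, whose $\delta$-neighborhood has full measure for every $\delta>0$. What you would actually need is $|\overline{\Sing_\alpha(f)\cap B_R}|=0$, and upper semicontinuity of $\Sa f$ does not force the singular set to be closed (indeed, placing suitable power singularities at a dense countable set shows it can fail).

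The fix is immediate and is exactly what the paper does: bound $|U_\delta|$ directly from the Vitali estimate you already have. Since the balls are disjoint with $\rho_i<\delta$,
\[
|U_\delta| \;=\; |B_1|\sum_i \rho_i^{D} \;\le\; |B_1|\,\delta^{\alpha}\sum_i \rho_i^{D-\alpha}
\;\le\; \delta^{\alpha}\int_{B_{R+1}} f\,\dy \;\xrightarrow[\delta\to 0]{}\;0.
\]
With this correction your argument is complete and essentially identical to the paper's; the separate appeal to Lebesgue differentiation then becomes unnecessary.
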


\begin{proof}
    We first prove it for $p = 1$.

    Define $\beta = \maxs{D - \alpha, 0}$.
    Let us assume $f \in L ^1 (\Rd)$ first. We will later see that local integrability is sufficient.
    We first fix a small number $\delta \in (0, 1)$. For any $x \in \Sing _\alpha (f)$, $\Sa [f] (x) = 0$. By \eqref{def:E-space}, there exists $\rho _x < \frac15 \delta$ such that 
    \begin{align*}
        \fint _{B _{\rho _x} (x)} f \dx > \rho _x ^{-\alpha} \qquad \implies \qquad \frac1{|B _1|}\int _{B _{\rho _x} (x)} f \d x > \rho _x ^{D - \alpha} \ge \rho _x ^\beta.
    \end{align*}
    The last inequality holds when $D - \alpha < 0$ because $\rho _x < 1$. 
    Now $\set{B _{\rho _x} (x)} _{x \in \Sing _\alpha (f)}$ forms an open cover of $\Sing _\alpha (f)$. Using Vitali covering lemma, we can select an at most countable disjoint subfamily $\set{B _{\rho _i} (x _i)} _{i}$ such that
    \begin{align*}
        \Sing _\alpha (f) \subset \bigcup _{i} B _{5 \rho _i} (x _i).
    \end{align*}
    Since $5 \rho _i < \delta$, we can bound 
    \begin{align*}
        \mathscr H _\delta ^{\beta} (\Sing _\alpha (f)) &:= \inf \set{
            \sum _j r _j ^{\beta}: \Sing _\alpha (f) \subset \bigcup _{j} B _{r _j} (x _j)
        } \\
        & \le \sum _i (5 \rho _i) ^{\beta} \\
        & \le C \sum _i \int _{B _{\rho _i} (x _i)} f \dx \\
        & = C \int _{\bigcup _i B _{\rho _i} (x _i)} f \dx \\
        & \le C \int _{\Rd} f \dx.
    \end{align*}
    It is finite, so 
    \begin{align*}
        \abs{\bigcup _i B _{\rho _i} (x _i)} = |B _1| \sum _i \rho _i ^D \le C \delta ^{D - \beta} \sum _i \rho _i ^\beta \le C \delta ^{D - \beta} \int _{\Rd} f \dx \to 0
    \end{align*}
    as $\delta \to 0$. Therefore 
    \begin{align*}
        \mathscr H ^{\beta} (\Sing _\alpha (f)) = \lim _{\delta \to 0} \mathscr H _\delta ^{\beta} (\Sing _\alpha (f)) \le \lim _{\delta \to 0} C \int _{\bigcup _i B _{\rho _i} (x _i)} f \dx = 0.
    \end{align*}
    In particular, when $\alpha \ge D$ and $\beta = 0$, we have $\Sing _\alpha (f) = \varnothing$.

    If $f$ is merely locally integrable but not $L ^1 (\Rd)$, we can pick any compact set $K \subset \Rd$ and apply the above argument to prove $\mathscr H ^{\beta} (\Sing _\alpha (f) \cap K) = 0$. Since $K$ is arbitrary, we conclude $\mathscr H ^{\beta} (\Sing _\alpha (f)) = 0$.

    For the general case $p > 1$, we apply the above argument to $\tilde f = f ^p \in L ^1 _\loc$ and $\tilde \alpha = p \alpha > 0$. If $0 < \tilde \alpha < D$, then the $(D - \tilde \alpha)$-dimensional Hausdorff measure of $\Sing _{\tilde \alpha} (\tilde f)$ is zero. If $\tilde \alpha \ge D$ then $\Sing _{\tilde \alpha} (\tilde f) = \varnothing$. By Lemma \ref{lem:nonlinear-scaling}, we know that 
    \begin{align*}
        \Sa [f] \ge \mathscr S _{\tilde \alpha} [\tilde f] \implies \Sing _\alpha (f) \subset \Sing _{\tilde \alpha} (\tilde f).
    \end{align*}
    This completes the proof.
\end{proof}

Assume $\Gamma \subset \Rd$ is a $d$-dimensional Lipschitz graph.
To establish trace estimates, we need to measure the level sets of $\Sa [f]$. 

\begin{lemma}
    \label{lem:ap}
    Let $\alpha > 0$, and $f \in L ^1 (\Rd)$ is nonnegative. Denote 
    \begin{align*}
        A (\rho) &:= \set{
            x' \in \Gamma:
            \rho \le \Sa [f] (x') < 2 \rho
        }, \qquad \rho > 0.
    \end{align*}
    Then 
    \begin{align*}
        \mathscr H ^d \pth{A (\rho)} \le C (D, d, L) \rho ^{-D + d + \alpha} \intRd f \dx.
    \end{align*}
\end{lemma}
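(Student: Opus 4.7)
The plan is to use a covering argument combined with the lower bound on $f$-mass in balls of scale comparable to $\rho$ that $A(\rho)$ forces.

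First I would extract a natural ball for each point of $A(\rho)$. Fix $x' \in A(\rho)$, and let $\sigma_{x'} = \Sa f(x') \in [\rho, 2\rho)$. By Lemma \ref{lem:average-name}, $f_{\sigma_{x'}}(x') = \sigma_{x'}^{-\alpha}$, so multiplying by $|B_{\sigma_{x'}}(x')| = |B_1|\sigma_{x'}^D$ gives
\begin{align*}
    \int_{B_{\sigma_{x'}}(x')} f \dx = |B_1|\, \sigma_{x'}^{D - \alpha} \ge c(D)\, \rho^{D - \alpha},
\end{align*}
using $\sigma_{x'} \in [\rho, 2\rho)$ and $\alpha > 0$ to convert to a uniform $\rho^{D-\alpha}$ lower bound.

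Next I would apply the Vitali covering lemma to the family $\{B_{\sigma_{x'}}(x')\}_{x' \in A(\rho)}$, extracting an at most countable disjoint subfamily $\{B_{\sigma_i}(x_i')\}_i$ with $A(\rho) \subset \bigcup_i B_{5\sigma_i}(x_i')$. Because the chosen balls are pairwise disjoint and each carries mass at least $c\,\rho^{D-\alpha}$, we obtain a cardinality bound
\begin{align*}
    c(D)\, \rho^{D - \alpha} \cdot \#\{i\} \le \sum_i \int_{B_{\sigma_i}(x_i')} f \dx \le \intRd f \dx,
\end{align*}
hence $\#\{i\} \le C(D)\, \rho^{-D+\alpha} \int f \dx$.

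Finally, I would invoke the Lipschitz graph structure of $\Gamma$: since each $x_i' \in \Gamma$ and $5\sigma_i < 10\rho$, projecting the intersection $\Gamma \cap B_{5\sigma_i}(x_i')$ onto the base $\R^d$ and using the $L$-Lipschitz bound on the graph $g$ gives $\mathscr H^d(\Gamma \cap B_{5\sigma_i}(x_i')) \le C(d,L)\, \rho^d$. Summing over $i$ yields
\begin{align*}
    \mathscr H^d(A(\rho)) \le \sum_i \mathscr H^d(\Gamma \cap B_{5\sigma_i}(x_i')) \le C(D,d,L)\, \rho^{-D + d + \alpha} \intRd f \dx,
\end{align*}
which is the claim. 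The only delicate step is the Hausdorff measure bound on a single ball intersected with $\Gamma$, but this follows directly from the area formula (or just the $1$-Lipschitz projection argument) applied to the graph of $g_t$, and the constant depends only on $d$ and $L$.
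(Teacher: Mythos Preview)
Your proof is correct and follows the same overall strategy as the paper---Vitali covering plus the Lipschitz graph structure---but organizes the argument more directly. The paper first passes to the $D$-dimensional tubular neighborhood $\mathcal U_\rho(A(\rho))$ via a measure-preserving parametrization $\phi(x',z) = (x', g(x')+z)$, bounds $\mathscr L^D(\mathcal U_\rho(A(\rho)))$ by covering it with the Vitali balls, and then converts back to $\mathscr H^d(A(\rho))$ through the relation $\mathscr L^d(U(\rho)) \le \mathscr L^D(\mathcal U_\rho(A(\rho)))/(c_{D-d}\rho^{D-d})$. You instead cover $A(\rho)$ itself, count the Vitali balls via their $f$-mass, and bound $\mathscr H^d(\Gamma \cap B_{5\sigma_i}(x_i'))$ one ball at a time using only that $\Gamma$ is an $L$-Lipschitz graph. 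Your route is shorter and avoids the auxiliary map $\phi$; the paper's tubular-neighborhood detour, however, is what gets reused in the later spacetime and drift versions (Lemmas~\ref{lem:atp-spacetime} and~\ref{lem:atp-drift}), where the covering happens in $\R^D$ rather than on $\Gamma_t$ and the disjointness of the spacetime cylinders is the key issue.

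One small remark: your constant $c(D)$ in the lower bound $\int_{B_{\sigma_{x'}}} f \ge c\,\rho^{D-\alpha}$ actually depends on $\alpha$ as well when $D-\alpha<0$ (through the factor $2^{D-\alpha}$ from $\sigma_{x'}<2\rho$). The paper's proof has the same hidden $\alpha$-dependence via $\rho_i^\alpha \le (2\rho)^\alpha$, and the downstream Theorem~\ref{thm:avg-space} explicitly allows $C = C(\alpha,D,d,L)$, so this is harmless.
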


\begin{proof}
    Recall that up to choosing a coordinate, $\Gamma$ is the graph of some $L$-Lipschitz function $g: U \subset \RR d \to \RR {D - d}$. Define $\phi: U \times \RR{D - d} \to \R ^D$ by 
    \begin{align*}
        \phi (x', z) = (x', g (x') + z), \qquad x' \in U, z \in \RR{D - d}.
    \end{align*}
    then 
    \begin{align*}
        A (\rho) &= \phi (U (\rho) \times \set0), \text{ for some } U (\rho) \subset U.
    \end{align*}
    Since $g$ is Lipschitz, the Hausdorff measure is bounded by 
    \begin{align}
        \label{eqn:Arho1}
        \mathscr H ^d (A (\rho)) &\le \nor{\phi} _{\Lip} ^{d} \mathscr L ^{d} \pth{U (\rho)} \le (1 + \nor{g} _{\Lip} ^2) ^{\frac{d}2} \mathscr L ^{d} \pth{U (\rho)}, 
    \end{align}
    where $\mathscr L ^{d}$ is the Lebesgue outer measure of dimension $d$. Recall that $\nor{g} _{\Lip} \le L$. Next, note that it is clear from the definition of $\phi$ that 
    \renewcommand{\delta}{\rho} 
    \begin{align*}
        \phi (U (\rho) \times B _\delta (0)) \subset \mathcal U _\delta (A (\rho)),
    \end{align*}
    where $B _\delta (0) \subset \RR{D - d}$ and $\mathcal U _\delta (A (\rho))$ is the $\delta$-tubular neighborhood of $A (\rho)$. As $\det \grad \phi = 1$, $\phi$ is measure preserving, so the measure of $U$ equals to 
    \begin{align}
        \mathscr L ^{d} \pth{U (\rho)} = \frac{\mathscr L ^D (\phi (U (\rho) \times B _\delta (0)))}{\mathscr L ^{D - d} (B _\delta (0))} \le \frac{\mathscr L ^D (\mathcal U _\delta (A (\rho)))}{c _{D - d} \delta ^{D - d}}.
    \end{align}
    Here $c _{D - d} = \mathscr L ^{D - d} (B _1)$ is the measure of a $(D - d)$-dimensional unit ball.

    For any $x' \in A (\rho)$, by definition \eqref{def:E-space}, there exists $\rho _{x'} \in [\rho, 2\rho)$ such that 
    \begin{align*}
        f _{\rho _{x'}} (x') > \rho _{x'} ^{-\alpha} \qquad \implies \qquad \rho _{x'} ^{D - \alpha} < \int _{B _{\rho _{x'}} (x')} f \dx.
    \end{align*}
    Note that $\rho _{x'} \ge \rho$ implies $\mathcal U _\rho (A (\rho))$ has an open cover
    \begin{align*}
        \mathcal U _\rho (A (\rho)) \subset 
        \bigcup _{x' \in A (\rho)} B _{\rho _{x'}} (x).
    \end{align*}
    By Vitali's covering lemma, we can find a disjoint subcollection $\set{B _{\rho _i} (x' _i)} _i$ such that $\mathcal U _\rho (A (\rho)) \subset \bigcup _i B _{5 \rho _i} (x' _i)$. So 
    \begin{align}
        \mathscr L ^{D} (\mathcal U _\rho (A (\rho)) ) \le \sum _{i} \mathscr L ^{D} (B _{5 \rho _i} (x _i)) \le |B _1| 5 ^{D} \sum _i \rho _i ^{D} \le C \rho ^{\alpha} \sum _i \rho _i ^{D-\alpha}.
    \end{align}
    Finally, since $B _{\rho _i} (x' _i)$ are pairwise disjoint, we have 
    \begin{align}
        \label{eqn:Arho2}
        \sum _i \rho _i ^{D-\alpha} \le \sum _i \int _{B _{\rho _i} (x' _i)} f \dx \le \int _{\Rd} f \dx.
    \end{align}
    Combine \eqref{eqn:Arho1}-\eqref{eqn:Arho2} finishes the proof of the lemma.
\end{proof}

With this, now we can prove the trace estimate for the averaging operator.

\begin{theorem}
    \label{thm:avg-space}
    Let $f \in L ^p _{\mathrm{loc}} (\Rd)$ be a nonnegative function with some $p \in [1, \infty]$. Let $\alpha > 0$ satisfy $p \alpha > D - d$.
    \begin{enumerate}[\upshape (a)]
        \item $\Aa [f]$ is $(\mathscr H ^d \mres \Gamma)$-measurable, and 
        \begin{align*}
            \mathscr H ^d (\set{\Aa [f] = 0} \cap \Gamma) = 0.
        \end{align*}
        \item If $p = 1$, $f \in L ^1 (\Rd)$, then 
        \label{enu:weaktype-space}
        \begin{align*}
            \nor{(\Aa [f]) ^{1 - \frac{D - d}{\alpha}}} _{L ^{1, \infty} (\Gamma)}\le C (\alpha, D, d, L) \nor{f} _{L ^1 (\Rd)}.
        \end{align*}
        \item If $p > 1$, $f \in L ^p (\Rd)$, then
        \label{enu:strongtype-space}
        \begin{align*}
            \nor{(\Aa [f]) ^{1 - \frac{D - d}{p \alpha}}} _{L ^p (\Gamma)} \le C (p, \alpha, D, d, L) \nor{f} _{L ^p (\Rd)}.
        \end{align*}
    \end{enumerate}
\end{theorem}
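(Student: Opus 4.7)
I would handle the three parts in order of increasing difficulty, with the weak-type bound (b) serving as a stepping stone for the strong-type bound (c). For (a), measurability is immediate: $\Aa f$ is lower semicontinuous by the earlier lemma, hence Borel and in particular $(\mathscr H^d \mres \Gamma)$-measurable. The negligible-set claim reduces to Proposition~\ref{prop:partial-regularity}: under the hypothesis $p\alpha > D - d$, the set $\Sing_\alpha(f) = \set{\Sa f = 0}$---on which $\Aa f$ takes its degenerate value---has Hausdorff dimension at most $D - p\alpha < d$, so its $\mathscr H^d$-measure on $\Gamma$ vanishes.

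For (b), my approach is to sum Lemma~\ref{lem:ap} dyadically. Writing $\set{0 < \Sa f < r}$ as the disjoint union of shells $\set{r 2^{-k-1} \le \Sa f < r 2^{-k}}$ for $k \ge 0$ and applying the lemma to each, the geometric series converges precisely because $\alpha > D - d$, producing
\begin{align*}
    \mathscr H^d(\set{\Sa f < r} \cap \Gamma) \le C r^{\alpha - (D - d)} \nor{f}_{L^1(\Rd)}.
\end{align*}
Substituting $\lambda = r^{-\alpha}$ and using the identity $\set{\Sa f < r} = \set{\Aa f > \lambda}$ recasts this as the weak-$L^1$ bound for $(\Aa f)^{1 - (D - d)/\alpha}$ claimed in (\ref{enu:weaktype-space}).

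For (c), I would upgrade (b) via a truncation argument driven by Lemma~\ref{lem:sublin-space}. For $\mu > 0$ split $f = f_\mu + f^\mu$ with $f_\mu = f \mathbf 1_{\set{f \le \mu}}$. Since $f_\mu \le \mu$, we have $(2 f_\mu)_\rho \le 2\mu$ for every $\rho$, so $\Aa(2 f_\mu) \le 2\mu$ uniformly; quasiconvexity then gives $\set{\Aa f > 2 \mu} \subset \set{\Aa(2 f^\mu) > 2 \mu}$, and applying (b) to $2 f^\mu \in L^1(\Rd)$ yields
\begin{align*}
    \mathscr H^d(\set{\Aa f > 2\mu} \cap \Gamma) \le C \mu^{-1 + (D-d)/\alpha} \nor{f^\mu}_{L^1}.
\end{align*}
A layer-cake representation with exponent $c = p - (D-d)/\alpha$, the substitution $\lambda = 2\mu$, and Fubini then reduce the problem to the identity
\begin{align*}
    \int_0^\infty \mu^{p-2} \nor{f^\mu}_{L^1} \d\mu = \int_{\Rd} f(x) \int_0^{f(x)} \mu^{p-2} \d\mu \dx = \frac{\nor{f}_{L^p}^p}{p - 1},
\end{align*}
giving (\ref{enu:strongtype-space}) for $1 < p < \infty$; the endpoint $p = \infty$ is immediate because $\nor{f}_{L^\infty} = M$ forces $\Aa f \le M$.

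The hardest step is (c): because $\Aa$ is only quasiconvex (not sublinear), standard Marcinkiewicz interpolation is unavailable, and the threshold $\rho^{-\alpha}$ is not homogeneous under scalar multiplication---hence the factors of $2$ in the decomposition must be tracked carefully. The choice $c = p - (D-d)/\alpha$ is precisely what forces the aggregate exponent on $\mu$ to collapse to $p - 2$ after the substitution, at which point Fubini recovers $\nor{f}_{L^p}^p$; the final $\mu$-integral converges at $0$ exactly because $p > 1$.
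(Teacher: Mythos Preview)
Your approach mirrors the paper's almost exactly: part (a) via semicontinuity and Proposition~\ref{prop:partial-regularity}, part (b) via a dyadic summation of Lemma~\ref{lem:ap}, and part (c) via a Marcinkiewicz-style truncation using quasiconvexity (Lemma~\ref{lem:sublin-space}) together with the weak-type bound (b).

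There is, however, one genuine gap in your plan for (c). Part (b) carries the hypothesis $\alpha > D - d$ (it is the case $p = 1$ of the theorem), so your truncation argument---which invokes (b) on $2f^\mu$---only proves (c) under the \emph{additional} assumption $\alpha > D - d$. The full statement of (c) allows $\alpha \le D - d$ provided merely $p\alpha > D - d$; in that regime the exponent $1 - (D-d)/\alpha$ in (b) is nonpositive and the weak-type bound you want to quote does not even make sense. The paper closes this gap via Lemma~\ref{lem:nonlinear-scaling}: choose $p' \in (1, p)$ with $\tilde\alpha := p'\alpha > D - d$, set $\tilde f = f^{p'} \in L^{\tilde p}$ with $\tilde p := p/p' > 1$, apply the already-established case (with parameters $\tilde\alpha, \tilde p$) to $\tilde f$, and then transfer back via the pointwise inequality $\Aa f \le (\A_{\tilde\alpha}\tilde f)^{1/p'}$. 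Without this reduction your argument leaves a nontrivial portion of the parameter range uncovered.
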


\begin{proof}
    Notice that for $\rho > 0$, $x \in \Rd$, 
    \begin{align*}
        (2 \rho) ^{-\alpha} < \Aa [f] (x) \le \rho ^{-\alpha} \iff 
        \rho \le \Sa [f] (x) < 2 \rho.
    \end{align*}
    This means 
    \begin{align*}
        \set{(2 \rho) ^{-\alpha} < \Aa [f] \le \rho ^{-\alpha}} \cap \Gamma = A (\rho),
    \end{align*}
    where $A (\rho)$ is defined in Lemma \ref{lem:ap}.

    \begin{enumerate}[\upshape (a)]
        \item The measurability of $\Aa [f]$ follows from semicontinuity. By Proposition \ref{prop:partial-regularity}, we know the singular set $\Sing _\alpha (f) = \set{\Aa [f] = \infty}$ has Hausdorff dimension at most $D - p \alpha$, so its $d$-dimensional measure is zero.
        \item Fix $\lambda > 0$, and pick $\rho _0 = \lambda ^{-\frac1\alpha}$, $\rho _k = 2 ^{-k} \rho _0$. Then 
        \begin{align*}
            \mathscr H ^d (\set{\Aa [f] > \lambda} \cap \Gamma) &= \sum _{k = 0} ^\infty \mathscr H ^d \pth{
                \set{\rho _k ^{-\alpha} < \Aa [f] \le \rho _{k + 1} ^{-\alpha}} \cap \Gamma
            } \\
            &= \sum _{k = 0} ^\infty \mathscr H ^d (A (\rho _{k + 1})) \\
            &\le C \sum _{k = 0} ^\infty \rho _{k + 1} ^{-D + d + \alpha} \nor{f} _{L ^1 (\Rd)} \\
            &\le C \rho _0 ^{\alpha - D + d} \nor{f} _{L ^1 (\Rd)} = \frac C{\lambda ^{1 - \frac{D - d}{\alpha}}} \nor{f} _{L ^1 (\Rd)}.
        \end{align*}
        The summation is a converging geometric series since the index $\alpha-D + d$ is positive.
        We thus conclude that $\nor{(\Aa [f]) ^{1 - \frac{D - d}{\alpha}}} _{L ^{1, \infty} (\Gamma)} \le C \nor{f} _{L ^1 (\Rd)}$.

        \item We prove the case $p = \infty$ first.
        Fix any $x \in \Rd$, we note from the definition of $f _\rho$ that $f _\rho (x) \le \nor{f} _{L ^\infty (\Rd)}$ for any $\rho > 0$. Therefore 
        \begin{align*}
            f _\rho (x) < \rho ^{-\alpha}, \qquad \forall \rho < \nor{f} _{L ^\infty (\Rd)} ^{-\frac1\alpha}.
        \end{align*}
        From the definition \eqref{def:E-space} we know 
        \begin{align*}
            \Sa [f] (x) \ge \nor{f} _{L ^\infty (\Rd)} ^{-\frac1\alpha} > 0,
        \end{align*}
        and correspondingly $\Aa [f] (x) \le \nor{f} _{L ^\infty (\Rd)}$.

        To show the boundedness for $p \in (1, \infty)$, we first assume in addition that $\alpha > D - d$. For any $\lambda > 0$ we define 
        \begin{align*}
            f _1 = 2 f \ind*{f \le \frac\lambda2}, \qquad f _2 = 2 f \ind*{f > \frac\lambda2} .
        \end{align*}
        Then $f = \half \pth{f _1 + f _2}$. By the quasiconvexity Lemma \ref{lem:sublin-space}, we conclude that 
        \begin{align*}
            \set{\Aa [f] > \lambda} \subset \set{\Aa [f _1] > \lambda} \cup \set{\Aa [f _2] > \lambda} = \set{\Aa [f _2] > \lambda},
        \end{align*}
        as $\Aa [f _1] \le \nor{f _1} _{L ^\infty (\Rd)} \le \lambda$. Thus
        \begin{align*}
            \mathscr H ^d \pth{
                \set{\Aa [f] > \lambda} \cap \Gamma
            } &\le \mathscr H ^d \pth{
                \set{\Aa [f _2] > \lambda} \cap \Gamma
            } \\
            &\le \frac C{\lambda ^{1 - \frac {D - d}\alpha}} \nor{f _2} _{L ^1 (\Rd)}.
        \end{align*}
        Here we used part \eqref{enu:weaktype-space} on $f _2$. And the remaining is analogous to the classical setting:
        \begin{align*}
            \nor{(\Aa [f]) ^{1 - \frac{D - d}{p \alpha}}} _{L ^{p} (\Gamma)} ^{p} &= 
            \int _{\Gamma} (\Aa [f]) ^{p - \frac{D - d}{\alpha}} \d \mathscr H ^d \\
            &= C \int _0 ^\infty \lambda ^{p - \frac{D - d}{\alpha} - 1} \mathscr H ^d \pth{
                \set{\Aa [f] > \lambda} \cap \Gamma
            } \d \lambda
            \\
            & \le C \int _0 ^\infty \lambda ^{p - 2}\int _{\Rd} f _2 \dx \d \lambda 
            \\
            &= C \int _{\Rd} \int _0 ^\infty f \ind*{f > \frac\lambda2} \lambda ^{p - 2} \d \lambda \dx  \\
            &\le C \int _{\Rd} f ^p \dx .
        \end{align*}

        We now use Lemma \ref{lem:nonlinear-scaling} to remove the extra assumption $\alpha > D - d$. Since $p \alpha > D - d$, we can fix $p' < p$ such that $\tilde \alpha := p' \alpha > D - d$ and $\tilde p := p / p' > 1$. Define $\tilde f = f ^{p'}$, then $\tilde f \in L ^{\tilde p} (\Rd)$. Apply the above proof for $\tilde f$, $\tilde \alpha$ and $\tilde p$ gives
        \begin{align*}
            \nor{(\A _{\tilde \alpha} [\tilde f]) ^{1 - \frac{D - d}{\tilde p \tilde \alpha}}} _{L ^{\tilde p} (\Gamma)} \le C \nor*{\tilde f} _{L ^{\tilde p} (\Rd)} = \nor{f} _{L ^p} ^{p'}.
        \end{align*}
        By Lemma \ref{lem:nonlinear-scaling}, we know that $\A _{\alpha} f \le (\A _{\tilde \alpha} [\tilde f]) ^\frac1{p'}$, so 
        \begin{align*}
            \nor{(\Aa [f]) ^{1 - \frac{D - d}{p \alpha}}} _{L ^p (\Gamma)} \le \nor{(\A _{\tilde \alpha} [\tilde f]) ^{\frac1{p'}(1 - \frac{D - d}{p \alpha})}} _{L ^p (\Gamma)} = \nor{(\A _{\tilde \alpha} [\tilde f]) ^{1 - \frac{D - d}{\tilde p \tilde \alpha}}} _{L ^{\tilde p} (\Gamma)} ^\frac1{p'} \le\nor{f} _{L ^p}.
        \end{align*}
    \end{enumerate}
    This completes the proof of the theorem.
\end{proof}

When $D = d$, $\Gamma = \Rd$, Theorem \ref{thm:avg-space} simply says 
\begin{align*}
    \nor{\Aa [f]} _{L ^{1, \infty} (\Rd)} \le C \nor{f} _{L ^{1} (\Rd)}, \qquad \nor{\Aa [f]} _{L ^{p} (\Rd)} \le C (p) \nor{f} _{L ^{p} (\Rd)}.
\end{align*}
This can also follow directly from the fact that $\Aa [f] \le \mm f$ pointwise by Lemma \ref{lem:average-name}, where $\mm f (x) = \sup _{\rho > 0} f _\rho (x)$ is the Hardy-Littlewood maximal function of $f$. 

\eqref{enu:strongtype-space} is false for $p = 1$. Indeed, unlike the maximal operator, when $f = |B _1| \delta _0$ is a Dirac measure, $\alpha \in (0, d)$, we can verify that 
\begin{align*}
    f _\rho (x) = \rho ^{-d} \ind*{|x| < \rho}, \qquad \Sa [f] (x) = |x| + \infty \ind*{|x| > 1}.
\end{align*}
So when $f \approx |B _1| \delta _0$ is an $L ^1$ function, $\Aa [f] (x) = \Sa [f] ^{-\alpha} (x) \approx |x| ^{-\alpha} \ind*{|x| < 1}$ is still integrable. However, in Appendix \ref{app:cantor} we construct an example where $f$ is a bounded Cantor-style measure and $\Aa [f]$ is $L ^{1, \infty} (\R)$ but not $L ^{1, q} (\R)$ for any $q < \infty$. 

When $D > d$, Theorem \ref{thm:avg-space} cannot hold for the maximal function anymore, as the maximal function does not have a trace on hypersurfaces.

\section{Blow-up technique and trace estimate: spacetime case}
\label{sec:spacetime}

We want to extend the theory in Section \ref{sec:space} to the spacetime setting. Due to our particular interest in equations with heat diffusion, we will focus on the parabolic scaling between time and space. That is, we define a cylinder $Q _\rho (t, x)$ to be the following: 
\begin{align}
    \label{eqn:Qe-spacetime}
    Q _\rho (t, x) = \set{(s, y): s \in (t - \rho ^2, t], y \in B _\rho (x)}.
\end{align}
We remark that other scalings are possible and similar results can be obtained.
$Q _\rho (t, x)$ will replace the role of $B _\rho (x)$ in the space setting. To be precise, we define the following. 

\begin{definition}
    \label{def:SASingReg-spacetime}
    Let $f: \R \times \Rd \to [0, \infty]$ be a Lebesgue measurable function. Denote 
    \begin{align*}
        f _\rho (t, x) := \fint _{Q _\rho (t, x)} f (s, y) \d y \d s, \qquad 0 < \rho < \infty.
    \end{align*}
    For $\alpha > 0$, we define the \textbf{scale operator} $\Sa [f]: \R \times \Rd \to [0, \infty]$ by 
    \begin{align}
        \label{def:E-spacetime}
        \Sa [f] (t, x) &:= \inf _{0 < \rho < \infty} \set{
            \rho: f _\rho (t, x) > \rho ^{-\alpha}
        }, \qquad (t, x) \in \R \times \Rd.
    \end{align}
    We define the \textbf{averaging operator} $\Aa [f]:\R \times \Rd \to [0, \infty]$ by 
    \begin{align}
        \label{def:A-spacetime}
        \Aa [f] (t, x) &:= \Sa [f] (t, x) ^{-\alpha}, \qquad (t, x) \in \R \times \Rd.
    \end{align}
    We define the \textbf{singular set} and \textbf{regular set} as 
    \begin{align*}
        \Sing _\alpha (f) = \set{(t, x) \in \R \times \Rd: \Sa [f] (t, x) = 0}, \\ \Reg _\alpha (f) = \set{(t, x) \in \R \times \Rd: \Sa [f] (t, x) > 0}.
    \end{align*}
\end{definition}
For simplicity we used the same notation as in the last section, but its meaning should be detected from the context based on the domain of $f$.

Clearly, Lemma \ref{lem:inner-outer-regularity} holds for cylinders as well. Thus we can parallel the semicontinuity, continuity, nonlinearity, and quasiconvexity results in Section \ref{sec:space}. The proof is identical to Lemma \ref{lem:continuity-space}-\ref{lem:sublin-space}, so we omit it here.

\begin{lemma}
    \label{lem:Aa-basic-spacetime}
    Let $\alpha > 0$ and let $f : \R \times \Rd \to [0, \infty]$ be measurable. Then
    $\Sa [f]$ is upper semicontinuous, and $\Aa [f]$ is lower semicontinuous. For any $p \in [1, \infty)$, it holds that 
    \begin{align*}
        \Sa [f] \ge \mathscr S _{p\alpha} [f ^p], \qquad \Aa [f] \le \A _{p\alpha} [f ^p] ^\frac1p.
    \end{align*}
    Moreover, if $f \in L ^1 _\loc (\R \times \Rd)$, then 
    \begin{align*}
        \Aa [f] (t, x) = f _{\Sa [f] (t, x)} (t, x), \qquad \text{ when } 0 < \Sa [f] (t, x) < \infty.
    \end{align*} 
    For any nonnegative $f, g \in  L ^1 _{\loc} (\R \times \Rd)$, $\lambda \in [0, 1]$, it holds that
    \begin{align*}
        \Aa \bkt{(1 - \lambda) f + \lambda g} (t, x) \le \maxs{
            \Aa [f] (t, x), \Aa [g] (t, x)
        }, \qquad \forall (t, x) \in \R \times \Rd.
    \end{align*} 
\end{lemma}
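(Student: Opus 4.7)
The plan is to carry over the proofs of Lemmas \ref{lem:inner-outer-regularity}--\ref{lem:sublin-space} essentially verbatim, replacing balls $B_\rho(x) \subset \Rd$ by the parabolic cylinders $Q_\rho(t,x) \subset \R \times \Rd$ defined in \eqref{eqn:Qe-spacetime}. The key observation that unlocks all four statements is the analog of Lemma \ref{lem:inner-outer-regularity} for cylinders: since $Q_\rho(t,x) = (t-\rho^2, t] \times B_\rho(x)$ depends jointly continuously on $(\rho, t, x)$, any compact $K \ssubset Q_\rho(t,x)$ lies in $Q_\sigma(s,y)$ for $(\sigma,s,y)$ sufficiently close to $(\rho,t,x)$, and dually for open supersets of $\overline{Q_\rho(t,x)}$. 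Coupled with the fact that $|Q_\rho(t,x)| = \rho^{D+2} |B_1|$ is continuous in $\rho$, this yields that $(\rho,t,x) \mapsto f_\rho(t,x)$ is lower semicontinuous in general, and continuous when $f \in L^1_\loc$, exactly as in the proof of Lemma \ref{lem:continuity-space}.

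From the lower semicontinuity of $f_\rho$ in $(\rho,t,x)$, the upper semicontinuity of $\Sa f$ is immediate: if $\Sa f(t,x) < \infty$, then for any $\e > 0$ we can choose $M \in [\Sa f(t,x), \Sa f(t,x)+\e]$ with $f_M(t,x) > M^{-\alpha}$; lower semicontinuity propagates this strict inequality to a neighborhood, forcing $\Sa f(s,y) \le M$ there. Since $r \mapsto r^{-\alpha}$ is continuous and decreasing on $(0,\infty)$ (with the conventions $0^{-\alpha} = +\infty$ and $(+\infty)^{-\alpha} = 0$), $\Aa f = (\Sa f)^{-\alpha}$ is lower semicontinuous. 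The averaging identity $\Aa f(t,x) = f_{\Sa f(t,x)}(t,x)$ when $0 < \Sa f(t,x) < \infty$ then follows exactly as in Lemma \ref{lem:average-name}: the defining inequality $f_\rho(t,x) \le \rho^{-\alpha}$ for $\rho < s := \Sa f(t,x)$ passes to $\rho = s$ by lower semicontinuity, while strict inequality $f_s(t,x) < s^{-\alpha}$ would contradict the definition \eqref{def:E-spacetime} by the joint continuity of both sides in $\rho$ (here using $f \in L^1_\loc$).

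For the nonlinear scaling bound, I would apply Jensen's inequality on $Q_\rho(t,x)$, which has finite measure: if $f_\rho(t,x) > \rho^{-\alpha}$ then $(f^p)_\rho(t,x) \ge f_\rho(t,x)^p > \rho^{-p\alpha}$, so the set of admissible radii in the definition of $\mathscr S_{p\alpha}(f^p)(t,x)$ contains the corresponding set for $\Sa f(t,x)$, giving $\Sa f \ge \mathscr S_{p\alpha}(f^p)$ and therefore $\Aa f \le \A_{p\alpha}(f^p)^{1/p}$. Finally, quasiconvexity is proved verbatim as in Lemma \ref{lem:sublin-space}: setting $h = (1-\lambda)f + \lambda g$ and assuming $\Sa h(t,x) < \infty$, pick $\rho_n \downarrow \Sa h(t,x)$ with $\rho_n^{-\alpha} < h_{\rho_n}(t,x) = (1-\lambda) f_{\rho_n}(t,x) + \lambda g_{\rho_n}(t,x)$; at least one of $f_{\rho_n}, g_{\rho_n}$ exceeds $\rho_n^{-\alpha}$ along a subsequence, hence $\Sa f(t,x) \le \Sa h(t,x)$ or $\Sa g(t,x) \le \Sa h(t,x)$, and the conclusion follows by taking $(-\alpha)$-powers.

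There is no real obstacle here since the entire argument is driven by the geometry of $Q_\rho$ mirroring that of $B_\rho$; the only point worth double-checking is that the half-openness of $Q_\rho$ in the time direction is harmless, which it is, because all the approximations above are phrased in terms of compact subsets, open supersets, and Lebesgue integrals that are insensitive to sets of measure zero.
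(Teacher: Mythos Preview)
Your proposal is correct and follows exactly the approach the paper takes: the paper simply states that Lemma \ref{lem:inner-outer-regularity} holds for cylinders and that the proofs of Lemmas \ref{lem:continuity-space}--\ref{lem:sublin-space} carry over verbatim, which is precisely what you have spelled out. Your additional remark about the half-open time interval being harmless is a nice sanity check that the paper does not bother to mention.
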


\subsection{Partial regularity and trace estimate}

The partial regularity theory for spacetime should be modified in order to fit the parabolic scaling. For $\beta \ge 0$, we introduce parabolic Hausdorff measure as 
\begin{align*}
    \mathscr P ^\beta (A) := \lim _{\delta \to 0} \mathscr P ^\beta _\delta (A) = \lim _{\delta \to 0} \inf \set{
        \sum _j r _j ^\beta: A \subset \bigcup _j Q _{r _j} (t _j, x _j)
    }, \qquad A \subset \R \times \Rd,
\end{align*} 
where infimum is taken over all possible covering of the set $A$ by cylinders. Accordingly, we obtain the following partial regularity theorem.

\begin{proposition}
    \label{prop:partial-regularity-spacetime}
    Suppose $f \in L ^p _{\loc} (\R \times \Rd)$ is nonnegative for some $p \in [1, \infty)$. If $0 < p \alpha < D + 2$ then the $(D + 2 - p \alpha)$-dimensional parabolic Hausdorff measure of $\Sing _\alpha (f)$ is zero:
    \begin{align*}
        \mathscr P ^{D + 2 - p \alpha} (\Sing _\alpha (f)) = 0.
    \end{align*}
    If $p \alpha \ge D + 2$ then $\Sing _\alpha (f)$ is empty.
\end{proposition}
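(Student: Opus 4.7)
The plan is to mirror the proof of Proposition \ref{prop:partial-regularity}, replacing balls $B_\rho(x) \subset \Rd$ by parabolic cylinders $Q_\rho(t,x) \subset \R \times \Rd$, ordinary Hausdorff measure by parabolic Hausdorff measure, and the volume identity $|B_\rho| = c_D \rho^D$ by $|Q_\rho| = c_D \rho^{D+2}$. I would first reduce to $p = 1$ using Lemma \ref{lem:Aa-basic-spacetime}: since $\Sa(f) \ge \mathscr S_{p\alpha}(f^p)$, we have $\Sing_\alpha(f) \subset \Sing_{p\alpha}(f^p)$, and $f^p \in L^1_{\loc}$. Thus it suffices to prove the statement for $p = 1$ with exponent $p\alpha$ in place of $\alpha$. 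By a standard localization argument (intersecting with a compact $K$ and exhausting), I may further assume $f \in L^1(\R \times \Rd)$.

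Consider first the case $0 < \alpha < D+2$, and set $\beta = D + 2 - \alpha$. Fix a small $\delta \in (0,1)$. For every $(t,x) \in \Sing_\alpha(f)$, the definition \eqref{def:E-spacetime} provides a radius $\rho_{(t,x)} \in (0, \delta/C_0)$ with $f_{\rho_{(t,x)}}(t,x) > \rho_{(t,x)}^{-\alpha}$, equivalently
\begin{align*}
    \int_{Q_{\rho_{(t,x)}}(t,x)} f(s,y) \, \dy \, \d s > c_D \, \rho_{(t,x)}^{D+2-\alpha} = c_D \, \rho_{(t,x)}^\beta.
\end{align*}
These cylinders cover $\Sing_\alpha(f)$. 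I would then invoke a parabolic Vitali covering lemma to extract a pairwise disjoint subfamily $\{Q_{\rho_i}(t_i, x_i)\}_i$ whose $C_0$-dilates (cylinders of radius $C_0 \rho_i$ with appropriately shifted centers) still cover $\Sing_\alpha(f)$. The required engulfing property is immediate: if $Q_{\rho_1}(t_1, x_1) \cap Q_{\rho_2}(t_2, x_2) \ne \varnothing$ with $\rho_2 \le \rho_1$, a short computation shows $Q_{\rho_2}(t_2, x_2) \subset Q_{3\rho_1}(t_1 + \rho_1^2, x_1)$, so the classical greedy argument yields the Vitali conclusion with inflation factor $C_0 = 3$. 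Summing the lower bounds gives
\begin{align*}
    \mathscr P^\beta_{\delta}(\Sing_\alpha(f)) \le \sum_i (C_0 \rho_i)^\beta \le C \sum_i \int_{Q_{\rho_i}(t_i, x_i)} f \, \dy \, \d s = C \int_{\bigcup_i Q_{\rho_i}(t_i, x_i)} f \, \dy \, \d s.
\end{align*}
Disjointness yields $\bigl|\bigcup_i Q_{\rho_i}\bigr| = c_D \sum_i \rho_i^{D+2} \le c_D\, \delta^\alpha \sum_i \rho_i^\beta \le C \delta^\alpha \|f\|_{L^1} \to 0$ as $\delta \to 0$, so by absolute continuity of the Lebesgue integral, $\int_{\bigcup_i Q_{\rho_i}} f \to 0$. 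Passing to the limit $\delta \to 0$ concludes $\mathscr P^\beta(\Sing_\alpha(f)) = 0$.

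Finally, for $\alpha \ge D+2$, the corresponding $\beta$ would be nonpositive; instead, I would argue directly that $\Sing_\alpha(f)$ is empty. If some $(t,x) \in \Sing_\alpha(f)$ existed, there would be arbitrarily small $\rho_n \to 0$ with $\int_{Q_{\rho_n}(t,x)} f \, \dy \, \d s > c_D \rho_n^{D+2-\alpha} \ge c_D > 0$, contradicting $|Q_{\rho_n}(t,x)| \to 0$ and absolute continuity of the integral of $f \in L^1_{\loc}$. The only substantive divergence from the spatial proof is the parabolic Vitali covering, which is handled by the explicit engulfing estimate above; everything else is a parabolic transcription with $D \mapsto D+2$ in the exponent relations.
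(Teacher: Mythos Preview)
Your proposal is correct and follows essentially the same approach as the paper, which simply states that the proof is identical to that of Proposition \ref{prop:partial-regularity} with $|Q_r| = C r^{D+2}$ replacing $|B_r| = C r^D$. You have spelled out the parabolic Vitali engulfing estimate and handled the case $p\alpha \ge D+2$ by a direct absolute-continuity argument rather than by setting $\beta = 0$, but these are cosmetic variations on the same proof.
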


The proof is identical to the proof of Proposition \ref{prop:partial-regularity}, except $|Q _r| = C r ^{D + 2}$ comparing to $|B _r| = C r ^D$.

It becomes slightly more delicate to estimate the trace in the parabolic setting. Recall that $\Gamma _T$, which satisfies Assumption \ref{ass:Gamma}, is not necessarily a manifold in $\R \times \Rd$, but can be discontinuous in time. The purpose of allowing this generality will be clear when we would like to replace $L ^\infty$ norms in $x$ by taking traces, but let us defer this discussion to the next subsection.

First, we revise Lemma \ref{lem:ap} to the following time-dependent form.

\begin{lemma}
    \label{lem:atp-spacetime}
    Let $t \in \R$, $\alpha > 0$, $p \in [1, \infty)$, and $f \in L ^p _{\loc, t} L ^p _x (\domain)$. Denote 
    \begin{align*}
        A _t (\rho) &:= \set{
            x' \in \Gamma _t:
            \rho \le \Sa [f] (t, x') < 2 \rho
        }, \qquad \rho > 0.
    \end{align*}
    Then for every $t \in (0, T)$, $\rho > 0$, 
    \begin{align*}
        \mathscr H ^d \pth{A _t (\rho)} \le C \rho ^{-D + d - 2 + p \alpha} \int _{t - 4 \rho ^2} ^{t} \intRd f (s, x) ^p \dx \d s.
    \end{align*}
\end{lemma}

\begin{proof}
    For any $x' \in A _t (\rho)$, by definition \eqref{def:E-spacetime}, there exists $\rho _{x'} \in [\rho, 2\rho)$ such that 
    \begin{align*}
        f _{\rho _{x'}} (t, x') > \rho _{x'} ^{-\alpha} \qquad \implies \qquad \rho _{x'} ^{D + 2 - p \alpha} < \int _{Q _{\rho _{x'}} (t, x')} f (s, y) ^p \d y \d s.
    \end{align*}
    Here we used Jensen's inequality as in Lemma \ref{lem:nonlinear-scaling}.
    Same as Lemma \ref{lem:ap}, Lipschitzness of $\Gamma _t$ implies 
    \begin{align}
        \label{eqn:Arho1-spacetime}
        \mathscr H ^{d} (A _t (\rho)) \le (1 + \nor{g _t} _{\Lip} ^2) ^{\frac{d}2}  \frac{\mathscr L ^D (\mathcal U _\rho (A _t (\rho)))}{c _{D - d} \rho ^{D - d}}.
    \end{align}
    Again, $\mathcal U _\rho (A _t (\rho))$ is covered by $\bigcup _{x' \in A _t (\rho)} B _{\rho _{x'}} (x')$, so Vitali covering lemma gives a disjoint subcollection $\set{B _{\rho _i} (x' _i)} _i$ with
    \begin{align}
        \mathscr L ^{D} (\mathcal U _\rho (A _t (\rho))) \le |B _1| 5 ^{D} \sum _i \rho _i ^{D} = C \rho ^{p \alpha - 2} \sum _i \rho _i ^{D + 2 - p \alpha}.
    \end{align}
    Finally, since $B _{\rho _i} (x' _i)$ are pairwise disjoint, $Q _{\rho _i} (t, x' _i)$ are also pairwise disjoint, so 
    \begin{align}
        \label{eqn:Arho2-spacetime}
        \sum _i \rho _i ^{D + 2 - p \alpha} \le C \sum _i \int _{Q _{\rho _i} (t, x' _i)} f (s, y) ^p \dy \d s \le C \int _{t - 4 \rho ^2} ^t \int _{\Rd} f ^p \dx \d t.
    \end{align}
    The last inequality used the fact that each $Q _{\rho _i} (t, x _i')$ has at most a timespan of $4 \rho ^2$. Combining \eqref{eqn:Arho1-spacetime}-\eqref{eqn:Arho2-spacetime} finishes the proof of the lemma.
\end{proof}

Using a similar idea, we can also introduce the following finer version of partial regularity comparing to Proposition \ref{prop:partial-regularity-spacetime}, which deals with anisotropic norms on $f$ and separates dimensions of singularity in space and time.

\begin{proposition}
    \label{lem:at0-spacetime}
    Let $\alpha > 0$, and $f \in L ^p _{\loc, t} L ^q _{\loc, x} (\domain)$, $1 \le q \le p < \infty$. Denote the singular section by 
    \begin{align*}
        S _t := \set{
            x' \in \Gamma _t: (t, x') \in \Sing _\alpha (f)
        }, \qquad t \in \R,
    \end{align*}
    and denote the set of singular time by 
    \begin{align*}
        \mathcal T = \set{t \in \R: \mathscr H ^d (S _t) > 0}.
    \end{align*}
    \begin{enumerate}[\upshape (a)]
        \item \label{enu:at0-spacetime} If $\frac{D - d}q < \alpha \le \frac 2p + \frac{D - d}q$, then $\mathcal T$ has Hausdorff dimension no greater than
        \begin{align*}
            \dim _{\mathscr H} (\mathcal T) \le 1 - \frac p2 \pth{\alpha - \frac{D - d}{q}}.
        \end{align*}

        \item \label{enu:at0-spacetime-reg} If $\alpha > \frac 2p + \frac{D - d}q$, then $\mathcal T = \varnothing$.
    \end{enumerate}
    
\end{proposition}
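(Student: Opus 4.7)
Since $\mathcal T = \bigcup _{n \ge 1} \mathcal T _{1/n}$ with $\mathcal T _\eta := \set{t : \mathscr H ^d (S _t) > \eta}$, it suffices to bound $\dim _{\mathscr H} (\mathcal T _\eta)$ for each $\eta > 0$. For $t \in \mathcal T _\eta$ and $\delta > 0$, since $\Sa f (t, x') = 0$ for every $x' \in S _t$, we may select a bad scale $\rho _{x'} < \delta$ satisfying $f _{\rho _{x'}} (t, x') > \rho _{x'} ^{-\alpha}$. Vitali's covering lemma produces a spatially disjoint subfamily $\set{B _{\rho _i} (x _i')} _i$ whose 5-fold enlargement covers $S _t$. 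The Lipschitz graph structure of $\Gamma _t$ (as in Lemma \ref{lem:ap}) gives $\eta < \mathscr H ^d (S _t) \le C \sum _i \rho _i ^d$. Writing $F (s) := \nor{f (s, \cdot)} _{L ^q (\Rd)} ^p$ and $\Phi (t, \delta) := \int _{t - \delta ^2} ^t F (s) \d s$, the mixed-norm Hölder inequality combined with the spacetime disjointness of the parabolic cylinders $Q _{\rho _i} (t, x _i')$ and the $\ell ^{p/q}$ embedding (for which $p \ge q$ is essential) yields
\[
    \sum _i \rho _i ^\gamma \le c \Phi (t, \delta), \qquad \gamma := 2 + \frac{p D}{q} - p \alpha.
\]

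The crux is to convert these two estimates into a pointwise lower bound on the parabolic fractional maximal function $M ^\sigma F (t) := \sup _{r > 0} r ^{-\sigma} \int _{t - r ^2} ^t F (s) \d s$ at the exponent $\sigma := 2 - p \beta$, where $\beta = \alpha - (D - d)/q$. Decomposing $\{\rho _i\}$ dyadically into bands $\rho _i \in [2 ^{-k-1} \delta, 2 ^{-k} \delta)$ with counts $N _k$, the per-band version $N _k (2 ^{-k} \delta) ^\gamma \le c \Phi (t, 2 ^{-k} \delta)$ (obtained by restricting the time window to the band) together with $\sum _k N _k (2 ^{-k} \delta) ^d \ge c \eta$ yields via pigeonholing a dominant scale $r _* \le \delta$ with $\Phi (t, r _*) \ge c \eta \, r _* ^{\gamma - d}$. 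For $p = q$ one has $\gamma - d = 2 - p \beta$ and $M ^{2 - p \beta} F (t) \ge c \eta$ follows immediately. For $p > q$ the exponent gap $\gamma - d - (2 - p \beta) = d (p - q)/q > 0$ has to be absorbed; Lebesgue differentiation $\Phi (t, r) = r ^2 F (t) + o (r ^2)$ at density points of $F$, combined with the lower bound $\Phi (t, r _*) \ge c\eta r _* ^{\gamma - d}$, forces $r _* ^{\gamma - d - 2} \lesssim F(t)/\eta$ and pins $r _*$ into a range depending on $\eta$, $F (t)$, and $\nor{F} _{L ^1 (\R)}$, allowing the residual factor to be absorbed into a constant $c(\eta)$.

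Once $M ^{2 - p \beta} F (t) \ge c (\eta) > 0$ is secured on $\mathcal T _\eta$, a standard Vitali covering argument on time intervals of parabolic diameter $r _j ^2$ produces a cover $\mathcal T _\eta \subset \bigcup _j (t _j - r _j ^2, t _j]$ satisfying $\sum _j r _j ^{2 - p \beta} \le C \nor{F} _{L ^1 (\R)} / c (\eta)$. Interpreting this sum as a Hausdorff measure with diameter exponent $s = (2 - p \beta)/2 = 1 - p \beta/2$ gives $\mathscr H ^{1 - p \beta/2} (\mathcal T _\eta) < \infty$, hence $\dim _{\mathscr H} (\mathcal T _\eta) \le 1 - p \beta/2$ for every $\eta > 0$, proving (a). Part (b) follows as a corollary: the condition $\alpha > 2/p + (D - d)/q$ forces $1 - p \beta/2 < 0$, but any nonempty set has $\mathscr H ^s = +\infty$ for $s < 0$ (under the convention $|E|^s = |E|^{-|s|}$ in the Hausdorff construction), so $\mathcal T _\eta = \varnothing$ for every $\eta > 0$ and therefore $\mathcal T = \varnothing$.

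The main obstacle is the gap-closing step for $p > q$: the naive Vitali-plus-pigeonhole analysis delivers only $M ^{\gamma - d} F (t) \ge c (\eta)$, and since $\gamma - d - (2 - p \beta) = d (p - q) / q > 0$, reaching the sharper exponent $\sigma = 2 - p \beta$ requires careful control on the dominant scale $r _*$ from below via Lebesgue density, with an additional refinement needed in the subrange where $\gamma - d \ge 2$ (equivalently $\alpha \le D/q - d/p$) and Lebesgue differentiation does not force a lower bound on $r _*$ directly.
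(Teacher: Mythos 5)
Your overall architecture — bad scales at singular points, Vitali in space, Lipschitz-graph measure bound, conversion to a temporal fractional maximal function, Vitali in time — is the right shape and matches the paper's strategy in spirit. But the central step has a real gap, and you have correctly identified it yourself: the ``gap-closing'' for $p > q$ does not work as proposed.

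Concretely, there are two problems. \emph{First}, the pigeonhole over dyadic bands is not justified. The correct per-band estimate (from Jensen in space and Hölder in time) is $N_k^{p/q}(2^{-k}\delta)^\gamma \le C\,\Phi(t,2^{-k}\delta)$, which gives $N_k(2^{-k}\delta)^d \le C[(2^{-k}\delta)^{-\sigma}\Phi(t,2^{-k}\delta)]^{q/p}$ with $\sigma = 2-p\beta$. Every term is bounded by $C(M^\sigma F(t))^{q/p}$, \emph{uniformly in $k$}, so the sum $\sum_k N_k(2^{-k}\delta)^d$ over infinitely many bands carries no contradiction with $M^\sigma F(t)$ being small; there is no geometric weight to force a single dominant scale. \emph{Second}, the Lebesgue-density patch changes what you end up proving. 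If $t$ is a Lebesgue point of $F$ and $\Phi(t,r)\approx r^2 F(t)$, then plugging into $N_k\lesssim (2^{-k}\delta)^{-\gamma q/p}\Phi(t,2^{-k}\delta)^{q/p}$ and summing yields $\eta \lesssim F(t)^{q/p}\delta^{-(D-d)+q\alpha}\to 0$ as $\delta\to 0$, a contradiction. This shows $\mathcal T_\eta$ avoids Lebesgue points of $F$, i.e.\ $|\mathcal T_\eta|=0$ — but measure zero is not a Hausdorff dimension bound, so the density argument does not recover the claimed $\dim_{\mathscr H}\le 1-\tfrac{p}{2}(\alpha-\tfrac{D-d}{q})$. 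Moreover in the subcase you flag ($\gamma-d\ge 2$) the density argument constrains nothing, and even where it works it produces a constant $c(\eta,F(t))$ depending on $t$, which breaks the final Vitali covering unless you further stratify by levels of $F$ (not discussed).

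The paper avoids all of this with a cleaner contrapositive. Rather than trying to prove $M^\sigma F\ge c(\eta)$ at the \emph{sharp} exponent $\sigma=2-p\beta$, it fixes any $\beta$ with $\tfrac{2\beta}{p}+\tfrac{D-d}{q}<\alpha$ strictly, and shows: if $\limsup_{\rho\to 0}\rho^{2\beta}\fint_{t-(2\rho)^2}^t F(s)\,\d s<\eta$, then $\mathscr H^d(S_t)=0$. The trick is to apply Hölder in time \emph{inside} the per-scale surface measure bound, giving $\mathscr H^d(S_t(\rho))\le C\rho^{-D+d+q\alpha}\bigl(\fint F\bigr)^{q/p}\le C\eta^{q/p}\rho^{-D+d+q\alpha-2\beta q/p}$. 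Because $\beta$ is strictly below critical, the exponent on $\rho$ is strictly positive, so summing over dyadic $\rho<\e$ gives a geometric series that $\to 0$ as $\e\to 0$. This makes $\mathcal T\subset\mathcal T(\eta)$ for \emph{every} $\eta>0$; the Vitali argument then gives $\mathscr H^{1-\beta}(\mathcal T)\le C\|f\|^p_{L^p_tL^q_x}/\eta$ for all $\eta$, hence $\mathscr H^{1-\beta}(\mathcal T)=0$, and one takes $\beta\uparrow\tfrac{p}{2}(\alpha-\tfrac{D-d}{q})$ over a countable sequence. No pigeonhole and no Lebesgue density are needed, and the $p>q$ case is handled automatically because Jensen in space and Hölder in time together produce the right exponent before any summation.

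A minor note in the same direction: your display $\sum_i\rho_i^\gamma\le c\,\Phi(t,\delta)$ is not quite what disjointness gives, because the power $\gamma$ already comes from raising the per-cylinder bound to the $p/q$ power, so disjointness yields $\sum_i\rho_i^{D+2-q\alpha}\le C\int_{t-\delta^2}^t\|f(s)\|^q_{L^q}\d s$, which does not commute with the conversion to $\gamma$ unless one re-applies Hölder afterward — precisely the reordering that the paper performs. I'd recommend restructuring along the contrapositive/limsup route.
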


\begin{proof}
    The proposition is equivalent to the following statement: for any $\beta \in [0, 1]$ with $\frac{2 \beta}p + \frac{D - d}q < \alpha$, it holds that $\mathscr H ^d (S _t) = 0$ for $\mathscr H ^{1-\beta}$-a.e. $t \in \R$. 

    Same as in the proof of Proposition \ref{prop:partial-regularity}, we can assume $f \in L ^p _{t} L ^q _x (\domain)$ without loss of generality.
    For any $\eta > 0$, we define 
    \begin{align*}
        \mathcal T (\eta) = \set{
            t \in \R: \limsup _{\rho \to 0} \rho ^{2\beta} \fint _{t - (2 \rho) ^2} ^t \nor{f (s)} _{L ^q (\R ^D)} ^p \d s \ge \eta 
        }.
    \end{align*}
    We claim that $\mathcal T \subset \mathcal T (\eta)$ for any $\eta > 0$. Indeed, if $t \notin \mathcal T (\eta)$, then for $\e > 0$ sufficiently small, for all $\rho \in (0, \e)$, it holds that 
    \begin{align*}
        \rho ^{2\beta} \fint _{t - (2 \rho) ^2} ^t \nor{f (s)} _{L ^q (\R ^D)} ^p \d s < \eta.
    \end{align*}
    If $x' \in S _t$, there exists $\rho _{x'} \in (0, \e)$ such that $f _{\rho _{x'}} (t, x') > \rho _{x'} ^{-\alpha}$. Denote 
    \begin{align*}
        S _t (\rho) = \set{
            x' \in S _t: \rho \le \rho _{x'} < 2 \rho
        }, \qquad \rho > 0.
    \end{align*}
    Note that in the proof of Lemma \ref{lem:atp-spacetime}, the only feature we need from $A _t (\rho)$ is to be able to find a $\rho _{x'} \in [\rho, 2\rho)$ with $f _{\rho _{x'}} (t, x') > \rho _{x'} ^{-\alpha}$. The same proof yields
    \begin{align*}
        \mathscr H ^d (S _t (\rho)) &\le C \rho ^{-D + d - 2 + q \alpha} \int _{(t - (2 \rho) ^2, t) \times \R ^D} f (s, y) ^q \d y \d s \\
        & = C \rho ^{-D + d + q \alpha} \fint _{t - (2 \rho) ^2} ^t \nor{f (s)} _{L ^q (\Rd)} ^q \d s \\
        &\le C \rho ^{-D + d + q \alpha} \pth{\fint _{t - (2 \rho) ^2} ^t \nor{f (s)} _{L ^q (\Rd)} ^p \d s} ^\frac qp \\ 
        & \le C \rho ^{-D + d + q \alpha} (\eta \rho ^{-2 \beta}) ^\frac qp \int _{t - (2 \rho) ^2} ^t \nor{f (s)} _{L ^q (\Rd)} ^p \d s \\
        & \le C \eta ^\frac qp \rho ^{-D + d + q (\alpha - 2 \beta / p)} \int _{t - (2 \rho) ^2} ^t \nor{f (s)} _{L ^q (\Rd)} ^p \d s.
    \end{align*}
    Recall that $-D + d + q (\alpha - 2 \beta / p) > 0$.
    Since $\set{S _t (2 ^{-k} \e)} _{k \ge 1}$ forms a partition of $S _t$, we can control the measure of $S _t$ by 
    \begin{align*}
        \mathscr H ^d (S _t) \le \sum _{k = 1} ^\infty \mathscr H ^d (S _t (2 ^{-k} \e)) \le C \eta ^\frac qp \e ^{-D + d + q (\alpha - 2 \beta / p)}.
    \end{align*}
    This holds for all $\e$ sufficiently small, so $\mathscr H ^d (S _t) = 0$, $t \notin \mathcal T$.

    It remains to show that $\mathcal T$ has $\mathscr H ^{1-\beta}$-measure zero. Fix $\eta > 0$ and $\e > 0$. For every $t \in \mathcal T (\eta)$, there exists $\rho _t > 0$ such that $20 \rho _t ^2 < \e$ and
    \begin{align*}
        \rho _t ^{2\beta} \fint _{t - (2 \rho _t) ^2} ^t \nor{f (s)} _{L ^q (\Rd)} ^p \d s \ge \eta.
    \end{align*}
    Intervals $\set{(t - (2 \rho _t) ^2, t + (2 \rho _t) ^2)} _{t \in \mathcal T (\eta)}$ forms an open cover of $\mathcal T (\eta)$, so we can find a disjoint subcollection $\set{(t _i - (2 \rho _i) ^2, t _i + (2 \rho _i) ^2)} _i$ by Vitali covering lemma such that 
    \begin{align*}
        \mathcal T \subset \mathcal T (\eta) \subset \bigcup _i (t _i - 5 (2 \rho _i) ^2, t _i + 5 (2 \rho _i) ^2).
    \end{align*}
    So the $\mathscr H ^{1 - \beta}$-Hausdorff measure of $\mathcal T$ is bounded by 
    \begin{align*}
        \mathscr H ^{1 - \beta} (\mathcal T) \le C \sum _i \rho _i ^{2 (1 - \beta)} &\le \frac C\eta \int _{t _i - (2 \rho _i) ^2} ^{t _i} \nor{f (s)} _{L ^q (\Rd)} ^p \d s \\
        &\le \frac C\eta \int _{\mathcal T ^* (\eta)}\nor{f (s)} _{L ^q (\Rd)} ^p \d s \\
        & \le \frac C\eta \nor{f} _{L ^p _t L ^q _x (\domain)} ^p.
    \end{align*}
    The constant $C$ comes from the covering lemma and is independent of $\eta$. Since this is true for arbitrary large $\eta$, we must have $\mathscr H ^{1 - \beta} (\mathcal T) = 0$.
\end{proof}

Now we are ready to show the main theorem for trace estimates. We first work on isotropic norms. We show that when $p \alpha > D - d$ we can take trace in space, and if in addition $p \alpha > D - d + 2$ then we can take trace in both space and time.

\begin{theorem}
    \label{thm:avg-spacetime}
    Let $f \in L ^p _{\mathrm{loc}} (\domain)$ be a nonnegative function with some $p \in [1, \infty]$. Let $\alpha > 0$ satisfy $p \alpha > D - d$. Then
    \begin{enumerate}[\upshape (a)]
        \item $\Aa [f]$ is $\mu _T$-measurable, and 
        \begin{align*}
            \mu _T (\set{\Aa [f] = \infty}) = 0.
        \end{align*}
        \item If $p = 1$, $f \in L ^1 (\domain)$, then 
        \label{enu:weaktype-spacetime}
        \begin{align*}
            \nor{(\Aa [f]) ^{1 - \frac{D - d}{\alpha}}} _{L ^{1, \infty} _{t, x} (\Gamma _T)}\le C (\alpha, D, d, L) \nor{f} _{L ^1 (\domain)}.
        \end{align*}
        \item If $p > 1$, $f \in L ^p (\domain)$, then
        \label{enu:strongtype-spacetime}
        \begin{align*}
            \nor{(\Aa [f]) ^{1 - \frac{D - d}{p \alpha}}} _{L ^p _{t, x} (\Gamma _T)} \le C (p, \alpha, D, d, L) \nor{f} _{L ^p (\domain)}.
        \end{align*}
    \end{enumerate}
    Moreover, if $p \alpha > D - d + 2$, then 
    \begin{enumerate}[\upshape (a)]
        \setcounter{enumi}{3}
        \item For every $t \in (0, T)$, $\Aa [f] (t, \cdot)$ is $\mu _t$-measurable, and 
        \begin{align*}
            \mu _t (\set{\Aa [f] (t, \cdot) = \infty}) = 0.            
        \end{align*}

        \item If $f \in L ^1 (\domain)$, then for every $t \in (0, T)$, 
        \label{enu:weaktype-fixedt-spacetime}
        \begin{align*}
            \nor{[\Aa [f] (t)] ^{1 - \frac{D - d + 2}{\alpha}}} _{L ^{1, \infty} (\Gamma _t)}\le C (\alpha, D, d, L) \nor{f} _{L ^1 (\domain)}.
        \end{align*}
        \item If $f \in L ^p (\domain)$ for some $p \in (1, \infty]$, then for every $t \in (0, T)$, 
        \label{enu:strongtype-fixedt-spacetime}
        \begin{align*}
            \nor{[\Aa [f] (t)] ^{1 - \frac{D - d + 2}{p \alpha}}} _{L ^p (\Gamma _t)} \le C (p, \alpha, D, d, L) \nor{f} _{L ^p (\domain)}.
        \end{align*}
    \end{enumerate}
\end{theorem}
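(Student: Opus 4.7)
My plan is to mirror the proof of Theorem \ref{thm:avg-space} step by step, substituting Lemma \ref{lem:atp-spacetime} for Lemma \ref{lem:ap} at the key estimate. Measurability of $\Aa f$ with respect to $\mu_T$ in (a) and with respect to $\mu_t$ in (d) is immediate from the lower semicontinuity recorded in Lemma \ref{lem:Aa-basic-spacetime}. That $\{\Aa f = \infty\} = \Sing_\alpha(f)$ is $\mu_T$- or $\mu_t$-null will be recovered a posteriori from the quantitative trace estimates in (b), (c), (e) and (f) after localizing $f$ to compact sets, so I do not try to extract it directly from Proposition \ref{prop:partial-regularity-spacetime}.

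For the weak-type bound (b), I fix $\lambda > 0$, set $\rho_k = 2^{-k}\lambda^{-1/\alpha}$, and dyadically decompose $\{x \in \Gamma_t : \Aa f(t,x) > \lambda\} = \bigsqcup_{k\ge 0} A_t(\rho_{k+1})$. Lemma \ref{lem:atp-spacetime} bounds $\mathscr H^d(A_t(\rho))$ by $C\rho^{-D+d-2+\alpha}\int_{t-4\rho^2}^t\int_{\Rd} f$. Integrating in $t$ over $(0,T)$, Fubini yields an extra factor $4\rho^2$, so the dyadic sum in $k$ is geometric with ratio $\rho_k^{\alpha-(D-d)}$, which converges precisely when $\alpha > D-d$ and produces the claimed $L^{1,\infty}_{t,x}$ estimate. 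For (e), the same computation at a fixed time slice loses the $\rho^2$ factor, so the geometric ratio becomes $\rho_k^{\alpha-(D-d+2)}$ and convergence now demands the stronger hypothesis $\alpha > D-d+2$. This $\rho^2$ factor from the time direction of $Q_\rho$ is exactly what separates the two hypotheses $p\alpha > D-d$ and $p\alpha > D-d+2$.

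For the strong-type bounds (c) and (f) with $1 < p \le \infty$, I follow the truncation scheme from Theorem \ref{thm:avg-space}. The endpoint $p=\infty$ is immediate from $f_\rho \le \nor{f}_{L^\infty}$, which forces $\Sa f \ge \nor{f}_{L^\infty}^{-1/\alpha}$ pointwise. For $1 < p < \infty$, split $f = \tfrac12(f_1 + f_2)$ with $f_1 = 2f\ind*{f \le \lambda/2}$ and $f_2 = 2f\ind*{f > \lambda/2}$; the quasiconvexity from Lemma \ref{lem:Aa-basic-spacetime} combined with $\Aa f_1 \le \nor{f_1}_{L^\infty} \le \lambda$ gives $\{\Aa f > \lambda\} \subset \{\Aa f_2 > \lambda\}$. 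Applying the weak-type inequality from (b) or (e) to $f_2$, multiplying by the correct power of $\lambda$, integrating in $\lambda$ and swapping the order of integration by Fubini recovers $\int f^p$. The auxiliary restriction $\alpha > D-d$ (resp.\ $\alpha > D-d+2$) that appears in the truncation is removed by the nonlinear-scaling part of Lemma \ref{lem:Aa-basic-spacetime}: apply the argument to $\tilde f = f^{p'}$ and $\tilde\alpha = p'\alpha$ for some $p' < p$ with $p'\alpha > D-d$ (resp.\ $> D-d+2$), then use $\Aa f \le (\A_{\tilde\alpha}\tilde f)^{1/p'}$.

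I expect the main bookkeeping obstacle to be tracking the $\rho^2$ factor from the time direction of $Q_\rho$ through both the dyadic sum and the truncation step: it is precisely this factor that distinguishes the spacetime trace hypothesis in (a)–(c) from the fixed-time slice hypothesis in (d)–(f), and it must be carried consistently through the nonlinear rescaling $\tilde f = f^{p'}$ in order to land on the sharp exponents $1 - \frac{D-d}{p\alpha}$ and $1 - \frac{D-d+2}{p\alpha}$ on both sides of the trace inequality.
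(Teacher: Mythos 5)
Your proposal is correct and follows essentially the same route as the paper's proof: dyadic decomposition of $\set{\Aa f > \lambda}$ into the annuli $A_t(\rho_k)$, Lemma \ref{lem:atp-spacetime} for the annulus measure, a geometric series governed by the sign of $\alpha - (D-d)$ (respectively $\alpha - (D-d+2)$), the Marcinkiewicz-type truncation $f = \tfrac12(f_1+f_2)$ plus quasiconvexity for $1<p<\infty$, and the nonlinear substitution $\tilde f = f^{p'}$ to remove the auxiliary restriction. Your accounting of the $\rho^2$ gain from Fubini in time, and its absence at a fixed slice, is exactly the mechanism the paper uses to distinguish the two hypotheses.

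The one place you genuinely deviate is in parts (a) and (d). The paper invokes Proposition \ref{lem:at0-spacetime} directly, which gives a Hausdorff-dimension bound on the set of singular times and thence $\mu_t(S_t)=0$ for a.e.\ $t$ (resp.\ every $t$). You instead propose to recover $\mu_T(\set{\Aa f = \infty})=0$ a posteriori from (b), (c) after localization. This does work, but it requires one observation you left implicit: for a compact $K$ and a ball $B$ whose interior contains $K$, $\Sa(f\mathbf{1}_B)(t,x) = 0 \iff \Sa f(t,x) = 0$ for $(t,x) \in K$, because $f\mathbf{1}_B \le f$ forces $\Sa(f\mathbf{1}_B) \ge \Sa f$, while for $\rho$ small enough that $Q_\rho(t,x)\subset B$ the two averages coincide. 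With that in hand, (b)/(c) applied to $g=f\mathbf{1}_B \in L^p(\domain)$ give a finite $L^{1,\infty}$ or $L^p$ bound on the positive power of $\Aa g$ on $\Gamma_T$, so $\set{\Aa g = \infty}$ is $\mu_T$-null, and by the localization identity so is $\set{\Aa f = \infty}\cap K$; exhaust and conclude. Your route buys you the nullity without re-proving the covering argument of Proposition \ref{lem:at0-spacetime}, at the cost of this extra localization step; the paper's route is marginally more direct and also yields the sharper dimension estimate on $\mathcal T$, which is useful elsewhere.

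Everything else in the proposal tracks the paper verbatim and I see no gaps.
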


\begin{proof}
    Notice that for $\rho > 0$, $(t, x) \in \domain$, 
    \begin{align*}
        (2 \rho) ^{-\alpha} < \Aa [f] (t, x) \le \rho ^{-\alpha} &\iff 
        \rho \le \Sa [f] (t, x) < 2 \rho,
        \\
        \Aa [f] (t, x) = \infty &\iff \Sa [f] (t, x) = 0.
    \end{align*}
    Thus 
    \begin{align*}
        \set{(2 \rho) ^{-\alpha} < \Aa [f] (t) \le \rho ^{-\alpha}} \cap \Gamma &= A _t (\rho), \\
        \set{\Aa [f] (t) = \infty} \cap \Gamma &= S _t,
    \end{align*}
    where $A _t (\rho)$ is defined in Lemma \ref{lem:atp-spacetime}, and $S _t$ is defined in Lemma \ref{lem:at0-spacetime}. 

    \begin{enumerate}[\upshape (a)]
        \item The measurability of $\Aa [f]$ comes from semicontinuity. Lemma \ref{lem:at0-spacetime} \eqref{enu:at0-spacetime} implies $\mu _t (S _t) = 0$ for $\mathscr L ^1$-almost every $t \in [0, 1]$. Therefore 
        \begin{align*}
            \mu _T (\set{\Aa [f] = \infty}) = \int _0 ^T \mu _t (S _t) \d t = 0.
        \end{align*}

        \item Fix $\lambda > 0$, and pick $\rho _0 = \lambda ^{-\frac1\alpha}$, $\rho _k = 2 ^{-k} \rho _0$. Then 
        \begin{align*}
            \mu _T (\set{\Aa [f] > \lambda}) &= \sum _{k = 0} ^\infty \mu _T \pth{
                \set{\rho _k ^{-\alpha} < \Aa [f] \le \rho _{k + 1} ^{-\alpha}}
            } \\
            & \le \sum _{k = 0} ^\infty \int _0 ^T \mu _t (A _t (\rho _{k + 1})) \d t.
        \end{align*}
        By Lemma \ref{lem:atp-spacetime}, we know that  
        \begin{align*}
            \int _0 ^T \mu _t (A _t (\rho _{k + 1})) \d t & = C \rho _{k + 1} ^{-D + d - 2 + \alpha} \int _0 ^T \int _{t - (2 \rho _{k + 1}) ^2} ^t \int _{\R ^D} f (s, y) \d y \d s \d t \\
            & \le C \rho _{k + 1} ^{-D + d + \alpha} \int _{-\infty} ^T \int _{\R ^D} f (s, y) \dy \d s.
        \end{align*}
        Therefore, we conclude 
        \begin{align*}
            \mu _T (\set{\Aa [f] > \lambda}) &\le C \sum _{k = 0} ^\infty \rho _{k + 1} ^{-D + d + \alpha} \nor{f} _{L ^1 (\domain)} \\
            &\le C \rho _0 ^{\alpha - D + d} \nor{f} _{L ^1 (\domain)} = \frac C{\lambda ^{1 - \frac{D - d}{\alpha}}} \nor{f} _{L ^1 (\domain)}.
        \end{align*}
        Here we use $\alpha > D - d$ to compute the summation. We thus conclude that $\nor{(\Aa [f]) ^{1 - \frac{D - d}{\alpha}}} _{L ^{1, \infty} (\Gamma _T)} \le C \nor{f} _{L ^1 (\domain)}$.

        \item Since we again have semicontinuity and quasiconvexity as the spatial case, we can prove it using an identical Marcinkovich interpolation proof as in Theorem \ref{thm:avg-space} \eqref{enu:strongtype-space}. We repeat it here for completeness.
        
        For $p = \infty$, we fix any $x \in \Rd$, then 
        \begin{align*}
            f _\rho (t, x) < \nor{f} _{L ^\infty (\domain)} \le \rho ^{-\alpha}, \qquad \forall \rho < \nor{f} _{L ^\infty (\domain)} ^{-\frac1\alpha}.
        \end{align*}
        From the definition \eqref{def:E-spacetime} we know 
        \begin{align*}
            \Sa [f] (t, x) \ge \nor{f} _{L ^\infty (\domain)} ^{-\frac1\alpha} > 0,
        \end{align*}
        and correspondingly $\Aa [f] (t, x) \le \nor{f} _{L ^\infty (\domain)}$. 

        To show the boundedness for $p \in (1, \infty)$, we assume in addition that $\alpha > D - d$. This requirement can be lifted in the end by making a nonlinear transform using Lemma \ref{lem:nonlinear-scaling} as in the proof of Theorem \ref{thm:avg-space} \eqref{enu:strongtype-space}.
        
        For any $\lambda > 0$ we define 
        \begin{align*}
            f _1 = 2 f \ind*{f \le \frac\lambda2}, \qquad f _2 = 2 f \ind*{f > \frac\lambda2} .
        \end{align*}
        Then $f = \half \pth{f _1 + f _2}$. By the quasiconvexity in Lemma \ref{lem:Aa-basic-spacetime}, we conclude that 
        \begin{align*}
            \set{\Aa [f] > \lambda} \subset \set{\Aa [f _1] > \lambda} \cup \set{\Aa [f _2] > \lambda} = \set{\Aa [f _2] > \lambda},
        \end{align*}
        as $\Aa [f _1] \le \nor{f _1} _{L ^\infty (\domain)} \le \lambda$. Thus
        \begin{align*}
            \mu _T \pth{
                \set{\Aa [f] > \lambda}
            } &\le \mu _T \pth{
                \set{\Aa [f _2] > \lambda}
            } 
            \le \frac C{\lambda ^{1 - \frac {D - d}\alpha}} \nor{f _2} _{L ^1 (\domain)}.
        \end{align*}
        Here we used part \eqref{enu:weaktype-spacetime} on $f _2$. And the remaining is analogous to the classical setting:
        \begin{align*}
            \nor{(\Aa [f]) ^{1 - \frac{D - d}{p \alpha}}} _{L ^{p} (\Gamma _T)} ^{p} &= 
            \int _{\Gamma _T} (\Aa [f]) ^{p - \frac{D - d}{\alpha}} \d \mu _T \\
            &= C \int _0 ^\infty \lambda ^{p - \frac{D - d}{\alpha} - 1} \mu _T \pth{
                \set{\Aa [f] > \lambda}
            } \d \lambda
            \\
            & \le C \int _0 ^\infty \lambda ^{p - 2}\int _{\domain} f _2 \dx \dt \d \lambda 
            \\
            &= C \int _{\domain} \int _0 ^\infty f \ind*{f > \frac\lambda2} \lambda ^{p - 2} \d \lambda \dx \dt \\
            &\le C \int _{\domain} f ^p \dx .
        \end{align*}

        \item Again, the measurability comes from semicontinuity. Lemma \ref{lem:at0-spacetime} \eqref{enu:at0-spacetime-reg} implies $\mu _t (S _t) = 0$ for every $t \in (0, T)$. 
        
        \item The proof is analogous to part \eqref{enu:weaktype-spacetime}. Fix $\lambda > 0$, and pick $\rho _0 = \lambda ^{-\frac1\alpha}$, $\rho _k = 2 ^{-k} \rho _0$. Then 
        \begin{align*}
            \mu _t (\set{\Aa [f] (t) > \lambda}) &= \sum _{k = 0} ^\infty \mu _t \pth{
                \set{\rho _k ^{-\alpha} < \Aa [f] (t) \le \rho _{k + 1} ^{-\alpha}}
            } \\
            & \le \sum _{k = 0} ^\infty \mu _t (A _t (\rho _{k + 1})) \\
            & \le C \sum _{k = 0} ^\infty \rho _{k + 1} ^{-D + d - 2 + \alpha} \int _{t - (2 \rho _{k + 1}) ^2} ^t \intRd f (s, y) \d y \d s \\
            & \le C \rho _{0} ^{\alpha - D + d - 2} \nor{f} _{L ^1 (\domain)} \\
            & = \frac1{\lambda ^{1 - \frac{D - d + 2}{\alpha}}} \nor{f} _{L ^1 (\domain)}.
        \end{align*}
        Here we used $\alpha > D - d + 2$ to compute the summation.

        \item The proof is identical to part \eqref{enu:strongtype-spacetime} except the codimension is now $D - d + 2$ instead of $D - d$.
    \end{enumerate}
\end{proof}

\subsection{Anisotropic estimates}

By interpolation, we can extend the estimates in Theorem \ref{thm:avg-spacetime} to the following anisotropic bounds. 

\begin{proposition}
    \label{prop:anisotropic-spacetime}
    Let $f \in L ^p (\domain)$, $p \ge 1$. If $p \alpha > D - d + 2$, then for $0 < \gamma \le 1$
    \begin{align}
        \label{eqn:t>x-spacetime}
        \nor{(\Aa [f]) ^{1 - \frac{D - d + 2 \gamma}{\alpha}}} _{L ^{\frac1{1 - \gamma}, \infty} _t L ^{1, \infty} _x (\Gamma _T)} &\le C (\gamma, \alpha, D, d, L) \nor{f} _{L ^1 (\domain)}, & p = 1. \\
        \label{eqn:t>x-spacetime-p}
        \nor{(\Aa [f]) ^{1 - \frac{D - d + 2 \gamma}{p\alpha}}} _{L ^{\frac p{1 - \gamma}} _t L ^{p} _x (\Gamma _T)} &\le C (\gamma, p, \alpha, D, d, L) \nor{f} _{L ^p (\domain)}, & p > 1.
    \end{align}
    If $p\alpha > D$, then for $0 \le \beta < 1$ 
    \begin{align}
        \label{eqn:t<x-spacetime}
        \nor{(\Aa [f]) ^{1 - \frac{D - \beta d}{\alpha}}} _{L ^{1, \infty} _t L ^{\frac1{\beta}, \infty} _x (\Gamma _T)} &\le C (\gamma, \alpha, D, d, L) \nor{f} _{L ^1 (\domain)}, & p = 1. \\
        \label{eqn:t<x-spacetime-p}
        \nor{(\Aa [f]) ^{1 - \frac{D - \beta d}{p\alpha}}} _{L ^{p} _t L ^{\frac p{\beta}} _x (\Gamma _T)} &\le C (\gamma, p, \alpha, D, d, L) \nor{f} _{L ^p (\domain)}, & p > 1.
    \end{align}
    If $p\alpha > D + 2$, then for $0 < \gamma \le 1$, $0 \le \beta \le 1$ 
    \begin{align}
        \label{eqn:t?x-spacetime}
        \nor{(\Aa [f]) ^{1 - \frac{D - \beta d + 2 \gamma}{\alpha}}} _{L ^{\frac1{1 - \gamma}, \infty} _t L ^{\frac1{\beta}, \infty} _x (\Gamma _T)} \le C (\gamma, \beta, \alpha, D, d, L) \nor{f} _{L ^1 (\domain)}, p = 1. \\
        \label{eqn:t?x-spacetime-p}
        \nor{(\Aa [f]) ^{1 - \frac{D - \beta d + 2 \gamma}{p\alpha}}} _{L ^{\frac p{1 - \gamma}} _t L ^{\frac p{\beta}} _x (\Gamma _T)} \le C (\gamma, \beta, p, \alpha, D, d, L) \nor{f} _{L ^p (\domain)}, p > 1.
    \end{align}
\end{proposition}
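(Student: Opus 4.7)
My plan is to split the six estimates into the $p=1$ weak-type group (\eqref{eqn:t>x-spacetime}, \eqref{eqn:t<x-spacetime}, \eqref{eqn:t?x-spacetime}), which I would derive directly from Lemma \ref{lem:atp-spacetime}, and the $p>1$ strong group, which I would obtain by real interpolation between Theorems \ref{thm:avg-spacetime}(c) and \ref{thm:avg-spacetime}(f).

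For $p=1$, I sum Lemma \ref{lem:atp-spacetime} dyadically with $\rho_k = 2^{-k}\Lambda^{-1/\alpha}$ to get
\begin{align*}
\mu_t(\set{\Aa f(t) > \Lambda}) \le C \sum_{k\ge 0} \rho_{k+1}^{-D+d-2+\alpha} \int_{t-4\rho_{k+1}^2}^t F(s)\,ds,
\end{align*}
with $F(s) = \int_{\Rd} f(s, y)\,dy$. The key step is the convex interpolation $\int_{t-4\rho^2}^t F \le (4\rho^2 MF(t))^{1-\gamma}\|F\|_{L^1}^\gamma$ (from $\min(A,B)\le A^{1-\gamma}B^\gamma$), where $MF$ is the one-sided parabolic maximal function $MF(t)=\sup_{r>0} r^{-2}\int_{t-r^2}^t F$. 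Summing the resulting geometric series---convergent for $\alpha > D-d+2\gamma$---gives the pointwise-in-$t$ estimate $\mu_t(\set{\Aa f(t) > \Lambda}) \le C \Lambda^{-\sigma_\gamma} MF(t)^{1-\gamma} \|F\|_{L^1}^\gamma$ with $\sigma_\gamma = 1-(D-d+2\gamma)/\alpha$, which becomes \eqref{eqn:t>x-spacetime} after taking $L^{1/(1-\gamma),\infty}_t$ norm and invoking $\|MF\|_{L^{1,\infty}_t} \le \|F\|_{L^1}$. For \eqref{eqn:t<x-spacetime} and \eqref{eqn:t?x-spacetime}, I would additionally derive the pointwise-in-$x$ bound $\Aa f(t,x)^{1-(D+2\gamma)/\alpha} \le C MF(t)^{1-\gamma}\|F\|_{L^1}^\gamma$ (valid when $\alpha > D+2\gamma$) from the Jensen estimate $f_\rho(t,x) \le C\rho^{-D-2\gamma} MF(t)^{1-\gamma}\|F\|_{L^1}^\gamma$ by solving the threshold inequality for $\Sa f$, then interpolate this $L^\infty_x$ bound against the weak $L^{\sigma_\gamma,\infty}_x$ bound via Hölder's inequality for Lorentz norms. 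The exponent arithmetic $(1-\theta)/\sigma_\gamma + \theta \alpha/(\alpha-D-2\gamma) = 1/(1-(D-\beta d+2\gamma)/\alpha)$ at the appropriate $\theta$ delivers the desired $L^{1/\beta,\infty}_x$ index.

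For $p>1$, I would rewrite Theorems \ref{thm:avg-spacetime}(c) and (f) as bounds on $\Aa f$ itself: $\|\Aa f\|_{L^{p\eta_0}_{t,x}} \le C\|f\|_{L^p}^{1/\eta_0}$ and $\|\Aa f\|_{L^\infty_t L^{p\eta_1}_x} \le C\|f\|_{L^p}^{1/\eta_1}$ with $\eta_j = 1-(D-d+2j)/(p\alpha)$. Real interpolation of the mixed strong norms (iterated Hölder) at the parameter $\theta = \gamma\eta_1/\eta_\gamma$ produces indices $p_\theta = p\eta_\gamma/(1-\gamma)$ and $q_\theta = p\eta_\gamma$, and the identity $(1-\theta)/\eta_0 + \theta/\eta_1 = 1/\eta_\gamma$ produces $\|\Aa f\|_{L^{p\eta_\gamma/(1-\gamma)}_t L^{p\eta_\gamma}_x} \le C \|f\|_{L^p}^{1/\eta_\gamma}$, equivalent to \eqref{eqn:t>x-spacetime-p}. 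The remaining $p>1$ estimates follow analogously; the $L^\infty_x$ endpoints needed for \eqref{eqn:t<x-spacetime-p} and \eqref{eqn:t?x-spacetime-p} come from the pointwise-in-$x$ bound $\Aa f(t,x)^{p\eta_0^{\beta=0}} \le C MF_p(t)$ (for $p\alpha>D$, via Jensen with $F_p(s) = \int f(s)^p$) combined with the Hardy-Littlewood maximal bound in $L^r_t$ for $r>1$.

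The hard part will be this $p>1$ interpolation: because the exponent $\eta_\gamma$ on $\Aa f$ varies with $\gamma$ and $\beta$, one cannot interpolate $(\Aa f)^{\eta_\gamma}$ directly between the endpoints, so the interpolation must be carried out on $\Aa f$ itself and both the Lorentz indices and the overall power must be matched simultaneously via the arithmetic identities above. A secondary difficulty is establishing the strong $\beta<1$ endpoints at the $p>1$ level: unlike the $p=1$ case where the weak $(1,1)$ bound on $M$ suffices to absorb the maximal function factor, the $p>1$ case requires the strong $L^r_t$-boundedness of $M$ for $r>1$, which constrains the range of $\beta$ for which the pointwise-in-$x$ bound can be promoted to a usable time-norm endpoint.
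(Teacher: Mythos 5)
Your $p=1$ argument is a genuine and arguably simpler alternative to the paper's. You derive the pointwise-in-$t$ bound $\mu_t(\set{\Aa f(t)>\Lambda}) \le C\Lambda^{-\sigma_\gamma}MF(t)^{1-\gamma}\nor{F}_{L^1_t}^\gamma$ directly by dyadically summing Lemma \ref{lem:atp-spacetime} and using the elementary inequality $\int_{t-4\rho^2}^t F \le (4\rho^2\,MF(t))^{1-\gamma}\nor{F}_{L^1_t}^\gamma$, then absorb $MF$ with the weak-$(1,1)$ bound on the one-sided maximal function. The paper instead produces the endpoint cases $\gamma=1$ and $\beta=0$ from Theorem \ref{thm:avg-spacetime} and interpolates the iterated weak norms against the joint weak $L^{1,\infty}_{t,x}$ norm through Lemma \ref{lem:lorentz-interpolation}, an interpolation it has to prove separately in Appendix \ref{app:lorentz}. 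Your route bypasses that lemma entirely, and I verified the exponent arithmetic: with $\theta=\gamma\eta_1/\eta_\gamma$ the identity $\eta_\gamma=(1-\gamma)\eta_0+\gamma\eta_1$ gives exactly the indices in \eqref{eqn:t>x-spacetime-p}, so your iterated Hölder interpolation between Theorems \ref{thm:avg-spacetime}(c) and (f) for that estimate is correct and essentially matches the paper.

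There is, however, a genuine gap in your treatment of the $\beta=0$ endpoint for $p>1$, which you need in order to reach \eqref{eqn:t<x-spacetime-p} and \eqref{eqn:t?x-spacetime-p}. Your pointwise bound $\Aa f(t,x)^{\,p(1-D/(p\alpha))}\le C\,MF_p(t)$ is correct, but then you want to apply the Hardy--Littlewood maximal bound ``in $L^r_t$ for $r>1$''. Since $F_p(s)=\nor{f(s)}_{L^p_x}^p$ lies only in $L^1_t$ when $f\in L^p(\domain)$, and $M$ is not bounded on $L^1_t$, this route yields only $\nor{(\Aa f)^{1-D/(p\alpha)}}_{L^{p,\infty}_t L^\infty_x}\le C\nor{f}_{L^p}$, a weak-in-$t$ norm. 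Interpolating this $L^{p,\infty}_t L^\infty_x$ endpoint against the strong $L^p_{t,x}$ endpoint at $\beta=1$ by Lorentz--Hölder gives the time index $r_\theta=p_1/\theta>p_\theta$, i.e.\ norms $L^{p_\theta,r_\theta}_t$ strictly weaker than the strong $L^{p_\theta}_t$ norm that \eqref{eqn:t<x-spacetime-p} and \eqref{eqn:t?x-spacetime-p} assert. You acknowledge this as a ``secondary difficulty'' but do not resolve it. The paper avoids the maximal function entirely here: it takes $\Gamma_t=\set{x(t)}$ a singleton near the $x$-supremum (by measurable selection) and applies Theorem \ref{thm:avg-spacetime}(c) with $d=0$, whose internal Marcinkiewicz-by-hand argument (via the quasiconvexity of $\Aa$) already produces a strong $L^p_t L^\infty_x$ bound. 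To repair your $p>1$ proof, you should adopt this measurable-selection endpoint in place of the maximal-function one.
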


\begin{proof}
    The case $\gamma = 1$ and $\beta = 0$ can be derived from Theorem \ref{thm:avg-spacetime} directly. Indeed, \eqref{eqn:t>x-spacetime} and \eqref{eqn:t>x-spacetime-p} with $\gamma = 1$ are direct consequences of \eqref{enu:weaktype-fixedt-spacetime} and \eqref{enu:strongtype-fixedt-spacetime}. Moreover, \eqref{eqn:t<x-spacetime} and \eqref{eqn:t<x-spacetime-p} with $\beta = 0$ are direct consequences of \eqref{enu:weaktype-spacetime} and \eqref{enu:strongtype-spacetime} with $d = 0$: we can choose $\Gamma _t$ to be a singleton $\set{x (t)}$ such that $\Aa [f] (t, x (t)) > \nor{\Aa [f] (t, \cdot)} _{L ^\infty (\Rd)} - \e$ for arbitrary small $\e$ (recall that there is no continuity requirement on the map $t \mapsto \Gamma _t$). Existence of such a choice is guaranteed by the measurable selection theorem.

    We easily derive \eqref{eqn:t>x-spacetime-p} and \eqref{eqn:t<x-spacetime-p} for $\gamma, \beta \in (0, 1)$ by interpolation. Indeed, when $\gamma = 0$ and $\beta = 1$, both degenerates to Theorem \ref{thm:avg-spacetime} \eqref{enu:strongtype-spacetime}.

    To derive \eqref{eqn:t>x-spacetime} and \eqref{eqn:t<x-spacetime} for $\gamma, \beta \in (0, 1)$, we can also use interpolation. 
    As explained in Appendix \ref{app:lorentz}, iterated weak norm and joint weak norm are not comparable. However, interpolation is possible between them. Denote $g = (\Aa [f]) ^{1 - \frac{D - d}{\alpha}}$. Since $\alpha > D - d$ in both cases, Theorem \ref{thm:avg-spacetime} \eqref{enu:weaktype-spacetime} shows 
    \begin{align*}
        \nor{g} _{L ^{1, \infty} _{t, x} (\Gamma _T)} \le C \nor{f} _{L ^1 (\domain)}.    
    \end{align*}
    \eqref{eqn:t>x-spacetime} with $\gamma = 1$ and \eqref{eqn:t<x-spacetime} with $\beta = 0$ translate to 
    \begin{align*}
        \nor{g} _{L ^\infty _t L ^{\frac{\alpha - D + d - 2}{\alpha - D + d}, \infty} _x (\Gamma _T)} &\le C \nor{f} _{L ^1 (\domain)} ^{\frac{\alpha - D + d}{\alpha - D + d - 2}}, \\
        \nor{g} _{L ^{\frac{\alpha - D}{\alpha - D + d}, \infty} _t L ^\infty _x (\Gamma _T)} &\le C \nor{f} _{L ^1 (\domain)} ^{\frac{\alpha - D + d}{\alpha - D}}.
    \end{align*}
    We now interpolate using Lemma \ref{lem:lorentz-interpolation}. Here $p _0 = \frac{\alpha - D}{\alpha - D + d}$, $q _0 = \frac{\alpha - D + d - 2}{\alpha - D + d}$. For some $p, q$ satisfying \eqref{eqn:pq-range} to be determined, we have 
    \begin{align*}
        \nor{g} _{L ^{p, \infty} _t L ^{q, \infty} _x (\Gamma _T)} &\le C \nor{f} _{L ^1 (\domain)} ^{\frac1p} \nor{f} _{L ^1 (\domain)} ^{\frac{\alpha - D + d}{\alpha - D + d - 2} (1 - \frac1p)} \\
        &= C \nor{f} _{L ^1 (\domain)} ^{\frac1p + \frac1{q _0} (1 - \frac1p)} = C \nor{f} _{L ^1 (\domain)} ^\frac1q.
    \end{align*}
    Therefore, 
    \begin{align*}
        \nor{g ^q} _{L ^{\frac pq, \infty} _t L ^{1, \infty} _x (\Gamma _T)} \le C \nor{f} _{L ^1 (\domain)}.
    \end{align*}
    Set $q = \frac{\alpha - D + d- 2 \gamma}{\alpha - D + d}$, then $g ^q = (\Aa [f]) ^{1 - \frac{D - d + 2 \gamma}{\alpha}}$, and from \eqref{eqn:pq-range} we know 
    \begin{align*}
        \frac{1 - q _0}{p} + \frac{q _0}{q} = 1 \implies \frac pq = \frac{1 - q _0}{q - q _0} = \frac1{1 - \gamma}.
    \end{align*}

    Similarly, for some $p, q$ satisfying \eqref{eqn:pq-range-2} to be determined, we have
    \begin{align*}
        \nor{g} _{L ^{p, \infty} _t L ^{q, \infty} _x (\Gamma _T)} &\le C \nor{f} _{L ^1 (\domain)} ^{\frac1q} \nor{f} _{L ^1 (\domain)} ^{\frac{\alpha - D + d}{\alpha - D} (1 - \frac1q)} \\
        &= C \nor{f} _{L ^1 (\domain)} ^{\frac1q + \frac1{p _0} (1 - \frac1q)} = C \nor{f} _{L ^1 (\domain)} ^\frac1p.
    \end{align*}
    Therefore, 
    \begin{align*}
        \nor{g ^p} _{L ^{1, \infty} _t L ^{\frac qp, \infty} _x (\Gamma _T)} \le C \nor{f} _{L ^1 (\domain)}.
    \end{align*}
    Set $p = \frac{\alpha - D + (1 - \gamma) d}{\alpha - D + d}$, then $g ^p = (\Aa [f]) ^{1 - \frac{D - (1 - \gamma) d}{\alpha}}$, and from \eqref{eqn:pq-range-2} we know
    \begin{align*}
        \frac{p _0}{p} + \frac{1 - p _0}{q} = 1 \implies \frac qp = \frac{1 - p _0}{p - p _0} = \frac1{\beta}.
    \end{align*}
    This proves \eqref{eqn:t>x-spacetime} and \eqref{eqn:t<x-spacetime}.

    Finally, \eqref{eqn:t?x-spacetime} is an interpolation of \eqref{eqn:t>x-spacetime} and \eqref{eqn:t>x-spacetime} with $d = 0$, while \eqref{eqn:t?x-spacetime-p} is an interpolation of \eqref{eqn:t>x-spacetime-p} and \eqref{eqn:t>x-spacetime-p} with $d = 0$, 
\end{proof}

\eqref{eqn:t>x-spacetime} can be slightly improved by the following proposition to \eqref{eqn:t>x-strong-spacetime}, where we achieve strong spatial integrability instead of weak integrability, and enlarge the range of $\alpha$. We could also work with anisotropic norms on $f$. 
\begin{proposition}
    \label{prop:anisotropic-2-spacetime}
    Let $0 < \gamma < \theta < 1 \le p \le \infty$ satisfy $p \alpha > {D - d + 2 \gamma}{}$. Then it holds that 
    \begin{align}
        \label{eqn:t>x-strong-spacetime}
        \nor{(\Aa [f]) ^{1 - \frac{D - d + 2 \gamma}{p \alpha}}} _{L ^{\frac p{1 - \gamma}, \infty} _t L ^p _x (\Gamma _T)} &\le C (p, \gamma, \alpha, D, d, L) \nor{f} _{L ^p (\domain)} , \\
        \label{eqn:anisotropic-f-spacetime}
        \nor{(\Aa [f]) ^{1 - \frac{D - d + 2 \gamma}{p \alpha}}} _{L ^{\frac p{\theta - \gamma}} _t L ^p _x (\Gamma _T)} &\le C (\theta, p, \gamma, \alpha, D, d, L) \nor{f} _{L ^\frac p\theta _t L ^p _x (\domain)}
        ,\\
        \label{eqn:anisotropic-f-2-spacetime}
        \nor{(\Aa [f]) ^{1 - \frac{D - d + 2 \gamma}{p \alpha}}} _{L ^{\infty} _t L ^{p, \infty} _x (\Gamma _T)} &\le C (p, \gamma, \alpha, D, d, L) \nor{f} _{L ^{\frac p\gamma} _t L ^p _x (\domain)}.
    \end{align}
    \eqref{eqn:anisotropic-f-2-spacetime} also holds for $\gamma = 0, 1$.
    In addition, if $p \alpha > {D + 2 \gamma}$, then for $\beta \in [0, 1]$ it holds that 
    \begin{align}
        \label{eqn:t>x-strong-2-spacetime}
        \nor{(\Aa [f]) ^{1 - \frac{D - \beta d + 2 \gamma}{p \alpha}}} _{L ^{\frac p{1 - \gamma}, \infty} _t L ^\frac p\beta _x (\Gamma _T)} &\le C (p, \gamma, \alpha, D, d, L) \nor{f} _{L ^p (\domain)} , \\
        \label{eqn:anisotropic-f-3-spacetime}
        \nor{(\Aa [f]) ^{1 - \frac{D - \beta d + 2 \gamma}{p \alpha}}} _{L ^{\frac p{\theta - \gamma}} _t L ^{\frac p \beta} _x (\Gamma _T)} &\le C (\theta, p, \gamma, \alpha, D, d, L) \nor{f} _{L ^\frac p\theta _t L ^p _x (\domain)}, \\
        \label{eqn:anisotropic-f-4-spacetime}
        \nor{(\Aa [f]) ^{1 - \frac{D - \beta d + 2 \gamma}{p \alpha}}} _{L ^{\infty} _t L ^{\frac p\beta, \infty} _x (\Gamma _T)} &\le C (p, \gamma, \alpha, D, d, L) \nor{f} _{L ^{\frac p\gamma} _t L ^p _x (\domain)}.
    \end{align}
    \eqref{eqn:anisotropic-f-4-spacetime} also holds for $\gamma = 0, 1$.
\end{proposition}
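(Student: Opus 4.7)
The plan is to extend the techniques of Theorem \ref{thm:avg-spacetime} by incorporating the anisotropic hypothesis on $f$ through Hölder's inequality in time, combined with the one-sided Hardy--Littlewood maximal function.

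The key single-scale estimate is obtained by applying Hölder's inequality in the temporal variable to Lemma \ref{lem:atp-spacetime}: for any $\theta \in (0, 1]$,
\begin{align*}
    \mu _t (A _t (\rho)) \le C \rho ^{-D + d + p \alpha - 2 \theta} \nor{f} _{L ^{p/\theta} (t - 4 \rho ^2, t; L ^p _x)} ^p.
\end{align*}
Setting $\theta = \gamma$ and bounding the local norm by the global one, then summing over dyadic scales $\rho _k = 2 ^{-k} \lambda ^{-1 / \alpha}$ (geometric series converging precisely because $p \alpha > D - d + 2 \gamma$), we obtain for every $t$,
\begin{align*}
    \mu _t (\set{(\Aa f) (t) > \lambda}) \le C \lambda ^{-p \sigma} \nor{f} _{L ^{p/\gamma} _t L ^p _x} ^p, \qquad \sigma = 1 - \tfrac{D - d + 2 \gamma}{p \alpha}.
\end{align*}
This is precisely \eqref{eqn:anisotropic-f-2-spacetime}; the endpoints $\gamma = 0$ and $\gamma = 1$ follow by using $L ^\infty _s$ directly on the time integral or by invoking Lemma \ref{lem:atp-spacetime} without Hölder, respectively.

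For the strong spatial $L ^p _x$ estimates \eqref{eqn:anisotropic-f-spacetime} and \eqref{eqn:t>x-strong-spacetime}, I replace the global norm $\nor{f} _{L ^{p/\gamma} _t L ^p _x}$ in the single-scale estimate by the one-sided Hardy--Littlewood maximal function in $t$,
\begin{align*}
    \nor{f} _{L ^{p/\gamma} (t - 4 \rho ^2, t; L ^p _x)} ^p \le C \rho ^{2 \gamma} \pth*{\mm _t \pth{\nor{f (\cdot)} _{L ^p _x} ^{p/\gamma}} (t)} ^\gamma,
\end{align*}
giving a pointwise-in-$t$ weak spatial bound with a maximal-function factor on the right. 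To upgrade the spatial norm from weak $L ^{p, \infty} _x$ to strong $L ^p _x$, I follow the Marcinkiewicz scheme from the proof of Theorem \ref{thm:avg-spacetime}(c): at each level $\lambda$, split $f = \half (f _1 + f _2)$ with $f _1 = 2 f \ind{f \le \lambda / 2}$, so $\nor{f _1} _{L ^\infty} \le \lambda$ and hence $\Aa f _1 \le \lambda$ by the template theorem. The quasiconvexity of $\Aa$ (Lemma \ref{lem:Aa-basic-spacetime}) then confines $\set{\Aa f > \lambda}$ to $\set{\Aa f _2 > \lambda}$, and layer-cake integration in $\lambda$ combined with Fubini yields strong spatial integrability. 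Integrating in $t$ using the $L ^{1/(\theta - \gamma)} _t$-boundedness of $\mm _t$ (valid because $\theta - \gamma \in (0, 1)$, i.e., the exponent is strictly greater than $1$) produces the target $L ^{p/(\theta - \gamma)} _t$ norm in \eqref{eqn:anisotropic-f-spacetime}. The weak endpoint $\theta = 1$ in \eqref{eqn:t>x-strong-spacetime} uses only the weak-type $L ^1 _t$ bound of $\mm _t$, yielding weak $L ^{p/(1 - \gamma), \infty} _t$ in time.

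The $\beta > 0$ versions \eqref{eqn:t>x-strong-2-spacetime}--\eqref{eqn:anisotropic-f-4-spacetime} follow from the measurable selection argument of Proposition \ref{prop:anisotropic-spacetime}: for each $t$, choose $x (t) \in \Gamma _t$ nearly maximizing $\Aa f (t, \cdot)$ to reduce to the $d = 0$ setting, and interpolate with the $\beta = 0$ bounds using Lemma \ref{lem:lorentz-interpolation}. The main obstacle is coordinating the spatial Marcinkiewicz interpolation (upgrading weak $L ^{p, \infty} _x$ to strong $L ^p _x$) with the temporal maximal function bound (yielding $L ^{p/(\theta - \gamma)} _t$): the truncation level $\lambda$ for $f _2$, the anisotropic norm $L ^{p/\theta} _t L ^p _x$ under which $f _2$ is controlled, and the target exponent for $\mm _t$ must all be chosen to match simultaneously. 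Handling the weak endpoint $\theta = 1$ in \eqref{eqn:t>x-strong-spacetime} is particularly delicate, since only weak-type $L ^1 _t$ boundedness of $\mm _t$ is available there.
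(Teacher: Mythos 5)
Your argument for \eqref{eqn:anisotropic-f-2-spacetime} is correct and essentially matches the paper's (a Hölder/Jensen step in time, then summing a geometric series --- the paper instead uses the annular level sets $A_t(\rho)$ to avoid the sum, but these are equivalent here). The reduction of \eqref{eqn:t>x-strong-2-spacetime}--\eqref{eqn:anisotropic-f-4-spacetime} to the $\beta = 0,1$ cases by measurable selection and interpolation is also what the paper does. The genuine gap is in your treatment of \eqref{eqn:t>x-strong-spacetime} and \eqref{eqn:anisotropic-f-spacetime}.

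First, the object you apply $\mm_t$ to is ill-posed in the generality required. Under the hypothesis $f \in L^{p/\theta}_t L^p_x(\Omega_T)$ with $0 < \gamma < \theta < 1$, the function $\nor{f(\cdot)}_{L^p_x}^{p/\gamma}$ lies only in $L^{\gamma/\theta}_t$, and $\gamma/\theta < 1$; in general it is not locally integrable, so $\mm_t\bigl(\nor{f(\cdot)}_{L^p_x}^{p/\gamma}\bigr)$ can be identically $+\infty$. (The same problem occurs for \eqref{eqn:t>x-strong-spacetime}, where the hypothesis is only $f\in L^p_{t,x}$.) Second, even pretending the maximal function is finite, your pointwise-in-$t$ bound has the wrong $\lambda$-exponent: inserting $\nor{f}_{L^{p/\gamma}(t-4\rho^2, t; L^p_x)}^p \le C\rho^{2\gamma}\bigl(\mm_t(\cdot)\bigr)^\gamma$ into $\mu_t(A_t(\rho)) \le C\rho^{-D+d+p\alpha-2\gamma}\nor{f}_{L^{p/\gamma}(t-4\rho^2,t;L^p_x)}^p$ cancels the $\rho^{\pm 2\gamma}$ factors and leaves $\mu_t(A_t(\rho))\le C\rho^{-D+d+p\alpha}(\mm_t(\cdot))^\gamma$. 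Summing the dyadic scales then yields decay $\lambda^{-(p\alpha - D + d)/\alpha}$, not $\lambda^{-p\sigma}$ with $\sigma = 1 - \frac{D-d+2\gamma}{p\alpha}$: the maximal function discards the $\lambda$-dependent length of the time window $(t-4\lambda^{-2/\alpha}, t)$, which is precisely what produced the extra $2\gamma/\alpha$ in the exponent. Third, the Marcinkiewicz split $f = \tfrac12(f_1 + f_2)$ with $f_2 = 2f\ind{f > \lambda/2}$ introduces a $\lambda$-dependence into $\mm_t\bigl(\nor{f_2(\cdot)}_{L^p_x}^{p/\gamma}\bigr)$ that cannot simply be moved outside the $\lambda$-integration; you acknowledge the difficulty but offer no mechanism to resolve it.

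The paper sidesteps all three problems by keeping the $\lambda$-dependent time interval explicit and never introducing a maximal function. It writes
\begin{align*}
    \int_{\Gamma_t} (\Aa f)^q \,d\mu_t = \frac{1}{2^{\alpha q} - 1}\int_0^\infty q\lambda^{q-1}\mu_t\bigl(A_t(\lambda^{-1/\alpha})\bigr)\,d\lambda,
\end{align*}
an identity obtained by the scaling trick $\lambda \mapsto 2^{-\alpha}\lambda$, which both avoids the geometric sum (and hence avoids any Marcinkiewicz decomposition of $f$) and directly produces the strong $L^p_x$ norm $\int(\Aa f)^q\,d\mu_t$. Applying Lemma \ref{lem:atp-spacetime}, the $\lambda$-integral Fubinis into $\int_0^\infty s^{-1+\gamma}\nor{f(t-s)}_{L^p}^p\,ds$; the appearance of the one-sided Riesz kernel $s^{-1+\gamma}\in L^{1/(1-\gamma),\infty}(\R_+)$, rather than a maximal function, is the crucial point. \eqref{eqn:t>x-strong-spacetime} and \eqref{eqn:anisotropic-f-spacetime} then follow at once from the weak and strong Young convolution inequalities of Lemma \ref{lem:convolution}. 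If you wanted to salvage a maximal-function route you would essentially be reproving the Hardy--Littlewood--Sobolev inequality for this kernel; it is much simpler to use the kernel directly.
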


\begin{proof}

    The case $p = \infty$ reduces to Theorem \ref{thm:avg-spacetime} \eqref{enu:strongtype-spacetime}, so we assume $p < \infty$ from now on. We will prove \eqref{eqn:t>x-strong-spacetime}-\eqref{eqn:anisotropic-f-2-spacetime} first, and prove \eqref{eqn:t>x-strong-2-spacetime}-\eqref{eqn:anisotropic-f-4-spacetime} using interpolation.
    
    \noindent {\bf Proof of \eqref{eqn:t>x-strong-spacetime}-\eqref{eqn:anisotropic-f-2-spacetime}.}  
    Denote $q = p (1 - \frac{D - d + 2 \gamma}{p\alpha}) > 0$. Then
    
    \begin{align*}
        \int _{\Gamma _t} (\Aa [f]) ^q (t, x) \d \mu _t &= \int _0 ^\infty q \lambda ^{q - 1} \mu _t (\set{\Aa [f] > \lambda}) \d \lambda \\
        &= 2 ^{-\alpha q} \int _0 ^\infty q \lambda ^{q - 1} \mu _t (\set{\Aa [f] > 2 ^{-\alpha} \lambda}) \d \lambda.
    \end{align*} 
    Taking the difference of two integrals yields (recall $A _t (\rho)$ defined in Lemma \ref{lem:atp-spacetime})
    \begin{align*}
        \int _{\Gamma _t} (\Aa [f]) ^q (t, x) \mu _t (\dx) &= \frac1{2 ^{\alpha q} - 1} \int _0 ^\infty q \lambda ^{q - 1} \mu _t (\set{2 ^{-\alpha} \lambda < \Aa [f] \le \lambda}) \d \lambda 
        \\
        &\le \frac1{2 ^{\alpha q} - 1} \int _0
        ^\infty q \lambda ^{q - 1} \mu _t (A _t (\lambda ^{-\frac1\alpha})) \d \lambda 
        \\
        &\le C \int _0
        ^\infty \lambda ^{q - 1} \lambda ^{\frac{D - d + 2 - p \alpha}\alpha} \int _{t - \lambda ^{-\frac2\alpha}} ^{t} \intRd f (s, x) ^p \dx \d s \d \lambda 
        \\
        &= C \int _0 ^\infty \int _0 
        ^{s ^{-\frac\alpha2}} \lambda ^{\frac{2}\alpha  (1 - \gamma) - 1} \d \lambda \int _{\Rd} f (t - s, x) ^p \d x \d s 
        \\
        &= \frac{C \alpha}{2 (1 - \gamma)} \int _0 ^\infty s ^{-1 + \gamma} \nor{f (t - s)} _{L ^p (\Rd)} ^p \d s.
    \end{align*}
    Since $t \mapsto t ^{-1 + \gamma}$ is in $L ^{\frac1{1 - \gamma}, \infty} (\R _+)$, the generalized Young's convolution inequality in Lemma \ref{lem:convolution} directly implies
    \begin{align*}
        \nor{(\Aa [f]) ^q} _{L ^{\frac1{1 - \gamma}, \infty} _t L ^1 _{x} (\Gamma _T)} &\le C \nor{f} _{L ^p _t L ^p _x (\domain)} ^p, \\
        \nor{(\Aa [f]) ^q} _{L ^{\frac1{\theta - \gamma}} _t L ^1 _x (\Gamma _T)} &\le C \nor{f} _{L ^\frac p\theta _t L ^p _x (\domain)} ^p.
    \end{align*}
    This completes the proof for \eqref{eqn:t>x-strong-spacetime}-\eqref{eqn:anisotropic-f-spacetime}.

    To prove \eqref{eqn:anisotropic-f-2-spacetime}, note that for any $\lambda > 0$, 
    \begin{align*}
        \mu _t (\set{2 ^{-\alpha} \lambda < \Aa [f] (t) \le \lambda}) 
        &\le C \lambda ^{\frac{D - d + 2 - p \alpha}\alpha} \int _{t - \lambda ^{-\frac2\alpha}} ^{t} \intRd f (s, x) ^p \dx \d s \d \lambda \\
        &= C \lambda ^{\frac{D - d - p \alpha}\alpha} \fint _{t - \lambda ^{-\frac2\alpha}} ^{t} \nor{f (s)} _{L ^p (\Rd)} ^p \d s.
    \end{align*}
    If $\gamma \in (0, 1]$, then by Jensen's inequality, 
    \begin{align*}
        \mu _t (\set{2 ^{-\alpha} \lambda < \Aa [f] (t) \le \lambda}) 
        &\le C \lambda ^{\frac{D - d - p \alpha}\alpha} \pth{\fint _{t - \lambda ^{-\frac2\alpha}} ^{t} \nor{f (s)} _{L ^p (\Rd)} ^{\frac p\gamma} \d s } ^\gamma \\
        &= C \lambda ^{\frac{D - d + 2 \gamma - p \alpha}\alpha} \pth{\int _{t - \lambda ^{-\frac2\alpha}} ^{t} \nor{f (s)} _{L ^p (\Rd)} ^{\frac p\gamma} \d s } ^\gamma \\
        &\le C \lambda ^{-q} \nor{f} _{L ^{\frac p \gamma} _t L ^p _x (\domain)} ^p.
    \end{align*}
    If $\gamma = 0$, then 
    \begin{align*}
        \mu _t (\set{2 ^{-\alpha} \lambda < \Aa [f] (t) \le \lambda}) 
        &\le C \lambda ^{\frac{D - d - p \alpha}\alpha} \nor{f} _{L ^\infty _t L ^p _x (\domain)} ^p \\
        &= C \lambda ^{-q} \nor{f} _{L ^{\frac p \gamma} _t L ^p _x (\domain)} ^p.
    \end{align*}
    Therefore, for every $t \in \R$,
    \begin{align*}
        \nor{\Aa [f] (t)} _{L ^{q, \infty} (\Gamma _t)} ^q \le C \nor{f} _{L ^{\frac p \gamma} _t L ^p _x (\domain)} ^p, \\
        \nor{\Aa [f] (t) ^\frac qp} _{L ^{p, \infty} (\Gamma _t)} \le C \nor{f} _{L ^{\frac p \gamma} _t L ^p _x (\domain)}.
    \end{align*}

    \noindent {\bf Proof of \eqref{eqn:t>x-strong-2-spacetime}-\eqref{eqn:anisotropic-f-4-spacetime}.}  
    If $p \alpha > {D + 2 \gamma}$, by setting $d = 0$ in \eqref{eqn:anisotropic-f-spacetime}, we have 
    \begin{align*}
        \nor{(\Aa [f]) ^{1 - \frac{D + 2 \gamma}{p \alpha}}} _{L ^{\frac p{\theta - \gamma}} _t L ^\infty _x (\Gamma _T)} &\le C (\theta, p, \gamma, \alpha, D, d, L) \nor{f} _{L ^\frac p\theta _t L ^p _x (\domain)}.
    \end{align*}
    This is proven using the same strategy for \eqref{eqn:t<x-spacetime}-\eqref{eqn:t<x-spacetime-p}.
    \eqref{eqn:anisotropic-f-3-spacetime} is an interpolation between this and \eqref{eqn:anisotropic-f-spacetime}. \eqref{eqn:t>x-strong-2-spacetime} and \eqref{eqn:anisotropic-f-4-spacetime} can also be proven by interpolation.
\end{proof}

We summarize Theorem \ref{thm:avg-spacetime}, Proposition \ref{prop:anisotropic-spacetime}, and Proposition \ref{prop:anisotropic-2-spacetime} in the following unified form.

\begin{corollary}
    \label{cor:p1q1p2q2}
    Let $0 < p _1, q _1, p _2, q _2 \le \infty$ and $\alpha > 0$. Define $r _1, r _2 \in \R \setminus \{0\} \cup \{\infty\}$ by 
    \begin{align*}
        \frac1{r _1} = \alpha - \frac{2}{p _1} - \frac{D}{q _1}, \qquad \frac1{r _2} = \alpha - \frac{2}{p _2} - \frac{d}{q _2}.
    \end{align*}
    Suppose $r _2 = \lambda r _1$ for some $\lambda \in (0, \infty)$, $p _2 \ge \lambda p _1$, $q _2 \ge \lambda q _1$, $1 \le q _1 \le p _1$, and $f \in L ^{p _1} _t L ^{q _1} _x (\domain)$. 
    
    \begin{enumerate}[\upshape (A)]
        \item \label{enu:weakweak} Suppose $p _1 = q _1 = p \in [1, \infty]$, and assume $\frac 1{r _1} + \frac2{p _1} \cdot \ind*{p _2 = \lambda p _1} + \frac d{q _1} \cdot \ind*{q _2 = \lambda q _1} > 0$.
        
        \begin{enumerate}[\upshape (a)]
            \item \label{enu:weakweak1} If $p = 1$, then 
            \begin{align*}
                \nor{\Aa [f]} _{L ^{\lambda, \infty} _{t, x} (\Gamma _T)} ^{\lambda} &\le C \nor{f} _{L ^1 _{t, x} (\domain)} \qquad \text{ if } p _2 = q _2 = \lambda, \\
                \nor{\Aa [f]} _{L ^{p _2, \infty} _t L ^{q _2, \infty} _x (\Gamma _T)} ^{\lambda} &\le C \nor{f} _{L ^1 _{t, x} (\domain)} \qquad \text{ if } p _2 > \lambda \text{ or }  q _2 > \lambda.
            \end{align*}
    
            \item \label{enu:p=q>1} If $p > 1$, then 
            \begin{align*}
                \nor{\Aa [f]} _{L ^{p _2} _t L ^{q _2} _x (\Gamma _T)} ^{\lambda} \le C \nor{f} _{L ^{p} _{t, x} (\domain)}.
            \end{align*}
        \end{enumerate}

        \item \label{enu:weakstrong} Suppose $\lambda p _1 < p _2 < \infty$. When $\lambda q _1 < q _2$, we further require $\alpha > \frac{d}{q _2}$.
        
        \begin{enumerate}[\upshape (a)]
            \item If $p _1 = q _1$, then 
            \begin{align*}
                \nor{\Aa [f]} _{L ^{p _2, \infty} _t L ^{q _2} _x (\Gamma _T)} ^{\lambda} \le C \nor{f} _{L ^{p _1} _t L ^{q _1} _x (\domain)}.
            \end{align*}

            \item \label{enu:Bb-spacetime} If $p _1 > q _1$, then 
            \begin{align*}
                \nor{\Aa [f]} _{L ^{p _2} _t L ^{q _2} _x (\Gamma _T)} ^{\lambda} \le C \nor{f} _{L ^{p _1} _t L ^{q _1} _x (\domain)}.
            \end{align*}
        \end{enumerate}

        \item \label{enu:C}
        Suppose $p _2 = \infty$. When $\lambda q _1 < q _2$, we further require $\alpha > \frac{d}{q _2}$. Then 
        \begin{align*}
            \nor{\Aa [f]} _{L ^{\infty} _t L ^{q _2, \infty} _x (\Gamma _T)} ^{\lambda} \le C \nor{f} _{L ^{p _1} _t L ^{q _1} _x (\domain)}.
        \end{align*}
    \end{enumerate}
    In all of the above, the constant $C = C (p _1, q _1, p _2, q _2, \alpha, D, d, L)$.
\end{corollary}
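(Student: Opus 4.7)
The plan is to observe that this corollary is a unified catalog of the estimates already established in Theorem \ref{thm:avg-spacetime}, Proposition \ref{prop:anisotropic-spacetime}, and Proposition \ref{prop:anisotropic-2-spacetime}. After a suitable change of parametrization, every sub-case listed here will match one of those prior results (or an interpolation between two of them), so the argument reduces to a careful case analysis and algebraic verification rather than any new substantive estimate.

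First I would introduce the anisotropy parameters $\gamma := 1 - \lambda p_1 / p_2$ and $\beta := \lambda q_1 / q_2$. The hypotheses $p_2 \ge \lambda p_1$ and $q_2 \ge \lambda q_1$ force $\gamma \in [0, 1)$ and $\beta \in (0, 1]$, and a direct computation from the definitions of $r_1, r_2$ together with $r_2 = \lambda r_1$ shows that $\lambda = 1 - \frac{D - \beta d + 2\gamma}{p_1 \alpha}$. This exponent is precisely the power on $\Aa f$ that appears in the statements \eqref{eqn:t?x-spacetime}, \eqref{eqn:t?x-spacetime-p}, and their specializations. Raising to the $1/\lambda$ power then converts each bound $\nor{(\Aa f)^\lambda}_{L^{1/(1-\gamma)}_t L^{1/\beta}_x}$ of the prior propositions into the form $\nor{\Aa f}_{L^{p_2}_t L^{q_2}_x}^\lambda$ stated here. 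With this dictionary, I would dispatch the cases mechanically: (A)(a)(1) with $p_2 = q_2 = \lambda$ is Theorem \ref{thm:avg-spacetime} \eqref{enu:weaktype-spacetime}; the subcases of (A)(a)(2) are covered by \eqref{eqn:t>x-spacetime}, \eqref{eqn:t<x-spacetime}, \eqref{eqn:t?x-spacetime}; (A)(b) is the $p > 1$ analogue, from Theorem \ref{thm:avg-spacetime} \eqref{enu:strongtype-spacetime} and \eqref{eqn:t>x-spacetime-p}--\eqref{eqn:t?x-spacetime-p}. Case (B)(a) upgrades the weak-in-time norm of (A)(b) to a strong one and is supplied by \eqref{eqn:t>x-strong-spacetime} or \eqref{eqn:t>x-strong-2-spacetime}; case (B)(b) with $p_1 > q_1$ matches the anisotropic-in-$f$ bounds \eqref{eqn:anisotropic-f-spacetime} or \eqref{eqn:anisotropic-f-3-spacetime} under the identification $\theta := \lambda p_1 / p_2 + \gamma$. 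Case (C) with $p_2 = \infty$ corresponds to \eqref{eqn:anisotropic-f-2-spacetime} or \eqref{eqn:anisotropic-f-4-spacetime}, and the case $\beta = 0$ is handled by the measurable-selection trick used in the proof of Proposition \ref{prop:anisotropic-spacetime} ($\Gamma_t$ collapsed to a single point). Any remaining mixed endpoints are obtained by interpolating two adjacent established inequalities via Lemma \ref{lem:interpolation}.

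The hard part will be the bookkeeping rather than any single estimate. The most delicate step is checking that the various positivity requirements appearing in the prior results, namely $p\alpha > D - d$, $p\alpha > D - d + 2$, $p\alpha > D - d + 2\gamma$, and $p\alpha > D + 2\gamma$, all translate, via the dictionary above, into the unified positivity condition in (A) together with the single auxiliary hypothesis $\alpha > d/q_2$ imposed in cases (B) and (C) whenever $q_2 > \lambda q_1$; and symmetrically, that the weak/strong distinctions in time match the strict/non-strict inequality $p_2 > \lambda p_1$ versus $p_2 = \lambda p_1$. Once this dictionary is verified once, each of the listed estimates is a one-line invocation of the appropriate earlier result, after possibly composing with Lemma \ref{lem:lorentz-interpolation} in the fully nested weak cases.
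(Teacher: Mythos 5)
Your overall strategy is exactly the paper's: reduce the corollary to Theorem~\ref{thm:avg-spacetime}, Proposition~\ref{prop:anisotropic-spacetime}, and Proposition~\ref{prop:anisotropic-2-spacetime} via a change of parameters, then verify the exponent identities case by case. The idea is right and the route is the same one the paper takes.

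However, the dictionary you propose is incorrect whenever $p_1 > q_1$, which is exactly the situation in cases~(B)(b) and~(C). You define $\gamma = 1 - \lambda p_1/p_2$, $\beta = \lambda q_1/q_2$, and assert $\lambda = 1 - \tfrac{D - \beta d + 2\gamma}{p_1\alpha}$. Substituting $r_2 = \lambda r_1$ and multiplying through by $p_1$ gives
\begin{align*}
    p_1\alpha(1-\lambda) = 2 + \frac{Dp_1}{q_1} - \frac{2\lambda p_1}{p_2} - \frac{\lambda d p_1}{q_2} = 2\gamma + \frac{p_1}{q_1}\bigl(D - \beta d\bigr),
\end{align*}
which equals $D - \beta d + 2\gamma$ only when $p_1 = q_1$. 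The paper's change of variables instead sets $p = q_1$, $\theta = q_1/p_1$, and $\gamma = \tfrac{q_1}{p_1} - \lambda\tfrac{q_1}{p_2}$, yielding the cleaner identity $\lambda = 1 - \tfrac{D - \beta d + 2\gamma}{q_1\alpha}$. Relatedly, your proposed identification $\theta := \lambda p_1/p_2 + \gamma$ degenerates to $\theta \equiv 1$ under your $\gamma$, but Proposition~\ref{prop:anisotropic-2-spacetime} needs $\theta = q_1/p_1 < 1$ precisely in case~(B)(b) (the right-hand side there is $L^{p/\theta}_t L^p_x$ with $p/\theta = p_1$, $p = q_1$). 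With your dictionary the two exponents cannot be matched, and the invocation of \eqref{eqn:anisotropic-f-spacetime}/\eqref{eqn:anisotropic-f-3-spacetime} would fail. Once the dictionary is corrected, your case analysis goes through as sketched; the $p_1 = q_1$ parts of (A) and (B)(a) are already handled correctly by your formulas.
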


\begin{proof}
    When $q _1 = \infty$, we have $p _1 = p _2 = q _2 = \infty$, $r _1 = r _2 = \alpha ^{-1}$ so $\lambda = 1$. \eqref{enu:p=q>1} and \eqref{enu:C} now are simply Theorem \ref{thm:avg-spacetime} \eqref{enu:strongtype-spacetime} with $p = \infty$. We now focus on the case $q _1 < \infty$. \eqref{enu:weakweak}-\eqref{enu:weakstrong} are translations of Proposition \ref{prop:anisotropic-spacetime}-\ref{prop:anisotropic-2-spacetime} under the following change of parameters:
    \begin{align*}
        \begin{cases}
            p _1 = \frac p\theta \\
            q _1 = p \\
            p _2 = \lambda \cdot \frac p{\theta - \gamma} \\
            q _2 = \lambda \cdot \frac p\beta 
        \end{cases}
        \impliedby
        \begin{cases}
            p = q _1 \\ 
            \theta = \frac{q _1}{p _1} \\
            \beta = \lambda \cdot \frac{q _1}{q _2} \\
            \gamma = \frac{q _1}{p _1} - \lambda \cdot \frac{q _1}{p _2} 
        \end{cases}
    \end{align*}
    with $\lambda$ satisfying $r _2 = \lambda r _1$. Note that 
    \begin{align*}
        \alpha \pth{1 - \frac{D - \beta d + 2 \gamma}{p \alpha}} &= \alpha - \frac Dp + \frac {\beta d} p - \frac{2 \gamma} p = \alpha - \frac D{q _1} + \frac{\lambda d}{q _2} - 2 \pth{\frac1{p _1} - \frac{\lambda}{p _2}} \\
        &= \alpha - \frac D{q _1} - \frac2{p _1} + \lambda \pth{\frac d{q _2} + \frac{2}{p _2}} = \frac1{r _1} + \lambda \pth{\alpha - \frac1{r _2}} = \lambda \alpha.
    \end{align*}
    So 
    \begin{align*}
        \lambda = 1 - \frac{D - \beta d + 2 \gamma}{p \alpha}.
    \end{align*}

    First, we verify that $p, \theta, \beta, \gamma$ are within the range prescribed by Proposition \ref{prop:anisotropic-spacetime}-\ref{prop:anisotropic-2-spacetime} provided
    $q _1 < \infty$. 
    \begin{itemize}
        \item $p = q _1 \in [1, \infty]$.
        \item $\theta = \frac{q _1}{p _1} \in [0, 1]$ because $q _1 \le p _1 < \infty$. $\theta = 1$ iff $p _1 = q _1$. $\theta = 0$ iff $p _1 = \infty$.
        \item $\beta = \lambda \cdot \frac{q _1}{q _2} \in [0, 1]$ because $q _2 \ge \lambda q _1$ and $q _1 < \infty$. $\beta = 0$ iff $q _2 = \infty$. $\beta = 1$ iff $q _2 = \lambda q _1$.
        \item $\gamma > 0$ if $p _2 > \lambda p _1$ because 
        \begin{itemize}
            \item when $p _2 < \infty$, $\gamma = \frac{q _1}{p _1 p _2} (p _2 - \lambda p _1) > 0$.
            \item when $p _2 = \infty$, $\gamma = \frac{q _1}{p _1} > 0$.
        \end{itemize}
        \item $\gamma = 0$ if $p _2 = \lambda p _1$.
        \item $\gamma \le \frac{q _1}{p _1} = \theta$. Equality holds if and only if $p _2 = \infty$.
    \end{itemize}
    We delay the verification of assumptions on parameters to the end of the proof.

    \begin{enumerate}[\upshape (A)]
        \item This will be proven using Theorem \ref{thm:avg-spacetime} and Proposition \ref{prop:anisotropic-spacetime}. 
        We separate four cases.
        
        \begin{enumerate}[\upshape (1)]
            \item When $p _2 = \lambda p _1$ and $q _2 = \lambda q _1$, we have $\gamma = 0$ and $\beta = 1$, thus $\lambda = 1 - \frac{D - d}{p \alpha}$. When $p _1 = q _1 = 1$, by Theorem \ref{thm:avg-spacetime} \eqref{enu:weaktype-spacetime}, we have 
            \begin{align*}
                & \nor{(\Aa [f]) ^{1 - \frac{D - d}{\alpha}}} _{L ^{1, \infty} _{t, x} (\Gamma _T)}\le C \nor{f} _{L ^1 (\domain)} \\
                \implies & \nor{\Aa [f]} ^\lambda _{L ^{\lambda, \infty} _{t, x} (\Gamma _T)}\le C \nor{f} _{L ^1 (\domain)}. 
            \end{align*}
            When $p _1 = q _1 > 1$, by Theorem \ref{thm:avg-spacetime} \eqref{enu:strongtype-spacetime}, we have 
            \begin{align*}
                & \nor{(\Aa [f]) ^{1 - \frac{D - d}{p \alpha}}} _{L ^p _{t, x} (\Gamma _T)} \le C \nor{f} _{L ^p (\domain)} \\
                \implies & \nor{\Aa [f]} ^\lambda _{L ^{p _2} _{t} L ^{q _2} _{x} (\Gamma _T)}\le C \nor{f} _{L ^p (\domain)} .
            \end{align*} 

            \item When $p _2 > \lambda p _1$ and $q _2 = \lambda q _1$, we have $0 < \gamma \le 1$ and $\beta = 1$, thus $\lambda = 1 - \frac{D + 2 \gamma}{p \alpha}$. When $p _1 = q _1 = 1$, by Proposition \ref{prop:anisotropic-spacetime} \eqref{eqn:t>x-spacetime}, we have 
            \begin{align*}
                & \nor{(\Aa [f]) ^{1 - \frac{D - d + 2 \gamma}{\alpha}}} _{L ^{\frac1{1 - \gamma}, \infty} _t L ^{1, \infty} _x (\Gamma _T)} \le C \nor{f} _{L ^1 (\domain)}, \\
                \implies & \nor{\Aa [f]} ^\lambda _{L ^{p _2, \infty} _{t}L ^{q _2, \infty} _{x} (\Gamma _T)}\le C \nor{f} _{L ^1 (\domain)}. 
            \end{align*}
            When $p _1 = q _1 > 1$, by Proposition \ref{prop:anisotropic-spacetime} \eqref{eqn:t>x-spacetime-p}, we have 
            \begin{align*}
                & \nor{(\Aa [f]) ^{1 - \frac{D - d + 2 \gamma}{p\alpha}}} _{L ^{\frac p{1 - \gamma}} _t L ^{p} _x (\Gamma _T)} \le C \nor{f} _{L ^p (\domain)}. \\
                \implies & \nor{\Aa [f]} ^\lambda _{L ^{p _2} _{t} L ^{q _2} _{x} (\Gamma _T)}\le C \nor{f} _{L ^p (\domain)}.
            \end{align*} 

            \item When $p _2 = \lambda p _1$ and $q _2 > \lambda q _1$, we have $\gamma = 0$ and $0 \le \beta < 1$, thus $\lambda = 1 - \frac{D - \beta d}{p \alpha}$. We use Proposition \ref{prop:anisotropic-spacetime} \eqref{eqn:t<x-spacetime}-\eqref{eqn:t<x-spacetime-p}.
            \item When $p _2 > \lambda p _1$ and $q _2 > \lambda q _1$, we have $0 < \gamma \le 1$ and $0 \le \beta < 1$. We use Proposition \ref{prop:anisotropic-spacetime} \eqref{eqn:t?x-spacetime}-\eqref{eqn:t?x-spacetime-p}.
        \end{enumerate}

        \item $\lambda p _1 < p _2 < \infty$ implies $0 < \gamma < \theta$, so we have verified $0 < \gamma < \theta \le 1 \le p < \infty$. If $\lambda q _1 < q _2$, then $\beta \in [0, 1)$, we have 

        \begin{enumerate}[\upshape (a)]
            \item If $p _1 = q _1$, then $\theta = 1$. By \eqref{eqn:t>x-strong-2-spacetime}, we have
            \begin{align*}
                & \nor{(\Aa [f]) ^{1 - \frac{D - \beta d + 2 \gamma}{p \alpha}}} _{L ^{\frac p{1 - \gamma}, \infty} _t L ^{\frac p \beta} _x (\Gamma _T)} \le C \nor{f} _{L ^p _t L ^p _x (\domain)} \\
                \implies & \nor{(\Aa [f]) ^{\lambda}} _{L ^{\frac p{1 - \gamma}, \infty} _t L ^{\frac p \beta} _x (\Gamma _T)} \le C \nor{f} _{L ^{p _1} _t L ^{q _1} _x (\domain)} \\
                \implies & \nor{\Aa [f]} _{L ^{p _2, \infty} _t L ^{q _2} _x (\Gamma _T)} ^{\lambda} \le C \nor{f} _{L ^{p _1} _t L ^{q _1} _x (\domain)}.
            \end{align*}

            \item If $p _1 > q _1$, then $\theta < 1$. By \eqref{eqn:anisotropic-f-3-spacetime}, we have 
            \begin{align*}
                & \nor{(\Aa [f]) ^{1 - \frac{D - \beta d + 2 \gamma}{p \alpha}}} _{L ^{\frac p{\theta - \gamma}} _t L ^{\frac p \beta} _x (\Gamma _T)} \le C \nor{f} _{L ^\frac p\theta _t L ^p _x (\domain)} \\
                \implies & \nor{(\Aa [f]) ^{\lambda}} _{L ^{\frac p{\theta - \gamma}} _t L ^{\frac p \beta} _x (\Gamma _T)} \le C \nor{f} _{L ^{p _1} _t L ^{q _1} _x (\domain)} \\
                \implies & \nor{\Aa [f]} _{L ^{p _2} _t L ^{q _2} _x (\Gamma _T)} ^{\lambda} \le C \nor{f} _{L ^{p _1} _t L ^{q _1} _x (\domain)}.
            \end{align*}

        \end{enumerate}
        If $\lambda q _1 = q _2$, $\beta = 1$, we use \eqref{eqn:t>x-strong-spacetime}-\eqref{eqn:anisotropic-f-spacetime} instead of \eqref{eqn:t>x-strong-2-spacetime}-\eqref{eqn:anisotropic-f-3-spacetime}.
    
        \item When $p _2 = \infty$, we use \eqref{eqn:anisotropic-f-2-spacetime} if $\lambda q _1 = q _2$ and \eqref{eqn:anisotropic-f-4-spacetime} if $\lambda q _1 < q _2$. Now $\gamma = \theta$ is allowed to take value in $[0, 1]$.
    \end{enumerate}

    The requirements of Theorem \ref{thm:avg-spacetime} and Proposition \ref{prop:anisotropic-spacetime} can be summarized to 
    \begin{align*}
        p \alpha > D - d \cdot \ind*{q _2 = \lambda q _1} + 2 \cdot \ind*{p _2 > \lambda p _1},
    \end{align*}
    which is equivalent to $\frac p{r _1} + d \cdot \ind*{q _2 = \lambda q _1} + 2 \cdot \ind*{p _2 = \lambda p _1} > 0$ using 
    \begin{align*}
        \frac p {r _1} = p \alpha - 2 - D
    \end{align*}
    when $p _1 = q _1 = p$.

    The requirement $p \alpha > D - d + 2 \gamma$ of Proposition \ref{prop:anisotropic-2-spacetime} can be simplified as the following: 
    \begin{align*}
        & \alpha - \frac D{q _1} + \frac{d}{q _1} > \frac{2\gamma}{q _1} = \frac2{p _1} - \lambda \frac{2}{p _2} \\
        \iff & \frac1{r _1} + \frac d{q _1} > - \lambda \frac2{p _2} = \lambda \pth{
            \frac1{r _2} - \alpha + \frac d{q _2}
        } \\
        \iff & \frac d{q _1} + \lambda \pth{
            \alpha - \frac d{q _2}
        } > 0
        \\
        \iff & \lambda \alpha + d \pth{\frac1{q _1} - \frac\lambda{q _2}} > 0.
    \end{align*}
    This always hold since $\lambda \alpha > 0$, and $q _2 \ge \lambda q _1$.

    Similarly, the requirement $p \alpha > D + 2 \gamma$ of Proposition \ref{prop:anisotropic-2-spacetime} can be simplified as $\lambda \pth{
        \alpha - \frac d{q _2}
    } > 0$, that is, $\alpha \ge \frac d{q _2}$.
\end{proof}

\begin{remark}
    Recall that $\Aa [f] = (\Sa [f]) ^{-\alpha}$. The above corollary provides a bound on the inverse scale function, for instance \eqref{enu:Bb-spacetime} implies 
    \begin{align*}
        \nor{(\Sa [f]) ^{-1}} _{L ^{p _2 \alpha} _t L ^{q _2 \alpha} _x (\Gamma _T)} ^{\lambda \alpha} \le C \nor{f} _{L ^{p _1} _t L ^{q _1} _x (\domain)}.
    \end{align*}
\end{remark}

\section{Blow-up technique and trace estimate: Lagrangian case}
\label{sec:blowup}

\subsection{Adapting the drift and boundary}

For equations with a transport term, such as transport equations, active scalar equations, kinetic equations, or fluid equations, additional difficulties appear when the pivot quantity $f$ does not control the drift $b$. It might be more convenient to consider the skewed cylinders that ``follow the trajectories'', and interpret the cylinders in the Lagrangian coordinate. The author studied these objects in \cite{yang2022}, but similar construction has been introduced by Isett et al. \cite{isett2016,isett2017,isett2018,isett2022} in the study of the Euler equation. Given a locally integrable drift $b: \R \times \R ^D \to \R ^D$, we fix $\varphi \in \cci (\R ^D)$ to be a nonnegative smooth mollifier supported in $B _1$. For any $\rho > 0$, define $\varphi _\rho (x) = \rho ^{-D}\varphi(x/\rho)$. Let $b _\rho (t) = b (t) * \varphi _\rho$ be the spatially mollified drift, and the flow $X _\rho$ is defined to be the unique solution to the following ODE:
\begin{align}
    \label{eqn:flow}
    \begin{cases}
        \dot X _\rho (s; t, x) = b _\rho (s, X _\rho (s; t, x)) \\
        X _\rho (t; t, x) = x.
    \end{cases}
\end{align}
Suppose $b \in L ^p (\OmegaT)$ for some $p \in [1, \infty]$, then we can make a zero extension so that $b \in L ^p (\R \times \R ^D)$, thus the ODE can have a unique solution for all time. We will assume this from now on. Define for $(t, x) \in \domain$ the skewed parabolic cylinder
\begin{align}
    \label{eqn:Qe}
    Q _\rho (t, x) := \set{(s, X _\rho (s; t, x) + y): s \in (t - \rho ^2, t], y \in B _\rho}.
\end{align}
Although \eqref{eqn:Qe} is duplicate notation with \eqref{eqn:Qe-spacetime}, in this section we will always assume an implied background drift $b$ and {\bf use \eqref{eqn:Qe} to define $Q _\rho (t, x)$ from now on}. In the previous section, the standard parabolic cylinder $(t - \rho ^2, t] \times B _\rho (x)$ can be regarded as a skewed parabolic cylinder $Q _\rho (t, x)$ when the underlying drift is $b = 0$. 

\begin{definition}[\cite{yang2022}, Definition 2]
    We say $Q _\rho (t, x)$ is \textbf{admissible} if 
    \begin{align}
        \label{eqn:admissible}
        \fint _{Q _\rho (t, x)} \mathcal M ((\grad b) (s)) (y) \d y \d s \le \eta _0 \rho ^{-2}.
    \end{align}
    Here $\eta _0 > 0$ is a small universal constant depending only on $d$ and $L$ that will be specified later, and recall $\mm$ is the maximal function defined by 
    \begin{align*}
        (\mm f) (x) = \sup _{r > 0} \fint _{B _r (x)} |f (y)| \d y, \qquad f \in L ^1 _{\loc} (\R ^D), x \in \R ^D.
    \end{align*}
\end{definition}

If there is no background flow (when $b = 0$), then $Q _\rho (t, x) = (t - \rho ^2, t] \times B _\rho (x)$ is just a standard parabolic cylinder, and it is admissible. 
This admissibility ensures a covering lemma for the skewed cylinders.
\begin{lemma}[\cite{yang2022}, Proposition 12]
    There exists a universal constant $C$ depending on the dimension $D$ such that the following holds. Let $\set{Q _{\e _\alpha} (t ^\alpha, x ^\alpha)} _{\alpha}$ be a collection of admissible cylinders with uniformly bounded radii. Then there exists a pairwise disjoint subcollection $\set*{Q _{\e _{\alpha _j}} (t ^{\alpha _j}, x ^{\alpha _j})} _j$ such that 
    \begin{align*}
        \sum _j |Q _{\e _{\alpha _j}} (t ^{\alpha _j}, x ^{\alpha _j})| \ge \frac1C \abs{\bigcup _\alpha Q _{\e _\alpha} (t ^\alpha, x ^\alpha)}.
    \end{align*}
\end{lemma}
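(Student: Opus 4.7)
The plan is to run a greedy Vitali-type selection on $\set{Q _{\e _\alpha} (t ^\alpha, x ^\alpha)} _\alpha$, with the classical ``smaller intersecting ball is contained in a constant dilation of the larger'' replaced by a Lagrangian \emph{engulfing lemma}: if $Q _{\rho _1} (t _1, x _1)$ and $Q _{\rho _2} (t _2, x _2)$ are admissible, $\rho _2 \le \rho _1$, and the two cylinders intersect, then
\begin{align*}
    Q _{\rho _2} (t _2, x _2) \subset Q _{K \rho _1} (t _1, x _1)
\end{align*}
for a universal constant $K = K (D, L)$. Granting this, the rest of the argument is standard.

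To establish the engulfing lemma, I would control how much the two flows $X _{\rho _1}$ and $X _{\rho _2}$ can separate on the overlapping time window. Since $b _\rho = b (s) * \varphi _\rho$, the standard pointwise mollifier estimate gives $|\grad b _\rho (s, y)| \le C \mm ((\grad b) (s)) (y)$ for every $\rho > 0$. The admissibility condition \eqref{eqn:admissible} then provides an averaged Lipschitz bound for $b _\rho$ on $Q _{\rho _i}$. A Chebyshev argument on $\mm ((\grad b) (s)) (y)$ inside $Q _{\rho _1}$, with $\eta _0$ chosen sufficiently small, shows that the set where the maximal function exceeds $\rho _1 ^{-2}$ occupies a controlled fraction of each time slice; on the complementary ``good'' set, a Gronwall argument bounds the separation of the two flows by a constant multiple of $\rho _1$. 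Combined with the intersection hypothesis (which says the two flow tubes meet at some point) this yields the inclusion $Q _{\rho _2} (t _2, x _2) \subset Q _{K \rho _1} (t _1, x _1)$.

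The selection itself is then classical. Since radii are uniformly bounded, I may sort the cylinders into dyadic radius bins $[2 ^{-k - 1} R, 2 ^{-k} R)$, $k \ge 0$. Greedily, at stage $k$, choose a maximal pairwise disjoint subfamily $\set{Q _{\e _{\alpha _j}} (t ^{\alpha _j}, x ^{\alpha _j})} _j$ among the cylinders of the $k$-th bin that do not meet any cylinder chosen at an earlier stage. Every originally given cylinder $Q _{\e _\alpha}$ either is selected or meets some selected $Q _{\e _{\alpha _j}}$ with $\e _{\alpha _j} \ge \frac12 \e _\alpha$, and by the engulfing lemma $Q _{\e _\alpha} \subset Q _{2 K \e _{\alpha _j}} (t ^{\alpha _j}, x ^{\alpha _j})$. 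Since each skewed cylinder has volume $|Q _\rho (t, x)| = |B _\rho| \cdot \rho ^2 = c _D \rho ^{D + 2}$ (the map $y \mapsto X _\rho (s; t, x) + y$ is a translation on each time slice, hence measure-preserving), the dilation costs only a factor $(2 K) ^{D + 2}$, giving
\begin{align*}
    \abs*{\bigcup _\alpha Q _{\e _\alpha} (t ^\alpha, x ^\alpha)} \le \sum _j \abs*{Q _{2 K \e _{\alpha _j}} (t ^{\alpha _j}, x ^{\alpha _j})} \le (2 K) ^{D + 2} \sum _j \abs*{Q _{\e _{\alpha _j}} (t ^{\alpha _j}, x ^{\alpha _j})},
\end{align*}
which is the desired conclusion with $C = (2 K) ^{D + 2}$.

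The main obstacle is the engulfing lemma: converting the averaged bound $\fint _{Q _\rho} \mm (\grad b) \le \eta _0 \rho ^{-2}$ into a uniform pointwise control of the flow divergence on the cylinder. This is precisely where the universal constant $\eta _0$ gets fixed, and it is also the step where it matters that admissibility is defined on the \emph{skewed} cylinder rather than a standard parabolic one, since the relevant piece of the flow trajectory lives inside $Q _\rho (t, x)$ and therefore inherits the averaged gradient bound. Once this Lagrangian comparison is in place, the rest of the argument is routine Vitali covering.
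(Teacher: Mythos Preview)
The paper does not prove this lemma; it is quoted verbatim from \cite{yang2022}. So there is no in-paper proof to compare against, and the question is whether your outline would actually go through.

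Your overall strategy (greedy Vitali selection plus a Lagrangian engulfing lemma, with the flow separation controlled by Gronwall using the uniform bound $|\grad b_\rho| \le C \mm(\grad b)$ and the admissibility condition) is the right one, and is in the spirit of Lemma~\ref{lem:c1c2} in the paper. But two concrete points in your engulfing step need repair.

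First, the cylinders are one-sided in time: $Q_\rho(t, x)$ has time interval $(t - \rho^2, t]$. If $Q_{\rho_2}(t_2, x_2)$ meets $Q_{\rho_1}(t_1, x_1)$ with $t_2 > t_1$, then no set of the form $Q_{K\rho_1}(t_1, x_1)$ can contain $Q_{\rho_2}(t_2, x_2)$, since its time interval still ends at $t_1$. You need to enlarge in time as well --- either shift the terminal time forward or pass to a two-sided tube --- and this should be made explicit.

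Second, and more seriously, the set $Q_{K\rho_1}(t_1, x_1)$ is by definition built from the flow $X_{K\rho_1}$, which is mollified at the \emph{larger} scale $K\rho_1$. Admissibility of $Q_{\rho_1}(t_1, x_1)$ says nothing about $X_{K\rho_1}$, so you have no a priori control on the enlarged cylinder's center curve. Your Gronwall sketch compares $X_{\rho_1}$ and $X_{\rho_2}$, which is fine (and requires, in addition to the gradient bound, the estimate $|b_{\rho_1} - b_{\rho_2}| \lesssim \rho_1 \mm(\grad b)$ to handle the forcing term --- you should say this), but it does not touch $X_{K\rho_1}$. The clean fix is to define the engulfing set not as $Q_{K\rho_1}(t_1, x_1)$ but as a thickened tube around the \emph{original} trajectory $X_{\rho_1}(\cdot; t_1, x_1)$, of spatial radius $K\rho_1$ and an appropriately enlarged (possibly two-sided) time interval; this set has volume comparable to $|Q_{\rho_1}|$ and stays under the control of the admissibility of $Q_{\rho_1}$. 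With those two adjustments your argument closes.
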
 

Now for a measurable function $f: \domain \to [0, \infty]$, we define $f _\rho$ for $\rho > 0$, define the scale function $\Sa [f]$ and the averaging function $\Aa [f]$ in the same way as Definition \ref{def:SASingReg-spacetime}, except we now use skewed parabolic cylinders. We do not repeat the definition here. 

In order to extend results from the previous section to the setting with drift, we verify that the inner and outer regularity still hold for the skewed parabolic cylinders.

\begin{lemma}
    \label{lem:inner-outer-regularity-skewed}
    Fix $\rho > 0$ and $(t, x) \in \domain$. 
    \begin{enumerate}
        \item (Inner regularity) For any compact subset $K \ssubset Q _{\rho} (t, x)$, $Q _\sigma (s, y)$ is also a superset of $K$ provided $(\sigma, s, y)$ is sufficiently close to $(\rho, t, x)$. 
        \item (Outer regularity) For any open superset $O \supset \supset \bar Q _{\rho} (t, x)$, $Q _\sigma (s, y)$ is also a subset of $O$ provided $(\sigma, s, y)$ is sufficiently close to $(\rho, t, x)$. 
    \end{enumerate}
\end{lemma}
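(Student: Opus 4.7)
The plan is to reduce both claims to a joint continuity property of the mollified flow $X _\sigma (\tau; s, y)$ in all four variables $(\sigma, s, y, \tau)$, and then mimic Lemma \ref{lem:inner-outer-regularity} with Euclidean balls replaced by the skewed spatial slices $X _\sigma (\tau; s, y) + B _\sigma$. First I would record the regularity of $\sigma \mapsto b _\sigma$: since $\varphi _\sigma (z) = \sigma ^{-D} \varphi (z/\sigma)$ is smooth in $\sigma > 0$, for any compact sub-interval $I \ssubset (0, \infty)$ containing $\rho$ and any compact $K \subset \Rd$ there is $\Lambda \in L ^1 _{\loc} (\R)$ with $\|\grad _x b _\sigma (r, \cdot)\| _{L ^\infty (K)} \le \Lambda (r)$ uniformly in $\sigma \in I$; moreover, by dominated convergence, $\int _{I _0} \|b _\sigma (r, \cdot) - b _\rho (r, \cdot)\| _{L ^\infty (K)} \d r \to 0$ as $\sigma \to \rho$ for any bounded $I _0 \subset \R$.

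These two facts feed into a standard Gronwall estimate on \eqref{eqn:flow}: whenever $(\sigma _n, s _n, y _n) \to (\rho, t, x)$, the solutions $X _{\sigma _n} (\cdot; s _n, y _n)$ converge to $X _\rho (\cdot; t, x)$ uniformly on every compact time interval. Consequently the parameterization
\begin{align*}
    \Phi _\sigma ^{s, y} (\tau, z) = (\tau, X _\sigma (\tau; s, y) + z), \qquad (\tau, z) \in (s - \sigma ^2, s] \times B _\sigma,
\end{align*}
is a homeomorphism onto $Q _\sigma (s, y)$ that depends continuously on $(\sigma, s, y)$ in the compact-open sense.

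With this in hand, both statements follow exactly as in Lemma \ref{lem:inner-outer-regularity}. For inner regularity, a compact $K \ssubset Q _\rho (t, x)$ is the $\Phi _\rho ^{t, x}$-image of a compact set bounded away from the boundary of $(t - \rho ^2, t] \times B _\rho$, so there exists $\delta > 0$ with $\tau \in [t - \rho ^2 + \delta, t]$ and $|w - X _\rho (\tau; t, x)| \le \rho - \delta$ for every $(\tau, w) \in K$; by flow continuity the same points still satisfy $\tau \in (s - \sigma ^2, s]$ and $|w - X _\sigma (\tau; s, y)| < \sigma$ once $(\sigma, s, y)$ is close enough to $(\rho, t, x)$. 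For outer regularity, $\overline{Q _\rho (t, x)}$ is compact and separated from the closed complement of $O$ by a positive distance, and flow continuity again keeps $Q _\sigma (s, y)$ uniformly inside $O$.

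The main technical obstacle is the flow continuity in the mollification parameter $\sigma$ itself, because $\|\grad \varphi _\sigma\| _{L ^\infty}$ blows up as $\sigma \to 0$; the saving grace is that $\rho > 0$ is fixed, so one only needs uniform bounds on a compact subinterval of $(0, \infty)$. I emphasize that the admissibility condition \eqref{eqn:admissible} plays no role here---the lemma is a purely kinematic continuity property of the skewed cylinders.
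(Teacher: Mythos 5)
The paper omits this proof, remarking only that it is ``a simple ODE exercise,'' so there is no argument to compare against; your sketch supplies exactly the ingredients the paper implicitly invokes. The reduction to joint continuity of the flow $(\sigma, s, y, \tau) \mapsto X_\sigma(\tau; s, y)$ via a Gronwall estimate, the observation that $\sigma \mapsto b_\sigma$ is continuous in $L^1_{t,\loc}L^\infty_{x,\loc}$ with locally uniform Lipschitz bounds for $\sigma$ in a compact subinterval of $(0, \infty)$, and the final geometric separation argument are all correct and are the natural way to carry out the omitted exercise. Your closing remark that admissibility plays no role is also right.

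One small imprecision worth cleaning up: in the inner regularity step you write $\tau \in [t - \rho^2 + \delta, t]$, allowing $\tau = t$, and then claim $\tau \in (s - \sigma^2, s]$ for $(\sigma, s, y)$ close to $(\rho, t, x)$. But $(\sigma, s, y)$ may approach $(\rho, t, x)$ with $s < t$, in which case $\tau = t$ is never captured by $Q_\sigma(s, y)$. You need $\tau \le t - \delta$, i.e.\ $K$ compactly contained in the \emph{interior} $(t - \rho^2, t) \times B_\rho$ rather than in the half-open cylinder. This matches how the lemma is actually used (the exhaustion $K_n \uparrow Q_\rho(t,x)$ in the semicontinuity proof takes $K_n$ inside the open interior), and the top slice has measure zero so nothing downstream is affected; it is a slight looseness in the lemma's statement as written, which your sketch inherits rather than introduces.
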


Since the drift is locally integrable, this is a simple ODE exercise and we omit the proof. This implies the continuity and semicontinuity for the scale function and average function. In other words, Lemma \ref{lem:Aa-basic-spacetime} still hold in our now generalized definition.

Below we give two important examples of the usage of the scale operator, and introduce a capped scale operator $\Saw$. 

\begin{definition}
    Recall $\eta _0 > 0$ defined in \eqref{eqn:admissible} is the admissibility threshold. Define 
    \begin{align*}
        \rint (t, x) &= \mathscr S _{2} [\infty \ind{\OmegaT ^c}] (t, x) = \inf \set{\rho > 0: Q _\rho (t, x) \nsubseteq \OmegaT},
        \\
        \radm (t, x) &= \mathscr S _{2} \bkt{\frac1{\eta _0}\mm (\grad b)} (t, x) = \inf \set{\rho > 0: Q _\rho (t, x) \text{ is not admissible}}.
    \end{align*}
    Denote $\rbar = \radm \wedge \rint$. For $f \in L ^1 _{\loc} (\R ^{1 + d})$, $\alpha > 0$, define $\Sa ^\wedge [f] = \Sa [f] \wedge \rbar$. 
\end{definition}

From the definition, we see that for any $\rho \le \radm (t, x)$, $Q _\rho (t, x)$ is admissible; for any $\rho \le \rint (t, x)$, $Q _\rho (t, x) \subseteq \OmegaT$. Therefore, $\rho \le \Saw [f] (t, x) < \infty$ ensures that the cube $Q _\rho (t, x)$ is an admissible cube, fully contained in $\OmegaT$, with $f _\rho (t, x) \le \rho ^{-\alpha}$. The benefits of using the capped scale operator $\Saw$ are two folds: the cut-off $\rint$ makes sure that the epsilon regularity theorem is applicable in $Q _\rho (t, x)$ in the presence of the boundary, whereas the cut-off $\radm$ enables the covering lemma and harmonic analysis which will be introduced in the next subsection. Moreover, note that $\Saw [f]$ is also upper semicontinuous since it is the minimum of three upper semicontinuous functions.

Provided $f \in L ^1 _\loc$, let us make the following observations. Given $(t, x) \in \domain$, denote $s (t, x) = \Saw [f] (t, x)$, then 
\begin{enumerate}[(a)]
    \item \label{enu:observation1}if $s (t, x) = 0$, it means either 
    \begin{enumerate}
        \item [(a\textsubscript{1})] $\rbar (t, x) = 0$.
        \item [(a\textsubscript{2})] $\rbar (t, x) > 0$ and $\Sa [f] (t, x) = 0$. There exists a sequence of $\e _i \to 0$ such that $f _{\e _i} (t, x) \ge \e _i ^{-\alpha}$, so $f _0 (t, x) = \limsup _{\e \to 0} f _\e (t, x) = +\infty$.
    \end{enumerate}
    
    \item if $s (t, x) > 0$, then $\rbar (t, x) > 0$ and $f _\rho (t, x) \le \rho ^{-\alpha}$ for every $\rho \le s (t, x)$. 
    In particular, 
    $f _{s (t, x)} (t, x) \le s (t, x) ^{-\alpha}$. More precisely:
    \label{enu:observation2}
    \begin{enumerate}
        \item [(b\textsubscript{1})] if $s (t, x) = \rbar (t, x)$, then $f _{s (t, x)} (t, x) = f _{\rbar (t, x)} (t, x) \le \rbar (t, x) ^{-\alpha} = s (t, x) ^{-\alpha}$. 
        \label{enu:observationb1}
        \item [(b\textsubscript{2})] if $s (t, x) < \rbar (t, x)$, then $f _{s (t, x)} (t, x) = s (t, x) ^{-\alpha} > \rbar (t, x) ^{-\alpha}$ by continuity of $f _\rho (t, x)$ in $\rho$.\label{enu:observationb2}
    \end{enumerate}
\end{enumerate}
Note that \eqref{enu:observationb1} also holds when $s (t, x) = \rbar (t, x) = +\infty$.
We divide the singular set and regular set by  
\begin{align*}
    \Sing _\alpha ^= (f) &= \set{(t, x) \in \domain: s (t, x) = 0 = \rbar (t, x)}, 
    \\
    \Sing _\alpha ^< (f) &= \set{(t, x) \in \domain: s (t, x) = 0 < \rbar (t, x)}, 
    \\
    \Reg _\alpha ^= (f) &= \set{(t, x) \in \domain: 0 < s (t, x) = \rbar (t, x)},
    \\
    \Reg _\alpha ^< (f) &= \set{(t, x) \in \domain: 0 < s (t, x) < \rbar (t, x)}.
\end{align*}
They form a partition of $\domain$. Note that $\Sing _\alpha (f) = \Sing _\alpha ^< (f) \cup \Sing _\alpha ^= (f)$, and $\Reg _\alpha (f) = \Reg _\alpha ^< (f) \cup \Reg _\alpha ^= (f)$, so they are consistent with the definition of singular sets and regular sets from the previous section.

To conclude this subsection, we remark that the interior threshold $\rint$ can be replaced by the parabolic distance in $\OmegaT$. This will be accomplished in Lemma \ref{lem:rstar}, but we need some auxiliary results first. 

\begin{lemma}
    \label{lem:c1c2}
    Suppose $c _2 > c _1 > 0$ satisfy
    \begin{align*}
        \nor{\varphi} _{L ^\infty} (c _2 + 2) ^D \eta _0 < \log c _2 - \log c _1.
    \end{align*}
    Let $Q _\e (t, x)$ be admissible, and let $t _* \in (t - \e ^2, t)$. If $x _* \in \R ^D$ satisfies 
    \begin{align*}
        |x _* - X _\e (t _*; t, x)| < c _1 \e,
    \end{align*}
    then 
    \begin{align*}
        |X _\e (s; t _*, x _*) - X _\e (s; t, x)| < c _2 \e, \qquad \text{ for all } s \in (t - \e ^2, t].
    \end{align*}
    Recall that $X _\e$ is defined in \eqref{eqn:flow}.
\end{lemma}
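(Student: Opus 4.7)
The plan is a continuity argument on $\delta(s) := X _\e(s; t _*, x _*) - X _\e(s; t, x)$. Since $\delta$ is continuous in $s$ with $|\delta(t _*)| < c _1 \e$, I would assume for contradiction that $s _0 \in (t - \e ^2, t]$ is the first time, measured from $t _*$ in either direction, at which $|\delta|$ reaches $c _2 \e$. On the closed interval between $t _*$ and $s _0$ one then has $|\delta(\sigma)| \le c _2 \e$, so both trajectories $y _1(\sigma) := X _\e(\sigma; t _*, x _*)$ and $y _2(\sigma) := X _\e(\sigma; t, x)$, together with the straight segment connecting them, stay inside $B _{c _2 \e}(y _2(\sigma))$.

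I would then derive a Gronwall-type differential inequality on $|\delta|$. The ODE $\dot\delta = b _\e(\cdot, y _1) - b _\e(\cdot, y _2)$ together with the fundamental theorem of calculus along the segment yields
\begin{align*}
    \frac{d}{d\sigma}|\delta(\sigma)| \le |\delta(\sigma)| \int _0 ^1 |\grad b _\e(\sigma, y _2(\sigma) + \theta \delta(\sigma))| \d \theta.
\end{align*}
Writing $\grad b _\e = (\grad b) * \varphi _\e$ and applying Fubini on the convolution, combined with the observation that the $\e$-tube around this segment lies inside $B _{(c _2 + 1)\e}(y _2(\sigma))$, shows that the integrand is controlled by $\nor{\varphi} _{L ^\infty} \e ^{-D} \int _{B _{(c _2 + 1)\e}(y _2(\sigma))} |\grad b(\sigma, w)| \d w$. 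Dividing by $|\delta|$, integrating in $\sigma$, and applying Gronwall reduces the problem to bounding this spacetime integral over the fattened tube by $\log(c _2 / c _1)$.

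The main technical point is to bridge this fattened-tube integral with the admissibility integral over $Q _\e(t, x)$. My plan is to dominate $|\grad b|$ by the spatial maximal function $\mathcal M((\grad b)(\sigma))$ and then to apply a volume-ratio/covering estimate: enlarging the cross-section from $B _\e$ to $B _{(c _2 + 1)\e}$ and allowing an additional $\e$-layer to realign with the admissibility center costs a factor of exactly $(c _2 + 2)^D$. Combined with the admissibility bound $\fint _{Q _\e(t, x)} \mathcal M((\grad b)(\sigma))(w) \d w \d \sigma \le \eta _0 \e ^{-2}$ and $|Q _\e| = |B _1| \e ^{D + 2}$, the resulting upper bound becomes $\nor{\varphi} _{L ^\infty} (c _2 + 2)^D \eta _0$, which by hypothesis is strictly less than $\log(c _2 / c _1)$. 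Therefore $|\delta(s _0)| < c _2 \e$, contradicting the definition of $s _0$; the argument is symmetric for both directions of $s$ relative to $t _*$, so the conclusion holds throughout $(t - \e ^2, t]$.
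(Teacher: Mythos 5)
Your proposal is correct and follows essentially the same route as the paper's proof: a continuity (bootstrap) argument on $|X_\e(s;t_*,x_*) - X_\e(s;t,x)|$, combined with a Gronwall/log-derivative estimate, the enlargement of the mollification ball $B_\e(\xi)$ to $B_{(c_2+2)\e}$ centered on the reference trajectory, domination by the maximal function of $\grad b$, and the admissibility bound over $Q_\e(t,x)$. The only cosmetic difference is that the paper invokes the mean value inequality at a single intermediate point $\xi$ and bounds $\left|\frac{d}{ds}\log|\gamma-\gamma_*|\right|$, whereas you carry the average $\int_0^1 |\grad b_\e(\sigma, y_2 + \theta\delta)|\,d\theta$ along the segment and close with Gronwall explicitly — logically equivalent, and if anything marginally cleaner for vector-valued $b_\e$.
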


\begin{proof}
    The proof is the same as Lemma 5 of \cite{yang2022} which is a special case of $c _1 = 1$ and $c _2 = 2$. We prove it here for the sake of completeness.

    For simplicity, denote the two trajectories by $\gamma (s) = X _\e (s; t, x)$ and $\gamma _* (s) = X _\e (s; t _*, x _*)$. Then $|\gamma _* (t _*) - \gamma (t _*)| < c _1 \e$. 
    We have 
    \begin{align*}
        \abs{\dfr{}s \log |\gamma (s) - \gamma _* (s)|} 
        &= \frac1{|\gamma (s) - \gamma _* (s)|} \abs{\dfr{}s |\gamma (s) - \gamma _* (s)|}
        \\
        &\le \frac1{|\gamma (s) - \gamma _* (s)|} \abs{\dot \gamma (s) - \dot \gamma _* (s)}
        \\
        &= \frac1{|\gamma (s) - \gamma _* (s)|} \abs{b _\e (s, \gamma (s)) - b _\e (s, \gamma _* (s))} 
        \\
        &\le |\grad b _\e (s, \xi)|
    \end{align*}
    for some $\xi$ between $\gamma (s)$ and $\gamma _* (s)$. Then provided $|\gamma (s) - \gamma _* (s)| \le c _2 \e$ at time $s$, we would have $|\gamma (s) - \xi| \le c _2 \e$ and
    \begin{align*}
        |\grad b _\e (s, \xi)| \le \int _{B _\e (\xi)} |\grad b (x)| |\varphi (x - \xi)| \d x \le \nor{\varphi} _{L ^\infty} \int _{B _{(c _2 + 2) \e} (y)} |\grad b (x)| \d x 
    \end{align*}
    for any $y \in B _\e (\gamma (s))$. Hence 
    \begin{align*}
        \abs{\dfr{}s \log |\gamma (s) - \gamma _* (s)| }
        \le \nor{\varphi} _{L ^\infty} (c _2 + 2) ^D \int _{B _{\e} (\gamma (s))} \mm (\grad b) (x) \d x .
    \end{align*}

    We now prove the lemma using a continuity argument. Suppose there exists $t _0 \in (t - \e ^2, t _*)$ such that $|\gamma (s) - \gamma _* (s)| \le c _2 \e$ for all $t _0 < s < t _*$, and equality is reached at $s = t _0$. Then 
    \begin{align*}
        \log c _2 - \log c _1 &= \log (c _2 \e) - \log (c _1 \e) \\
        &\le \log |\gamma (t _0) - \gamma _* (t _0)| - \log |\gamma (t _*) - \gamma _* (t _*)| \\
        &\le \nor{\varphi} _{L ^\infty} (c _2 + 2) ^D \int _{t _0} ^{t _*} \int _{B _{\e} (\gamma (s))} \mm (\grad b) (x) \d x \\
        &\le \nor{\varphi} _{L ^\infty} (c _2 + 2) ^D \eta _0.
    \end{align*}
    Hence we reach a contradiction. The same argument holds for $t _0 \in (t _*, t)$. By continuity, $|\gamma (s) - \gamma _* (s)|$ cannot reach $c _2 \e$ at any $s \in (t - \e ^2, t]$.
\end{proof}

\begin{corollary}
    \label{cor:dist-from-boundary}
    Suppose $\Omega$ satisfies Assumption \ref{ass:lipschitz} with constant $L$ and $r _0$. Let $(t, x) \in \OmegaT$, and suppose $Q _\e (t, x)$ is admissible with $(L + 4) \e < r _0$. If 
    \begin{align*}
        \dist _\P (Q _\e (t, x), \partial _\P \OmegaT) < \e,    
    \end{align*}
    then 
    \begin{align*}
        \dist _\P ((t, x), \partial _\P \OmegaT) \le (L + 4) \e.
    \end{align*}
\end{corollary}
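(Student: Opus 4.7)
The plan is to extract a pair witnessing the hypothesis and split into two geometric cases. Pick $(s_*, z_*) \in Q_\e(t,x)$ and $(s^{**}, y^{**}) \in \partial_\P \OmegaT$ with $|s_* - s^{**}| + |z_* - y^{**}|^2 < \e^2$, write $z_* = X_\e(s_*;t,x) + \tilde z$ with $|\tilde z| < \e$ and $s_* \in (t-\e^2, t]$, and decompose according to which component of $\partial_\P \OmegaT = \{0\}\times\Omega \cup [0, T)\times\partial\Omega$ contains $(s^{**}, y^{**})$. In either case the distance bound gives $|s_* - s^{**}| < \e^2$ and $|z_* - y^{**}| < \e$.

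The first case $s^{**} = 0$ is elementary. Then $s_* < \e^2$, which together with $s_* > t - \e^2$ forces $t < 2\e^2$, giving $\sqrt t < \sqrt 2\,\e$. Since $(0, x) \in \{0\}\times\Omega \subseteq \partial_\P\OmegaT$, this yields $\dist_\P((t,x), \partial_\P\OmegaT) \le \sqrt t < \sqrt 2 \,\e \le (L+4)\e$.

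In the second case $y^{**} \in \partial\Omega$, the triangle inequality gives $|X_\e(s_*;t,x) - y^{**}| < 2\e$. I would apply Lemma \ref{lem:c1c2} with $t_* = s_*$, $x_* = y^{**}$, $c_1 = 2$, and $c_2 = L + 3$; the admissibility threshold $\eta_0 = \eta_0(L, D)$ is to be specified small enough to ensure $\|\varphi\|_{L^\infty}(L+5)^D \eta_0 < \log((L+3)/2)$, which is exactly where the $L$-dependence of $\eta_0$ gets fixed. Evaluated at $\sigma = t$, the lemma yields $|X_\e(t; s_*, y^{**}) - x| < (L+3)\e$. Setting $y^* := X_\e(t; s_*, y^{**})$ and using the hypothesis $(L+4)\e < r_0$ to remain inside a single Lipschitz chart at $y^{**}$, I would argue, via the $L$-Lipschitz graph description of $\partial\Omega$ combined with an admissibility bound on the total trajectory excursion $|y^* - y^{**}|$, that there exists $w \in \partial\Omega$ within $\e$ of $y^*$; the triangle inequality then yields $\dist(x, \partial\Omega) \le |x - w| \le |x - y^*| + |y^* - w| \le (L+4)\e$.

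The main obstacle is this final bridging step: relating the flow-transported point $y^*$ back to an actual point of $\partial\Omega$. Since the flow $X_\e(t; s_*, \cdot)$ does not in general preserve $\partial\Omega$, one must combine admissibility (which controls the spreading of trajectories and, through the contribution of the jump of $b$ at $\partial\Omega$ after zero extension to $\mathcal M(\grad b)$, also constrains the absolute displacement of trajectories starting on the boundary) with the Lipschitz graph structure. The exact form $(L+4)\e$ in the conclusion then decomposes as $c_2 = L+3$ from Lemma \ref{lem:c1c2} plus an additional $\e$ from this boundary excursion bound.
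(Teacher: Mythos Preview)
Your Case 1 (time boundary) is correct. In Case 2, however, the ``bridging step'' you flag is a genuine gap, and the justification you sketch does not work. Admissibility, through Lemma \ref{lem:c1c2}, controls only the \emph{relative} separation of two trajectories; it says nothing about the \emph{absolute} displacement $|y^* - y^{**}|$ of a single trajectory from its starting point. Your remark about the jump of $b$ across $\partial\Omega$ contributing to $\mathcal M(\grad b)$ is also off: $\mathcal M(\grad b)$ is the maximal function of the $L^1_{\loc}$ function $\grad b$ after zero extension, not of any distributional surface part, and in any case admissibility only bounds its average over the skewed cylinder, which does not yield a pointwise bound on $|b_\e|$ along a boundary trajectory. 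There is no general reason for $X_\e(t; s_*, y^{**})$ to remain within $\e$ of $\partial\Omega$.

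The paper's resolution is to place the auxiliary point $x_*$ not on $\partial\Omega$ but strictly in the \emph{exterior}, far enough that $B_\e(x_*) \cap \Omega = \varnothing$. Since $b$ is extended by zero outside $\Omega$, this forces $b_\e(\cdot, x_*) \equiv 0$, so the trajectory through $x_*$ is constant: $X_\e(t; t_*, x_*) = x_*$. A single application of Lemma \ref{lem:c1c2} (with $c_1 = L+3$, $c_2 = L+4$) then gives $|x - x_*| \le (L+4)\e$ with $x_* \notin \Omega$, which is the desired bound on $\dist(x,\partial\Omega)$. To set this up, the paper argues by contradiction (assuming $\dist(x,\partial\Omega) > (L+4)\e$), uses the intermediate value theorem to find a time $t_*$ at which the main trajectory $X_\e(\cdot; t, x)$ is at distance exactly $2\e$ from $\Omega^c$, and then uses the $L$-Lipschitz graph structure at the nearest boundary point $y_*$ to fit an exterior ball $B_\e(x_*) \subset \Omega^c$ with $|x_* - y_*| = (L+1)\e$, whence $|x_* - X_\e(t_*; t, x)| \le (L+3)\e$.
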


\begin{proof}
    We argue by contradiction and assume for some $(s, y) \in Q _\e (t, x)$, 
    \begin{align*}
        \dist _\P ((s, y), \partial _\P \OmegaT) &< \e, \\
        \dist _\P ((t, x), \partial _\P \OmegaT) &> (L + 4) \e.
    \end{align*}
    Since $(t, x) \in \OmegaT$, it is clear that 
    \begin{align*}
        \dist _\P ((t, x), \partial _\P \OmegaT) = \mins{\sqrt t, \dist (x, \partial \Omega)},
    \end{align*}
    so $\sqrt t > (L + 4) \e$ and $\dist (x, \partial \Omega) > (L + 4) \e$. Since $(s, y) \in Q _\e (t, x)$, we know that
    \begin{align*}
        0 < 9 \e ^2 < ((L + 4) ^2 - 1) \e ^2 < t - \e ^2 < s \le t < T.
    \end{align*}
    As $s > 9 \e ^2$, $\dist _\P ((s, y), \set{0} \times \Omega) > 3 \e$, so it must hold that 
    \begin{align*}
        \e > \dist _\P ((s, y), \partial _\P \OmegaT) = \dist _\P ((s, y), [0, T) \times \partial \Omega) = \dist (y, \partial \Omega).
    \end{align*}
    Moreover, since $|y - X _\e (s; t, x)| < \e$, triangular inequality implies
    \begin{align*}
        \dist (X _\e (s; t, x), \Omega ^c) < 2 \e.
    \end{align*}
    Together with $\dist (X _\e (t; t, x), \Omega ^c) = \dist (x, \Omega ^c) > (L + 4) \e > 2 \e$, by continuity, we know there exists $t _* \in (s, t)$ such that 
    \begin{align*}
        \dist (X _\e (t _*; t, x), \Omega ^c) = 2 \e.
    \end{align*}
    Therefore, we can find $y _* \in \partial \Omega$ such that $|y _* - X _\e (t _*; t, x)| = 2 \e$. Since $\partial \Omega \cap B _{r _0} (y _*)$ is an $L$-Lipschitz graph, we can find a ball 
    \begin{align*}
        B _\e (x _*) \subset B _{r _0} (y _*) \setminus \Omega
    \end{align*}
    with $|x _* - y _*| = (L + 1) \e$, and 
    \begin{align*}
        |x _* - X _\e (t _*; t, x)| \le |x _* - y _*| + |y _* - X _\e (t _*; t, x)| \le (L + 3) \e.
    \end{align*}
    By Lemma \ref{lem:c1c2}, provided $\eta _0$ is sufficiently small depending on $L$, we have
    \begin{align*}
        |X _\e (t; t _*, x _*) - X _\e (t; t, x)| \le (L + 4) \e .
    \end{align*}
    Since $B _\e (x _*)$ is disjoint from $\Omega$, the drift is zero, so $X _\e (t; t _*, x _*) = x _*$. Thus $|x _* - x| \le (L + 4) \e$. Because $x ^* \notin \Omega$, we know that $\dist (x, \partial \Omega) \le (L + 4) \e$, which is a contradiction.
\end{proof}

\begin{lemma}
    \label{lem:rstar}
    For any $(t, x) \in \OmegaT$, denote 
    \begin{align*}
        \rstar (t, x) = \frac{\dist _\P ((t, x), \partial _\P \OmegaT)}{L + 4} \wedge r _0.
    \end{align*}
    Then it holds 
    \begin{align*}
        \rint (t, x) \wedge \radm (t, x) \ge \rstar (t, x) \wedge \radm (t, x).
    \end{align*}
\end{lemma}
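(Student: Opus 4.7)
The plan is to reduce the stated inequality to the stronger bound $\rint(t, x) \ge \rstar(t, x) \wedge \radm(t, x)$, from which the stated inequality follows by intersecting both sides with $\radm(t, x)$ (since $\min(a, b) \ge \min(c, b)$ whenever $a \ge c \wedge b$). Unwinding the definition $\rint(t, x) = \inf\{\rho > 0 : Q_\rho(t, x) \nsubseteq \OmegaT\}$, it suffices to show that for every $0 < \rho < \rstar(t, x) \wedge \radm(t, x)$, the skewed parabolic cylinder $Q_\rho(t, x)$ is contained in $\OmegaT$.

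Fix such a $\rho$. From $\rho < \radm(t, x)$ and the definition of $\radm$ we get that $Q_\rho(t, x)$ is admissible in the sense of \eqref{eqn:admissible}. From $\rho < \rstar(t, x)$, interpreted via the preliminary definition \eqref{eqn:defn-rstar} so that $\rstar \le r_0/(L+4)$, we obtain simultaneously $(L + 4)\rho < \dist_\P((t, x), \partial_\P \OmegaT)$ and $(L + 4)\rho < r_0$. These are precisely the hypotheses required by Corollary \ref{cor:dist-from-boundary}.

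Applying that corollary in its contrapositive form yields $\dist_\P(Q_\rho(t, x), \partial_\P \OmegaT) \ge \rho > 0$; in particular $Q_\rho(t, x) \subset \OmegaT$. Taking the infimum over admissible $\rho < \rstar(t, x) \wedge \radm(t, x)$ concludes the argument.

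The real work has already been carried out in Corollary \ref{cor:dist-from-boundary}, which packages the Gronwall-type comparison of trajectories (Lemma \ref{lem:c1c2}) with the local Lipschitz graph structure of $\partial \Omega$ at scale $r_0$; once that corollary is in hand, the remainder is an essentially formal translation between the language of admissible cylinders fully contained in $\OmegaT$ (what $\rint$ measures) and the language of parabolic distances to $\partial_\P \OmegaT$ (what $\rstar$ measures). The only subtlety to watch is the consistency of the two convention-dependent placements of $r_0$ in the definition of $\rstar$; one must use the form appearing in \eqref{eqn:defn-rstar} so that $(L+4)\rho < r_0$ is available for invoking the corollary.
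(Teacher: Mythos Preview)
Your argument is correct and follows essentially the same route as the paper: both proofs reduce immediately to Corollary~\ref{cor:dist-from-boundary}, the paper by applying it directly at scale $\e = \rint(t,x)$ in the case $\radm > \rint$, and you by applying its contrapositive at every scale $\rho < \rstar \wedge \radm$. Your observation about the two placements of $r_0$ in the definition of $\rstar$ is apt; the form in \eqref{eqn:defn-rstar} is indeed the one needed so that $(L+4)\rho < r_0$ is available. One small step you leave implicit is the passage from $\dist_\P(Q_\rho(t,x), \partial_\P \OmegaT) \ge \rho > 0$ to $Q_\rho(t,x) \subset \OmegaT$: this uses that every $(s,y) \in Q_\rho(t,x)$ has $s \in (0,T)$ (from $t > (L+4)^2\rho^2$) together with connectedness of $Q_\rho(t,x)$, so that a point $(s,y)$ with $y \notin \Omega$ would force the connected set $Q_\rho(t,x)$ to meet $[0,T) \times \partial\Omega \subset \partial_\P \OmegaT$.
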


\begin{proof}
    If $\radm (t, x) \le \rint (t, x)$, then there is nothing to prove. If $\radm (t, x) > \rint (t, x)$, then $Q _{\rint (t, x)} (t, x)$ is an admissible cylinder that touches the parabolic boundary $\partial _\P \OmegaT$. Corollary \ref{cor:dist-from-boundary} then implies $\dist _\P ((t, x), \partial _\P \OmegaT) \le (L + 4) \rint (t, x)$.
\end{proof}

\subsection{The cutoff averaging operator}

We define the cutoff averaging operator $\Aa ^\wedge [f]$ by the following. 

\begin{definition}
    For $f \in L ^1 _{\mathrm{loc}} (\domain)$, $(t, x) \in \domain$, define
    \begin{align*}
        \Aa ^\wedge [f] (t, x) &:= f _{\Saw [f] (t, x)} (t, x), \\
        \Aa ^< [f] (t, x) &:= \Aa ^\wedge [f] (t, x) \ind{\Sing _\alpha ^< (f) \cup \Reg _\alpha ^< (f)} (t, x) = \Sa [f] (t, x) ^{-\alpha} \ind*{\Sa [f] (t, x) < \rbar (t, x)}, \\
        \Aa ^= [f] (t, x) &:= \Aa ^\wedge [f] (t, x) \ind{\Sing _\alpha ^= (f) \cup \Reg _\alpha ^= (f)} (t, x) = f _{\rbar (t, x)} (t, x) \ind*{\Sa [f] (t, x) = \rbar (t, x)}.
    \end{align*}
\end{definition}

\begin{remark}
    $\Aa ^< [f]$ and $\Aa ^= [f]$ are disjointly supported, and $\Aa ^\wedge [f] = \Aa ^< [f] + \Aa ^= [f]$.
    By observation \eqref{enu:observation2}, we note that
    \begin{itemize}
        \item $\Aa ^< [f] (t, x) \le \Aa ^\wedge [f] (t, x) \le \Sa [f] (t, x) ^{-\alpha}$ for every $(t, x)$.
        
        \item $\Aa ^= [f] (t, x) = \Aa ^\wedge [f] (t, x) \le \rbar (t, x) ^{-\alpha}$ if $(t, x) \in \Reg _\alpha ^= (f)$.
        
        \item $\Aa ^< [f] (t, x) = \Aa ^\wedge [f] (t, x) > \rbar (t, x) ^{-\alpha}$ if $(t, x) \in \Reg _\alpha ^< (f)$.
    \end{itemize}
    Moreover, the partition of $\R^{1 + d}$ can now be written as 
    \begin{align*}
        \Sing _\alpha ^< (f) &= \set{\Aa ^< [f] =  +\infty}, \\ 
        \Sing _\alpha ^= (f) &= \set{\Aa ^= [f] =  +\infty}, \\ 
        \Reg _\alpha ^< (f) &= \set{0 < \Aa ^< [f] <  +\infty}, \\
        \Reg _\alpha ^= (f) &= \set{0 < \Aa ^= [f] <  +\infty} \cup \set{\Aa ^< [f] = \Aa ^= [f] = 0}.
    \end{align*}
\end{remark}

Since $\Saw [f]$ is upper semicontinuous, we know that $\Aa ^\wedge [f]$, $\Aa ^< [f]$ and $\Aa ^= [f]$ are Borel measurable. Note that this average is pointwise bounded from above by the $\mathcal Q$-maximal function:
\begin{align*}
    \mathcal M _{\mathcal Q} f (t, x):= \set{
        \sup _{r > 0} \fint _{Q _r (t, x)} f \d y \d s: Q _r (t, x) \text{ is admissible}
    },
\end{align*}
the averaging operator $\Aa ^\wedge$ inherits the boundedness of the maximal function $\mathcal M _{\mathcal Q}$: weak type-$(1, 1)$ and strong type-$(p,p)$ for $1 < p \le \infty$ \cite[theorem 1]{yang2022}. 

\begin{proposition}
    \label{thm:maximal-weak-strong}
    Let $f \in L ^1 _{\mathrm{loc}} (\domain)$. Suppose $b \in L ^p (0, T; W ^{1, p} _0 (\Omega))$ is divergence free, and $\mm (\grad b) \in L ^p (\OmegaT)$ for some $1 \le p \le \infty$. Then 
    \begin{enumerate}[\upshape (1)]
        \item $\meas (\set{\Aa ^\wedge [f] = \infty}) = 0$.
        \item If $f \in L ^1 (\domain)$, then $\Aa ^\wedge [f] \in L ^{1, \infty} (\domain)$ with estimate 
        \begin{align*}
            \nor{\Aa ^\wedge [f]} _{L ^{1, \infty} (\domain)} \le C \nor{f} _{L ^1 (\domain)}.
        \end{align*}
        \item If $f \in L ^q (\domain)$ for some $q \in (1, \infty]$, then $\Aa ^\wedge [f] \in L ^q (\domain)$ with 
        \begin{align*}
            \nor{\Aa ^\wedge [f]} _{L ^q (\domain)}\le C (p) \nor{f} _{L ^q (\domain)} .
        \end{align*}
    \end{enumerate}
\end{proposition}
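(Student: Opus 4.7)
The plan is to reduce the proposition to the weak-$(1,1)$ and strong-$(q,q)$ bounds for the $\mathcal{Q}$-maximal function $\mathcal{M}_{\mathcal{Q}}$ established in \cite[Theorem 1]{yang2022}. The hypotheses on $b$ are precisely those required to invoke that theorem. The core step is to verify the pointwise domination
\begin{equation*}
    \Aa^\wedge f(t,x) \le \mathcal{M}_{\mathcal{Q}} f(t,x), \qquad (t,x) \in \Omega_T.
\end{equation*}

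To establish the pointwise bound, fix $(t,x) \in \Omega_T$ and set $s := \Saw f(t,x)$. When $0 < s < \infty$, the definition of $\Saw$ forces $s \le \radm(t,x) \wedge \rint(t,x)$, so $Q_s(t,x)$ is an admissible cylinder contained in $\Omega_T$, and therefore
\begin{equation*}
    \Aa^\wedge f(t,x) \;=\; f_s(t,x) \;=\; \fint_{Q_s(t,x)} f \, dy\, d\sigma \;\le\; \mathcal{M}_{\mathcal{Q}} f(t,x).
\end{equation*}
If $s = \infty$ then $\rbar(t,x) = \infty$ and $f_\rho(t,x) \le \rho^{-\alpha}$ for every $\rho>0$, so by convention $\Aa^\wedge f(t,x) = 0$. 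If $s = 0$ with $\rbar(t,x) > 0$ (case (a$_2$) in the observations following the definition), then along a sequence $\e_i \downarrow 0$ one has admissible cylinders with $f_{\e_i}(t,x) > \e_i^{-\alpha} \to \infty$, so $\mathcal{M}_{\mathcal{Q}} f(t,x) = \infty$ matches $\Aa^\wedge f(t,x) = \infty$. Finally, if $s = \rbar(t,x) = 0$, then Lebesgue differentiation gives $\Aa^\wedge f(t,x) = f(t,x) \le \mathcal{M}_{\mathcal{Q}} f(t,x)$ for almost every $(t,x)$.

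Once the domination is in hand, claims (2) and (3) are immediate from $\{\Aa^\wedge f > \lambda\} \subset \{\mathcal{M}_{\mathcal{Q}} f > \lambda\}$ and the corresponding bounds on $\mathcal{M}_{\mathcal{Q}}$ from \cite[Theorem 1]{yang2022}. For claim (1), a localization argument suffices: since $f \in L^1_{\loc}(\Omega_T)$ can be truncated against compactly supported cutoffs to obtain $L^1$ pieces, the weak-$(1,1)$ bound on $\mathcal{M}_{\mathcal{Q}}$ yields $\mathcal{M}_{\mathcal{Q}} f < \infty$ almost everywhere, hence $\meas(\{\Aa^\wedge f = \infty\}) = 0$.

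The only real obstacle is bookkeeping the boundary cases $s \in \{0, \infty\}$ carefully so that the pointwise inequality holds literally rather than just on a full-measure set; the observations following the definition of $\Aa^\wedge$ in the paper already cover the relevant dichotomies. Beyond that, no new harmonic analysis is required — the proposition is essentially a transparent corollary of the maximal function theorem from \cite{yang2022}.
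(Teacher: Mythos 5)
Your approach is exactly the one the paper takes: the paper introduces $\mathcal M_{\mathcal Q}$ in the sentence preceding the proposition, remarks that $\Aa^\wedge f$ is pointwise bounded above by it, and then cites the weak-$(1,1)$ and strong-$(q,q)$ bounds of~\cite[Theorem~1]{yang2022}. Your verification of the pointwise domination for claims~(2) and~(3) is sound, since the corner cases you flag live on null sets and therefore do not affect a weak or strong Lebesgue norm.

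The argument you give for claim~(1), however, contains a genuine gap. You assert that truncating $f\in L^1_{\loc}$ against cutoffs and invoking weak-$(1,1)$ yields $\mathcal M_{\mathcal Q} f<\infty$ a.e. This is false in general: the maximal function is not local, so controlling $\mathcal M_{\mathcal Q}(f\mathbf1_K)$ on $K$ does not control $\mathcal M_{\mathcal Q} f$ on $K$, and for $f\in L^1_{\loc}$ with sufficiently rapid growth (e.g.\ $f=e^{|x|^2}$ when $\Omega_T$ is unbounded) one has $\mathcal M_{\mathcal Q} f\equiv\infty$. Fortunately claim~(1) does not need the maximal function at all. Observation~(b) following the definition gives directly $\Aa^\wedge f(t,x)=f_{\Saw f(t,x)}(t,x)\le \Saw f(t,x)^{-\alpha}$, which is finite whenever $\Saw f(t,x)>0$. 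Thus $\set{\Aa^\wedge f=\infty}\subset\set{\Saw f=0}$, and one concludes by showing the latter is Lebesgue-null: $\rint>0$ everywhere on the open set $\OmegaT$; $\set{\radm=0}=\Sing_2\pth{\eta_0^{-1}\mm(\grad b)}$ is null because $\mm(\grad b)\in L^p$; and $\set{\Sa f=0}\cap\set{\radm>0}$ is null by the partial-regularity covering argument (Proposition~\ref{lem:at0-drift} and its proof use the admissibility $\radm>0$ to invoke the Vitali-type covering for skewed cylinders). Replacing your truncation argument by this observation closes the gap; everything else in your write-up matches the paper's one-line justification.
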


Next, we explore the finer structure of the averaging operator $\Aa$. 
We note that $\Aa ^\wedge$ is also quasiconvex in the following sense. 

\begin{lemma}\label{lem:sublin}
    Fix $\alpha > 0$.
    For any nonnegative functions $f, g \in  L ^1 _{\loc} (\domain)$, $\lambda \in [0, 1]$, it holds that
    \begin{align*}
        \Aa \bkt{(1 - \lambda) f + \lambda g} (t, x) \le \maxs{
            \Aa [f] (t, x), \Aa [g] (t, x)
        }, \qquad \forall (t, x) \in \domain.
    \end{align*}
\end{lemma}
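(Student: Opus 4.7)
My plan is to reproduce the argument from Lemma \ref{lem:sublin-space} essentially verbatim, since the only change from the spatial setting to the Lagrangian setting is that averages are taken over skewed cylinders $Q_\rho(t,x)$ rather than Euclidean balls. The key observation is that for any fixed $\rho > 0$ and $(t,x) \in \domain$, the set $Q_\rho(t,x)$ is a fixed measurable set (it depends only on $\rho$, $t$, $x$, and the background drift $b$, not on the integrand), so the averaging map $u \mapsto u_\rho(t,x)$ is linear in $u$. This linearity is precisely what drives the quasiconvexity argument.

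More concretely, I would fix $(t,x) \in \domain$ and set $h = (1-\lambda)f + \lambda g$. If $\Sa h(t,x) = \infty$, then $\Aa h(t,x) = 0$ and there is nothing to prove. Otherwise, the definition \eqref{def:E-spacetime} of the infimum (now with skewed cylinders) produces a sequence $\rho_n \to \Sa h(t,x)$ with
\begin{align*}
    \rho_n^{-\alpha} < h_{\rho_n}(t,x) = (1-\lambda) f_{\rho_n}(t,x) + \lambda g_{\rho_n}(t,x).
\end{align*}
Consequently, for each $n$ at least one of $f_{\rho_n}(t,x) > \rho_n^{-\alpha}$ or $g_{\rho_n}(t,x) > \rho_n^{-\alpha}$ holds. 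Passing to a subsequence, we may assume it is the same one throughout, say $f_{\rho_n}(t,x) > \rho_n^{-\alpha}$. Then $\Sa f(t,x) \le \rho_n$ for every $n$, so letting $n \to \infty$ yields $\Sa f(t,x) \le \Sa h(t,x)$, hence $\Aa h(t,x) = \Sa h(t,x)^{-\alpha} \le \Sa f(t,x)^{-\alpha} = \Aa f(t,x)$. The symmetric subsequence would give $\Aa h(t,x) \le \Aa g(t,x)$, and together these imply $\Aa h(t,x) \le \max\{\Aa f(t,x), \Aa g(t,x)\}$.

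I do not expect any genuine obstacle here: the argument uses nothing about the geometry of the cylinder beyond the linearity of integration. In particular, admissibility, the drift $b$, and the capping by $\rbar$ never enter because the statement concerns $\Aa$ rather than $\Aa^\wedge$. Had the lemma been stated for $\Aa^\wedge$, one would have to check that the truncation at $\rbar$ is compatible with convex combinations; but since $\rbar$ depends only on $b$ and $\Omega$, not on $f$ or $g$, the same argument would still go through after replacing $\Sa$ by $\Saw$ throughout.
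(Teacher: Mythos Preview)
Your argument is correct for the lemma exactly as stated, i.e.\ for the uncapped operator $\Aa$, and it is indeed the same proof as Lemma~\ref{lem:sublin-space} transported to skewed cylinders.

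However, the paper's own proof is doing something genuinely different: despite the displayed statement writing $\Aa$, the sentence preceding the lemma and the entire proof are about the \emph{capped} operator $\Aa^\wedge$, and this is what is actually used later (e.g.\ in the Marcinkiewicz-type arguments). For $\Aa^\wedge$ the paper performs a four-way case split according to whether $(t,x)$ lies in $\Sing_\alpha$, $\Reg_\alpha^<$, or $\Reg_\alpha^=$ for each of $f,g,h$. The extra case that your argument does not cover is $(t,x)\in\Reg_\alpha^=(h)\cap\Reg_\alpha^=(f)\cap\Reg_\alpha^=(g)$, where $\Saw h=\Saw f=\Saw g=\rbar$ and one computes $\Aa^\wedge h=h_{\rbar}=(1-\lambda)f_{\rbar}+\lambda g_{\rbar}$ directly.

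Your closing remark that ``the same argument would still go through after replacing $\Sa$ by $\Saw$'' is therefore too quick: the identity $\Aa^\wedge f=(\Saw f)^{-\alpha}$ fails on $\Reg_\alpha^=(f)$, where instead $\Aa^\wedge f=f_{\rbar}\le\rbar^{-\alpha}$. So for $\Aa^\wedge$ one really does need the additional case analysis the paper carries out, not merely a notational substitution.
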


\begin{proof}
    Denote $h = (1 - \lambda) f + \lambda g$, and take $(t, x) \in \domain$. We separate the following cases.
    \begin{enumerate}[(a)]
        \item If $(t, x) \in \Sing _\alpha (f) \cup \Sing _\alpha (g)$, then the right hand side is $+\infty$ and there is nothing to prove. Otherwise, $(t, x) \in \Reg _\alpha (f) \cap \Reg _\alpha (g)$, $\rbar (t, x) > 0$, so $(t, x) \notin \Sing _\alpha ^= (h)$. 

        \item If $(t, x) \in \Sing _\alpha ^< (h) \cup \Reg _\alpha ^< (h)$, then there exists a sequence of $\rho _n > 0$ within $\Sa [h] (t, x) < \rho _n < \rbar (t, x)$ and $\rho _n \to \Sa [h] (t, x)$ such that 
        \begin{align*}
            \rho _n ^{-\alpha} < h _{\rho _n} (t, x) = (1 - \lambda) f _{\rho _n} (t, x) + \lambda g _{\rho _n} (t, x).
        \end{align*}
        Therefore, at least one of $f _{\rho _n} (t, x)$ and $g _{\rho _n} (t, x)$ is greater than $\rho _n ^{-\alpha}$. Suppose, up to a subsequence, that $f _{\rho _n} (t, x) > \rho _n ^{-\alpha}$. Then $(t, x) \in \Reg _\alpha ^< (f)$, and $0 < \Sa [f] (t, x) \le \rho _n$. Taking $n \to \infty$ we find $\Sa [f] (t, x) \le \Sa [h] (t, x)$, and consequently 
        \begin{align*}
            \Aa ^\wedge [h] (t, x) = \Sa [h] (t, x) ^{-\alpha} \le \Sa [f] (t, x) ^{-\alpha} = \Aa ^\wedge [f] (t, x).            
        \end{align*}

        \item If $(t, x) \in \Reg _\alpha ^= (h) \cap \Reg _\alpha ^< (f)$, then 
        \begin{align*}
            \Aa ^\wedge [h] (t, x) \le \rbar (t, x) ^{-\alpha} < \Aa ^\wedge [f] (t, x).
        \end{align*}
        Similarly, if $(t, x) \in \Reg _\alpha ^= (h) \cap \Reg _\alpha ^< (g)$, then $\Aa ^\wedge [h] (t, x) < \Aa ^\wedge [g] (t, x)$.

        \item If $(t, x) \in \Reg _\alpha ^= (h) \cap \Reg _\alpha ^= (f) \cap \Reg _\alpha ^= (g)$, then 
        \begin{align*}
            \Aa ^\wedge [h] (t, x) &= \fint _{Q _{\rbar (t, x)} (t, x)} h \d y \d s
            \\
            &= (1 - \lambda) \fint _{Q _{\rbar (t, x)} (t, x)} f \d y \d s + \lambda \fint _{Q _{\rbar (t, x)} (t, x)} g \d y \d s 
            \\
            &= (1 - \lambda) \Aa ^\wedge [f] (t, x) + \lambda \Aa ^\wedge [g] (t, x).
        \end{align*}
    \end{enumerate}
    These are all the possible cases, so it always holds that $\Aa ^\wedge [h] \le \max \set{\Aa ^\wedge [f], \Aa ^\wedge [g]}$.
\end{proof}

Finally, we remark that $\rbar \ge \rstar$ in $\Reg _\alpha ^= (f)$ if $f$ is dominates the drift gradient. 

\begin{lemma}
    \label{lem:radmrbar}
    Suppose $\alpha \ge 2$, $\mm (\grad b) \in L ^1 _\loc (\OmegaT)$, and $f \ge [\frac1{\eta _0} \mathcal M (\grad b)] ^\frac\alpha2$ in $\OmegaT$. Then $\rbar (t, x) \ge \rstar (t, x)$ for any $(t, x) \in \Reg _\alpha ^= (f)$. Moreover, if $f$ is bounded in a neighborhood of $(t, x)$, then $(t, x) \in \Reg _\alpha (f)$. 
\end{lemma}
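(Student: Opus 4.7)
The plan is to extract the pointwise comparison $\radm \ge \Sa f$ from the hypothesis $f \ge (\eta_0^{-1}\mm(\grad b))^{\alpha/2}$ by Jensen's inequality, then combine it with Lemma~\ref{lem:rstar} to obtain $\rbar \ge \rstar$. First I would exploit $\alpha/2 \ge 1$: the map $x \mapsto x^{\alpha/2}$ is convex on $[0, \infty)$, so averaging the hypothesis over any skewed cylinder $Q_\rho(t, x)$ yields
\[
    f_\rho(t, x) \ge \left( \fint_{Q_\rho(t, x)} \eta_0^{-1} \mm(\grad b) \right)^{\alpha/2}.
\]
Hence $f_\rho(t, x) \le \rho^{-\alpha}$ implies the admissibility condition \eqref{eqn:admissible} on $Q_\rho(t, x)$; taking the contrapositive and then infima gives $\radm(t, x) \ge \Sa f(t, x)$.

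Since $(t, x) \in \Reg_\alpha^=(f)$ requires $0 < \rbar(t, x) \le \Sa f(t, x)$, this produces $\radm \ge \rbar$. Lemma~\ref{lem:rstar} now gives $\rbar = \rint \wedge \radm \ge \rstar \wedge \radm$: in the generic case $\radm \ge \rstar$ the desired inequality $\rbar \ge \rstar$ is immediate, whereas in the residual case $\radm < \rstar$ one has $\rbar = \radm$, which combined with $\Sa f \ge \rbar$ and $\radm \ge \Sa f$ collapses to $\Sa f = \radm = \rbar$. I would rule this borderline case out by pairing observation~(b\textsubscript{1})---namely $f_\rbar(t, x) \le \rbar^{-\alpha}$---with the continuity of $\rho \mapsto \fint_{Q_\rho(t, x)} \mm(\grad b)$ guaranteed by $\mm(\grad b) \in L^1_\loc$ (the spacetime analog of Lemma~\ref{lem:continuity-space}), so that admissibility persists on a right neighborhood of $\rbar$, contradicting $\radm = \rbar$.

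For the ``moreover'' statement, if $f \le M$ on an open neighborhood $U$ of $(t, x)$, the inner regularity in Lemma~\ref{lem:inner-outer-regularity-skewed} furnishes $\rho_0 > 0$ such that $Q_\rho(t, x) \subset U$ for every $\rho < \rho_0$, so $f_\rho(t, x) \le M$ there; choosing $\rho < \min(\rho_0, M^{-1/\alpha})$ yields $f_\rho(t, x) \le M < \rho^{-\alpha}$ and hence $\Sa f(t, x) > 0$. Together with $\rint(t, x) > 0$ (from $(t, x) \in \OmegaT$) and $\radm(t, x) > 0$ (at a Lebesgue point of $\mm(\grad b)$), this gives $s(t, x) > 0$, i.e.\ $(t, x) \in \Reg_\alpha(f)$. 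The main obstacle is precisely the Jensen-equality borderline in paragraph two: excluding the case $\Sa f = \radm = \rbar$ requires the fine semicontinuity of $\rho \mapsto \fint_{Q_\rho} \mm(\grad b)$ near $\rho = \rbar$, and is where the proof genuinely needs care.
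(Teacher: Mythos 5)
Your route coincides with the paper's up to the final ``combining the three inequalities'' step: the Jensen comparison (Lemma~\ref{lem:nonlinear-scaling}) gives $\radm = \mathscr S _{2}\pth{\eta_0^{-1}\mm(\grad b)} \ge \Sa(f)$, and on $\Reg _\alpha ^= (f)$ one has $\Sa(f) \ge \Saw(f) = \rbar$, which together with Lemma~\ref{lem:rstar} produces $\radm \ge \Sa(f) \ge \rbar \ge \radm \wedge \rstar$. You are right to flag that this chain alone does not directly yield $\rbar \ge \rstar$: when $\radm < \rstar$ the chain collapses to $\Sa f = \radm = \rbar$, which a priori allows $\rbar < \rstar$. (The case $\radm > \rbar$ is clean, since then $\rbar = \rint < \radm$ and Corollary~\ref{cor:dist-from-boundary} gives $\rint \ge \rstar$.) Your proposed repair---deduce $\fint_{Q_{\rbar}}\mm(\grad b) \le \eta_0\rbar^{-2}$ from observation (b\textsubscript{1}) and Jensen, then invoke continuity of $\rho \mapsto \fint_{Q_\rho}\mm(\grad b)$ to push admissibility past $\rbar$---closes the gap only when the inequality at $\rho=\rbar$ is strict; if $\fint_{Q_{\rbar}}\mm(\grad b) = \eta_0\rbar^{-2}$ exactly, continuity gives no persistence, as you yourself acknowledge. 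So the subtlety you identified is genuine, and your patch does not yet resolve it; note also that the paper's own proof declares only ``combining the above three inequalities'' and does not spell out how this residual case is handled either.

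For the ``moreover'' part, your appeal to ``$\radm(t,x) > 0$ at a Lebesgue point of $\mm(\grad b)$'' is the wrong mechanism: the conclusion must hold at \emph{every} $(t,x)$ where $f$ is locally bounded, not merely at almost every point. The paper's argument is cleaner and you should adopt it: the standing hypothesis $f \ge [\eta_0^{-1}\mm(\grad b)]^{\alpha/2}$ implies $\mm(\grad b) \le \eta_0 f^{2/\alpha}$, so $\mm(\grad b)$ is bounded on the same neighborhood $U$ where $f$ is bounded; hence $\fint_{Q_\rho(t,x)}\mm(\grad b)$ stays bounded while $\eta_0\rho^{-2}\to\infty$ as $\rho\to 0$, which makes $Q_\rho(t,x)$ admissible for all small $\rho$ and gives $\radm(t,x)>0$ pointwise, with no Lebesgue-point hypothesis needed.
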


\begin{proof}
    First, we use Jensen Lemma \ref{lem:nonlinear-scaling}:
    \begin{align*}
        \radm = \mathscr S _2 \left[
            \frac{\mm (\grad b)}{\eta _0}
        \right] \ge \mathscr S _{\frac\alpha2 \cdot 2} \left[
            \left(
                \frac1{\eta _0} \mathcal M (\grad b)
            \right) ^\frac\alpha2
        \right] \ge \Sa [f].
    \end{align*}
    By Lemma \ref{lem:rstar}, we know that 
    \begin{align*}
        \rbar (t, x) \ge \radm (t, x) \wedge \rstar (t, x).
    \end{align*}
    Moreover, for any $(t, x) \in \Sing _\alpha ^= (f) \cup \Reg _\alpha ^= (f)$ it holds that 
    \begin{align*}
        \Sa [f] (t, x) \ge \Saw [f] (t, x) = \rbar (t, x).
    \end{align*}
    Combining the above three inequalities, we find that $\rbar (t, x) \ge \rstar (t, x)$ for any $(t, x) \in \Reg _\alpha ^= (f)$. 

    Next, suppose $f$ is bounded in a neighborhood $U \subset \OmegaT$ of $(t, x)$. Then $\mm (\grad b)$ is also bounded in $U$. For $\e$ sufficiently small, $Q _\e (t, x) \subset U$ and $\fint _{Q _\e (t, x)} \mm (\grad b) \le \eta _0 \e ^{-2}$ imply $Q _\e (t, x)$ is admissible and $\rbar (t, x) > 0$. This shows $(t, x) \notin \Sing _\alpha ^= (f)$. Moreover, $\fint _{Q _\e (t, x)} f \le \e ^{-\alpha}$ for sufficiently small $\e$, so $(t, x) \notin \Sing _\alpha ^< (f)$. Combined, they prove $(t, x) \in \Reg _\alpha (f)$. 
\end{proof}

In the next subsection, we will show some trace estimates on $\Aa ^<$, which extends results from Section \ref{sec:spacetime}.

\subsection{Trace estimate for the averaging operator}

To establish trace estimates, we need to measure the level sets of $\Sa [f]$. We start with the section of a regular set. 

\begin{lemma}
    \label{lem:atp-drift}

    Let $t \in \R$, $\alpha > 0$, $p \in [1, \infty)$, and $f \in L ^p _{\loc, t} L ^p _x (\domain)$. Denote 
    \begin{align*}
        A _t (\rho) &:= \set{
            x' \in \Gamma _t:
            \rho \le \Sa [f] (t, x') < 2 \rho \wedge \radm (t, x')
        }, \qquad \rho > 0.
    \end{align*}
    Then for every $t \in (0, T)$, $\rho > 0$, 
    \begin{align*}
        \mathscr H ^d \pth{A _t (\rho)} \le C \rho ^{-D + d - 2 + p \alpha} \int _{t - 4 \rho ^2} ^{t} \intRd f (s, x) ^p \dx \d s.
    \end{align*}
\end{lemma}

\begin{proof}
    We can borrow almost entirely the proof of Lemma \ref{lem:atp-spacetime}, with one exception that pairwise disjoint $B _{\rho _i} (x' _i)$ do not guarantee disjoint $Q _{\rho _i} (t, x' _i)$. We will patch this up by the following alternative argument instead. For simplicity, suppose $p = 1$ and $f \in L ^1 _{t, x} (\domain)$.

    Same as Lemma \ref{lem:atp-spacetime}, we know 
    \begin{align*}
        \mathscr H ^{d} (A _t (\rho)) \le (1 + \nor{g _t} _{\Lip} ^2) ^{\frac{d}2}  \frac{\mathscr L ^D (\mathcal U _\rho (A _t (\rho)))}{c _{D - d} \rho ^{D - d}},
    \end{align*}
    with $\mathcal U _\rho (A _t (\rho))$ covered by $\bigcup _{x' \in A _t (\rho)} B _{\rho _{x'}} (x')$ for some $\rho ' \in [\rho, 2 \rho \wedge \radm (t, x'))$ satisfying
    \begin{align*}
        f _{\rho _{x'}} (t, x') > \rho _{x'} ^{-\alpha}.
    \end{align*}
    Now we choose a bigger covering: 
    \begin{align*}
        \mathcal U _\rho (A _t (\rho)) \subset \bigcup _{x' \in A _t (\rho)} B _{9 \rho _{x'}} (x').
    \end{align*}
    By Vitali's covering lemma, we can find a disjoint subcollection $\set{B _{9 \rho _i} (x _i)} _i$ with 
    \begin{align*}
        \mathscr L ^{D} (\mathcal U _\rho (A _t (\rho))) \le C \sum _{i} \mathscr L ^{D} (B _{9 \rho _i} (x _i)) \le C \rho ^{-2} \sum _i \mathscr L ^{D + 1} (Q _{\rho _i} (t, x _i)).
    \end{align*}
    Here $C = C (d)$ depends only on the dimension.
    Since $\rho _i \le \radm (x _i)$, $Q _{\rho _i} (t, x _i)$ is an admissible cylinder. 
    By \cite[Proposition 7]{yang2022}, we know $B _{9 \rho _i} (x _i) \cap B _{9 \rho _j} (x _j) = \varnothing$ implies $Q _{\rho _i} (t, x_ i)$ and $Q _{\rho _j} (t, x _j)$ are disjoint. Hence $\set{Q _{\rho _i} (t, x _i)} _i$ are also pairwise disjoint. Therefore, the total volume is
    \begin{align*}
        \sum _i \mathscr L ^{D + 1} (Q _{\rho _i} (t, x _i)) &= \sum _i \frac1{f _{\rho _i} (t, x _i)} \int _{Q _{\rho _i} (t, x _i)} f (s, y) \d y \d s \\
        &\le \sum _i \rho _i ^\alpha \int _{Q _{\rho _i} (t, x _i)} f (s, y) \d y \d s \\
        &\le \sum _i (2\rho) ^\alpha \int _{Q _{\rho _i} (t, x _i)} f (s, y) \d y \d s \\
        & \le (2\rho) ^\alpha \int _{(t - (2 \rho) ^2, t) \times \R ^D} f (s, y) \d y \d s.
    \end{align*}
    This is because $Q _{\rho _i} (t, x _i) \subset (t - (2 \rho) ^2, t) \times \R ^D$. Combine these estimates, we have 
    \begin{align*}
        \mu _t (A _t (\rho)) \le C \rho ^{-D + d - 2 + \alpha} \int _{(t - (2 \rho) ^2, t) \times \Rd} f (s, y) \d y \d s.
    \end{align*}
    This finishes the proof of the lemma.
\end{proof}

Next, we measure the singular set. We obtain the same estimate on the dimension of the singular set as in Proposition \ref{lem:at0-spacetime}, and anisotropic estimates as in Corollary \ref{cor:p1q1p2q2}.

\begin{proposition}
    \label{lem:at0-drift}
    Let $\alpha > 0$, and $f \in L ^p _{\loc, t} L ^q _{\loc, x} (\domain)$, $1 \le q \le p < \infty$. Denote the singular section by 
    \begin{align*}
        S _t := \set{
            x' \in \Gamma _t: (t, x') \in \Sing _\alpha (f), \radm (t, x') > 0
        }, \qquad t \in \R,
    \end{align*}
    and denote the set of singular time by 
    \begin{align*}
        \mathcal T = \set{t \in \R: \mathscr H ^d (S _t) > 0}.
    \end{align*}
    \begin{enumerate}[\upshape (a)]
        \item \label{enu:at0} If $\frac{D - d}q < \alpha \le \frac 2p + \frac{D - d}q$, then $\mathcal T$ has Hausdorff dimension no greater than
        \begin{align*}
            \dim _{\mathscr H} (\mathcal T) \le 1 - \frac p2 \pth{\alpha - \frac{D - d}{q}}.
        \end{align*}

        \item \label{enu:at0-reg} If $\alpha > \frac 2p + \frac{D - d}q$, then $\mathcal T = \varnothing$.
    \end{enumerate}
    
\end{proposition}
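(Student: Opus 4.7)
The plan is to mimic the proof of Proposition \ref{lem:at0-spacetime} essentially verbatim, with Lemma \ref{lem:atp-drift} replacing Lemma \ref{lem:atp-spacetime}. Both parts \eqref{enu:at0} and \eqref{enu:at0-reg} will be consequences of the following unified statement: for every $\beta \in [0, 1]$ with $\frac{2\beta}{p} + \frac{D - d}{q} < \alpha$, we have $\mathscr H ^d (S _t) = 0$ for $\mathscr H ^{1 - \beta}$-a.e.\ $t \in \R$. Part \eqref{enu:at0-reg} corresponds to $\beta = 1$, in which case $\mathscr H^0$-a.e.\ means every $t$.

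The first step is to introduce, for each $\eta > 0$, the set
\begin{align*}
    \mathcal T (\eta) = \set{
        t \in \R: \limsup _{\rho \to 0} \rho ^{2\beta} \fint _{t - (2 \rho) ^2} ^t \nor{f (s)} _{L ^q (\R ^D)} ^p \d s \ge \eta
    },
\end{align*}
and to show $\mathcal T \subset \mathcal T (\eta)$. Fix $t \notin \mathcal T (\eta)$ and $x' \in S _t$. Because $\radm (t, x') > 0$ by definition of $S_t$, we may choose $\e > 0$ small enough that $2\e < \radm(t, x')$ and so that the running time average above stays below $\eta$ for every $\rho \in (0, \e)$. Since $\Sa f (t, x') = 0$, there exists $\rho _{x'} \in (0, \e)$ with $f _{\rho _{x'}} (t, x') > \rho _{x'} ^{-\alpha}$, and automatically $\rho _{x'} < \radm (t, x')$. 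This is the one place where Assumption $\radm > 0$ is actually used; after that, the scales $\rho_{x'}$ are admissible and Lemma \ref{lem:atp-drift} applies.

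The second step partitions $S _t = \bigcup _{k \ge 1} S _t (2 ^{-k} \e)$ where $S_t(\rho) = \{x' \in S_t : \rho \le \rho_{x'} < 2\rho\}$. Each $S_t(\rho)$ is of the type covered by Lemma \ref{lem:atp-drift} (applied with the $L^q$ estimate, reached via Jensen), yielding
\begin{align*}
    \mathscr H ^d (S _t (\rho)) \le C \eta ^{q / p} \rho ^{-D + d + q (\alpha - 2 \beta / p)} \int _{t - (2 \rho) ^2} ^t \nor{f (s)} _{L ^q (\Rd)} ^p \d s,
\end{align*}
exactly as in the proof of Proposition \ref{lem:at0-spacetime}. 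Summing the geometric series in $k$—this is where the hypothesis $\alpha > \tfrac{2\beta}{p} + \tfrac{D-d}{q}$ enters to ensure convergence—gives $\mathscr H ^d (S _t) \le C \eta ^{q/p} \e ^{-D + d + q (\alpha - 2 \beta / p)}$, which vanishes as $\e \to 0$. Hence $t \notin \mathcal T$, proving $\mathcal T \subset \mathcal T (\eta)$.

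The last step bounds $\mathscr H ^{1 - \beta} (\mathcal T (\eta))$. For each $t \in \mathcal T(\eta)$ pick $\rho_t$ with $\rho_t^{2\beta} \fint_{t - (2\rho_t)^2}^t \|f(s)\|_{L^q}^p \d s \ge \eta$, extract a disjoint Vitali subfamily of intervals $(t_i - (2\rho_i)^2, t_i + (2\rho_i)^2)$, and estimate
\begin{align*}
    \mathscr H^{1 - \beta}(\mathcal T(\eta)) \le C \sum_i \rho_i^{2(1 - \beta)} \le \frac{C}{\eta} \nor{f}_{L^p_t L^q_x(\domain)}^p.
\end{align*}
Letting $\eta \to \infty$ gives $\mathscr H^{1 - \beta}(\mathcal T) = 0$, completing both parts. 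The bookkeeping with the admissibility constraint $\rho_{x'} < \radm(t, x')$ is the only genuinely new ingredient beyond Proposition \ref{lem:at0-spacetime}, and it is handled transparently by the definition of $S_t$; no further obstacle is expected.
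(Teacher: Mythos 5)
Your proof is correct and follows exactly the route the paper itself takes: the paper's proof of Proposition \ref{lem:at0-drift} simply observes that $\radm(t,x') > 0$ lets one choose admissible scales $\rho_{x'}$ arbitrarily small so that Lemma \ref{lem:atp-drift} applies, and then refers back wholesale to the proof of Proposition \ref{lem:at0-spacetime}, which you have reproduced faithfully. One small wording issue: you write ``we may choose $\e > 0$ small enough that $2\e < \radm(t,x')$,'' but $\e$ must be independent of $x'$ (it determines the uniform partition $S_t = \bigcup_k S_t(2^{-k}\e)$); the correct phrasing is that for each fixed $x'$ one may pick $\rho_{x'} < \min\{\e, \radm(t,x')\}$, which is possible precisely because $\Sa f(t,x') = 0 < \radm(t,x')$. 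This is what your argument actually uses, so the substance is unaffected.
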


\begin{proof}
    If $\radm (t, x') > 0$, then for every $\e > 0$, there exists $\rho = \rho _{x'} < \e$ such that $f _\rho (t, x') > \rho ^{-\alpha}$ and $Q _\rho (t, x')$ is admissible, so the covering lemma applies. The rest of the proof is identical to that of Proposition \ref{lem:at0-spacetime}.
\end{proof}

\begin{theorem}
    \label{thm:avg}
    \renewcommand{\Aa}{\mathscr A _\alpha ^<}
    \renewcommand{\domain}{\OmegaT}
    Let $0 < p _1, q _1, p _2, q _2 \le \infty$ and $\alpha > 0$. Define $r _1, r _2 \in \R \setminus \{0\} \cup \{\infty\}$ by 
    \begin{align*}
        \frac1{r _1} = \alpha - \frac{2}{p _1} - \frac{D}{q _1}, \qquad \frac1{r _2} = \alpha - \frac{2}{p _2} - \frac{d}{q _2}.
    \end{align*}
    Suppose $r _2 = \lambda r _1$ for some $\lambda \in (0, \infty)$, $p _2 \ge \lambda p _1$, $q _2 \ge \lambda q _1$, $1 \le q _1 \le p _1$, and $f \in L ^{p _1} _t L ^{q _1} _x (\domain)$. 
    
    \begin{enumerate}[\upshape (A)]
        \item \label{enu:weakweak-drift} Suppose $p _1 = q _1 = p \in [1, \infty]$, and assume $\frac 1{r _1} + \frac2{p _1} \cdot \ind*{p _2 = \lambda p _1} + \frac d{q _1} \cdot \ind*{q _2 = \lambda q _1} > 0$.
        
        \begin{enumerate}[\upshape (a)]
            \item \label{enu:weakweak1-drift} If $p = 1$, then 
            \begin{align*}
                \nor{\Aa [f]} _{L ^{\lambda, \infty} _{t, x} (\Gamma _T)} ^{\lambda} &\le C \nor{f} _{L ^1 _{t, x} (\domain)} \qquad \text{ if } p _2 = q _2 = \lambda, \\
                \nor{\Aa [f]} _{L ^{p _2, \infty} _t L ^{q _2, \infty} _x (\Gamma _T)} ^{\lambda} &\le C \nor{f} _{L ^1 _{t, x} (\domain)} \qquad \text{ if } p _2 > \lambda \text{ or }  q _2 > \lambda.
            \end{align*}
    
            \item \label{enu:p=q>1-drift} If $p > 1$, then 
            \begin{align*}
                \nor{\Aa [f]} _{L ^{p _2} _t L ^{q _2} _x (\Gamma _T)} ^{\lambda} \le C \nor{f} _{L ^{p} _{t, x} (\domain)}.
            \end{align*}
        \end{enumerate}

        \item \label{enu:weakstrong-drift} Suppose $\lambda p _1 < p _2 < \infty$. When $\lambda q _1 < q _2$, we further require $\alpha > \frac{d}{q _2}$.
        
        \begin{enumerate}[\upshape (a)]
            \item \label{enu:Ba} If $p _1 = q _1$, then 
            \begin{align*}
                \nor{\Aa [f]} _{L ^{p _2, \infty} _t L ^{q _2} _x (\Gamma _T)} ^{\lambda} \le C \nor{f} _{L ^{p _1} _t L ^{q _1} _x (\domain)}.
            \end{align*}

            \item \label{enu:Bb} If $p _1 > q _1$, then 
            \begin{align*}
                \nor{\Aa [f]} _{L ^{p _2} _t L ^{q _2} _x (\Gamma _T)} ^{\lambda} \le C \nor{f} _{L ^{p _1} _t L ^{q _1} _x (\domain)}.
            \end{align*}
        \end{enumerate}

        \item \label{enu:C-drift}
        Suppose $p _2 = \infty$. When $\lambda q _1 < q _2$, we further require $\alpha > \frac{d}{q _2}$. Then 
        \begin{align*}
            \nor{\Aa [f]} _{L ^{\infty} _t L ^{q _2, \infty} _x (\Gamma _T)} ^{\lambda} \le C \nor{f} _{L ^{p _1} _t L ^{q _1} _x (\domain)}.
        \end{align*}
    \end{enumerate}
    In all of the above, the constant $C = C (p _1, q _1, p _2, q _2, \alpha, D, d, L)$.
\end{theorem}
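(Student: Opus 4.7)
The plan is to mirror the assembly proof of Corollary \ref{cor:p1q1p2q2}, substituting the drift-adapted tools developed in this section (Lemma \ref{lem:atp-drift} and Proposition \ref{lem:at0-drift}) for their drift-free analogs (Lemma \ref{lem:atp-spacetime} and Proposition \ref{lem:at0-spacetime}). The key observation is that $\mathscr A _\alpha ^< f (t,x) > \lambda$ forces $\mathscr S _\alpha f(t,x) \in [0, \lambda^{-1/\alpha})$ with $\mathscr S _\alpha f(t,x) < \rbar(t,x) \le \radm(t,x)$, so the super-level sets at each time $t$ decompose dyadically into the sets $A_t(\rho_{k+1})$ with $\rho_k = 2^{-k}\lambda^{-1/\alpha}$ from Lemma \ref{lem:atp-drift}, exactly as in the proof of Theorem \ref{thm:avg-spacetime}.

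First I would re-prove the drift analogs of Theorem \ref{thm:avg-spacetime}, Proposition \ref{prop:anisotropic-spacetime}, and Proposition \ref{prop:anisotropic-2-spacetime} with $\mathscr A _\alpha ^<$ in place of $\mathscr A _\alpha$ and $\OmegaT$ in place of $\R \times \Rd$. The singular-set statements come from Proposition \ref{lem:at0-drift}, since $\set{\mathscr A _\alpha ^< f = \infty} = \Sing _\alpha ^< (f) \subset \Sing _\alpha (f) \cap \set{\radm > 0}$. Summing Lemma \ref{lem:atp-drift} as a geometric series in $\rho _k$ yields the weak-type $L^{1,\infty}$ estimate, and Marcinkiewicz interpolation together with the quasiconvexity of $\mathscr A _\alpha ^<$ (which follows from the case analysis in Lemma \ref{lem:sublin} restricted to the $\Reg _\alpha ^< \cup \Sing _\alpha ^<$ part) upgrades this to $L^p$ for $p > 1$. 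The key bound of Proposition \ref{prop:anisotropic-2-spacetime} on $\mu _t (\set{2^{-\alpha} \lambda < \mathscr A _\alpha ^< f (t) \le \lambda})$ in terms of a time-slab integral of $f^p$ transfers directly via Lemma \ref{lem:atp-drift}, after which Young's convolution inequality (Lemma \ref{lem:convolution}) and Lorentz interpolation (Lemma \ref{lem:lorentz-interpolation}) apply unchanged.

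With these three drift analogs in hand, the unified statement follows by the same parameter-matching bookkeeping used at the end of the proof of Corollary \ref{cor:p1q1p2q2}: given $p_1, q_1, p_2, q_2, \lambda$ satisfying the scaling hypothesis, set $p = q_1$, $\theta = q_1/p_1$, $\beta = \lambda q_1/q_2$, $\gamma = q_1/p_1 - \lambda q_1/p_2$, verify these lie in the admissible ranges of the anisotropic propositions, and check $\alpha(1 - (D - \beta d + 2\gamma)/(p\alpha)) = \lambda \alpha$. The case split into (A)--(C) corresponds to whether $p_2$ equals $\lambda p_1$, lies in $(\lambda p_1, \infty)$, or equals $\infty$, and similarly for $q_2$; each subcase selects one of the six anisotropic estimates, exactly as in Corollary \ref{cor:p1q1p2q2}.

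The main obstacle is the covering step underlying Lemma \ref{lem:atp-drift}: Vitali-disjoint balls $B_{\rho_i}(x_i')$ need not induce disjoint skewed cylinders $Q_{\rho_i}(t, x_i')$, so one inflates the balls to $B_{9\rho_i}(x_i')$ and invokes \cite[Proposition 7]{yang2022}, which requires admissibility $\rho_i \le \radm(t, x_i')$. This admissibility threshold is precisely why we work with $\mathscr A _\alpha ^<$ rather than $\mathscr A _\alpha ^\wedge$: the $\mathscr A _\alpha ^=$ component could place mass at scales where admissibility fails and the covering argument would break. Since this delicate point is already handled inside Lemma \ref{lem:atp-drift}, apart from it the proof is a mechanical translation of the argument in Section \ref{sec:spacetime}.
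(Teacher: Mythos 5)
Your proposal is correct and follows essentially the same route as the paper's own proof, which reduces Theorem \ref{thm:avg} to Lemma \ref{lem:atp-drift} and Proposition \ref{lem:at0-drift} via the observations that $\mathscr A _\alpha ^< f (t,x) > \lambda$ forces $x \in A _t (\rho)$ for some dyadic $\rho$ and that $\Sing _\alpha ^< (f) \cap \Gamma _t \subset S _t$, and then declares the rest identical to the assembly in Corollary \ref{cor:p1q1p2q2}. The details you add — the quasiconvexity of $\mathscr A _\alpha ^<$ extracted from the case analysis of Lemma \ref{lem:sublin}, the inflated-ball covering with the admissibility requirement behind Lemma \ref{lem:atp-drift}, and the parameter bookkeeping — are precisely what the paper's terse ``same as before'' leaves implicit.
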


\begin{proof}
    Recall that $f$ is extended to be zero outside $\OmegaT$.
    The measurability of $\Aa [f]$ comes from semicontinuity. Since $\bar r$ is also semicontinuous, sets $\Sing _\alpha ^< (f)$ and $\Reg _\alpha ^< (f)$ are also measurable. Same as before, in both cases $q _1 \alpha > D - d$. Lemma \ref{lem:at0-drift} implies $\mu _t (S _t) = 0$ for $\mathscr L ^1$-almost every $t \in [0, 1]$. Therefore 
    \begin{align*}
        \mu _T (\set{\Aa ^< [f] = \infty}) \le  \int _0 ^T \mu _t (S _t) \d t = 0.
    \end{align*}
    Notice that for $\rho > 0$, $(t, x) \in \Gamma _T$, 
    \begin{align*}
        (2 \rho) ^{-\alpha} < \Aa ^< [f] (t, x) \le \rho ^{-\alpha} \implies 
        \rho \le \Sa [f] (t, x) < 2 \rho \wedge \radm (t, x) &\implies x \in A _t (\rho),
        \\
        \Aa ^< [f] (t, x) = \infty \implies \Sa [f] (t, x) = 0 < \bar r (t, x) \le \radm (t, x) &\implies x \in S _t.
    \end{align*}
    where $A _t (\rho)$ is defined in Lemma \ref{lem:atp-drift}, and $S _t$ is defined in Lemma \ref{lem:at0-drift}. All the estimates are based on these two lemmas, same as in Corollary \ref{cor:p1q1p2q2} from the previous section, so we omit the proofs. 
\end{proof}

\section{Trace of vorticity}
\label{sec:proof}

In this section, we apply the blow-up method on vorticity and higher derivative estimates. First, we recall some local estimates for the Navier--Stokes equation. Then we apply the averaging operator to prove the main results.

\subsection{$\e$-regularity theory for the Navier--Stokes equation}

\begin{lemma}[Caffarelli, Kohn, and Nirenberg, \cite{caffarelli1982}; Lin \cite{lin1998}]
    There exists $\epsilon _0 > 0$ such that if $u, P$ is a suitable weak solution to \eqref{eqn:navier-stokes} in $(-4, 0) \times B _2$, satisfying 
    \begin{align*}
        \int _{-4} ^0 \int _{B _2} |u| ^3 + |P| ^\frac32 \dx \dt \le \epsilon _0, 
    \end{align*}
    then for $n \ge 0$, $\nor{\grad ^n u} _{L ^\infty ((-1, 0) \times B _1)} \le C _n$.
\end{lemma}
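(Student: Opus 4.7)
The plan is to prove this in two stages: first the $L^\infty$ bound, which is the original Caffarelli--Kohn--Nirenberg $\e$-regularity theorem (proved compactly by Lin), then bootstrap to the higher derivative bounds.

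\textbf{Stage 1: base $L^\infty$ bound.} I would argue by contradiction following Lin. Suppose no $\e_0$ works; there is a sequence of suitable weak solutions $(u_k, P_k)$ on $(-4, 0) \times B_2$ with
\begin{align*}
    \e_k := \int _{-4} ^0 \int _{B _2} |u_k|^3 + |P_k|^{3/2} \dx \dt \to 0,
\end{align*}
yet $\nor{u _k} _{L ^\infty ((-1, 0) \times B _1)} \not\to 0$. After subtracting from $P _k$ a function of $t$ alone (which does not affect the equation) to normalize its spatial mean on $B _2$, the local energy inequality plus Sobolev embedding gives, after rescaling by $\e _k ^{-1/3}$, a uniform bound on $\grad \tilde u _k$ in $L ^2 _{t,x}$ and on $\tilde u _k$ in $L ^\infty _t L ^2 _x$ on any interior subcylinder. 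Aubin--Lions yields strong $L ^3$ compactness; passing to the limit in the nonlinearity, the limit $\tilde u$ solves a linear Stokes system whose scaling-invariant energy is forced to vanish by the smallness of the right hand side, contradicting the normalization. As an alternative I would do a direct De Giorgi-type iteration on the dyadic cylinders $Q _r = (-r ^2, 0) \times B _r$: use the local energy inequality together with interpolation to show that the scaling-invariant pivot $E(r) := \tfrac1r \int _{Q _r} |u| ^3 + |P| ^{3/2}$ contracts under rescaling $r \mapsto r/2$ once it is small enough. Iterating yields $u \in L ^\infty (Q _{1/2})$ together with a quantitative bound depending on $\e _0$ only.

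\textbf{Stage 2: higher derivatives by bootstrap.} Once $u$ is locally bounded on, say, $Q _{3/2}$, I would treat
\begin{align*}
    \pt u - \La u = - u \cdot \grad u - \grad P
\end{align*}
as a linear heat equation with bounded drift. To control $P$ on an interior subcylinder I would fix a cutoff $\chi \in \cci(B_{3/2})$ with $\chi \equiv 1$ on $B _{5/4}$ and split $P = P _1 + P _2$ where $-\La P _1 = \partial _i \partial _j (\chi u ^i u ^j)$ on $\mathbb R ^3$ and $P _2$ is harmonic in $B _{5/4}$. Calder\'on--Zygmund bounds give $\grad P _1 \in L ^q$ for every $q < \infty$ on $Q _{5/4}$; interior regularity for harmonic functions, combined with the $L ^{3/2}$ bound on $P _2$ coming from the smallness assumption, gives $\grad P _2 \in L ^\infty (Q _{5/4})$. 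Applying $L ^p$ parabolic regularity to the equation on $Q _{5/4}$ then produces $\grad u \in L ^\infty (Q _{9/8})$. Induction: assuming $\grad ^{n-1} u \in L ^\infty (Q _{1 + 2 ^{-n}})$, differentiate the equation $n-1$ times, apply the same pressure splitting to $\grad ^{n-1} P$, and invoke parabolic Schauder / $L ^p$ theory on a slightly smaller cylinder to produce the bound on $\grad ^n u$, with constants depending only on $n$ (and ultimately on $\e _0$). After finitely many steps all $Q _{1 + 2 ^{-n}} \supset Q _1$, yielding the estimates $\nor{\grad ^n u} _{L ^\infty (Q _1)} \le C _n$.

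\textbf{Expected main obstacle.} The pressure is the subtle part: it is a nonlocal quantity, so smallness on $Q _2$ does not pass automatically to smaller cylinders, and localizing it creates a harmonic correction that must be handled separately. Concretely, in Stage 1 one must ensure that the modification by a function of $t$ does not destroy the local energy inequality, and in Stage 2 the harmonic part $P _2$ must be controlled at each scale by the smallness hypothesis on $P$ together with mean-value/gradient estimates for harmonic functions. A second subtlety in Stage 1 is the compactness needed to pass to the limit in $u \cdot \grad u$: one must upgrade weak $L ^2 _t H ^1 _x$ convergence to strong $L ^3 _{t,x}$ convergence, which needs a uniform bound on $\pt u$ in a negative-order space, obtained from the equation itself once the pressure is normalized.
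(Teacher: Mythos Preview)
Your outline is a correct sketch of the classical arguments, but note that the paper does not prove this lemma at all: it is quoted as a known result with citations to Caffarelli--Kohn--Nirenberg and Lin, and the paper immediately passes to the rescaled corollary. So there is nothing to compare against in the paper's own text.

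That said, your Stage~1 compactness description is slightly garbled. In Lin's argument the rescaled limit $\tilde u$ solves the \emph{linear} Stokes system (the nonlinearity carries an extra factor $\e_k^{1/3}$ and disappears), hence is smooth by linear theory; the contradiction is not that ``the scaling-invariant energy is forced to vanish'' but rather that smoothness of the limit forces decay of the pivot quantity $r^{-2}\int_{Q_r}|\tilde u|^3 + |\tilde P|^{3/2}$ as $r\to 0$, which after undoing the normalization contradicts the assumed failure of decay for the original sequence. Your alternative De~Giorgi-style iteration on $E(r)$ is closer to the original CKN route and is also fine. Stage~2 (pressure splitting into a Calder\'on--Zygmund piece plus a harmonic remainder, then parabolic $L^p$/Schauder bootstrap) is standard and correctly described, as are the obstacles you flag.
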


By scaling, we obtain the following corollary.

\begin{corollary}
    Let $u$ be a suitable weak solution to \eqref{eqn:navier-stokes} in $\OmegaT$. For $\e > 0$ and $(t, x) \in \OmegaT$, define $Q _\e (t, x)$ as in \eqref{eqn:Qe-spacetime}. If $Q _\e (t, x) \subset \OmegaT$ and 
    \begin{align*}
        \fint _{Q _\e (t, x)} |u| ^3 + |P| ^\frac32 \dx \dt \le \epsilon _0 \e ^{-3}, 
    \end{align*}
    then $|\grad ^n u (t, x)| \le C _n \e ^{-n - 1}$.
\end{corollary}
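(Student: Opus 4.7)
The plan is to deduce this corollary from the preceding unit-scale lemma by exploiting the Navier--Stokes scaling invariance. Set $\lambda = \e/2$ and introduce the rescaled pair
\begin{align*}
\tilde u(s, y) &= \lambda u(t + \lambda^2 s, x + \lambda y), &
\tilde P(s, y) &= \lambda^2 P(t + \lambda^2 s, x + \lambda y)
\end{align*}
for $(s, y) \in (-4, 0] \times B_2$; the argument $(t + \lambda^2 s, x + \lambda y)$ then sweeps $Q_\e(t, x) \subset \OmegaT$, so $(\tilde u, \tilde P)$ is well defined. Because \eqref{eqn:navier-stokes} and the associated local energy inequality are invariant under the parabolic scaling $u \mapsto \lambda u(\lambda^2 \cdot, \lambda \cdot)$, $P \mapsto \lambda^2 P(\lambda^2 \cdot, \lambda \cdot)$, the pair $(\tilde u, \tilde P)$ is itself a suitable weak solution on $(-4, 0) \times B_2$.

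Next, I would translate the hypothesis. Using $|\tilde u|^3 = \lambda^3 |u|^3$, $|\tilde P|^{3/2} = \lambda^3 |P|^{3/2}$, and $\d y \d s = \lambda^{-(D + 2)} \dx \dt$, a direct change of variables together with $|Q_\e| = |B_1|\e^{D+2}$ yields
\begin{align*}
\int_{-4}^0 \int_{B_2} \pth{|\tilde u|^3 + |\tilde P|^{3/2}} \d y \d s = C_D \e^3 \fint_{Q_\e(t, x)} \pth{|u|^3 + |P|^{3/2}} \dx \dt \le C_D \epsilon_0,
\end{align*}
after invoking the hypothesis $\fint \le \epsilon_0 \e^{-3}$. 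Choosing the threshold $\epsilon_0$ appearing in the corollary to be the lemma's threshold divided by $C_D$, the lemma applies to $(\tilde u, \tilde P)$ and delivers $\nor{\grad^n \tilde u}_{L^\infty((-1, 0) \times B_1)} \le C_n$.

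Rescaling back via $(\grad^n \tilde u)(s, y) = \lambda^{n+1} (\grad^n u)(t + \lambda^2 s, x + \lambda y)$, evaluation at $(s, y) = (0, 0)$ gives $|\grad^n u(t, x)| \le \lambda^{-n-1} C_n = 2^{n+1} C_n \e^{-n-1}$, which is the claimed bound after absorbing the factor $2^{n+1}$ into $C_n$. The only genuine point requiring attention -- the main, though routine, obstacle -- is confirming that the distributional local energy inequality defining ``suitable weak solution'' is preserved by this parabolic rescaling, so that the unit-scale lemma is indeed applicable to $(\tilde u, \tilde P)$; this follows by inspection since every term in the inequality is homogeneous of the same weight under the scaling above.
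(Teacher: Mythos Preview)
Your proof is correct and follows exactly the approach the paper takes: the paper simply writes ``By scaling, we obtain the following corollary,'' and your argument supplies precisely the standard parabolic rescaling $\tilde u = \lambda u(\lambda^2\cdot,\lambda\cdot)$, $\tilde P = \lambda^2 P(\lambda^2\cdot,\lambda\cdot)$ with $\lambda = \e/2$ that turns the unit-scale lemma into the $\e$-scale statement. The details of the change of variables and the verification that suitability is preserved are handled correctly.
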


The downside of this estimate is that the scaling of $u$ is worse than the scaling of $\grad u$. If we want to have the same scaling as $\grad u \in L ^2 _{t, x}$, $u$ needs to have $L ^4 _{t, x}$ integrability. This is not known for suitable weak solutions. Therefore, we use the following lemma instead.

\begin{lemma}[Choi and Vasseur, \cite{choi2014}]
    There exists $\bar \eta$ and a sequence of constants $C _n$ with the following property. If $u, P$ is a classical solution to \eqref{eqn:navier-stokes} in $(-4, 0) \times B _2$,satisfying 
    \begin{align*}
        \int _{B _2} \varphi (x) u (t, x) \d x = 0, \qquad \text{ for all } t \in (-2, 0), 
    \end{align*}
    and 
    \begin{align*}
        \int _{-4} ^0 \int _{B _2} |\mathcal M (\grad u)| ^2 + |\grad ^2 P| \dx \dt \le \bar \eta, 
    \end{align*}
    then for $n \ge 1$, $\nor{\grad ^n u} _{L ^\infty ((-\frac19, 0) \times B _\frac13)} \le C _n$. 
\end{lemma}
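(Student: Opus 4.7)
The plan is to reduce the claim to the classical Caffarelli--Kohn--Nirenberg $\varepsilon$-regularity theorem on a slightly smaller sub-cylinder, and then to bootstrap to higher derivatives by standard parabolic regularity. The hypothesis packages the data in the critical form $\int |\grad u|^2 + |\grad^2 P|$, which sits at the level of the supercritical Navier--Stokes energy; the key task is to promote this to the sub-critical CKN quantity $\int |u|^3 + |P|^{3/2}$ on a subdomain, with constant depending only on $\bar\eta$.

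The first step is to exploit the weighted mean-zero condition $\int_{B_2} \varphi u \dx = 0$. Combined with the standard Poincar\'e inequality it yields $\nor{u(t)}_{L^2(B_2)} \lesssim \nor{\grad u(t)}_{L^2(B_2)}$ at each time, and the Sobolev embedding $H^1 \hookrightarrow L^6$ then controls $\nor{u(t)}_{L^6(B_2)}$. For the pressure I would normalize $\tilde P = P - \fint_{B_2} P$, so the dimension-three Sobolev embeddings $W^{2,1}(B_2) \hookrightarrow L^3(B_2)$ and $W^{1,1}(B_2) \hookrightarrow L^{3/2}(B_2)$ control $\tilde P$ pointwise in time from the data $\grad^2 P \in L^1$.

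To match the space-time integrability required by CKN, I would run a localized energy estimate on a sub-cylinder: testing the equation against $u \chi^2$ for a cutoff $\chi \in \cci$ supported in $(-4, 0) \times B_2$ and equal to one on $(-2, 0) \times B_{3/2}$ produces $u \in L^\infty_t L^2_x \cap L^2_t L^6_x$, and parabolic interpolation then gives $u \in L^{10/3}_{t,x}$ on the sub-cylinder. The pointwise bound $|\grad u| \le \mm(\grad u)$ combined with the $L^2$ control of the maximal function lets me absorb the lower-order terms produced by $\grad \chi$ back into $\bar\eta$. After these steps one verifies $\int |u|^3 + |\tilde P|^{3/2} \le \epsilon_0$ on a suitable sub-cylinder, and CKN yields $\nor{u}_{L^\infty}$ on a yet smaller cylinder. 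The higher derivative bounds $\nor{\grad^n u}_{L^\infty}$ for $n \ge 1$ then follow by standard bootstrap: once $u$ is bounded, the Poisson formula $-\La P = \sum_{ij} \partial_i \partial_j (u_i u_j)$ gives interior smoothness of $P$, and the vorticity equation $\pt \omega - \La \omega = \omega \cdot \grad u - u \cdot \grad \omega$ together with iterated Schauder estimates produces every $\grad ^n u$ on a sequence of nested cylinders with radii shrinking down to $1/3$.

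The main obstacle is closing the localized energy estimate with only the scaling-critical data $|\grad u|^2$ and $|\grad^2 P|$: the pressure term in the energy identity is the usual enemy, and tracking it through Poincar\'e, Sobolev and the cutoff scales while keeping the constant proportional to $\bar\eta$ requires some care. The role of $\mm(\grad u)$ rather than $\grad u$ itself in the hypothesis is precisely to furnish uniform control at all sub-scales, independently of where the cutoff is placed; this is what distinguishes the statement from a vanilla CKN smallness hypothesis and allows the pressure-free quantity $|\grad u|^2$ to drive the argument.
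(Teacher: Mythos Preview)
The paper does not prove this lemma from scratch; it cites Proposition~2.2 of Choi--Vasseur and observes that the underlying De~Giorgi iteration is purely local. So the operative mechanism is De~Giorgi, not a reduction to CKN.

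Your reduction to the CKN criterion has a gap in the time direction. Poincar\'e (via the weighted mean-zero hypothesis) and Sobolev give $\nor{u(t)}_{L^p(B_2)} \le C\nor{\grad u(t)}_{L^2(B_2)}$ for each fixed $t$ and each $p\le 6$, but integrating in $t$ only yields $u\in L^2_tL^p_x$, because the hypothesis places $\grad u$ in $L^2_{t,x}$. Likewise $W^{2,1}\hookrightarrow L^3$ yields only $\tilde P\in L^1_tL^3_x$ from $\grad^2 P\in L^1_{t,x}$. Neither matches the $L^3_t$ (respectively $L^{3/2}_t$) integrability needed for the CKN quantity $\int |u|^3+|\tilde P|^{3/2}$. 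Your plan to recover $u\in L^\infty_tL^2_x$ from the localized energy identity runs into the cubic term $\int\!\!\int |u|^3|\grad\chi|$ on the right-hand side, whose control already requires either $u\in L^3_{t,x}$ or $u\in L^\infty_tL^2_x$; the argument is circular and does not close with a single cutoff. The presence of $\mm(\grad u)$ rather than $\grad u$ does not help here, since $\mm$ is a purely spatial maximal function and carries no extra time integrability; in Choi--Vasseur it enters for a different purpose (controlling terms across scales in the moving-frame iteration), not to upgrade $L^2_t$ to $L^3_t$.

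What actually breaks the circle is the De~Giorgi mechanism: one iterates the local energy inequality on truncations $v_k=(|u|-C_k)_+$ over shrinking cylinders, obtaining a recursion $U_{k+1}\le C^kU_k^{\beta}$ with $\beta>1$. The super-linear exponent lets the nonlinear term be absorbed into the previous level $U_k$ (via Chebyshev on the level sets) without ever needing an a~priori $L^3_{t,x}$ bound on $u$. Your sketch would need to replace the single energy estimate by this nonlinear iteration to close.
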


\begin{proof}
    This is the first part of Proposition 2.2 of \cite{choi2014} with $r = 0$ (see also Remark 2.7). Although the theorem stated in (18) that the domain of the solution is $\Omega = \RR3$, the integer-order derivatives part of the proof is based on De Giorgi iteration and hence is purely local. Therefore, it suffices to require $u$ to solve the Navier--Stokes equation in $(-2, 0) \times B _2$. 
\end{proof}

By scaling, we have the following corollary at the $\e$ scale. 
\begin{corollary}
    Let $u$ be a classical solution to \eqref{eqn:navier-stokes} in $\OmegaT$. For $\e > 0$ and $(t, x) \in \OmegaT$, define $Q _\e (t, x) \subset \OmegaT$ as in \eqref{eqn:flow}-\eqref{eqn:Qe} with $b = u$. If $Q _\e (t, x) \subset \OmegaT$ and 
    \begin{align*}
        \fint _{Q _\e (t, x)} |\mm (\grad u)| ^2 + |\grad ^2 P| \dx \dt \le \bar \eta \e ^{-4}, 
    \end{align*}
    then $|\grad ^n u (t, x)| \le C _n \e ^{-n - 1}$.
\end{corollary}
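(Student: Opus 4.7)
The plan is to deduce the corollary from the Choi--Vasseur lemma via a Galilean change of variables into the frame comoving with the mollified flow, followed by a parabolic rescaling to unit size. Set $X(s) := X_{\e/2}(s; t, x)$ and $c(s) := b_{\e/2}(s, X(s)) = \dot X(s)$, and define
\[
\tilde u(s, y) := u(s, y + X(s)) - c(s), \qquad \tilde P(s, y) := P(s, y + X(s)) + \dot c(s) \cdot y.
\]
Since $c(s)$ is spatially constant, a short computation shows that $(\tilde u, \tilde P)$ satisfies the Navier--Stokes equation: $\tilde u \cdot \grad_y \tilde u$ agrees with $((u - c) \cdot \grad u)(\cdot, y + X(s))$, while the moving-frame inertial force $-\dot c(s)$ is absorbed by the linear-in-$y$ pressure correction. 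Crucially, $\grad_y^2 \tilde P = (\grad^2 P)(\cdot, y + X(s))$, so the correction does not affect the $L^1$ control of $\grad^2 P$.

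Next, I would rescale parabolically by $\e/2$: define
\[
v(\tau, z) := \tfrac{\e}{2}\, \tilde u\!\left(t + \tfrac{\e^2}{4}\tau,\ \tfrac{\e}{2} z\right), \qquad P_v(\tau, z) := \tfrac{\e^2}{4}\, \tilde P\!\left(t + \tfrac{\e^2}{4}\tau,\ \tfrac{\e}{2} z\right).
\]
Then $(v, P_v)$ is a classical Navier--Stokes solution on $(-4, 0) \times B_2$, provided the physical image of this rescaled cylinder lies in $\OmegaT$; the hypothesis $Q_\e(t, x) \subset \OmegaT$ gives this after comparing $X_{\e/2}$ with $X_\e$ on $(t - \e^2, t]$ via Lemma~\ref{lem:c1c2}. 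The zero weighted mean $\int_{B_2} \varphi(z) v(\tau, z)\, \d z = 0$ follows after the substitution $y = (\e/2) z$ from the identity $(\varphi_{\e/2} \ast u(s, \cdot))(X(s)) = b_{\e/2}(s, X(s)) = c(s)$.

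Finally, the scaling relations $\grad_z v = (\e/2)^2 (\grad u)(\cdot, \cdot + X)$ and $\grad_z^2 P_v = (\e/2)^3 (\grad^2 P)(\cdot, \cdot + X)$, combined with the change-of-variables Jacobian $\d z\, \d\tau = (\e/2)^{-(D+2)}\, \d y\, \d s$, yield
\[
\int_{-4}^{0}\!\!\int_{B_2} |\mm(\grad v)|^2 + |\grad^2 P_v|\, \d z\, \d\tau \;\le\; C \e^4 \fint_{Q_\e(t, x)} |\mm(\grad u)|^2 + |\grad^2 P|\, \d y\, \d s \;\le\; C \bar \eta,
\]
for an absolute constant $C$ that can be absorbed by redefining the threshold $\bar\eta$ in the hypothesis. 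The Choi--Vasseur lemma then gives $|\grad^n v(0,0)| \le C_n$, and unwinding yields $|\grad^n u(t,x)| = (\e/2)^{-(n+1)} |\grad^n v(0,0)| \le C_n \e^{-n-1}$. The main technical obstacle is the Galilean step: verifying that absorbing the inertial force into the pressure does not disturb $\grad^2 P$ (which is harmless because $\dot c(s) \cdot y$ is linear in $y$), and reconciling the mollification scales $\e$ and $\e/2$ so that the flow $X_{\e/2}$ remains inside $\OmegaT$ throughout the relevant time interval---this is exactly where admissibility of $Q_\e(t,x)$ comes in through Lemma~\ref{lem:c1c2}.
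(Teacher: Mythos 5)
Your overall strategy --- Galilean change of variables into the frame comoving with the mollified drift, absorbing the inertial force into a linear-in-$y$ pressure correction (which leaves $\grad^2 P$ untouched), then parabolic rescaling and the Choi--Vasseur lemma --- is exactly the paper's. However, your specific execution has a genuine gap at the flow-comparison step.

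You rescale by $\e/2$ and use the flow $X_{\e/2}$, so the physical image of your rescaled cylinder $(-4,0)\times B_2$ is $\set{(s, y + X_{\e/2}(s;t,x)) : s\in(t-\e^2,t],\ y\in B_\e}$, a radius-$\e$ tube around the trajectory of $X_{\e/2}$. But $Q_\e(t,x)$, over which the hypothesis controls $\mm(\grad u)^2 + |\grad^2 P|$, is the tube of the same radius around the trajectory of $X_\e$. These two tubes coincide only if $X_{\e/2}$ and $X_\e$ agree, and you have no control on how far they drift apart: they solve ODEs with different right-hand sides $b_{\e/2}$ and $b_\e$, and their difference is governed by the oscillation of $u$ at scale $\e$, which is not a priori bounded. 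Lemma \ref{lem:c1c2}, which you invoke to reconcile the two, does not apply here --- it compares two trajectories of the \emph{same} flow $X_\e$ from nearby initial points, not trajectories of flows mollified at different scales. (It also requires an admissibility hypothesis on $Q_\e(t,x)$ that the corollary does not assume.) Consequently neither the inclusion of your physical cylinder in $\OmegaT$ nor the estimate $\int_{-4}^0\int_{B_2} |\mm(\grad v)|^2 + |\grad^2 P_v|\,\d z\,\d\tau \le C\e^4\fint_{Q_\e}\dots$ is justified.

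The fix is to scale by $\e$ and use $X_\e$ throughout, as the paper does: set $z(s) = X_\e(t+\e^2 s; t, x)$ and $v(s,y) = \e\,u(t+\e^2 s,\, z(s)+\e y) - \e\, u_\e(t+\e^2 s, z(s))$ for $(s,y)\in(-1,0]\times B_1$. Then the image of the rescaled cylinder is \emph{exactly} $Q_\e(t,x)$ by definition, the mean-zero condition $\int\varphi(y)\,v(s,y)\,\d y = 0$ holds because $\varphi_\e\ast u = u_\e$, and the hypothesis translates directly into the smallness condition for (a parabolically-rescaled version of) the Choi--Vasseur lemma. Note that you cannot keep the $\e/2$ scaling with the flow $X_\e$, because then the mean-zero condition would involve $u_{\e/2}(s, X_\e(s))$ rather than $u_\e(s, X_\e(s))$ and would fail; the mollification scale in the subtracted drift, the scale of the flow, and the parabolic rescaling factor must all coincide.
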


\begin{proof}
    The proof of the corollary uses the Galilean invariance of the Navier--Stokes equation. See \cite[Corollary 18]{yang2022} for instance. Given a solution $(u, P)$ to \eqref{eqn:navier-stokes} in $Q _\e (t, x)$ for some $(t, x) \in \OmegaT$, for $s \in (-1, 0]$ and $y \in B _1 (0) \subset \RR3$ define
    \begin{align*}
        r (s) &= t + \e ^2 s, \\
        z (s) &= X _\e (r (s); t, x), \\
        v (s, y) &= \e u (r (s), z (s) + \e y) - \e u _\e (r (s), z (s)), \\ 
        p (s, y) &= \e ^2 P (r (s), z (s) + \e y) + \e y \partial _s [u _\e (r (s), z (s))].
    \end{align*}
    Recall that $X _\e$ is defined by \eqref{eqn:flow}. $(v, p)$ then solves \eqref{eqn:navier-stokes} in $(-1, 0) \times B _1$, and 
    \begin{align*}
        \fint _{(-1, 0) \times B _1} \abs{\mm (\grad v)} ^2 + |\grad ^2 p| \d x \d t = \e ^4 \fint _{Q _\e (t, x)} |\mm (\grad u)| ^2 + |\grad ^2 P| \dx \dt.
    \end{align*}
    Also for any $n \ge 1$, $\grad ^n u (t, x) = \e ^{-n - 1} \grad ^n v (0, 0)$.
\end{proof}

Here we need to use the second derivative of the pressure. If $\Omega = \RR3$ or $\mathbb T ^3$, then $\nor{\grad ^2 P} _{L ^1 _t \mathcal H ^1 _x} \le C \nor{\grad u} _{L ^2 _{t, x} } ^2$ in Hardy space due to compensated compactness \cite{coifman1993,lions1996,vasseur2010}. However, if $\partial \Omega \neq \varnothing$, we can separate $P = P _0 + P _1$ with $\nor{\grad ^2 P _1} _{L ^1 _t \mathcal H ^1 _x} \le C \nor{\grad u} _{L ^2 _{t, x} } ^2$, $P _1 = 0$ on $\partial \Omega$, and $P _0$ is harmonic in $\Omega$. 

Next, we remove the dependence on the pressure. We will have a similar result for the control of vorticity.

\begin{lemma}[Vasseur and Yang, \cite{vasseur2021}]
    Let $\frac{11}6 < p < 2$,
    $\frac{4 - 2p}{3 - p} < \theta \le \frac{14 p - 24}{13 p - 18}$.
    There exist universal constants $\eta > 0$ and $C _n > 0$ for $n \ge 0$, such that if a suitable weak solution $u$ satisfies
    \begin{align}
        \label{eqn:local-interpolate}
        \pth{
            \fint _{Q _\e (t, x)} \abs{\grad u} ^p \dx \dt 
        } ^\frac{1 - \theta}p \pth{
            \fint _{Q _\e (t, x)} \abs{\grad u} ^2 \dx \dt
        } ^\frac\theta2 \le \eta \e ^{-2},
    \end{align}
    then $(t, x)$ is a regular point, and $|\grad ^n \omega (t, x)| \le C _n \e ^{-n -2}$. 
\end{lemma}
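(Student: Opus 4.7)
The plan is to reduce to the unit scale by parabolic scaling and then run a De Giorgi iteration directly on the pressure-free vorticity equation. Setting $v (s, y) = \e u (t + \e^2 s, x + \e y)$ and $\tilde \omega = \curl v$, the hypothesis at scale $\e$ becomes
\[
\pth{\fint _{Q _1} |\grad v| ^p \dx \dt}^{\frac{1 - \theta}{p}} \pth{\fint _{Q _1} |\grad v| ^2 \dx \dt}^{\frac \theta 2} \le \eta,
\]
and the target reduces to $|\grad ^n \tilde \omega (0, 0)| \le C _n$. The scaling of the interpolated norm matches that of $\grad u$, which is consistent with the desired exponent $-(n+2)$ on $\e$ for $\grad^n \omega$.

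The central object is the vorticity equation
\[
\pt \omega + u \cdot \grad \omega - \omega \cdot \grad u = \La \omega,
\]
in which the pressure is absent; this is exactly why the hypothesis only involves $\grad u$ and not $P$. Dotting with $\omega$ and using $\div u = 0$ produces
\[
\pt \tfrac{|\omega| ^2}2 + u \cdot \grad \tfrac{|\omega| ^2}2 + |\grad \omega| ^2 - \La \tfrac{|\omega| ^2}2 = \omega \cdot (\grad u) \, \omega.
\]
Testing against $\varphi ^2 (\tfrac{|\omega|^2}{2} - k) _+$ for a spatial cutoff $\varphi$ and truncation level $k$, and iterating the standard De Giorgi machinery over a decreasing sequence of levels $k _j \uparrow k _\infty$ and nested cylinders $Q _{r _j} (0, 0)$ with $r _j \downarrow \tfrac13$, we aim to derive a nonlinear recursion of the form $U _{j+1} \le C b ^j U _j ^{1 + \delta}$, which forces $\omega \in L ^\infty (Q _{1/3})$ provided $U _0$ is small enough.

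The main work is to bound the vortex-stretching and drift terms by the hypothesis. For the stretching term $\omega \cdot (\grad u) \, \omega$ we split $|\grad u| = |\grad u| ^{1 - \theta} \cdot |\grad u| ^\theta$ and distribute Hölder's inequality over spacetime so that $|\grad u| ^{1 - \theta}$ pairs with the $L ^p$ factor of the hypothesis while $|\grad u| ^\theta$ pairs with the $L ^2$ factor; parabolic Sobolev embedding on $\omega$ (fed by the dissipation $|\grad \omega|^2$ from the energy inequality) then absorbs the remaining $|\omega|^2$. The drift term $u \cdot \grad \tfrac{|\omega|^2}{2}$ is treated similarly after controlling $u$ via Biot–Savart and Poincaré from $\grad u$. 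The range $\tfrac{11}{6} < p < 2$ and $\tfrac{4 - 2 p}{3 - p} < \theta \le \tfrac{14 p - 24}{13 p - 18}$ is precisely the arithmetic window in which this Hölder–Sobolev chain produces a recursion exponent strictly above $1$ with only a polynomial-in-$j$ multiplier, so the De Giorgi iteration closes; identifying this range and verifying that the smallness $\eta$ beats the implicit constants is the main technical obstacle.

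Once $\omega \in L ^\infty (Q _{1/3})$, the higher-derivative bound $|\grad ^n \omega (0, 0)| \le C _n$ follows by a standard bootstrap: inside the cylinder one recovers $u$ from $\omega$ via Biot–Savart and interior elliptic regularity, interprets the vorticity equation as a linear parabolic system with bounded drift and bounded stretching matrix, and applies parabolic $L ^q$ and Schauder estimates inductively on shrinking sub-cylinders to reach every $n \ge 0$.
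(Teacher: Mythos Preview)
Your approach and the paper's are entirely different. The paper does \emph{not} run a De Giorgi iteration here at all: it quotes the result of \cite{vasseur2021} in the form ``for any $\delta\le\eta_2$, if
\[
\delta^{-2\nu}\Bigl(\fint_{Q_\e}|\nabla u|^p\Bigr)^{2/p}+\delta\fint_{Q_\e}|\nabla u|^2\le\eta_3\e^{-4}
\]
then $|\nabla^n\omega(t,x)|\le C_n\e^{-n-2}$,'' sets $\nu=\tfrac12(1-\tfrac1\theta)$, and then shows by elementary optimization in $\delta$ that the interpolated smallness $A^{(1-\theta)/p}B^{\theta/2}\le\eta\e^{-2}$ forces $\delta^{-2\nu}A^{2/p}+\delta B\le\eta_3\e^{-4}$ for a suitable $\delta$. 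The whole proof is a few lines of algebra; the analytic content is outsourced to the cited paper. What you wrote is a sketch of how one might reprove the cited theorem from scratch, which is a much larger undertaking.

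If you pursue that route, there is a genuine gap in your outline: you rescale by $v(s,y)=\e\,u(t+\e^2 s,x+\e y)$ and later propose to control the drift $u\cdot\nabla\tfrac{|\omega|^2}{2}$ ``via Biot--Savart and Poincar\'e from $\nabla u$.'' On a bounded cylinder, Poincar\'e only controls $u-\bar u$, and $\bar u=\fint u$ is \emph{not} bounded by any norm of $\nabla u$; your hypothesis contains no information on it. In the paper (and in \cite{vasseur2021}) this is exactly why the cylinders $Q_\e(t,x)$ are the \emph{skewed} Lagrangian cylinders of Section~\ref{sec:blowup} with drift $b=u$: after the Galilean change of frame that follows the mollified flow $X_\e$, the rescaled velocity has zero mollified mean by construction, and only then can one close the energy estimate on the vorticity using $\nabla u$ alone. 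Your standard parabolic rescaling does not subtract the mean, so the drift term does not close as written.
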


\begin{proof}
    Pick $\nu = \frac12(1 - \frac1\theta)$. According to \cite{vasseur2021}, there exists $\eta _2, \eta _3 > 0$, such that for any $\delta \le \eta _2$, if $u$ is a suitable weak solution to \eqref{eqn:navier-stokes} in $Q _\e (t, x)$ satisfying 
    \begin{align}
        \label{eqn:yang2021}
        \delta ^{-2 \nu} \pth{
            \fint _{Q _\e (t, x)} \abs{\grad u} ^p \dx \dt 
        } ^\frac2p + \delta \fint _{Q _\e (t, x)} \abs{\grad u} ^2 \dx \dt \le \eta _3 \e ^{-4},
    \end{align}
    then $\abs{\grad ^n \omega (t, x)} \le C _n \e ^{-n - 2}$. Now we prove \eqref{eqn:local-interpolate} implies this condition with a suitable choice of $\eta$.

    Define $\eta _4, \eta _5$ by 
    \begin{align*}
        \pth{\fint _{Q _\e (t, x)} \abs{\grad u} ^2 \dx \dt} ^\frac12 = \eta _4 \e ^{-2}, \qquad 
        \pth{\fint _{Q _\e (t, x)} \abs{\grad u} ^p \dx \dt} ^\frac1p = \eta _5 \e ^{-2}.
    \end{align*}
    Then the assumption implies $\eta _4 ^{1 - \theta} \eta _5 ^\theta \le \eta$. Moreover, since $p < 2$, Jensen's inequality implies $\eta _4 \ge \eta _5$, and thus $\eta _5 \le \eta$. Define $\delta = \mins{\eta _2, \eta _4 ^{-2 \theta} \eta _5 ^{2 \theta}}$, then 
    \begin{align*}
        \delta ^{-2 \nu} \pth{
            \fint _{Q _\e (t, x)} \abs{\grad u} ^p \dx \dt 
        } ^\frac2p + \delta \fint _{Q _\e (t, x)} \abs{\grad u} ^2 \dx \dt
        & = 
        \delta ^{-2 \nu} \eta _5 ^2 \e ^{-4} + \delta \eta _4 ^2 \e ^{-4} .
    \end{align*}
    If $\delta = \eta _4 ^{-2 \theta} \eta _5 ^{2 \theta}$, then $$\delta ^{-2 \nu} \eta _5 ^2 + \delta \eta _4 ^2 = 2 \eta _4 ^{2 - 2 \theta} \eta _5 ^{2 \theta} \le 2 \eta ^2.$$ Here we used that $\delta ^{-2\nu} = \eta _4 ^{2 - 2\theta} \eta _5 ^{2 \theta-2}$. If $\delta = \eta _2 < \eta _4 ^{-2 \theta} \eta _5 ^{2 \theta}$, then $\eta _4 < \eta _5 \eta _2 ^{-1/2\theta}$, so 
    \begin{align*}
        \delta ^{-2 \nu} \eta _5 ^2 + \delta \eta _4 ^2 \le \eta _2 ^{-2 \nu} \eta _5 ^2 + \eta _2 \eta _5 ^2 \eta _2 ^{-1/\theta} = 2 \eta _2 ^{-2 \nu} \eta ^2.
    \end{align*}
    Therefore, if we choose $\eta = (2 + 2 \eta _2 ^{-2 \nu}) ^{-1/2} \eta _3 ^{1/2}$, then for both possible values of $\delta$, we always have $\delta ^{-2 \nu} \eta _5 ^2 + \delta \eta _4 ^2 \le \eta _3$. Therefore \eqref{eqn:yang2021} is satisfied and $\abs{\grad ^n \omega (t, x)} \le C _n \e ^{-n - 2}$.
\end{proof}

Using Jensen's inequality, we can deduce the following corollary.

\begin{corollary}
    \label{cor:vorticity-local}
    There exists $\eta > 0$, such that for any suitable weak solution $u$ to the Navier--Stokes equation in $\OmegaT$, if 
    \begin{align*}
        \fint _{Q _\e (t, x)} \abs{\grad u} ^2 \dx \dt \le \eta\e ^{-4},
    \end{align*}
    then 
    \begin{align*}
        \abs{\grad ^n \omega (t, x)} \le C _n \e ^{-n - 2}.
    \end{align*}
\end{corollary}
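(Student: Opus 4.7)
The plan is to deduce Corollary~\ref{cor:vorticity-local} as a direct consequence of the Vasseur--Yang $\e$-regularity lemma stated immediately above it, using Jensen's inequality to pass from the mixed interpolated bound on $\grad u$ in $L^p$ and $L^2$ to a single bound in $L^2$.

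First I would fix a specific admissible pair $(p, \theta)$ in the range $\frac{11}{6} < p < 2$ and $\frac{4-2p}{3-p} < \theta \le \frac{14p-24}{13p-18}$ (the existence of any such pair is guaranteed by the hypothesis of the preceding lemma, and the particular choice does not matter for the corollary since the resulting constants are absorbed). Call the corresponding smallness threshold $\eta_0$, so the lemma says: if
\begin{align*}
    \pth{\fint_{Q_\e(t,x)} |\grad u|^p \dx \dt}^{\frac{1-\theta}{p}} \pth{\fint_{Q_\e(t,x)} |\grad u|^2 \dx \dt}^{\frac{\theta}{2}} \le \eta_0 \e^{-2},
\end{align*}
then $|\grad^n \omega(t,x)| \le C_n \e^{-n-2}$.

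Next I would apply Jensen's inequality: since $p < 2$, on the probability space $Q_\e(t,x)$ with normalized measure we have
\begin{align*}
    \pth{\fint_{Q_\e(t,x)} |\grad u|^p \dx \dt}^{1/p} \le \pth{\fint_{Q_\e(t,x)} |\grad u|^2 \dx \dt}^{1/2}.
\end{align*}
Raising this to the power $1-\theta$ and multiplying by $\pth{\fint|\grad u|^2}^{\theta/2}$ bounds the left-hand side of the interpolated condition by $\pth{\fint_{Q_\e(t,x)} |\grad u|^2 \dx \dt}^{1/2}$. Thus, setting $\eta := \eta_0^2$, the hypothesis $\fint_{Q_\e(t,x)} |\grad u|^2 \le \eta \e^{-4}$ of the corollary directly implies the hypothesis of the Vasseur--Yang lemma, and the conclusion on $|\grad^n \omega(t,x)|$ follows.

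There is no real obstacle here beyond verifying that the range of $(p,\theta)$ prescribed in the preceding lemma is nonempty (so that a valid admissible pair can be fixed) and that Jensen's inequality legitimately collapses the two factors into a single $L^2$ average. Both are immediate, so the corollary is essentially a one-line consequence of Jensen applied to the finer estimate above.
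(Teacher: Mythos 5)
Your proposal is correct and matches the paper's approach exactly: the paper itself introduces the corollary with the single remark that it follows from the preceding lemma ``Using Jensen's inequality,'' which is precisely the argument you spell out. The only subtlety is checking $\theta < 1$ so that raising to the power $1-\theta$ preserves the inequality, which you implicitly use and which indeed holds for all admissible $(p,\theta)$.
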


\subsection{Proof of the main results}

Now we prove the vorticity trace estimates for the Navier--Stokes equation. The previous subsection shows the following pointwise estimate on the regular part. 

\begin{lemma}
    \label{lem:pointwise}
    Let $(u, P)$ be a suitable weak solution to \eqref{eqn:navier-stokes} in $\OmegaT$, and suppose $u$ satisfies no-slip boundary condition \eqref{eqn:no-slip} if the boundary is non-empty. Define $f _1, f _2, f _3: \OmegaT \to \R$ by
    \begin{align*}
        f _1 &= \frac{\mm(\grad u) ^2}\eta, 
        & 
        f _2 &= \frac{\mm(\grad u) ^2 + |\grad ^2 P|}{\bar \eta},
        &
        f _3 &= \frac{|u| ^3 + |P| ^\frac32}{\epsilon _0}.
    \end{align*}
    Choose drift $b = u$ and define $s _1 = \mathscr S _4 ^\wedge [f _1]$, $s _2 = \mathscr S _4 ^\wedge [f _2]$. Choose drift $b = 0$ and define $s _3 = \mathscr S _3 ^\wedge [f _3]$.
    Then there exists $C _n > 0$ for every $n \ge 0$, such that for any {\em regular} point $(t, x) \in \OmegaT$, we have 
    \begin{align*}
        |\grad ^n \omega (t, x)| &\le C _n s _1 (t, x) ^{-n - 2} \qquad n \ge 0, \\
        |\grad ^n u (t, x)| &\le C _n s _2 (t, x) ^{-n - 1} \qquad n \ge 1, \\
        |\grad ^n u (t, x)| &\le C _n s _3 (t, x) ^{-n - 1} \qquad n \ge 0.
    \end{align*}
    Moreover, 
    \begin{align*}
        s _i (t, x) ^{-1} &\le \A _4 ^< [f _i] (t, x) ^{\frac14} \vee r _* (t, x) ^{-1}, \qquad i = 1, 2 \\
        s _3 (t, x) ^{-1} &\le \A _3 ^< [f _3] (t, x) ^{\frac13} \vee r _* (t, x) ^{-1}.
    \end{align*}
\end{lemma}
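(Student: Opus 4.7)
The proof assembles the three $\e$-regularity corollaries from the previous subsection with the capped scale operator $\Saw$ from Section \ref{sec:blowup}. At any regular point $(t,x)$, the plan is to show first that $s_i(t,x) > 0$, then invoke the corresponding $\e$-regularity statement at scale $\e = s_i(t,x)$, and finally unpack $s_i^{-1}$ into either the averaging operator or $\rstar^{-1}$ depending on which side of the cutoff $\rbar$ sits against $\Sa f_i$.

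Because $(t,x)$ is regular for the Navier--Stokes equation, local regularity theory makes $u$, $\grad u$, $P$ and $\grad ^2 P$ essentially locally bounded at $(t,x)$, hence each $f_i$ is locally bounded there. By Lemma \ref{lem:radmrbar}, whose pointwise hypothesis $f_i \ge [\mm(\grad b)/\eta_0]^{\alpha/2}$ is arranged by choosing the universal constants $\eta, \bar\eta \le \eta_0^2$ (and is trivial for $i=3$ since $b=0$), this forces $(t,x) \in \Reg_{\alpha_i}(f_i)$ with $\alpha_1 = \alpha_2 = 4$ and $\alpha_3 = 3$, so $s_i(t,x) > 0$.

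By the construction of $\Saw$, the cylinder $Q_{s_i}(t,x)$ (skewed by drift $b = u$ for $i = 1, 2$, standard for $i = 3$) is admissible and contained in $\OmegaT$, and the mean-value bound $(f_i)_{s_i}(t,x) \le s_i^{-\alpha_i}$ holds. Unfolding the definitions of $f_i$ gives exactly the hypothesis of the corresponding $\e$-regularity corollary at scale $\e = s_i(t,x)$: Corollary \ref{cor:vorticity-local} for $f_1$ (using $|\grad u| \le \mm(\grad u)$ a.e.\ to pass from $\mm(\grad u)^2$ to $|\grad u|^2$), the Choi--Vasseur corollary for $f_2$, and the Caffarelli--Kohn--Nirenberg corollary for $f_3$. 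These yield $|\grad^n \omega(t,x)| \le C_n s_1^{-n-2}$ for $n \ge 0$, $|\grad^n u(t,x)| \le C_n s_2^{-n-1}$ for $n \ge 1$, and $|\grad^n u(t,x)| \le C_n s_3^{-n-1}$ for $n \ge 0$, respectively.

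For the final pair of inequalities, I would split along the partition $\Reg_{\alpha_i}^<(f_i) \cup \Reg_{\alpha_i}^=(f_i)$. On $\Reg_{\alpha_i}^<$ one has $s_i = \Sa(f_i) < \rbar$, so by definition $\A_{\alpha_i}^<(f_i) = s_i^{-\alpha_i}$, giving $s_i^{-1} = \A_{\alpha_i}^<(f_i)^{1/\alpha_i}$. On $\Reg_{\alpha_i}^=$ one has $s_i = \rbar$, and Lemma \ref{lem:radmrbar} (whose hypothesis was verified above) yields $\rbar \ge \rstar$, whence $s_i^{-1} \le \rstar^{-1}$. Taking the pointwise maximum delivers the claimed bound. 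No step is substantive in itself; the one piece of bookkeeping to watch is the simultaneous calibration of the universal constants $\eta$ and $\bar\eta$, since they must satisfy both the smallness required by the respective $\e$-regularity corollaries and the smallness $\eta, \bar\eta \le \eta_0^2$ demanded by Lemma \ref{lem:radmrbar}. Both being upper-bound conditions on universal constants, they can be imposed together.
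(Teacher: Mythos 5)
Your proposal is correct and follows essentially the same route as the paper's proof: apply the rescaled $\e$-regularity corollaries at scale $\e = s_i(t,x)$ using the mean-value bound $(f_i)_{s_i} \le s_i^{-\alpha_i}$ built into $\Saw$, then split along $\Reg_{\alpha_i}^<(f_i) \cup \Reg_{\alpha_i}^=(f_i)$ and invoke Lemma \ref{lem:radmrbar} for the $\Reg^=$ branch. Your ordering is marginally cleaner in that you verify $s_i(t,x) > 0$ up front (via local boundedness of $f_i$ at regular points and the ``Moreover'' clause of Lemma \ref{lem:radmrbar}) before taking the average over $Q_{s_i}(t,x)$, whereas the paper takes that average first and only later notes $(t,x)\in\Reg_4^<\cup\Reg_4^=$, but the substance is identical.
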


\begin{proof}
    Fix a regular point $(t, x)$, and let $\rho = s _1 (t, x)$. Since $\rho = \mathscr S _4 ^\wedge [f _1] (t, x) \le \mathscr S _4 [f _1]$, by definition we know that 
    \begin{align*}
        \A _4 ^\wedge [f _1] (t, x) = \fint _{Q _\rho (t, x)} \frac{\mm(\grad u) ^2}\eta \le \rho ^{-4}.
    \end{align*}
    Since $\mm (\grad u) \ge |\grad u|$, Corollary \ref{cor:vorticity-local} implies that $|\grad ^n \omega (t, x)| \le C _n \rho ^{-n - 2}$. 

    Note that the conditions in Lemma \ref{lem:radmrbar} are satisfied with $\alpha = 4$, $b = u$, and $f = f _1$.
    Since $(t, x)$ is a regular point, $(t, x) \in \Reg _4 ^< (f _1) \cup \Reg _4 ^= (f _1)$. If $(t, x) \in \Reg _4 ^= (f _1)$, then $\rho = \rbar (t, x)$. By Lemma \ref{lem:radmrbar}, we have $\rho \ge \rstar (t, x)$ and thus $s _1 (t, x) ^{-1} = \rho ^{-1} \le \rstar ^{-1} (t, x)$. If $(t, x) \in \Reg _4 ^< (f _1)$, then $\rho = \mathscr S _4 [f _1] (t, x) = \A _4 ^< [f _1] (t, x) ^{-\frac14}$.
    This completes the proof for $s _1$, and the proofs for $s _2, s _3$ are similar.
\end{proof}

With this lemma, we prove the main results listed in Section \ref{sec:introduction}.

\begin{proof}[Proof of Theorem \ref{thm:main}]
    Since $u$ is a classical solution, the singular set is empty, and any derivative of the solution is locally bounded everywhere. Note that $\mm (\grad u)$ is also locally bounded. 
    
    By Lemma \ref{lem:pointwise}, we know that $s _1 ^{-1} \ind*{s _1 < \rstar} \le \A _4 ^< [f _1] ^\frac14$. Apply Theorem \ref{thm:avg} \eqref{enu:weakweak1-drift} with $D = d = 3$, $f = f _1 = \frac1\eta \mm (\grad u) ^2$, $\alpha = 4$, $p _1 = q _1 = 1$, $p _2 = q _2 = \frac{d + 1}4$, we have $r _1 = -1$, $r _2 = -\frac{d + 1}4$, $\lambda = \frac{d + 1}4$, and 
    \begin{align*}
        \nor{\A _4 ^< [f _1]} _{L ^{\frac{d + 1}4, \infty} (\Gamma _T)} ^\frac{d + 1}4 &\le C \nor{\frac1\eta\mm(\grad u) ^2} _{L ^1 (\OmegaT)} \le \frac C\eta \nor{\grad u} _{L ^2 (\Omega _T)} ^2.
    \end{align*}
    Apply Theorem \ref{thm:avg} \eqref{enu:weakweak1-drift} again with $d \ge 2$, $p _1 = q _1 = 1$, $p _2 = \infty$, $q _2 = \frac{d - 1}4$, we have $r _1 = -1$, $r _2 = -\frac{d - 1}4$, $\lambda = \frac{d - 1}4$, and 
    \begin{align*}
        \nor{\A _4 ^< [f _1]} _{L ^\infty _t L ^{\frac{d - 1}4, \infty} (\Gamma _T)} ^{\frac{d - 1}4} &\le C \nor{\frac1\eta\mm(\grad u) ^2} _{L ^1 (\OmegaT)} \le \frac C\eta \nor{\grad u} _{L ^2 (\Omega _T)} ^2.
    \end{align*}
    The theorem is thus proven by using $s _1 ^{-1} \ind*{s _1 < \rstar} \le \A _4 ^< [f _1] ^\frac14$.
\end{proof}

\begin{proof}[Proof of Corollary \ref{cor:main}]
    For a classical solution, the first estimate is a direct consequence of Theorem \ref{thm:main} \eqref{enu:main} with $\Gamma _t = \Omega$ and $D = d = 3$, $|\grad \omega| \le C s _1 ^{-3}$, so 
    \begin{align*}
        \nor{\grad \omega \ind*{|\grad \omega| > C \rstar ^{-3}}} _{L ^{\frac43, \infty} (\OmegaT)} ^\frac43 
        & \le \nor{s _1 ^{-3} \ind*{s _1 < \rstar}} _{L ^{\frac43, \infty} (\OmegaT)} ^\frac43 \le C \nor{\grad u} _{L ^2 (\OmegaT)} ^2.
    \end{align*}
    And the second estimate is the case $d = 2$ with $|\omega| \le C s _1 ^{-2}$:
    \begin{align*}
        \nor{\omega \ind*{|\grad \omega| > C \rstar ^{-2}}} _{L ^{\frac32, \infty} (\Gamma _T)} ^\frac32
        & \le \nor{s _1 ^{-3} \ind*{s _1 < \rstar}} _{L ^{\frac43, \infty} (\OmegaT)} ^\frac43 \le C \nor{\grad u} _{L ^2 (\OmegaT)} ^2.
    \end{align*}
    For a suitable weak solution, we have the same bounds on the regular set, which is the complement of the singular set $\Sing (u)$ defined in Theorem \ref{thm:ckn}. 
    
    Note that the Navier--Stokes equation \eqref{eqn:navier-stokes} can be understood as an evolutionary Stokes equation with a forcing term $u \cdot \grad u$, which can be bounded by 
    \begin{align*}
        \nor{u \cdot \grad u} _{L ^\frac54 (\OmegaT)} &\le \nor{u} _{L ^\frac{10}3 (\OmegaT)} \nor{\grad u} _{L ^2(\OmegaT)} \\
        &\le \nor{u} _{L ^\infty _t L ^2 _x (\OmegaT)} ^\frac25 \nor{u} _{L ^2 _t L ^6 _x (\OmegaT)} ^\frac35 \nor{\grad u} _{L ^2(\OmegaT)} \le \nor{u _0} _{L ^2 (\Omega)} ^2.
    \end{align*}
    As $u, \grad u, P \in L ^\frac54 _{\loc} (\OmegaT)$, by linear Stokes theory, we know that $\grad ^2 u \in L ^\frac54 _{\loc} (\OmegaT)$ and $\grad P \in L ^\frac54 _{\loc} (\OmegaT)$. Therefore $\grad \omega$ is a measurable function in $\OmegaT$, and thus for almost every $t \in (0, T)$, the trace of $\omega (t)$ on a codimension $1$ hypersurface $\Gamma _t$ is a well-defined function. Since the singular set $\Sing (u)$ is a $\mathscr P ^1$-nullset, it vanishes at almost every $t \in (0, T)$, so $\Sing (u)$ is also a $\mu _T$-nullset. 
    
    As Lemma \ref{lem:pointwise} holds over the regular set, and the singular set is negligible, we finish the proof.
\end{proof}

\begin{proof}[Proof of Theorem \ref{thm:main-with-pressure}]
    The proof is analogous to Theorem \ref{thm:main} and Corollary \ref{cor:main}, where we use the estimate on $s _2$ in Lemma \ref{lem:pointwise} instead of $s _1$.
\end{proof}

\begin{proof}[Proof of Corollary \ref{cor:anisotropic}]
    This proof is also analogous to Theorem \ref{thm:main}. We apply Theorem \ref{thm:avg} \eqref{enu:weakweak1-drift} with $D = d = 3$, $\Gamma_t = \Omega$, $\alpha = 4$, $f = f _2 = \frac1{\eta _0} (\mm (\grad u) ^2 + |\grad ^2 P|)$, $p _1 = q _1 = 1$, we have $r _1 = -1$. Moreover, for $0 < p _2 < q _2 \le \infty$ with $\frac1{p _2} + \frac3{q _2} = 4$, we have 
    \begin{align*}
        \frac1{r _2} = \alpha - \frac2{p _2} - \frac{d}{q _2} = 4 - \frac2p - \frac3q = - \frac1{p _2}.
    \end{align*}
    So $\lambda = p _2 = \lambda p _1$ and $q _2 > p _2 = \lambda q _1$. By Theorem \ref{thm:avg} \eqref{enu:weakweak1-drift}, we have
    \begin{align*}
        \nor{\Aa ^< [f _2]} _{L ^{p _2, \infty} _t L ^{q _2, \infty} _x (\OmegaT)} ^{p _2} &\le C \nor{f _2} _{L ^1 (\OmegaT)} \le \frac C{\eta _0} \nor{\grad u} _{L ^2 (\OmegaT)} ^2.
    \end{align*}
    Here in a torus, $\grad ^2 P$ in $L ^1$ can be controlled by $\grad u$ in $L ^2$ (see Remark \ref{rmk:pressure} below). Therefore, we have
    \begin{align*}
        \nor{s _2 ^{-1} \ind*{s _2 < \rstar}} _{L ^{4 p _2, \infty} _t L ^{4 q _2, \infty} _x (\OmegaT)} ^{4 p _2} &\le \nor{\Aa ^< [f _2]} _{L ^{p _2} _t L ^{q _2} _x (\OmegaT)} ^{p _2} \le \frac C{\eta _0} \nor{\grad u} _{L ^2 (\OmegaT)} ^2.
    \end{align*}
    Moreover, when $\Omega = \mathbb T ^3$, $r _* (t, x) = \mins{1, \sqrt t}$. So 
    \begin{align*}
        \nor{r _* ^{-1}} _{L ^{4 p _2, \infty} (t _0, T; L ^{4 q _2, \infty} (\mathbb T ^3))} ^{4 p _2} &\le \int _{t _0} ^T t ^{-2 p _2} \vee 1 \dt 
        \le (T - t _0) \max \{t _0 ^{-2 p _2}, 1\}.
    \end{align*}
    Combined, we have proven that 
    \begin{align*}
        \nor{s _2 ^{-1}} _{L ^{4 p _2, \infty} (t _0, T; L ^{4 q _2, \infty} (\mathbb T ^3))} ^{4 p _2} \le C \pth{
            \nor{\grad u} _{L ^2 (\OmegaT)} ^2 + (T - t _0) \max \{t _0 ^{-2 p _2}, 1\}
        }.
    \end{align*}
    The theorem is proven by $|\grad ^n u| \le C _n s _2 ^{-n - 1}$ and setting $p = \frac{4 p _2}{n + 1}$, $q = \frac{4 q _2}{n + 1}$.

    Next, we apply Theorem \ref{thm:avg} \eqref{enu:Ba} with $\frac1{p _2} + \frac1{q _2} = 2$. We have 
    \begin{align*}
        \frac1{r _2} = \alpha - \frac2{p _2} - \frac{d}{q _2} = 2\pth{2 - \frac1p} - \frac3q = - \frac1{q _2}.
    \end{align*}
    So $\lambda = q _2 = \lambda q _1$ and $p _2 > q _2 = \lambda p _1$. By Theorem \ref{thm:avg} \eqref{enu:Ba}, we have
    \begin{align*}
        \nor{\Aa ^< [f _2]} _{L ^{p _2, \infty} _t L ^{q _2} _x (\OmegaT)} ^{q _2} &\le C \nor{f _2} _{L ^1 (\OmegaT)} \le \frac C{\eta _0} \nor{\grad u} _{L ^2 (\OmegaT)} ^2.
    \end{align*}
    Similarly, by bounding $r _* ^{-1}$, we have 
    \begin{align*}
        \nor{s _2 ^{-1}} _{L ^{4p _2, \infty} (t _0, T; L ^{4q _2} (\mathbb T ^3))} ^{4q _2} \le C \pth{
            \nor{\grad u} _{L ^2 (\OmegaT)} ^2 + (T - t _0) \max \{t _0 ^{-2 q _2}, 1\}
        }.
    \end{align*}
    The theorem is proven by setting $p = \frac{4 p _2}{n + 1}$, $q = \frac{4 q _2}{n + 1}$.

    Finally, we apply Theorem \ref{thm:avg} \eqref{enu:weakweak1-drift} to prove the last statement. The proof is similar so we omit the details.
\end{proof}

\begin{remark}
    \label{rmk:pressure} 
    By taking the divergence of \eqref{eqn:navier-stokes}, we know that 
    \begin{align*}
        -\La P = \div (u \cdot \grad u) = \div ((\grad u) ^\top) \cdot u + (\grad u) ^\top : \grad u.    
    \end{align*}
    Note that $\div ((\grad u) ^\top) = 0$, $\curl \grad u = 0$. When $\Omega = \RR3$ or $\mathbb T ^3$, by the div-curl lemma \cite{coifman1993}, we know that 
    \begin{align*}
        \nor{\La P (t)} _{\mathcal H ^1 (\Omega)} \le C \nor{\grad u (t)} _{L ^2 (\Omega)} ^2.
    \end{align*}
    where $\mathcal H ^1$ stands for the Hardy $\mathcal H ^p$ space with $p = 1$. Since the Riesz transform is bounded on $\mathcal H ^1$, we obtain 
    \begin{align*}
        \nor{\grad ^2 P} _{L ^1 (\OmegaT)} \le C \nor{\grad ^2 P} _{L ^1 (0, T; \mathcal H ^1 (\Omega))} \le C \nor{\grad u} _{L ^2 (\OmegaT)} ^2.
    \end{align*}
    See also \cite[section 3.2]{lions1996}.
\end{remark}

Corollary \ref{cor:anisotropic} applies to the case when $\Omega = \mathbb T ^3$ has no boundary. 
In fact, if we only wish to bound the vorticity and its derivatives, similar bounds hold in any domain $\Omega$ of finite measure because $f _1$ requires nothing from the pressure.
When there is a boundary, if we want to bound $\grad ^n u$ we still need to control the pressure term. In this case, we can use the following proposition to handle the pressure term by a splitting scheme.

\begin{proposition}
    \label{prop:gradu}
    Let $\Omega \subset \RR3$ be a bounded set with $C ^2$ boundary satisfying Assumption \ref{ass:lipschitz}. 
    Let $u$ be a classical solution to \eqref{eqn:navier-stokes}-\eqref{eqn:no-slip} in $(0, T) \times \Omega$, with initial kinetic energy $\nor{u (0)} _{L ^2 (\Omega)} \le E$ for some $E \ge 0$. 
    For any $\delta \in (0, r _0)$, denote 
    \begin{align*}
        \Gamma _T ^\delta &= \set{(t, x) \in \Gamma _T: \dist _\mathcal P ((t, x), \partial _\P \OmegaT) > \delta}.
    \end{align*} 
    Then for any $K \subset \Gamma _T ^\delta$, for every $n \ge 1$, the following holds.
    For any $0 < p < q \le \infty$ with $\frac1p + \frac dq = n + 1$:
    \begin{align*}
        \nor{\grad ^n u} _{L ^{p, \infty} _t L ^{q, \infty} _x (K)} \le C (\Omega, T, K, \delta, E, p, q) ^{n + 1}.
    \end{align*}
    For any $0 < q < p \le \infty$ with $\frac2p + \frac{d - 1}q = n + 1$:
    \begin{align*}
        \nor{\grad ^n u} _{L ^{p, \infty} _t L ^{q} _x (K)} \le C (\Omega, T, K, \delta, E, p, q) ^{n + 1}.
    \end{align*}
    When $p = q = \frac{d + 1}{n + 1}$: 
    \begin{align*}
        \nor{\grad ^n u} _{L ^{\frac{d + 1}{n + 1}, \infty} (K)} \le C (\Omega, T, K, \delta, E) ^{n + 1}.
    \end{align*}
\end{proposition}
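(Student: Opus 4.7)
The plan is to follow the proof of Corollary \ref{cor:anisotropic} with the global compensated-compactness estimate of Remark \ref{rmk:pressure} replaced by a boundary-adapted pressure decomposition. At each time, write $P(t) = P _0(t) + P _1(t)$ on $\Omega$, where $P _1$ solves $-\La P _1 = \operatorname{tr}((\grad u) ^2)$ with $P _1|_{\partial \Omega} = 0$, and $P _0 = P - P _1$ is harmonic in $\Omega$. A local-Hardy-space Calder\'on--Zygmund estimate for the Dirichlet Laplacian on a bounded $C ^2$ domain (Chang--Krantz--Stein; Miyakawa--Sohr) combined with the div-curl structure of the right-hand side yields $\nor{\grad ^2 P _1} _{L ^1 (\OmegaT)} \le C(\Omega) \nor{\grad u} _{L ^2 (\OmegaT)} ^2 \le C(\Omega) E ^2$. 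Since $\nor{P} _{L ^{5/4}(\OmegaT)}$ is bounded by $C(\Omega, T, E)$ via standard Stokes theory, the harmonic remainder satisfies $\nor{P _0} _{L ^{5/4}(\OmegaT)} \le C(\Omega, T, E)$, and interior regularity for harmonic functions then supplies $\nor{\grad ^2 P _0} _{L ^{5/4} _t L ^\infty _x(\Omega _T ^{\delta/4})} \le C(\Omega, T, \delta, E)$, where $\Omega _T ^{\delta/4}$ denotes the parabolic $\delta/4$-interior of $\OmegaT$.

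On $K \subset \Gamma _T ^\delta$, every cylinder $Q _\rho(t, x)$ with $\rho < \delta/2$ lies inside $\Omega _T ^{\delta/4}$, so the pivot $f _2 = (\mm(\grad u) ^2 + |\grad ^2 P|)/\bar \eta$ of Lemma \ref{lem:pointwise} admits the pointwise majorization $f _2 \le \tilde f + g$ on $\Omega _T ^{\delta/4}$ with $\tilde f := \bar \eta ^{-1}(\mm(\grad u) ^2 + |\grad ^2 P _1|) \in L ^1(\OmegaT)$ and $g := \bar \eta ^{-1}|\grad ^2 P _0|\,\ind{\Omega _T ^{\delta/4}} \in L ^{5/4}(\OmegaT)$. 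Monotonicity and quasiconvexity of $\A _4 ^<$ (Lemma \ref{lem:sublin}) give $\A _4 ^<(f _2)(t, x) \le \max(\A _4 ^<(2 \tilde f), \A _4 ^<(2 g))(t, x)$ for every $(t, x) \in K$, while $K \subset \Gamma _T ^\delta$ forces $\rstar \ge \delta/(L + 4)$ and hence bounds the $\ind*{s _2 \ge \rstar}$ portion of $s _2 ^{-1}$ by $(L + 4)/\delta$ uniformly. Thus the pointwise bound $|\grad ^n u| \le C _n s _2 ^{-n - 1}$ from Lemma \ref{lem:pointwise} transfers to an inequality whose right-hand side is controlled by $\A _4 ^< \tilde f$, $\A _4 ^< g$, and a bounded remainder.

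The three estimates then follow by applying Theorem \ref{thm:avg} separately to $\tilde f$ (with $\alpha = 4$, $p _1 = q _1 = 1$) and to $g$ (with $\alpha = 4$, $p _1 = q _1 = 5/4$, invoking Lemma \ref{lem:nonlinear-scaling} if needed to fit within the hypotheses), exactly as in the proof of Corollary \ref{cor:anisotropic}. For $p \le q$, case \eqref{enu:weakweak1-drift} (respectively \eqref{enu:p=q>1-drift}) with $(p _2, q _2) = ((n + 1) p / 4, (n + 1) q / 4)$ under $\tfrac1p + \tfrac dq = n + 1$ produces the isotropic and the weak-weak anisotropic bounds; for $p > q$, case \eqref{enu:Ba} under $\tfrac2p + \tfrac{d - 1}q = n + 1$ produces the strong-in-space anisotropic bound. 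Raising to the $(n + 1)$-th power then yields the claimed estimates with constants of the form $C(\Omega, T, K, \delta, E, p, q) ^{n + 1}$.

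The principal obstacle is the $L ^1$ bound on $\grad ^2 P _1$: unlike the torus case, where Remark \ref{rmk:pressure} follows at once from the classical Coifman--Lions--Meyer--Semmes div-curl lemma, the Dirichlet analogue on a bounded $C ^2$ domain requires the local Hardy space $h ^1(\Omega)$ and a boundary-reflection argument. Once this ingredient is in place, all remaining steps parallel those in the torus case.
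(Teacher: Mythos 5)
Your overall strategy matches the paper's: split $P = P_0 + P_1$ with $P_1 = 0$ on $\partial\Omega$ and $P_0$ harmonic, use the div-curl/Hardy bound for $\grad^2 P_1$, use interior regularity for $\grad^2 P_0$, and then invoke Theorem~\ref{thm:avg} to get the trace estimates. But there are two genuine gaps.

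First, the claim that ``every cylinder $Q_\rho(t,x)$ with $\rho < \delta/2$ lies inside $\Omega_T^{\delta/4}$'' is false as stated. For $f_2$, the scale $s_2$ is defined with drift $b = u$, so $Q_\rho(t,x)$ is the \emph{skewed} cylinder \eqref{eqn:Qe} following the mollified flow $X_\rho$, not the standard parabolic cylinder \eqref{eqn:Qe-spacetime}. The trajectory can drift: over a timespan of $\rho^2$, one only has $|X_\rho(s;t,x) - x| \le |b_\rho|_{L^\infty}\,\rho^2$, and the only a priori bound on $b_\rho$ comes from the energy, $|b_\rho(t,x)| \le \nor{u(t)}_{L^2}\,|B_\rho|^{-1/2} \le C(E)\rho^{-3/2}$, giving a displacement $\lesssim C(E)\rho^{1/2}$. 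Since $\rho^{1/2} \gg \rho$ as $\rho \to 0$, the naive threshold $\rho < \delta/2$ does not guarantee containment; one needs $\rho$ small depending on both $\delta$ and $E$ (so that $C(E)\rho^{1/2} + \rho < \delta/2$), and one must separately absorb the complementary regime $\rho \gtrsim (\delta/C(E))^2$ into the $\rstar^{-1}$ bound. This is exactly the displacement computation the paper carries out, and it cannot be omitted.

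Second, the decision to split $f_2 \le \tilde f + g$ and apply Theorem~\ref{thm:avg} ``separately'' with $(\alpha, p_1, q_1) = (4, 1, 1)$ for $\tilde f$ and $(4, 5/4, 5/4)$ for $g$ does not close. For $\tilde f$ you get $r_1 = -1$ and must take $\lambda = p_2$ with $\frac1{r_2} = -\frac1{p_2}$; for $g$ with $p_1 = q_1 = 5/4$ you get $\frac1{r_1} = 4 - \frac85 - \frac{12}5 = 0$, i.e.\ $r_1 = \infty$, which forces $r_2 = \infty$ as well, incompatible with the $(p_2, q_2)$ you need for the target norm. Combining the two pieces via quasiconvexity requires both applications of Theorem~\ref{thm:avg} to land in the same output space $L^{p_2,\infty}_t L^{q_2,\infty}_x$, and they do not. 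The remedy is simpler and is what the paper does: since $\Omega$ is bounded and $T < \infty$, your stronger bound $g \in L^{5/4}_t L^\infty_x$ already implies $g \in L^1$, so $\tilde f + g \in L^1((\delta^2,T)\times\Omega^\e)$ and you should apply Theorem~\ref{thm:avg}\,\eqref{enu:weakweak1-drift} once with $p_1 = q_1 = 1$ to the truncated pivot $f_2\,\ind{(\delta^2,T)\times\Omega^\e}$, using the cylinder-containment argument above to justify that this truncation does not change $s_2$ on $K$.
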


\begin{proof}
    We only sketch how to control the pressure term using the idea of Lions \cite[section 3.3]{lions1996}. We can split $P = P _0 + P _1$, $\La P _0 = 0$, $\fint _\Omega P _0 (t) \dx = 0$ for a.e. $t \in (0, T)$, and $P _1 = 0$ on $\partial \Omega$. We have 
    \begin{align*}
        \nor{\grad P _1} _{L ^\frac54 ((\delta ^2, T) \times \Omega)} + \nor{\grad P _0} _{L ^\frac54 ((\delta ^2, T) \times \Omega)} \le C
            (\nor{u \cdot \grad u} _{L ^\frac54 (\Omega _T)} + 1).
    \end{align*}
    Using Riesz transform on $\La P _1$ and harmonicity on $P _0$, we have
    \begin{align*}
        \nor{\grad ^2 P _1} _{L ^1 (0, T; \mathcal H ^1 (\Omega))} &\le C \nor{\grad u} _{L ^2 (\Omega _T)} ^2, 
        \\
        \nor{\grad ^2 P _0} _{L ^1 (\delta ^2, T; L ^1 (\Omega ^\e))} &\le C (\nor{u \cdot \grad u} _{L ^\frac54 (\Omega _T)} + 1), \qquad k \ge 0.
    \end{align*}
    Here $\Omega ^\e = \set{x \in \Omega: \dist (x, \partial \Omega) > \e}$ for some $\e > 0$ to be chosen. Note that both $\nor{\grad u} _{L ^2} ^2$ and $\nor{u \cdot \grad u} _{L ^{\frac54}}$ are dominated by $\nor{u _0} _{L ^2} ^2$ (see the Proof of Theorem \ref{thm:main}). Therefore, we have
    \begin{align*}
        \int _{\delta ^2} ^T \int _{\Omega ^\e} \mm (\grad u) ^2 + |\grad ^2 P| \dx \dt \le C (\nor{u _0} _{L ^2 (\Omega)} ^2 + 1).
    \end{align*}
    Take $(t, x) \in K$, then $\dist (x, \partial \Omega) \ge \delta$. Suppose $s _2 (t, x) = \rho < \e$. Recall that $Q _\rho (t, x)$ are defined by \eqref{eqn:flow}-\eqref{eqn:Qe} with $b = u$. Since at any time $\nor{u (t)} _{L ^2 (\Omega)} \le \nor{u _0} _{L ^2 (\Omega)} \le E$, we have 
    \begin{align*}
        |b _\rho| (t, x) \le \nor{u (t)} _{L ^2} |B _\rho| ^{-\frac12} \le C \rho ^{-\frac32}.
    \end{align*}
    In a timespan of length $\rho ^2$, 
    \begin{align*}
        |X _\rho (s; t, x) - x| \le C \rho ^{-\frac32} \rho ^2 \le C \rho ^{\frac12}.
    \end{align*}
    This shows that $\dist (y, \partial \Omega) \ge \delta - C \e ^\frac12 - \e > 2\e$ for any $(s, y) \in Q _\rho (t, x)$, if we choose $\e$ sufficiently small. Thus, $Q _{s _2 (t, x)} (t, x) \subset \subset \Omega _\e$ for every $(t, x) \in K$,
    so the value of $s _2 = \Saw [f _2]$ inside $\Omega ^\delta$ is not affected by the value of $f _2 = \mm (\grad u) ^2 + |\grad ^2 P|$ outside $\Omega ^\e$. 
    
    Same as in Corollary \ref{cor:anisotropic}, for $0 < p _2 < q _2 \le \infty$ with $\frac1{p _2} + \frac d{q _2} = 4$ we have 
    \begin{align*}
        \frac1{r _2} = \alpha - \frac2{p _2} - \frac{d}{q _2} = 4 - \frac2{p _2} - \frac d{q _2} = - \frac1{p _2}.
    \end{align*}
    Thus
    \begin{align*}
        \nor{s _2 ^{-1} \ind*{s _2 < \rstar}} _{L ^{4 p _2, \infty} _t L ^{4 q _2, \infty} _x (K)} ^{4 p _2} &\le \nor{\Aa ^< [f _2]} _{L ^{p _2} _t L ^{q _2} _x ((0, T) \times \Omega ^\e)} ^{p _2} \le C.
    \end{align*}
    Because $r _*$ is bounded from below in $(\delta ^2, T) \times \Omega ^\delta$, we have
    \begin{align*}
        \nor{s _2 ^{-1}} _{L ^{4 p _2, \infty} _t L ^{4 q _2, \infty} _x (K)} ^{4 p _2} &\le C + \nor{r _* ^{-1}} _{L ^{4 p _2, \infty} _t L ^{4 q _2, \infty} _x (K)} ^{4 p _2} \le C.
    \end{align*}
    By $|\grad ^n u| \le s _2 ^{-1}$, we finish the proof for the first estimate.

    For $0 < q _2 < p _2 \le \infty$ with $\frac2{p _2} + \frac{d - 1}{q _2} = 4$. We have 
    \begin{align*}
        \frac1{r _2} = \alpha - \frac2{p _2} - \frac{d}{q _2} = \frac{d - 1}{q _2} - \frac dq = - \frac1{q _2}.
    \end{align*}
    Similarly, we have 
    \begin{align*}
        \nor{s _2 ^{-1}} _{L ^{4 p _2, \infty} _t L ^{4 q _2} _x (K)} ^{4 p _2} &\le C + \nor{r _* ^{-1}} _{L ^{4 p _2, \infty} _t L ^{4 q _2, \infty} _x (K)} ^{4 p _2} \le C.
    \end{align*}
    By $|\grad ^n u| \le s _2 ^{-1}$, we finish the proof for the second estimate. The third is similar so we do not repeat here.
\end{proof}

\begin{proof}[Proof of Proposition \ref{prop:blow-up}]

    First, we want to apply Theorem \ref{thm:avg} on $f _3$ defined in Lemma \ref{lem:pointwise}, with $D = d = 3$, $\Gamma _t = \Omega = \mathbb T ^3$, $\alpha = 3$, $p _1 = p / 3$, $q _1 = q / 3$, $p _2 = p' / 3$, $q _2 = q' / 3$, we have 
    \begin{align*}
        \frac1{r _1} = 3 - \frac2{p _1} - \frac3{q _1} = 3 \pth{1 - \frac2p - \frac3q} = 0
    \end{align*}
    and similarly $\frac1{r _2} = 0$. Since $r _1 = r _2 = \infty$, $r _2 = \lambda r _1$ holds for any $\lambda > 0$. Note that $q \le p < \infty$ implies $q _1 \le p _1 < \infty$. We follow the following steps:

    \noindent \textbf{Step 1: case $p = q = 5$ for \eqref{eqn:use-u}.}
    
    In this case, $p _1 = q _1 = \frac53$. By choosing $\lambda$ small, we can make sure $q _2 > \lambda q _1$ and $p _2 > \lambda p _1$. Using Theorem \ref{thm:avg} \eqref{enu:p=q>1}, we have 
    \begin{align*}
        \nor{\Aa ^< [f _3]} _{L ^{p} _t L ^{q} _x (\Omega _T)} ^{\lambda} &\le C \nor{f _3} _{L ^\frac53 (\Omega _T)}.
    \end{align*}
    Here 
    \begin{align*}
        \nor{f _3} _{L ^\frac53 (\OmegaT)} = C \nor{u} _{L ^5 ((0, T) \times \mathbb T ^3)} ^3 + C \nor{P} _{L ^\frac52 ((0, T) \times \mathbb T ^3)} ^{\frac32} \le C \nor{u} _{L ^5 ((0, T) \times \mathbb T ^3)} ^3.
    \end{align*}
    Here we used $P = (-\La) ^{-1} \div \div (u \otimes u)$, and $(-\La) ^{-1} \div \div$ is a Riesz transform, so $\nor{P} _{L ^\frac52} \le \nor{u \otimes u} _{L ^\frac52} \le \nor{u} _{L ^5} ^2$. Hence Lemma \ref{lem:pointwise} implies 
    \begin{align*}
        \nor{s _3 ^{-1} \ind*{s _3 < r _*}} _{L ^{3 p _2} _t L ^{3 q _2} _x (\Omega _T)} ^{3 \lambda} \le C \nor{u} _{L ^5 ((0, T) \times \mathbb T ^3)} ^3.
    \end{align*}
    Since $r _*$ is bounded from below in $(t _0, T) \times \mathbb T ^3$, we have
    \begin{align*}
        \nor{s _3 ^{-1}} _{L ^{p'} _t L ^{q'} _x ((t _0, T) \times \mathbb T ^3)} \le C \left( 
            \nor{u} _{L ^5 ((0, T) \times \mathbb T ^3)} ^\frac1\lambda + 1
        \right).
    \end{align*}
    Because Lemma \ref{lem:pointwise} also implies $|u| \le s _3 ^{-1}$ and $|\grad u| \le s _3 ^{-2}$, we conclude  
    \begin{align*}
        \nor{u} _{L ^{p'} _t L ^{q'} _x ((t _0, T) \times \mathbb T ^3)} + \nor{\grad u} _{L ^{\frac{p'}2} _t L ^{\frac{q'}2} _x ((t _0, T) \times \mathbb T ^3)} ^\frac12 \le C \pth{
            \nor{u} _{L ^5 ((0, T) \times \mathbb T ^3)} ^\frac1\lambda + 1
        }.
    \end{align*}
    We can pick $\gamma = \frac1\lambda$. This proves \eqref{eqn:use-u} for $p = q = 5$.

    \noindent \textbf{Step 2: case $3 < q < p < \infty$ for \eqref{eqn:use-u}.} 
    
    Given Step 1, it suffices to work with $p' = q' = 5$. Again, choose $\lambda$ sufficiently small so that $q _2 = \frac53 > \lambda q _1$ and $p _2 = \frac53 > \lambda p _1$. Note that $\alpha = 3 > \frac3{q _2} = \frac95$. Using Theorem \ref{thm:avg} \eqref{enu:Bb}, we have
    \begin{align*}
        \nor{\Aa ^< [f _3]} _{L ^{\frac53} _{t, x} (\Omega _T)} ^{\lambda} &\le C \nor{f _3} _{L ^{p _1} _t L ^{q _1} _x (\Omega _T)}.
    \end{align*}
    Similar as Step 1, we have $\nor{f _3} _{L ^{p _1} _t L ^{q _1} _x} \le \nor{u} _{L ^p _t L ^q _x} ^3$. So  
    \begin{align*}
        \nor{u} _{L ^{5} _{t, x} ((\frac{t _0}2, T) \times \mathbb T ^3)} + \nor{\grad u} _{L ^{\frac52} _{t, x} ((t _0, T) \times \mathbb T ^3)} ^\frac12 \le C \pth{
            \nor{u} _{L ^5 ((0, T) \times \mathbb T ^3)} ^\frac1\lambda + 1
        }.
    \end{align*}
    Combined with Step 1, we prove \eqref{eqn:use-u} for $3 < q < p < \infty$.

    \noindent \textbf{Step 3: case $3 < q \le \frac{15}4 < 10 \le p < \infty$ for \eqref{eqn:use-gradu}.} 
    
    Recall the conservation of momentum: $\int _{\mathbb T ^3} u (t) \dx = \int _{\mathbb T ^3} u _0 \dx = 0$. By Sobolev embedding in $\mathbb T ^3$, 
    \begin{align*}
        \nor{u} _{L ^\frac p2 _t L ^{q'} _x ((0, T) \times \mathbb T ^3)} \le C \nor{\grad u} _{L ^\frac p2 _t L ^\frac q2 _x ((0, T) \times \mathbb T ^3)}.
    \end{align*}
    Here $q' = \frac{3 \frac q2}{3 - \frac q2} = \frac{3 q}{6 - q}$, so 
    \begin{align*}
        \frac2{\frac p2} + \frac3{q'} = \frac4p + \frac{3 (6 - q)}{3 q} = \frac4p + \frac6q - 1 = 1.
    \end{align*}
    Moreover, $\frac p2 \ge 5$ and $q' \le 5$. By Step 1 and Step 2, \eqref{eqn:use-u} implies \eqref{eqn:use-gradu} in this range.

    Next, we work with $f _2$ defined in Lemma \ref{lem:pointwise} instead of $f _3$, with $D = d = 3$, $\Gamma _t = \Omega = \mathbb T ^3$, $\alpha = 4$, $p _1 = p / 4$, $p _2 = p / 4$, $q _1 = q / 4$, $q _2 = q / 4$. 

    \noindent \textbf{Step 4: case $q = p = 5$ for \eqref{eqn:use-gradu}.} 
    
    We let $p' = 10$ and $q' = \frac{15}4$. Choose $\lambda = \frac34$. Then $p _1 = q _1 = \frac54$, $p _2 = \frac52$, $q _2 = \frac{15}{16}$, $p _2 > \lambda p _1$ and $q _2 = \lambda q _1$. Using Theorem \ref{thm:avg} \eqref{enu:p=q>1-drift}, we have
    \begin{align*}
        \nor{\Aa ^< [f _2]} _{L ^{\frac52} _t L ^{\frac{15}{16}} _x (\Omega _T)} ^{\frac34} &\le C \nor{f _2} _{L ^{\frac54} _{t, x} (\Omega _T)} = C \nor{\grad u} _{L ^\frac52 _{t, x} (\OmegaT)} ^2 + C \nor{\grad ^2 P} _{L ^\frac54 _{t, x} (\OmegaT)} \\
        &\le C \nor{\grad u} _{L ^\frac52 _{t, x} (\OmegaT)} ^2.
    \end{align*}
    Here we used Remark \ref{rmk:pressure} to bound $\nor{\grad ^2 P} _{L ^\frac54}$ by $\nor{\grad u} _{L ^\frac52} ^2$ using boundedness of Riesz transform. Similar to Step 1, 
    \begin{align*}
        \nor{\grad u} _{L ^5 _t L ^\frac85 _x ((\frac{t _0}4, T) \times \mathbb T ^3)} ^\frac32 \le C \pth{
            \nor{\grad u} _{L ^\frac52 _{t, x} ((0, T) \times \Omega)} ^2 + 1
        }.
    \end{align*}
    This combined with Step 3 proves \eqref{eqn:use-gradu} for $p = q = 5$.

    \noindent \textbf{Step 5: case $4 < q < p < 8$ for \eqref{eqn:use-gradu}.} 

    We let $p' = q' = 5$. Choose $\lambda = \frac58$. Then $p _2 = q _2 = \frac54$, $2 > p _1 > q _1 > 1$, and $p _2 > \lambda p _1 > \lambda q _1$. Note $\alpha = 4 > \frac d{q _2} = \frac{12}5$. Using Theorem \ref{thm:avg} \eqref{enu:Bb}, we have
    \begin{align*}
        \nor{\Aa ^< [f _2]} _{L ^{\frac54} _t L ^{\frac54} _x (\Omega _T)} ^{\frac58} &\le C \nor{f _2} _{L ^{p _1} _t L ^{q _1} _x} = C \nor{\grad u} _{L ^{2 p _1} _t L ^{2 q _1} _x} ^2 + C \nor{\grad ^2 P} _{L ^{p _1} _t L ^{q _1} _x} \\
        & \le  \nor{\grad u} _{L ^{2 p _1} _t L ^{2 q _1} _x} ^2.
    \end{align*}
    Similarly, we can control 
    \begin{align*}
        \nor{\grad u} _{L ^\frac52 _{t, x} ((\frac{t _0}8, T) \times \mathbb T ^3)} ^\frac54 \le C \pth{
            \nor{\grad u} _{L ^{\frac p2} _t L ^{\frac q2} _x ((0, T) \times \mathbb T ^3)} ^2 + 1
        }.
    \end{align*}
    This combined with Step 4 proves \eqref{eqn:use-gradu} for $4 < q < p < 8$. We have now proved all the claims in Proposition \ref{prop:blow-up}.
\end{proof}

\begin{appendix}
    
\section{Unboundedness of averaging operator in $L ^1$}
\label{app:cantor}

We claim that the averaging operator $\Aa$ is not bounded in $L ^1 (\R)$ for $\alpha = \frac12$, by constructing a sequence of functions $f _k$ as the following. For $k \ge 1$, define
\begin{align*}
    S _{k} = \set{
        \sum _{j = 1} ^k a _j 4 ^{-j}: a _j = 0 \text{ or } 2
    }.
\end{align*}
$S _k$ contains fractions whose quaternary expression contains only $0$ and $2$. Define $A _0 = [0, 1)$, and for $k \ge 1$ define
\begin{align*}
    A _k = \bigcup _{x \in S _k} [x, x + 4 ^{-k}).
\end{align*}
We draw the definition of $A _k$ in Figure \ref{fig:cantor}. Clearly, $|A _k| = 2 ^{-k}$. Finally, we define 
\begin{align*}
    f _k = 2 ^\frac32 \cdot 2 ^k \ind{A _k}.
\end{align*}
So that $f _k$ are uniformly bounded in $L ^1 (\R)$.
\begin{figure}[htbp]
    \centering
    \begin{tikzpicture}
        \draw[ultra thick] (0, 0) -- (4, 0);

        \draw[ultra thick] (0, -0.1) -- (1, -0.1);
        \draw[ultra thick] (2, -0.1) -- (3, -0.1);

        \draw[ultra thick] (0, -0.2) -- (1/4, -0.2);
        \draw[ultra thick] (2/4, -0.2) -- (3/4, -0.2);
        \draw[ultra thick] (2, -0.2) -- (2 + 1/4, -0.2);
        \draw[ultra thick] (2 + 2/4, -0.2) -- (2 + 3/4, -0.2);

        \draw[ultra thick] (0, -0.3) -- (1/16, -0.3);
        \draw[ultra thick] (2/16, -0.3) -- (3/16, -0.3);
        \draw[ultra thick] (2/4, -0.3) -- (2/4 + 1/16, -0.3);
        \draw[ultra thick] (2/4 + 2/16, -0.3) -- (2/4 + 3/16, -0.3);

        \draw[ultra thick] (2 + 0, -0.3) -- (2 + 1/16, -0.3);
        \draw[ultra thick] (2 + 2/16, -0.3) -- (2 + 3/16, -0.3);
        \draw[ultra thick] (2 + 2/4, -0.3) -- (2 + 2/4 + 1/16, -0.3);
        \draw[ultra thick] (2 + 2/4 + 2/16, -0.3) -- (2 + 2/4 + 3/16, -0.3);
        \draw[] (1.5, -0.5) node[] {$\dots$};
    \end{tikzpicture}
    \caption{Construction of a Cantor-type measure}
    \label{fig:cantor}
\end{figure}
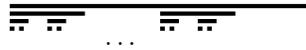

Now, for every $k \ge 1$, $x \in A _{k - 1} \setminus A _{k}$, $j \ge k$, we know that $x \in [x ^* + 4 ^{-k}, x ^* + 2 \cdot 4 ^{-k})$ for some $x ^* \in S _{k}$. Then 
\begin{align*}
    \int _{B _{2 \cdot 4 ^{-k}} (x)} f _j \ge \int _{x ^*} ^{x ^* + 4 ^{-k}} f _j = 2 ^\frac32 \cdot 2 ^k \cdot 4 ^{-k} = 2 ^\frac32 \cdot 2 ^{-k}.
\end{align*}
Thus 
\begin{align*}
    \fint _{B _{2 \cdot 4 ^{-k}} (x)} f _j \ge 2 ^{-\frac12} \cdot 2 ^{k} = (2 \cdot 4 ^{-k}) ^{-\frac12}.
\end{align*}
By definition, $\Sa [f _j] (x) \le 2 \cdot 4 ^{-k}$, and $\Aa [f _j] (x) \ge (2 \cdot 4 ^{-k}) ^{-\frac12} = 2 ^{-\frac12} \cdot 2 ^k$. So
\begin{align*}
    \abset{\Aa [f _j] (x) \ge 2 ^{-\frac12} \cdot 2 ^k} \ge \abs{A _{k - 1} \setminus A _k} = 2 ^{-k}, \qquad \forall k \le j.
\end{align*}
Hence $\set{\Aa [f _j]} _j$ is not uniformly bounded in $L ^{1, q} (\R)$ for any $q < \infty$. In particular, letting $f _j \to \infty$ we obtain a Cantor measure $\nu$ such that for any $k \ge 1$, $x ^* \in S _k$, 
\begin{align*}
    \nu ([x ^*, x ^* + 4 ^{-k})) &= 2 ^\frac32 \cdot 2 ^{-k}, 
    &
    \nu ([x ^* + 4 ^{-k}, x ^* + 2 \cdot 4 ^{-k})) &= 0.
\end{align*}
Then $|\nu| = 2 ^\frac32$ is a finite measure, but $\Aa (\nu)$ is not in $L ^{1, q} (\R)$ for any $q < \infty$.

\section{Lorentz norm}
\label{app:lorentz}

We claim that the quasinorms $L ^{1, \infty} _{t} L ^{1, \infty} _x$ and $L ^{1, \infty} _{t, x}$ are not comparable. 

\begin{lemma}
    Let $\Omega = (0, 1)$. For any $\e > 0$, there exist a pair of functions $u _1, u _2 \in L ^1 _\loc ((0, 1) \times \Omega)$ with 
    \begin{align*}
        \nor{u _1} _{L ^{1, \infty} (0, 1; L ^{1, \infty} (\Omega))} = \nor{u _2} _{L ^{1, \infty} (0, 1; L ^{1, \infty} (\Omega))} = 1,
    \end{align*}
    but $\nor{u _1} _{L ^{1, \infty} ((0, 1) \times \Omega)} \le \e$ and $\nor{u _2} _{L ^{1, \infty} ((0, 1) \times \Omega)} = +\infty$.
\end{lemma}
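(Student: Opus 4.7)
The plan is to exhibit $u_1$ and $u_2$ by explicit formulas. I would take
\[
    u_2 (t, x) = \frac{1}{t x},
    \qquad
    u_1 (t, x) = t^{-a} \ind*{0 < x < t^{a - 1}},
\]
with $a \ge \max \{1, 1/\e\}$; both lie in $L^1_\loc ((0, 1) \times \Omega)$ because they are essentially bounded away from $\set{t = 0}$. For a.e.\ $t \in (0, 1)$, the slice $u_1 (t, \cdot)$ is the constant $t^{-a}$ on an interval of length $t^{a - 1} \in (0, 1]$, so $\nor{u_1 (t)}_{L^{1, \infty}_x} = t^{-a} \cdot t^{a - 1} = 1/t$, and the same identity holds for $u_2$ since $\nor{1/x}_{L^{1, \infty} ((0, 1))} = 1$. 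A direct calculation gives $\nor{1/t}_{L^{1, \infty} (0, 1)} = 1$, so both nested norms equal $1$.

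Next, I would compute the joint norms from the super-level sets. For $u_1$, $\set{u_1 > \lambda} = \set{0 < t < \lambda^{-1/a}, \, 0 < x < t^{a - 1}}$, whose two-dimensional Lebesgue measure is
\[
    \int_0^{\min (1, \lambda^{-1/a})} t^{a - 1} \d t = \frac{\min (1, \lambda^{-1})}{a},
\]
so $\sup_\lambda \lambda \abs{\set{u_1 > \lambda}} = 1/a \le \e$. For $u_2$, splitting the integral $\int_0^1 \min (1, 1/(\lambda t)) \d t$ at $t = 1/\lambda$ gives $\abs{\set{u_2 > \lambda}} = (1 + \ln \lambda)/\lambda$ for $\lambda > 1$, hence $\lambda \abs{\set{u_2 > \lambda}} = 1 + \ln \lambda \to \infty$, i.e., the joint weak norm of $u_2$ is infinite.

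No substantial obstacle appears; the only mild subtlety is selecting the right $u_1$. The key idea is that each slice $u_1 (t, \cdot)$ is a tall-thin spike of height $t^{-a}$ on an $x$-interval of length $t^{a - 1}$: the weak $L^1_x$ norm of each slice is \emph{blind to the exponent $a$} and stays pinned at $1/t$, while the two-dimensional Lebesgue measure of each super-level set collects a factor of $1/a$, which can be made arbitrarily small by choosing $a$ large. The function $u_2 = 1/(tx)$ then serves as the standard product-type counter-example in the opposite direction.
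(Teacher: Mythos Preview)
Your proof is correct and follows essentially the same strategy as the paper: explicit construction of $u_2 = 1/(tx)$ (identical to the paper) and of $u_1$ as a tall--thin indicator whose slice $L^{1,\infty}_x$ norm is fixed while the joint super-level sets carry a small prefactor. The only difference is cosmetic: the paper takes $u_1(t,x) = e^{t/\e}\ind{[0,e^{-t/\e}]}(x)$, which has \emph{constant} slice norm $1$, whereas your power-law choice gives slice norm $1/t$; both yield nested norm $1$ and joint norm $\le \e$ by the same mechanism. One tiny quibble: your justification that $u_2 \in L^1_\loc$ ``because it is essentially bounded away from $\{t=0\}$'' is incomplete---$u_2$ also blows up near $\{x=0\}$---but local boundedness on compact subsets of the open square $(0,1)^2$ is immediate anyway.
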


\begin{proof}
    Define 
    \begin{align*}
        u _1 (t, x) = e ^\frac t\e \ind{[0, e ^{-\frac t\e}]} (x), \qquad u _2 (t, x) = \frac1{t x}.
    \end{align*}
    Then for any $t \in (0, 1)$, $\nor{u _1 (t)} _{L ^{1, \infty} (\Omega)} = 1$ and $\nor{u _2 (t)} _{L ^{1, \infty} (\Omega)} = \frac1t$. They are both $L ^{1, \infty}$ functions in $t$.
    
    Note that for any $\alpha > 0$, 
    \begin{align*}
        \set{u _1 > \alpha} &= \set{(t, x) \in (0, 1) \times \Omega: t > \e \log _+ \alpha, 0 < x < e ^{-\frac t\e}} \\
        \implies 
        \abset{u _1 > \alpha} &= \int _{\e \log _+ \alpha} ^1 e ^{-\frac t\e} \d t = \e \int _{\log _+ \alpha} ^\frac1 \e e ^{-s} \d s < \frac\e\alpha,
    \end{align*}
    hence $\nor{u _1} _{L ^{1, \infty} ((0, 1) \times \Omega)} \le \e$. On the other hand, for any $\alpha > 1$,  
    \begin{align*}
        \set{u _2 > \alpha} &= \set{(t, x) \in (0, 1) \times \Omega:  0 < t < \frac1\alpha \text{ or } 0 < x < \frac1{t \alpha}} \\
        \implies \abset{u _2 > \alpha} &= \frac1\alpha + \int _{\frac1\alpha} ^1 \frac1{t \alpha} \d t = \frac1\alpha \pth{1 + \log \alpha}
    \end{align*}
    hence $\nor{u _2} _{L ^{1, \infty} ((0, 1) \times \Omega)} = +\infty$.
\end{proof}

\begin{remark}
    The example $u _1$ indeed satisfies $\nor{u _1 (t)} _{L ^{1, q} (\Omega)} \equiv 1$ for any $q \in [1, \infty]$. Moreover, by interpolation, we have $\nor{u _1} _{L ^{1, q} ((0, 1) \times \Omega)} \le \e ^\frac{p - 1}p$. Hence $$L ^{1, q} _{t, x} \not \hookrightarrow L ^{1, q} _t L ^{1, q} _x$$ for any $q > 1$. 
\end{remark}

Although they are not equivalent, we show below that it is still possible to interpolate between an isotropic norm and a repeated norm. 

\begin{lemma}
    \label{lem:lorentz-interpolation}
    Let $T \in (0, \infty]$, and let $\Omega$ be a measurable space. Suppose $\mu _t$ is a measure on $\Omega$ for every $t > 0$, and we define $\mu _T$ by $\d \mu _T = \dt \d \mu _t$ as before. 
    \begin{enumerate}[\upshape (a)]
        \item If $f \in L ^{\infty} _t L ^{q _0, \infty} _x$ and $f \in L ^{1, \infty} _{t, x}$ for some $q _0 \in (0, 1)$, then $f \in L ^{p, \infty} _t L ^{q, \infty} _x$ with 
        \begin{align}
            \label{eqn:pq-range}
            \frac{1 - q _0}p + \frac {q _0} q = 1, \qquad 1 < p < \infty, q _0 < q < 1.
        \end{align}
        Moreover, 
        \begin{align*}
            \nor{f} _{L ^{p, \infty} _t L ^{q, \infty} _x} \le C (p, q, q _0) \nor{f} _{L ^{1, \infty} _{t, x}} ^\frac1p \nor{f} _{L ^{\infty} _t L ^{q _0, \infty} _x} ^{1 - \frac 1p}.
        \end{align*}

        \item If $f \in L ^{p _0, \infty} _t L _x ^{\infty}$ and $f \in L ^{1, \infty} _{t, x}$ for some $p _0 \in (0, 1)$, then $f \in L ^{p, \infty} _t L ^{q, \infty} _x$ with 
        \begin{align}
            \label{eqn:pq-range-2}
            \frac{p _0} p + \frac{1 - p _0}q = 1,\qquad p _0 < p < 1, 1 < q < \infty.
        \end{align}
        Moreover, 
        \begin{align*}
            \nor{f} _{L ^{p, \infty} _t L ^{q, \infty} _x} \le C (p, q, p _0) \nor{f} _{L ^{1, \infty} _{t, x}} ^\frac1q \nor{f} _{L ^{p _0, \infty} _t L ^{\infty} _x} ^{1 - \frac 1q}.
        \end{align*}
    \end{enumerate}
    
    \begin{proof}
        Define $S (t, \alpha) = \mu _t (\set{x \in \Omega: f (t, x) > \alpha})$, and define $\alpha _k = 2 ^k$. Then 
        \begin{align*}
            \nor{f (t)} _{L ^{q, \infty} (\Omega, \mu _t)} ^q \approx \sup _k \alpha _k ^q S (t, \alpha _k).
        \end{align*}
        Fix $\beta > 0$, and denote 
        \begin{align*}
            B := \set{t \in (0, T): \nor{f (t)} _{L ^{q, \infty} (\Omega, \mu _t)} > \beta}.
        \end{align*}
        For every $t \in B$, there exists $k \in \mathbb Z$ such that $\alpha _k ^q S (t, \alpha _k) > \beta ^q$. Hence, we can partition $B = \cup _k B _k$ into a sequence of pairwise disjoint sets $B _k \subset B$, such that 
        \begin{align}
            \label{eqn:Lpq}
            \alpha _k ^q S (t, \alpha _k) > \beta ^q, \qquad \forall t \in B _k.
        \end{align}
        Moreover, if $\nor{f} _{L ^{1, \infty} _{t, x}} = K < \infty$, then for any $\alpha > 0$, 
        \begin{align*}
            \frac K\alpha > \mu _T (\set{f > \alpha}) = \int _0 ^T S (t, \alpha) \d \alpha \ge \sum _{k \in \mathbb Z} \int _{B _k} S (t, \alpha) \d t.
        \end{align*}
        In particular, for any $k \in \mathbb Z$, we have
        \begin{align}
            \label{eqn:L18}
            \frac K{\alpha _k} > \int _{B _k} S (t, \alpha) \d t > |B _k| \beta ^q \alpha _k ^{-q}.
        \end{align}

        \begin{enumerate}[\upshape (a)]
            \item Without loss of generality assume $\nor{f} _{L ^\infty _t L ^{q _0, \infty} _x} = 1$, thus for any $t \in (0, T)$ and any $\alpha > 0$, we have 
            \begin{align*}
                \alpha ^{q _0} S (t, \alpha) \le 1.
            \end{align*}
            In particular, choose $t \in B _k$ and $\alpha = \alpha _k$, \eqref{eqn:Lpq} yields
            \begin{align*}
                1 \ge \alpha _k ^{q _0} S (t, \alpha _k) > \beta ^q \alpha _k ^{q _0 - q}.
            \end{align*}
            So $\alpha _k > \beta ^{\frac q{q - q _0}}$, hence $k > k _* := \frac{q}{q - q _0}\log _2 \beta$. Moreover, \eqref{eqn:L18} yields 
            \begin{align*}
                |B| = \sum _{k > k _*} |B _k| \le \sum _{k > k _*} K \alpha _k ^{q - 1} \beta ^{-q} \le K a _{k _*} ^{q - 1} \beta ^{-q} = K \beta ^{\frac{q (q - 1)}{q - q _0} - q} = K \beta ^{-p}.
            \end{align*}
            
            \item Without loss of generality assume $\nor{f} _{L ^{p _0, \infty} _t L ^\infty _x} = 1$. Note that \eqref{eqn:Lpq} implies $S (t, \alpha _k) > 0$, hence $\nor{f (t)} _{L ^\infty (\Omega, \mu _t)} \ge \alpha _k$. Therefore, for any $k \in \mathbb Z$ we have 
            \begin{align*}
                \alpha _k ^{p _0} |B _k| < 1. 
            \end{align*}
            Combined with \eqref{eqn:L18}, we have 
            \begin{align*}
                |B _k| \le \mins{
                    \alpha _k ^{-p _0}, K \alpha _k ^{q - 1} \beta ^{-q} 
                }, \qquad \forall k \in \mathbb Z.
            \end{align*}
            Hence for some $k _*$ to be determined, we have
            \begin{align*}
                |B| = \sum _{k \in \mathbb Z} |B _k| = \sum _{k \le k _*} |B _k| + \sum _{k > k _*} |B _k| &\le \sum _{k \le k _*} K \alpha _k ^{q - 1} \beta ^{-q} + \sum _{k > k _*} \alpha _k ^{-p _0} \\
                & \le K \alpha _{k _*} ^{q - 1} \beta ^{-q} + \alpha _{k _*} ^{-p _0}.
            \end{align*}
            By selecting $k _* := \frac1{p _0 + q - 1} \log _2 \pthf{\beta ^q}K$, we have 
            \begin{align*}
                |B| \le \pthf{\beta ^q}K ^{-\frac{p _0}{p _0 + q - 1}} = K ^{\frac pq} \beta ^{-p}.
            \end{align*} 
        \end{enumerate}
    \end{proof}
\end{lemma}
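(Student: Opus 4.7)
The plan is a Marcinkiewicz-style real-interpolation argument carried out by hand on dyadic super-level sets in $x$: standard interpolation machinery does not apply here because, as the non-comparability example preceding the lemma shows, iterated and joint weak Lorentz quasinorms genuinely live on different scales. We therefore track $S(t,\alpha) := \mu_t(\set{|f(t,\cdot)| > \alpha})$ directly. Both parts reduce, after fixing $\beta > 0$, to showing that $B := \set{t \in (0,T) : \nor{f(t)}_{L^{q,\infty}(\Omega,\mu_t)} > \beta}$ satisfies $|B| \le C\beta^{-p}$, where we normalise $\nor{f}_{L^\infty_t L^{q_0,\infty}_x}$ or $\nor{f}_{L^{p_0,\infty}_t L^\infty_x}$ to $1$ and write $K := \nor{f}_{L^{1,\infty}_{t,x}}$.

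For the common ingredient, let $\alpha_k := 2^k$. The quasinorm equivalence $\nor{f(t)}_{L^{q,\infty}_x}^q \approx \sup_k \alpha_k^q S(t,\alpha_k)$ assigns to each $t \in B$ a dyadic witness $k(t)$ with $\alpha_{k(t)}^q S(t,\alpha_{k(t)}) \gtrsim \beta^q$; partition $B = \bigsqcup_k B_k$ by $k(t)$. Integrating the joint weak hypothesis on $B_k$ gives
\begin{align*}
    |B_k|\,\beta^q \alpha_k^{-q} \lesssim \int_{B_k} S(t,\alpha_k)\,\dt \le \frac{K}{\alpha_k},
\end{align*}
hence the universal estimate $|B_k| \lesssim K\alpha_k^{q-1}\beta^{-q}$ used in both parts. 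For part (a), the endpoint hypothesis yields $\alpha_k^{q_0} S(t,\alpha_k) \le 1$, which combined with the lower bound on $B_k$ forces $\alpha_k > \beta^{q/(q-q_0)}$, so only $k > k_* := \frac{q}{q-q_0}\log_2 \beta$ contribute. Since $q<1$ the tail $\sum_{k>k_*}\alpha_k^{q-1}$ is a decreasing geometric series dominated by $\alpha_{k_*}^{q-1}$; substituting and simplifying via $(1-q_0)/p + q_0/q = 1$ gives $|B| \lesssim K\beta^{-p}$.

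For part (b) the endpoint hypothesis $\nor{f}_{L^{p_0,\infty}_t L^\infty_x} \le 1$ instead supplies a second bound: since $S(t,\alpha_k)>0$ on $B_k$ forces $\nor{f(t)}_{L^\infty_x} \ge \alpha_k$, we obtain $|B_k| \le \alpha_k^{-p_0}$. Taking $|B_k| \le \min(\alpha_k^{-p_0},\, K\alpha_k^{q-1}\beta^{-q})$ and splitting the sum at the balance point $\alpha_{k_*}^{p_0+q-1} = \beta^q/K$, the low-$k$ half sums as a geometric series with ratio $2^{q-1}>1$ (uses $q>1$) and the high-$k$ half with ratio $2^{-p_0}<1$ (uses $p_0>0$); each is dominated by the common endpoint value at $k_*$, producing $|B| \lesssim K^{p/q}\beta^{-p}$ after applying $p_0/p+(1-p_0)/q=1$. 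The main technical obstacle is this bookkeeping: the truncation in (a) and the balancing in (b) must produce \emph{exactly} the interpolated exponent $p$, which requires careful tracking of the signs of $q-1$, $q-q_0$, and $p_0+q-1$ across the two complementary regimes $q<1$ and $q>1$; conceptually the point is that although the iterated and joint weak Lorentz norms fail to compare, their combined information interpolates cleanly once the essential supremum in $x$ is replaced by a dyadic maximum and $S(t,\alpha_k)$ is tracked layer-by-layer.
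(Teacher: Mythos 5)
Your proposal reproduces the paper's argument essentially verbatim: the same dyadic decomposition of $B$ by the witness level $k(t)$, the same universal bound $|B_k| \lesssim K\alpha_k^{q-1}\beta^{-q}$ extracted from the joint weak norm, and the same two closing moves (one-sided truncation of the dyadic sum in part (a) using $q<1$; balancing the two available bounds on $|B_k|$ at $\alpha_{k_*}^{p_0+q-1} = \beta^q/K$ in part (b) using $q>1$ and $p_0>0$). The exponent bookkeeping and the conclusion $|B|\lesssim K^{p/q}\beta^{-p}$ (with $p/q=1$ in part (a)) match the paper exactly, so this is the same proof.
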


\end{appendix}

\section*{Declarations}

\addtocontents{toc}{\protect\setcounter{tocdepth}{1}}

\subsection*{Funding}
The author was partially supported by the US National Science Foundation under Grant No.~DMS-1926686. 

\subsection*{Conflicts of interests}
The authors have no relevant financial or non-financial interests to disclose.

\bibliographystyle{alpha}
\bibliography{ref.bib}

\end{document}